\newcommand{\bP}{{\mathbb P}}
\newcommand{\bR}{{\mathbb R}}
\newcommand{\bZ}{{\mathbb Z}}
\newcommand{\bN}{{\mathbb N}}
\newcommand{\bF}{{\mathbb F}}
\newcommand{\bY}{{\mathbb Y}}
\newcommand{\bX}{{\mathbb X}}
\newcommand{\bW}{{\mathbb W}}
\newcommand{\bM}{{\mathbb M}}
\newcommand{\bO}{{\mathbb O}}
\newcommand{\bfk}{{\mathbf k}}
\newcommand{\bfm}{{\mathbf m}}
\newcommand{\bfn}{{\mathbf n}}
\newcommand{\cF}{\mathcal F}
\newcommand{\cK}{\mathcal K}
\newcommand{\cA}{\mathcal A}
\newcommand{\cC}{\mathcal C}
\newcommand{\cD}{\mathcal D}
\newcommand{\cM}{\mathcal M}
\newcommand{\cP}{\mathcal P}
\newcommand{\cQ}{\mathcal Q}
\newcommand{\cV}{\mathcal V}
\newcommand{\cX}{\mathcal X}
\newcommand{\fs}{\mathfrak{s}}
\newcommand{\fg}{\mathfrak{g}}
\newcommand{\ro}{\mathrm{o}}
\newcommand{\gl}{\mathrm{gl}}
\newcommand{\flow}{\operatorname{flow}}
\newcommand{\Flow}{\operatorname{Flow}}
\newcommand{\Face}{\operatorname{Face}}
\newcommand{\Iso}{\operatorname{Biject}}
\newcommand{\coker}{\operatorname{coker}}
\newcommand{\Ch}{\operatorname{Ch}}
\newcommand{\Hom}{\operatorname{Hom}}
\newcommand{\Chart}{\operatorname{Chart}}
\newcommand{\Kur}{\operatorname{Kur}}
\newcommand{\Top}{\operatorname{Top}}
\newcommand{\Set}{\operatorname{Set}}
\newcommand{\Cat}{\operatorname{Cat}}
\newcommand{\Ob}{\operatorname{Ob}}
\newcommand{\id}{\operatorname{id}}
\newcommand{\pt}{\operatorname{pt}}
\newcommand{\codim}{\operatorname{codim}}
\newcommand{\colim}{\operatorname*{colim}}
\newcommand{\hocolim}{\operatorname*{hocolim}}
\newcommand{\rel}{\operatorname{rel}}
\newcommand{\Multimod}{\operatorname{Multimod}}
\newcommand{\Int}{\operatorname{Int}}
\def\co{\colon\thinspace}
\newtheorem{thm}{Theorem}[section]
\newtheorem{cor}[thm]{Corollary}
\newtheorem{lem}[thm]{Lemma}
\newtheorem{prop}[thm]{Proposition}
\newtheorem{defin}[thm]{Definition}
\newtheorem{def-lem}[thm]{Definition-Lemma}
\theoremstyle{remark}
\newtheorem{rem}[thm]{Remark}
\renewcommand{\th}[0]{\textsuperscript{th}}
\numberwithin{equation}{section}
\newcommand\RSloop{\@ifnextchar\bgroup\RSloopa\RSloopb}
\newcommand\RSloopa[1]{\bgroup\RSloop#1\relax\egroup\RSloop}
\newcommand\RSloopb[1]%
\newcommand\X{0}
\newcommand\RS[1]%
\newcommand\RSdef[1]{\expandafter\def\csname RS:#1\endcsname}
\newlength\RSu
\title{An axiomatic approach to virtual chains}
\author{Mohammed Abouzaid}
\begin{document}
\maketitle
\setcounter{tocdepth}{1}

\begin{abstract}
  We introduce a category of Kuranishi presentations, whose objects are a variant of the Kuranishi structures introduced by Fukaya and Ono, and which can be seen as a refinement of the version studied by Pardon. We then formulate the notion of virtual chains categorically as a natural transformation between two functors from this category to the category of chain complexes; we call such a datum \emph{a theory of virtual counts}. To show that this definition carries non-trivial content, we then construct a multicategory whose objects are Kuranishi flow categories, and show that a theory of virtual counts determines a multifunctor to the multicategory of chain complexes. We then implement this construction in the setting of Hamiltonian Floer theory, borrowing from some joint work with Groman and Varolgunes, yielding a construction of Hamiltonian Floer groups (and operations on them) as an output of this machine.  We plan to provide a similar account for Lagrangian Floer theory in subsequent joint work.
\end{abstract}

\tableofcontents
\section{Introduction}
\label{sec:introduction}

The proof that the differential in Morse and Floer squares to $0$ is a consequence, in the simplest situations, of expressing the boundary of a $1$-dimensional moduli space as a fibre product of $0$-dimensional manifolds. Algebraically, this amounts to writing the boundary of a fundamental chain for the $1$-dimensional manifold as a sum of products of the fundamental chains for the $0$-dimensional manifolds that define the differential. This algebraic reformulation has the advantage that it can make sense in contexts in which Floer theory does not produce manifolds with boundary; this is the notion of virtual fundamental chains appearing in the title. More generally, we have in mind the extensions of this type of argument which are required to construct chain-level structures in Floer theory, e.g. the $A_\infty$-structures which are of central importance in the study of Lagrangian submanifolds.

In the literature, the notion of a virtual fundamental chain is tied to a specific chain-level model for (co)-homology: Fukaya and Ono \cite{FukayaOno1999} used singular chains, as did Liu and Tian \cite{LiuTian1998}, while in their work on toric varieties Fukaya, Oh, Ohta, and Ono used differential forms \cite{FOOO2016}, which is also the approach taken in the polyfold framework of Hofer, Wysovski, and Zehnder \cite{HoferWysockiZehnder2009}. On the other hand, Pardon uses \v{C}ech cohomology \cite{Pardon2016}, as do McDuff and Wehrheim in \cite{McDuff2017}. In all these instances, it is difficult to disentangle the notion of a virtual chain (in the chosen theory) from its use in Floer theory, as most of the work goes in producing virtual fundamental chains in the chosen theory from the geometry of the moduli spaces of pseudo-holomorphic curves.

The purpose of this paper is provide an axiomatic formulation of the notion of virtual chains, and to verify that one of the currently available approaches (namely Pardon's \cite{Pardon2016}) can be modified to satisfy these axioms. The reader will have to decide whether this modification should count as a simplification (the papers are largely logically independent), but we would like to stress at this stage that many of the techniques that we use have an antecedent in \cite{Pardon2016}; those familiar with this formalism will likely find our use the homotopy normal bundles to be the most novel ingredient (c.f. Section \ref{sec:thoms-isom-via}). 

\begin{rem}
Our axiomatic formulation is designed for geometric situations in which the moduli spaces have been rigidified, i.e. where one counts elements of virtual dimension $0$, with relations arising from those of virtual dimension $1$. In such a setting, the key difficulty of understanding the relationship between virtual chains associated to a pair of moduli spaces and the virtual chains associated to a fibre product over a manifold does not arise because fibre products over $0$-dimensional manifolds can be alternatively described as disjoint unions of products of subsets. In practice, this means that the approach to Floer theory which we will describe relies on Morse theory to rigidify moduli spaces. This approach has its own drawbacks, especially in geometric setting where there is a non-trivial symmetry group.
  
\end{rem}

\subsection{The category of Kuranishi presentations}
\label{sec:categ-kuran-pres-1}

The starting point of our formulation of the notion of virtual chains is the category of Kuranishi charts, given in Definition \ref{def:Kuranishi-charts}. The objects of this category are a generalisation of Fukaya and Ono's original finite-dimensional approach to the construction of virtual chains; they consist of a compact Lie group $G$ of automorphisms, of a locally linear $G$-manifold $X$ with finite stabilisers, of a $G$-vector bundle $V$ over $X$, and of a $G$-equivariant section $\fs$ of $V$. We shall denote by $Z$ the zero locus of $\fs$, and the basic idea is that the quotient $Z/G$ should form a local model for the moduli space being studied.
\begin{rem}
The fact that we extend the formalism to allow for arbitrary compact Lie groups (rather than only finite ones), is largely motivated by our joint work with McLean and Smith \cite{AbouzaidMcleanSmith2021}, which realises moduli spaces of stable genus $0$ maps, in compact symplectic manifolds, as such a quotient $Z/G$ for a single Kuranishi chart, which we call a \emph{global chart}.  
\end{rem}

The next thing to note about our notion of Kuranishi charts is that, following Pardon \cite{Pardon2016}, we shall not impose any smoothness assumption on the manifold $X$. In applications, this greatly simplifies the analytic work that is required in order to prove that moduli spaces of pseudo-holomorphic curves fit within the framework, though this has the obvious disadvantage of preventing the use of differential forms as a model for the cochains. The next thing to note is that our Kuranishi charts are equipped with an additional datum, namely a global stratification indexed by a category. This is in some sense a generalisation for the assumption, in the smooth setting, that the total space is a smooth manifold with corners, but, even in that context, the additional datum of the stratification would be important for an axiomatic formulation of a theory of virtual chains.
\begin{rem}
  The fact that we consider stratifications by categories rather than by partially ordered sets is a consequence of our desire to ultimately apply the results of this paper to Lagrangian Floer theory, and the simpler setup would have sufficed for the Hamiltonian theory that we provide as a test of the applicability of our results.
\end{rem}

From a practical point of view, the key new point, following \cite{AbouzaidBlumberg2021}, is that we make explicit the fact that we construct a \emph{category} of Kuranishi charts denoted  $\Chart$, which then allows us to build later structures using formal arguments. In other approaches, this categorical structure is either absent (often because the very definition of composition requires an additional choice), or is present but left implicit (e.g. in \cite{Ishikawa2018}), so this is not in itself any kind of major advance, but it forms the cornerstone of our approach to the subject: all constructions should be done functorially at the level of the category of Kuranishi charts, which allows for one to apply to them arguments and constructions of formal nature. We note here that Joyce \cite{Joyce2019} has done substantial work, in the smooth setting, building a more sophisticated formalism of Kuranishi charts (which he calls Kuranishi neighbourhoods), which form a $2$-category in his context, but we shall not use the higher categorical structure.

One difference to note with \cite{AbouzaidBlumberg2021} at this stage is that we take care to construct $\Chart$ as a symmetric monoidal category, i.e. that we define the product of Kuranishi charts, and ensure that this product does not depend on the order up to natural isomorphism (this is essentially obvious as long as one takes care that the notion of stratification be sufficiently flexible). This additional structure is crucial when formulating a theory of virtual chains which is adequate for applications to the study of Fukaya categories.

In any finite-dimensional approach to virtual fundamental chains, Kuranishi charts are assembled as the building blocks of global structures variously called Kuranishi structures, Kuranishi spaces, or Implicit atlases depending on the specific implementation. To distinguish our approach from others, we introduced the terminology of \emph{Kuranishi presentation} in  \cite{AbouzaidBlumberg2021} to refer to a category $\Kur$ whose objects are diagrams in the category of Kuranishi charts (i.e. a functor from an indexing category of $\Chart$) satisfying certain assumptions. We revisit this notion in Definition \ref{def:Kuranishi-presentation}, again making sure to construct a symmetric monoidal structure on this category.

At this stage, the datum of the stratification becomes important, as our definition of a Kuranishi presentation really consists of compatible diagrams in the categories of stratified charts associated to each substratum of the given space. Thinking of Kuranishi spaces as generalisations of manifolds, this is certainly an oddity as we would be requiring that an atlas of a manifold with boundary includes the datum of charts for the boundary. What the reader should have in mind, however, is that all our constructions proceed by globalising functors defined on charts, so that if we want an invariant associated to a manifold to functorially map to the associated invariant associated to a bounding manifold, we must either have canonical ways of associating to charts of the boundary a chart of the interior or we must proceed as we do by incorporating these charts in the atlas of the bounding manifold.

We note at this stage that the category $\Kur$ that we construct has far fewer morphisms than the corresponding category studied by Joyce \cite{Joyce2019}. We are singularly focused on the notion of virtual counts as it is used in Floer theory, and for this notion, the only morphisms that are required are the (inclusions) of boundary strata. The key reason that more general morphisms are required in Joyce's approach is the need to ultimately consider fibre products of Kuranishi spaces with respect to evaluation maps to manifolds. As discussed earlier, our own planned approach to applications eschews such fibre products by using Morse theory to rigidify moduli spaces.

\subsection{Theories of virtual counts}
\label{sec:theor-virt-counts-1}

The main notion that we introduce in this paper will be formulated in terms of a natural transformation between two functors from the category of Kuranishi presentations to the category of cochain complexes over a ring $\Bbbk$ (more precisely, the domain is a category $ \Kur^{\Bbbk}$ of graded and $\Bbbk$-oriented Kuranishi charts discussed in Section \ref{sec:absol-orient-kuran}). The target of this natural transformation is the constant functor with value the ring $\Bbbk$ considered as a chain complex supported in degree $0$ with trivial differential. The domain of this natural transformation is the cochain functor of strata $C^*_{st}$ introduced in Section \ref{sec:chain-functor-strata}: this is a functor which assigns to a Kuranishi presentation a complex freely generated by the objects of the stratifying category whose associated strata are non-empty. More precisely, the complex is the direct sum of orientations lines associated to the strata, and the differential is given by a restriction map on these orientation lines.

To understand the idea behind the stratum cochain complex, we consider an oriented manifold $\cM$ with corners: the boundary of each stratum of codimension $k$ is a union of codimension $k-1$ strata, and each such stratum has a fundamental class in relative homology, whose image under the boundary homomorphism is the sum of the relative fundamental classes of its boundary strata. We can thus form a chain complex generated by the relative fundamental classes of all strata, with differential given by the boundary homomorphism. Since we are using cohomological conventions, this complex is supported in degrees greater than minus the virtual dimension, with a unique generator up to sign in this degree corresponding to the top stratum.

At this stage, it would be tempting to define a theory of virtual counts to be a symmetric monoidal natural transformation (we review this notion in Appendix \ref{sec:symm-mono-categ-1}) from the stratum functor to the constant functor:
  \begin{equation} \label{eq:virtual_counts}
\begin{tikzcd}[column sep=huge]
\Kur^{\Bbbk}
  \arrow[bend left=50]{r}[name=U,label=above:$C^*_{st}$]{}
  \arrow[bend right=50]{r}[name=D,label=below:$\Bbbk$]{} &
\Ch.
\arrow[from=U.south-|D,to=D,Rightarrow,shorten=5pt]{r}{\cV}
\end{tikzcd}   
   \end{equation}
   We shall presently explain why we are unable to prove that such a monoidal transformation exists, but it may be useful to first explain why such a natural transformation should be called a theory of virtual counts: note first of all that, since $\Bbbk$ is supported in degree $0$, the above natural transformation consists of a map
   \begin{equation}
          C^{0}_{st}(\bX) \to \Bbbk
   \end{equation}
   for each Kuranishi presentation.  As discussed above, $ C^{k}_{st}(\bX)$ is generated by the strata of dimension $-k$, so the above map is then an assignment of an element of $\Bbbk$ for each stratum of virtual dimension $0$. More precisely, the $\Bbbk$-module $ C^{0}_{st}(\bX) $ is generated by the image of the fundamental classes of the Kuranishi presentations associated to these strata, so that naturality implies that all the data is encoded by assigning a number to each Kuranishi presentations of virtual dimension $0$. This is the \emph{virtual count} that is required to construct algebraic structures on Kuranishi presentations.

       This assignment is not arbitrary. The understand the conditions that are imposed, we use again functoriality, now applied to Kuranishi presentations of virtual dimension $1$. The stratum functor for such presentation has a unique generator in degree $-1$ (corresponding to the (relative) fundamental chain), generators in degree $0$ corresponding to codimension $1$ strata (i.e. those of virtual dimension $0$), as well as generators in positive degree corresponding to strata of negative virtual dimension. The differential maps the generator in degree $-1$ to the sum of the generators in degree $0$, but the image of the degree $-1$ generator under the natural transformation must vanish (because the target is trivial is this degree), so we conclude that the virtual counts of boundaries of $1$-dimensional Kuranishi presentations vanishes.

       We pause at this stage to point out that this data is exactly what is needed to prove the invariance of various enumerative constructions: for example, if one imagines that the $0$-dimensional Kuranishi presentations are moduli spaces of stable holomorphic spheres for a given almost complex structure, and the $1$-dimensional Kuranishi presentation is a parametrised moduli space for a family of tame almost complex structures, then we would be recovering the statement that Gromov-Witten invariants do not depends on the choice of tame almost complex structure.

       To continue the discussion, we observe that, in the construction of Floer complexes, the above structure is not sufficient, and the proof that the Floer differential squares to $0$ arises from the fact that each codimension-$1$ boundary of a  moduli space of Floer trajectories of virtual dimension $n$ is a product of moduli spaces the sum of whose virtual dimension is $n-1$. Specialising to $n=1$, we get a decomposition of the codimension $1$ strata as a product of moduli spaces the sum of whose virtual dimension vanishes. The equation $d^2=0$ then arises by expressing the virtual counts of these boundary strata as a product of the virtual counts of the factors; in particular, this implies that the only contributions arise from the case in which both factors have vanishing virtual dimension.  In our formalism, the fact that the virtual count of a product is the product of the virtual counts of its factors would follow immediately from equipping the domain and target of $\cV$ with the structure of monoidal functors, and imposing the condition that $\cV$ be a monoidal natural transformation.

We now reach the point where we explain why we do not formulate our results in terms of Diagram \eqref{eq:virtual_counts}: the basic issue is that all constructions that we are aware of involve choices, and it is not clear how to make such choices independent of all possible automorphisms of a Kuranishi presentations. This is particularly difficult when one desires to have a monoidal natural transformation, since a $0$-dimension Kuranishi presentation may admit inequivalent factorisations as products of $0$-dimensional presentations, and there seems to be no way of ensuring that the virtual counts associated to different factorisations agree.
       
       We expect that one possible solution to this problem is to formulate the notion of virtual count in an $\infty$-categorical setting, but give a more concrete solution: in Section \ref{sec:categ-kuran-pres}, we shall introduce a rigidification $\widetilde{\Kur}^{\Bbbk}$ of the category of oriented Kuranishi presentations, which has the property that product decompositions are unique up to reordering.

       \begin{rem}
        While it is difficult to justify introducing $\widetilde{\Kur}^{\Bbbk}$ from an abstract point of view, it use does not hinder the study of Floer theory, because all moduli spaces that arise in concrete applications are equipped with canonical product decompositions, hence lift to this category.
       \end{rem}

       For our main definition, we shall write $C^*_{st}$ for the composition of the stratum functor with the forgetful map from $\widetilde{\Kur}^{\Bbbk}$ to $\Kur^{\Bbbk}$. 
   
\begin{defin} \label{def:virtual_count}
  A \emph{multiplicative theory of virtual counts with coefficients in $\Bbbk$} is a monoidal natural transformation from $C^*_{st}$ to the constant functor $\Bbbk$, considered as symmetric monoidal functors from  $\widetilde{\Kur}^{\Bbbk}$ to $\Ch_{\Bbbk}$:
   \begin{equation} \label{eq:virtual_counts-strict}
\begin{tikzcd}[column sep=huge]
\widetilde{\Kur}^{\Bbbk}
  \arrow[bend left=50]{r}[name=U,label=above:$C^*_{st}$]{}
  \arrow[bend right=50]{r}[name=D,label=below:$\Bbbk$]{} &
\Ch.
\arrow[from=U.south-|D,to=D,Rightarrow,shorten=5pt]{r}{\cV}
\end{tikzcd}   
   \end{equation}
\end{defin}
\begin{rem} \label{rem:theor-virt-counts}
  For the purpose of extracting numbers from moduli spaces of holomorphic curve, one can altogether drop the monoidal requirement, and any natural transformation from $ C^*_{st}$ to the constant functor $\Bbbk$ is sufficient; this can be done via a much simpler construction than the one we require. In order to construct chain complexes from moduli spaces of holomorphic curves, it is sufficient to work with monoidal functors; this would be done by replacing $\widetilde{\Kur}^{\Bbbk}$ with a coarser monoidal category in which factorisations are unique, and the task of constructing such a natural transformation is also substantially simpler than constructing a symmetric monoidal one. The symmetric condition is required when studying multiplicative structures, such as Fukaya categories.
\end{rem}

Observing that 
the cohomology $ H^{*}_{st}(\bX)$ is canonically trivial whenever $\bX$ is stratified by a singleton, we have:
\begin{defin}
  If $\Bbbk$ is a field of characteristic $0$, we say that a theory of virtual counts satisfies the \emph{standard normalisation} if on any Kuranishi presentation of $\pt/\Gamma$ (with empty boundary strata), the image of the canonical generator of $H^0_{st}(\bX)$ is $1/|\Gamma|$.
\end{defin}
\begin{rem}
  We conjecture the existence of theories which do not satisfy the standard normalisation, but none have yet been exhibited in the literature.  We do not currently expect that all theories which satisfy the standard nomalisation are equivalent: to understand the kind of issue that may arise, consider a finite group $\Gamma$, a $\Gamma$-representation $V$, and the vector bundle $V \times X \to X$ over a compact manifold $X$  with trivial $\Gamma$-action of dimension equal to that of $V$. Choosing $X$ appropriately, these data should represent a non-trivial Kuranishi presentation of virtual dimension $0$, but we expect that one can choose what a theory of virtual counts assigns to this Kuranishi presentation independently of what is assigned to $\pt/\Gamma$.
\end{rem}

The main result of this paper is the following:

\begin{thm} \label{thm:virtual_counts_exist}
If $\Bbbk$ is a field of characteristic $0$, there exists a symmetric theory of virtual counts which satisfies the standard normalisation.
\end{thm}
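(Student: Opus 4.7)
The plan is to produce, for each oriented Kuranishi presentation $\bX$, a natural cochain map $\cV_{\bX} \co C^*_{st}(\bX) \to \Bbbk$ built from virtual fundamental chains associated to each stratum. The local building blocks are Kuranishi charts $(G, X, V, \fs)$, and for each such chart I would construct a virtual fundamental chain in a \v{C}ech-type cochain model for the cohomology of $X$ relative to the complement of the zero set $Z = \fs^{-1}(0)$, supported in arbitrarily small neighbourhoods of $Z$. The key ingredient, following the author's remark about Section \ref{sec:thoms-isom-via}, is a Thom isomorphism $H^*_G(X, X \setminus Z) \cong H^{*-\dim V}_G(X)$ realised at the cochain level using the homotopy normal bundle of $Z$ in $X$; the virtual fundamental chain is then the preimage of the $G$-equivariant fundamental class of $X$. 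In virtual dimension zero, pushing this chain to a point via $G$-equivariant integration (and dividing by $|\pi_0 G|$ and the orders of the finite isotropy groups, which is where $\operatorname{char}(\Bbbk)=0$ enters) produces the virtual count.

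Next I would extend this chart-level construction to a natural transformation on the category $\Kur^{\Bbbk}$. A Kuranishi presentation is by definition a compatible system of diagrams of charts indexed by the stratifying category, together with charts for every stratum; the virtual fundamental chain of $\bX$ is assembled from the chart-level data by a totalisation that records how the Thom isomorphisms for different charts patch together (this is essentially Pardon's implicit atlas argument reinterpreted functorially). The morphisms in $\Kur^{\Bbbk}$ are boundary-stratum inclusions, and naturality with respect to these reduces to the assertion that restricting the virtual fundamental chain of $\bX$ to a boundary stratum returns the virtual fundamental chain of that stratum, twisted by the orientation line of the normal direction. This restriction compatibility is designed to match the differential on $C^*_{st}$: the signed sum over codimension-one strata equals the image of the top generator under the boundary, so $\cV_{\bX}$ is a genuine cochain map landing in $\Bbbk$ (concentrated in degree zero), and it is natural in $\bX$.

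The main obstacle, and the reason for the rigidified category $\widetilde{\Kur}^{\Bbbk}$, is promoting this to a \emph{symmetric monoidal} natural transformation. On product charts $(G_1 \times G_2, X_1 \times X_2, V_1 \boxplus V_2, \fs_1 \boxplus \fs_2)$, the Thom class of an external sum of bundles canonically factors as the exterior product of the Thom classes of the factors, so the virtual fundamental chains multiply under $\boxtimes$; the delicate question is strict compatibility with the symmetry braiding. In an arbitrary monoidal category of charts, a virtual-dimension-zero presentation can admit inequivalent product factorisations, and there is no intrinsic mechanism forcing the resulting counts to agree. Passing to $\widetilde{\Kur}^{\Bbbk}$ resolves this by declaring factorisations unique up to reordering, after which compatibility with the swap map follows from the Koszul sign rule applied to exterior products of Thom classes, together with a careful once-and-for-all choice of orientations on the homotopy normal bundles of products. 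This step is where almost all of the real work lies: functoriality on $\Chart$ is a formal consequence of the definitions, but making the monoidal and symmetry structures \emph{strict} rather than coherent up to homotopy requires precisely the categorical setup developed in the earlier sections.

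Finally, the standard normalisation is verified directly on the Kuranishi presentation of $\pt/\Gamma$ with trivial bundle and empty boundary: the section $\fs$ is zero, so $Z = \pt$, the Thom isomorphism is the identity, the virtual fundamental chain is the $\Gamma$-equivariant fundamental class of the point, and $\Gamma$-equivariant integration assigns it the value $1/|\Gamma|$, as required.
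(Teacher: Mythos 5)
Your proposal assembles the right ingredients (homotopy normal bundles for chart-level functoriality, a Thom-type identification of the relative cohomology of $(X, X\setminus Z)$, equivariant integration producing $1/|\Gamma|$, and the role of $\widetilde{\Kur}^{\Bbbk}$ in resolving inequivalent factorisations), but it contains a genuine gap at its central step. You define the virtual fundamental chain as ``the preimage of the $G$-equivariant fundamental class of $X$'' under a cochain-level Thom isomorphism and then assert that the resulting map $\cV_{\bX}\co C^*_{st}(\bX)\to\Bbbk$ ``is natural in $\bX$.'' No such canonical cochain exists: the Thom isomorphism and Poincar\'e duality are cohomology-level statements, and at the cochain level the fundamental class only has a non-empty set of representing cycles. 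An arbitrary choice of representative will not restrict on the nose to the chosen representative of a boundary stratum, so the claimed compatibility with the differential on $C^*_{st}$ fails for a generic choice, and naturality on all of $\Kur^{\Bbbk}$ cannot be arranged at all — this is precisely the obstruction the paper flags when explaining why Diagram \eqref{eq:virtual_counts} is not the definition.

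The paper's proof is therefore a two-step argument, and your proposal is missing the second step. First one constructs only a morphism in the \emph{homotopy category} of symmetric monoidal functors: a zig-zag $C^*_{st}\Leftarrow \cdot \Rightarrow C^{*,c}_{\rel\partial}\Rightarrow\cdots\Rightarrow\Bbbk$ through the twisted virtual cochains, whose backwards arrows are monoidal weak equivalences; the nontrivial input here is Lemma \ref{lem:absolute_cochians_have_fundamental_class-local-to-global} (proved by a homotopy $\cK$-sheaf argument over the indexing category $A$, using Condition \eqref{eq:contractible_nerve_presentation} and characteristic $0$), which identifies the cohomology of the virtual cochains in degree $-\dim\bX$. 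Second, Proposition \ref{prop:cofibrant_property_of_lift_Kur} converts the zig-zag into an honest monoidal natural transformation on $\widetilde{\Kur}^{\Bbbk}$ by induction on depth: one chooses a representing cycle for one object in each equivalence class of indecomposables, lets the monoidal structure dictate the value on decomposables, and uses exactness to extend over the top stratum of each new object so that restriction to boundary strata holds on the nose. This induction is where the choices you leave implicit are actually made, and it only works because morphisms in $\widetilde{\Kur}^{\Bbbk}$ project to reorderings of factors — so the role of the rigidified category is not only to fix the symmetry/braiding issue you identify, but to make \emph{any} coherent system of choices possible. Even then the lifted triangle is only shown to commute on cohomology, not at the chain level.
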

\begin{rem}
 If one restricts attention to the smooth setting and to usual manifolds with corners, we expect that one can formulate a notion of equivalence of multiplicative theories, in terms of homotopies of natural transformations, so that the three methods for constructing virtual counts (via singular cochains \cite{FukayaOhOhtaOno2009}, de Rham cochains \cite{FOOO2016,Hofer2006}, or \v{C}ech cochains \cite{Pardon2016,McDuff2017}), all lead to equivalent theories of virtual counts satisfying the standard normalisation. We reiterate, however, that the formalism developed in this note does not incorporate the notion of virtual chains on fibre products, and hence one cannot recast the known applications of virtual fundamental chains to symplectic topology (in particular those due to Fukaya, Oh, Ohta, and Ono), in our framework, without doing the additional work of rigidifying moduli spaces.
\end{rem}

At this stage, it may be helpful to provide a brief outline of the proof of Theorem \ref{thm:virtual_counts_exist}. There are essentially two steps:
\begin{enumerate}
\item Construct a morphism from the stratum functor to the constant functor, in the homotopy category of symmetric monoidal functors from $\Kur^{\Bbbk}$ to $\Ch$.
\item Show that the pullback of the stratum functor to $\widetilde{\Kur}^{\Bbbk} $ satisfies a lifting property with respect to monoidal weak equivalence of symmetric monoidal functors.
\end{enumerate}
Concretely, the first step entails the construction of zig-zag
\begin{equation}
 C^*_{st} \Leftarrow F_1 \Rightarrow \cdots \Leftarrow F_k \Rightarrow \Bbbk   
\end{equation}
consisting of a finite collection of symmetric monoidal functors, and of monoidal natural transformations between them, in which the left pointing arrows are equivalences. Such a zig-zag is sufficient for an $\infty$-categorical approach to algebraic structures in Floer theory, but a more standard approach requires replacing the zig-zag of natural transformations appearing in our definition by a single monoidal natural transformation. This is the outcome of the second step, which entails proving that we can construct the dashed arrow in a diagram
  \begin{equation}
    \begin{tikzcd}
      & F_1 \ar[d,Rightarrow] \\
      C^*_{st} \ar[ur,Rightarrow,dashed] \ar[r,Rightarrow] & F_2,
    \end{tikzcd}
  \end{equation}
  consisting of symmetric monoidal functors on $\widetilde{\Kur}^{\Bbbk} $  and natural transformations between them. We warn the reader at this stage that, when constructing a lift, we only ensure its commutativity on the cohomology level, since this is sufficient for our purpose.

  We also prove two variants of Theorem \ref{thm:virtual_counts_exist}. Both concern settings in which one can construct a theory of virtual counts over an arbitrary ring $\Bbbk$. The first is when one restricts to the category of Kuranishi charts with trivial isotropy, and the proof in this case is a minor variant of Theorem \ref{thm:virtual_counts_exist}, so that we do not separate the discussion of this case. The second arises when one consider Kuranishi charts which have non-negative dimension, and which have the property that those charts of dimension $0$ and $1$ have trivial isotropy. This requires a slightly different approach, whereby one notices that Floer-theoretic applications only require information about moduli spaces of dimension $0$ and $1$, so that one can truncate the virtual cochains to only keep information in this degree. We discuss this in Section \ref{sec:monot-kuran-pres}.

  \subsection{The category of flow categories}
\label{sec:categ-flow-categ}

In the second part of this paper, we explain how to build algebraic structures from the datum of a theory of virtual counts in the sense of Definition \ref{def:virtual_count}, keeping in mind that we shall apply this framework to Hamiltonian Floer theory in the third part; this implementation is built upon the work developed with Groman and Varolgunes in \cite{AbouzaidGromanVarolgunes2021}.

Inspired as in \cite{Pardon2016} by Cohen-Jones-Segal's notion of a flow category \cite{CohenJonesSegal1995}, we formulate the notion of a Kuranishi flow category, which should be thought of as the primary output of Morse and Floer theory, in the sense that we should have objects corresponding to critical points of an action functional, and morphisms given by moduli spaces of solutions to Floer's equation. The reader will again find that our formulation is different from \cite{Pardon2016}, but this difference is familiar to practitioners of Floer theory: in non-exact situations, one can describe generators of Floer complexes either as critical points of a $1$-form on a space of loops or paths, in which case, as in Novikov-Morse theory, there are usually finitely many generators, with a possibly non-compact space of trajectories connecting them, or one can pass to a cover on which the action functional is defined, leading to a possibly infinite set of generators, with a compact space of generators. These two perspectives are equivalent once one records the action of the deck transformations of the covering space in the second case (and the relative homotopy class of the trajectories in the first case), and account for the differences between the two approaches.
 
We choose, however, to precede giving the definition of Kuranishi flow categories by introducing a formal enlargement of the Kuranishi presentations, which we call the category of \emph{Kuranishi diagrams.} Such structure appear repeatedly when trying to use Morse theory to rigidify Floer theory (c.f. \cite{Fukaya1993,BiranCornea2009}): the moduli spaces that have to be counted in order to define operations are given as a union of top-dimensional strata, indexed by topological configurations of Riemann surfaces and metric intervals, which are glued pairwise along codimension $1$ boundary strata. Our notion of Kuranishi diagram formalises this idea by including the data of a stratifying category, and an assignment of a Kuranishi presentation to this category. We also include a condition for the stratifying category, that essentially amounts to requiring that the link of each codimension $k$ stratum be a sphere of dimension $k$, as in the definition of a combinatorial manifold.
\begin{rem}
We could significantly weaken the condition that we impose on Kuranishi diagrams, requiring only that codimension $1$ strata are matched, as in the definition of a pseudo-manifold, but we do not know a situation where this weaker condition naturally occurs.
\end{rem}

In the next step, we try to construct a category $\Flow$ whose objects are Kuranishi flow categories, and whose morphisms are bimodules; we are motivated here by the notion of continuation maps in Morse (and Floer) theory which provide a way to define maps associated to a $1$-parameter family of functions whose endpoints are Morse. This is the place where the notion of Kuranishi diagram is essential in our approach, as the composition of two bimodules built from Kuranishi presentations fails to remain within this class. In the geometric context, this amounts to the fact that a composition of continuation maps is a \emph{broken continuation map.}

However, this basic idea runs into a pesky difficulty which arises from the failure of the natural isomorphism between products of sets to be equalities. The issue can be seen already for endomorphisms of the empty flow category: such endomorphisms are given by Kuranishi diagrams without boundary, and composition is given by products thereof. But this composition fails to be associative for the simple reason that the product sets $X \times (Y \times Z)$ and $(X \times Y) \times Z$ are not equal. We resolve the problem by incorporating factorisations into compositions as
part of the datum of morphisms in $\Flow$.

\begin{rem}
  It is tempting to dismiss this issue as pure pedantry, and we suspect that it is possible to resolve the problem at once by appealing to Isbell's rectification result \cite{Isbell1969} that replaces every monoidal category with an equivalent category in which the associator isomorphisms are equalities. However, the fact that one cannot in general identify the products $X \times Y$ and $Y \times X$ is witnessed both by Steenrod operations, and by Moore's result that connected commutative monoids are products of Eilenberg-MacLane spaces.  
\end{rem}

Implementing the idea of including the choice of factorisation in the datum of a morphism is made technically more difficult by needing to combine this with constructing $\Flow$ as a category whose morphisms carry some remnant of a topology: the latter structure is required because any serious application of Floer/Morse theory, e.g. to prove the invariance of the homology with respect to the choice of Morse function, relies on the study of \emph{parametrised moduli spaces.} However, while the topology on unbroken continuation maps is straightforward to describe, trying to define an actual topology that incorporates breaking (and that is adequate for applications) is a matter that we prefer to avoid. Instead, we model the underlying homotopy theory using the theory of symmetric cubical sets \cite{Grandis2009,Isaacson2011}; which ultimately amounts to studying only families parametrised by cubes. 
\begin{rem}
We expect that our approach here is related to the notion introduced in \cite{Pardon2016} of a flow category parametrised by a simplicial set, under the usual equivalence between simplicial and cubical sets as models for spaces. In particular, it should not be too difficult to prove that the nerve of the category $\Flow$ is a universal base for a flow category diagram in the sense of  \cite{Pardon2016}.  
\end{rem}
At this stage, we can state a key consequence of the combined results of the second two parts of the paper, which rely on the fact that the category of chain complexes may be considered as cubically enriched category via
\begin{equation}
  \Hom_{n}(C,D) \equiv \Hom(C \otimes C_*([0,1]^n), D).  
\end{equation}
\begin{thm}
  \label{thm:floer-functor-and-Ham}
  A theory of virtual counts determines a cubically enriched functor
\begin{equation}
    \Flow \to \Ch,
\end{equation}
which can be applied to flow categories of Hamiltonian Floer trajectories and to continuation maps between them.
\end{thm}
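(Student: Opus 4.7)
The plan is to extract the chain complex, the chain maps, and their higher cubical structure directly from the stratum cochains of the various Kuranishi diagrams that assemble a flow category $\cF$ and the bimodules between them. On objects, I would assign to $\cF$ the chain complex $CF^*(\cF) = \bigoplus_x o_x$ whose generators are indexed by the objects of $\cF$, weighted by the orientation lines supplied by the Kuranishi presentation at each $x$, and whose differential between $x$ and $y$ is obtained by applying the virtual count $\cV$ to the $0$-dimensional stratum of the morphism Kuranishi diagram $\cM(x,y)$. The identity $d^2 = 0$ is forced by naturality: on a $1$-dimensional Kuranishi diagram $\cM(x,z)$ the stratum differential sends the top generator to the alternating sum of codimension $1$ boundary strata, and the defining property of a flow category identifies these boundaries canonically with a disjoint union of products $\cM(x,y) \times \cM(y,z)$. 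The monoidal structure of $\cV$ then turns the vanishing of $\cV$ on a stratum cochain boundary into the desired $d^2 = 0$ equation.

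For morphisms, I would send a bimodule $B\co \cF_0 \to \cF_1$ to the chain map whose matrix coefficients are the images under $\cV$ of its $0$-dimensional strata, and verify it is a chain map by the same argument applied to the codimension $1$ boundary of each $1$-dimensional stratum, which decomposes as products $\cM^{\cF_0}(x,x') \times B(x',y)$ and $B(x,y') \times \cM^{\cF_1}(y',y)$. The genuinely subtle point is functoriality of composition, and this is precisely where the enlargement from Kuranishi presentations to Kuranishi diagrams plays its decisive role: the composition $B_1 \circ B_2$ appears geometrically as a broken bimodule whose stratifying category records the break, and the recipe above applied to this diagram directly reproduces the matrix product of the constituent chain maps, provided the canonical factorisations lift through $\widetilde{\Kur}^{\Bbbk}$. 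This lift is where the rigidification machinery of Section \ref{sec:categ-kuran-pres} is essential, and the symmetric monoidality of the virtual count guarantees compatibility with the symmetrisation of factors that is built into the definition of $\Flow$.

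The cubical enrichment amounts to repeating the construction for $n$-parameter families of bimodules: such a family is, by definition, a Kuranishi diagram over $[0,1]^n$ whose boundary strata are indexed by the faces of the cube, and applying $\cV$ to its $0$-dimensional strata defines an element of $\Hom(CF^*(\cF_0) \otimes C_*([0,1]^n), CF^*(\cF_1))$; compatibility with the face, degeneracy, and symmetry structure of the cubical enrichment then follows once again from naturality and symmetric monoidality of $\cV$. I expect the main technical obstacle to lie here, in the simultaneous bookkeeping of the symmetric structure on $\widetilde{\Kur}^{\Bbbk}$, the symmetric cubical structure on $\Flow$, and the cubical enrichment of $\Ch$, and in verifying that each broken-strata factorisation we encounter admits a canonical lift across the rigidification. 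The Hamiltonian Floer application is then inherited from \cite{AbouzaidGromanVarolgunes2021}, which supplies Kuranishi presentations of the relevant Floer moduli spaces, packaged so as to assemble into the required flow categories and continuation bimodules; feeding these data into the functor constructed above yields the Hamiltonian Floer groups and their continuation maps as its values.
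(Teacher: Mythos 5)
Your proposal follows the paper's own route essentially step for step: the object-level complex generated by orientation lines with differential given by virtual counts of the zero-dimensional strata, $d^2=0$ and the chain-map property extracted from the codimension-one boundary decomposition together with monoidality of $\cV$ (this is Lemma \ref{lem:Floer-differential} and the first half of Proposition \ref{prop:floer-functor}), composition handled by the monoidal axiom once the broken-stratum factorisations are lifted to $\widetilde{\Kur}$ (Proposition \ref{prop:lift_flow_to_tflow}), and the cubical enrichment obtained from diagrams over $\square^n$. You also correctly locate the two genuinely delicate points, namely the rigidification lift and the simultaneous bookkeeping of the symmetric cubical structures.

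The one substantive omission is the Novikov coefficient ring. The paper's complex is the $T$-adically completed sum $\widehat{\bigoplus}_p \ro_p \otimes_{\Bbbk} \Lambda_0$, and the matrix coefficient from $p$ to $p'$ is the sum over all $\lambda \in \Gamma_+$ of $T^{\lambda}\bfm^{\lambda}_{p,p'}$; for Hamiltonian Floer theory on a non-exact manifold there are in general infinitely many nonzero terms, so without the $T^{\lambda}$ weighting and the completion (whose convergence rests on the finiteness condition \eqref{eq:compactness-property-morphisms} built into the definition of a flow category) neither the differential nor the continuation maps are defined. This is easily repaired and does not change your approach, but as written your $\bigoplus_x \ro_x$ only makes sense for flow categories with finitely many contributing morphisms.
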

In fact, the results of Section \ref{sec:flow-mult-hamilt} lead to a sharper statement, from which we conclude the invariance of Floer homology in Corollary \ref{cor:Hamiltonian-CF-well-defined}.
\begin{rem}
  We could have replaced the third part of the paper with an account of Lagrangian Floer theory in the exact or monotone situation, but we plan to return to this subject, in a generality that incorporates curvature, in subsequent work.
\end{rem}
  \subsection{The multicategory of flow categories}
\label{sec:mult-flow-categ-2}

We take an additional step towards to the construction of algebraic operations in Floer theory by equipping $\Flow$ with the structure of a (symmetric) multicategory (see Appendix \ref{sec:algebr-constr-symm}): extending the idea that a morphism between flow categories is given by a bimodule, we assign to a sequence of input flow categories $\vec{\bX} = (\bX_1, \ldots, \bX_n)$, and an output flow category $\bY$ a space (more precisely, a symmetric cubical set)
\begin{equation}
  \Flow(\bX_1, \ldots, \bX_n; \bY)  
\end{equation}
of multimodules, which consists of assignments of Kuranishi diagrams to each sequence of objects of $\vec{\bX}$ and of $\bY$, with an appropriate action the morphisms of $\vec{\bX}$ and $\bY$. In Hamiltonian Floer theory, we obtain such multimodules by considering Riemann surfaces with $n+1$ punctures, of which $n$ are labelled by the Hamiltonians giving rise to the flow categories $\bX_i$, and the last is labelled by the Hamiltonian corresponding to $\bY$.

\begin{rem}
  One way to think about the structure of multimorphisms is that it is a replacement for the product of flow categories, in the sense that if we were to introduce a monoidal structure in this category, it would be required to come with a natural isomorphism
  \begin{equation}
       \Flow(\bX_1 \otimes \cdots \otimes \bX_n, \bY)   \cong   \Flow(\bX_1, \ldots, \bX_n; \bY)
     \end{equation}
     for all possible targets $\bY$. The subtlety of defining this product can be seen when thinking about the example of Morse theory: the compactified moduli spaces of Morse flow lines for a function on $M_1 \times M_2$ given as a sum of functions on the individual factors can be expressed in terms of the Morse flow lines in the two factors, but the expression is not given simply by taking the product, because the moduli space of flow lines is a quotient by $\bR$ of the space of gradient trajectories, which affects the description of the compactification.
\end{rem}

\begin{thm}
  \label{thm:floer-multfunctor-and-Ham}
  A theory of virtual counts determines an extension of the functor
\begin{equation}
    \Flow \to \Ch,
\end{equation}
to a multifunctor, where $\Ch$ is considered as a multicategory via it usual symmetric monoidal structure:
\begin{equation}
  \Hom_{\bullet}(C_1, \ldots, C_n; D) \equiv \Hom_{\bullet} (C_1 \otimes \cdots \otimes C_n, D).
\end{equation}
\end{thm}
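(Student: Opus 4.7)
The plan is to extend the cubically enriched functor $\Flow \to \Ch$ of Theorem \ref{thm:floer-functor-and-Ham} by specifying, for each multimodule $\cM \in \Flow(\bX_1,\ldots,\bX_n; \bY)$, a chain map
\[
\cM_* \colon C_*(\bX_1) \otimes \cdots \otimes C_*(\bX_n) \to C_*(\bY),
\]
and then verifying the composition, unit, and symmetry axioms of a multifunctor. Unwinding definitions, a multimodule assigns to each tuple $(x_1,\ldots,x_n; y)$ of objects a Kuranishi diagram $\cM(x_1,\ldots,x_n; y)$ whose stratifying category records the breakings, and whose codimension-$1$ strata factor (via the composition data of $\Flow$) as products of the Kuranishi presentations encoding either the differential bimodules of $\bX_i$ or $\bY$, or an ``interior'' piece of $\cM$ with those differentials. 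The matrix coefficients of $\cM_*$ on generators $x_i \mapsto y$ are then defined by applying $\cV$ to the canonical class in $C^0_{st}$ of the $0$-dimensional component of $\cM(x_1,\ldots,x_n;y)$.

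The chain map property follows, exactly as in Theorem \ref{thm:floer-functor-and-Ham}, from naturality of $\cV$ applied to Kuranishi diagrams of virtual dimension $1$: the unique generator in degree $-1$ is sent to zero, and its differential in $C^*_{st}$ is the signed sum over codimension-$1$ boundary strata, which the monoidality of $\cV$ evaluates as a sum of products of virtual counts. This sum is precisely $d_{\bY} \circ \cM_* - \sum_i (-1)^{\varepsilon_i} \cM_* \circ_i d_{\bX_i}$, so that the vanishing of the boundary count produces the chain map identity.

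For compatibility with multi-composition, recall that the composite $\cM \circ (\cN_1,\ldots,\cN_n)$ of multimodules is built as a Kuranishi diagram whose top strata are products of the Kuranishi presentations of the $\cN_i$ and of $\cM$, \emph{together with the choice of factorization} furnished by the composition in $\Flow$. This factorization datum is precisely what is needed to lift these diagrams to the rigidified category $\widetilde{\Kur}^{\Bbbk}$ on which $\cV$ is defined as a strictly monoidal transformation. The multiplicativity of $\cV$ on factored products therefore produces the equality $(\cM \circ (\cN_1,\ldots,\cN_n))_* = \cM_* \circ (\cN_{1,*} \otimes \cdots \otimes \cN_{n,*})$. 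Unit compatibility is the observation that the identity multimodule on $\bX$ agrees with the identity bimodule used to construct the underlying functor, so its assigned chain map is the identity of $C_*(\bX)$.

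The hard part is the $\Sigma_n$-equivariance. We must show that for a permutation $\sigma \in \Sigma_n$, the chain map $(\sigma^*\cM)_*$ associated to the reordered multimodule agrees with $\cM_*$ precomposed with the symmetry isomorphism of $\Ch$ permuting the tensor factors of $C_*(\bX_{\sigma(1)}) \otimes \cdots \otimes C_*(\bX_{\sigma(n)})$. Geometrically both sides involve the same collection of $0$-dimensional Kuranishi diagrams, but organized as products in two different orders, and it is precisely here that the \emph{symmetric} (not merely monoidal) structure of $\cV$ is essential: in $\widetilde{\Kur}^{\Bbbk}$, product decompositions are unique up to reordering, and the symmetry constraint of the symmetric monoidal natural transformation $\cV$ tracks exactly the Koszul signs with which the symmetric isomorphism of $\Ch$ acts on generators. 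Granted this identity for $0$-cubes of multimorphisms, the cubically enriched refinement is formal: a $k$-cube of multimorphisms is a single multimodule of Kuranishi diagrams parametrized by $[0,1]^k$, and the above construction is applied without change, with all naturality properties inherited from the enrichment already established for $\Flow \to \Ch$.
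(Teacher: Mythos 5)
Your overall strategy matches the paper's: define the structure maps by applying $\cV$ to the stratum cochains of the multimodule diagrams, get the chain-map identity from codimension-$1$ boundary strata of $1$-dimensional diagrams, and get compatibility with multicomposition and with permutations of the inputs from the monoidal, respectively symmetric, structure of $\cV$. However, there is a genuine gap at the pivot of the whole argument: the sentence asserting that the factorization data carried by a multimodule ``is precisely what is needed to lift these diagrams to the rigidified category $\widetilde{\Kur}^{\Bbbk}$.'' This is not automatic, and the paper devotes all of Appendix \ref{sec:lifting-flow-modules} (Proposition \ref{prop:lift_flow_to_tflow}) to constructing a cubically enriched multifunctor $\Flow \to \widetilde{\Flow}$. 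The obstruction is that objects and morphisms of $\widetilde{\Kur}$ require \emph{equalities} of factors ($\bX_i(P) = \bY_{f(i)}(f(P))$), whereas the data of an $n$-cube of multimorphisms only provides \emph{isomorphisms}: the boundary decompositions of Equation \eqref{eq:map_decomposed_strata_multimodule} are isomorphisms of Kuranishi diagrams, and the symmetric group acts on $n$-cubes via the chosen isomorphisms $\bM_{\phi_1\circ\phi_2}\cong\phi_2^*\bM_{\phi_1}$ of Equation \eqref{eq:choice_isomorphism_cube-Kur}. The paper explicitly stresses that the resulting lift is \emph{not} canonical (unlike Lemma \ref{lem:lift_flow_cat-to-factorised} for objects): one must choose representatives of the orbits of the symmetric group action on non-degenerate cubes and proceed by induction on the dimension of the cube, verifying compatibility with face maps, degeneracies, reorderings of the inputs, and multicompositions at each stage.

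This gap also undermines the two places where you lean hardest on it. First, your resolution of the $\Sigma_n$-equivariance — which you correctly identify as the hard part — consists of invoking uniqueness of product decompositions in $\widetilde{\Kur}^{\Bbbk}$; but whether the reordered multimodule $\sigma^*\cM$ receives a lift compatible (via reordering of factors) with the lift of $\cM$ is exactly what the inductive choice of orbit representatives is designed to arrange, and is not a consequence of the definitions. Second, the claim that the cubical enrichment is ``formal'' understates the problem: an element of $\Flow_n(\vec{\bX})$ is not a single diagram over $[0,1]^n$ but an assignment of trees of multimodules to every face, with coherence data, and the lift must be constructed simultaneously and compatibly for all faces — this is precisely why the paper's construction is an induction on cube dimension rather than a stratum-by-stratum prescription. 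To close the gap you would need to either reproduce the content of Proposition \ref{prop:lift_flow_to_tflow} or give an alternative argument showing that $\cV$ can be evaluated coherently on the isomorphism classes of factored diagrams appearing in a multimodule.
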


In Section \ref{sec:flow-mult-hamilt}, we apply this result to Hamiltonian Floer theory.

\subsection*{Acknowledgments} This paper began partly from the following sources: (i) a joint project with John Pardon, now largely abandoned, to use the formalism of \cite{Pardon2016} in order give an alternative construction of Fukaya categories in a general context, (ii) the work with Yoel Groman and Umut Varolgunes on local Floer theory \cite{AbouzaidGromanVarolgunes2021}, (iii) an attempt to put in writing concrete thoughts about Kuranishi spaces coming from joint work with Andrew Blumberg on Floer homotopy theory \cite{AbouzaidBlumberg2021}, in a context that does not require the full machinery of stable homotopy theory, and (iv) the desire to unify the traditional perspective which describes moduli spaces of pseudo-holomorphic curves in terms of local Kuranishi charts with joint work \cite{AbouzaidMcleanSmith2021} performed with Ivan Smith and Mark McLean that provides geometric constructions of global Kuranishi charts for moduli spaces of stable genus $0$ holomorphic curves. I am grateful to them for discussions about various points. I would also like to thank Nate Bottman and Semon Rezchikov for comments about early drafts.

This research was supported by the NSF through grants DMS-1609148, DMS-1564172, and DMS-2103805, the Simons Collaboration on Homological Mirror Symmetry, and a Poincar\'e visiting professorship at Stanford University.

\part{Constructing a theory of virtual counts}
\label{part:constr-theory-virt}
 
\section{Kuranishi presentations}
\label{sec:symm-mono-categ}

\subsection{Stratified spaces}
\label{sec:stratified-spaces}

Let $\cQ$ be a small category, equipped with a functor to the natural numbers with their usual ordering
\begin{equation}
  \codim \co \cQ \to \bN = \{ 0 \to 1 \to \cdots \} 
\end{equation}
which we call the \emph{codimension}.

Given an arrow $\alpha$ in  $\cQ$ (with domain $P_0$ and target $P_1$), we write $\cQ^{\alpha}$ for the category whose objects are factorisation $P_0 \to P \to P_1$ of $\alpha$, and whose morphisms are arrows from $P$ to $P'$ which fit in a commutative diagram
\begin{equation}
  \begin{tikzcd}[row sep = 10, column sep = 10]
    & P \ar[dd] \ar[dr] & \\
    P_0 \ar[ur] \ar[dr] & & P_1. \\
    & P' \ar[ur] &
  \end{tikzcd}
\end{equation}
Note that $ \cQ^{\alpha}$ has an initial object arising from the identity of $P_0$ and a terminal object associated to the identity on $P_1$. We abuse notation and write $P_0$ and $P_1$ for these objects.
\begin{defin} \label{def:model_for_corners}
  A category $\cQ$ is a \emph{model for manifolds with generalised corners} if  (i) $\cQ$ has an initial element and, (ii) for each arrow $\alpha$, the category $\cQ^{\alpha} \setminus \{ P_0, P_1\}$ is a partially ordered set  whose geometric realisation
  is a sphere of dimension
  \begin{equation}
    \codim P_1 - \codim P_0 - 2.
  \end{equation}
  A map of models is an embedding $\cQ_1 \to \cQ_2$ inducing an isomorphism $ \cQ^{\alpha}_1 \to \cQ^{f(\alpha)}_2$ for each arrow $\alpha$ in $\cQ_1$.
\end{defin}
\begin{rem}
  In Section \ref{sec:extend-categ-kuran}, we shall consider a more general setup where the condition that $\cQ$ have an initial element is dropped, allowing us to have more than one top dimensional stratum. 
\end{rem}

The geometric realisation of a category is the simplicial complex whose non-degenerate $n$-simplices are composable sequences consisting of $n$ morphisms. Our definition thus implies that there are no morphisms in $\cQ$ (other than identities) between objects of equal codimension. Since adding a terminal or initial element to a category corresponds to taking the cone, we see that the triple
\begin{equation}
  ( |\cQ^{\alpha}|, | \cQ^{\alpha} \setminus \{P_0\}|, | \cQ^{\alpha} \setminus \{P_1\}|)
\end{equation}
is homeomorphic to $(D^{n}, D_+^{n-1}, D_-^{n-1})$. We shall use the notation
\begin{equation}
  \kappa^{\alpha} \equiv |\cQ^{\alpha}|,
\end{equation}
and in the special case where $\alpha_P$ is the unique arrow from the initial object to a given object $P$, we write
\begin{equation}
    \kappa^{P} \equiv \kappa^{\alpha_P}.
\end{equation}
\begin{rem}
  The canonical example to have in mind in the case $\cQ$ is the partially ordered set of subsets of a set $Q$: in that case, there is a unique arrow $P_0 \to P_1$ if and only if $P_1$ includes $P_0$, and $ |\cQ^{P_0 \to P_1}|$ is a cube of dimension given by the number of element of the complement. This level of generality is sufficient to understand the open-closed field theory associated to Lagrangians in a symplectic manifold, in the absence of curvature. An intermediate notion, which appears when studying Lagrangian Floer theory in the presence of curvature, would consist of requiring that $\cQ^{\alpha} $ be a cube of the appropriate dimension. The setting we consider arises in the construction the Fukaya $2$-category of symplectic manifolds \cite{MauWehrheimWoodward2018,Bottman2019}, as well as in the study of symplectic field theory \cite{Pardon2019}. 
\end{rem}

For the next definition, we shall need to appeal to the following construction:  given an arrow $\alpha \co P_0 \to P_1$, we associate to each object $P_0 \to P \to P_1$ of $\cQ^{\alpha}$ the partially ordered set $\cQ^{P \to P_1}$. This construction is functorial in $\cQ^{\alpha} $ because an arrow  from $P \to P'$ induces a map of partially ordered set
\begin{equation}
 \cQ^{P \to P_1} \to\cQ^{P' \to P_1}. 
\end{equation}

\begin{defin}
  A \emph{$ \cQ $-manifold} with an action of a compact Lie group $G$ is a contravariant functor $P \mapsto \partial^P M$  on $ \cQ^{op}$ taking value in locally linear topological $G$-manifolds with boundary, and satisfying the following two properties for each $P_0 \in \cQ$:
  \begin{enumerate}
  \item the natural map
    \begin{equation}
      \colim_{\alpha \co P_0 \to P_1}   \partial^{P_1} M \to \partial^{P_0} M   
    \end{equation}
    factors through a homeomorphism of the colimit with the boundary of $ \partial^{P_0} M$.
    \item given an arrow $\alpha \co P_0 \to P_1$, and a point $x$ in the interior of $\partial^{P_1} M$, there is a $G$-manifold $U$, and a natural transformation
     \begin{equation} \label{eq:local_model}
              U \times |\cQ^{P \to P_1} | \Rightarrow \partial^{P} M
            \end{equation}
of functors on $\cQ^\alpha$, which is $G$-equivariant for the trivial $G$-action on second factor of the source, and whose restriction to the product of $U$ with a neighbourhood of $P_1 \in |\cQ^{P \to P_1} |$ is an open neighbourhood of the image of $x$ in $\partial^{P} M$.   
  \end{enumerate}

\end{defin}
We note that Equation \ref{eq:local_model} specialises to the statement that $U$ is a $G$-invariant open neighbourhood of $x$ in $\partial^{P_1} M $, because $P_1 \in |\cQ^{P \to P_1} |$ is a boundary point. 
\begin{rem}
The local linearity condition is the requirement that each orbit admit a $G$-invariant neighbourhood which is $G$-equivariantly homeomorphic to the total space of a vector bundle on the orbit. This condition is automatically satisfied in the smooth setting. In its absence, there is essentially no control on the topology of the fixed point loci of $G$ or of the quotient space \cite{Bing1952}.
\end{rem}

We often abuse notation, and write $M$ for the manifold associated to the minimal element of $\cQ$; indeed, we can describe the data of a $\cQ$-manifold as a stratification on $M$ in the manner of \cite[Definition 2.15]{Pardon2019}. However, our point of view allows us to functorially associate to each element $P \in \cQ$ the $ \partial^{P} \cQ $-manifold $\partial^P M$, where $\partial^{P} \cQ$ is the under category of $P$ (i.e. the category of arrows in $\cQ$ with source $P$).

When we specialise the local model in Equation \eqref{eq:local_model} to the case that $P$ and $P_0$ are both given by the minimal element, we find a local homeomorphism
\begin{equation} \label{eq:local_model_stratum_open}
    \partial^{P_1} M  \times \kappa^{P_1} \dashrightarrow M.
\end{equation}

\begin{rem}
The above definition is a version of Pardon's notion of cell-like stratification \cite[Section 3.1]{Pardon2019}. 
It includes  as a special case the stratification induced on the topological manifold with boundary underlying  \emph{smooth manifolds with generalised corners}  \cite{Joyce2016,KottkeMelrose2015}; in this setting, the stratification is modeled after an integral affine polytope. 
\end{rem}

\subsection{Kuranishi charts}
\label{sec:kuranishi-charts}

We now arrive at the basic building block of all models of Kuranishi spaces; our definition is slightly more general than the one that is usually considered, in that we allow $G$ to be a compact Lie group:
\begin{defin} \label{def:Kuranishi-charts}
 A \emph{Kuranishi chart} is a quintuple $\bX = (G,\cQ,X,V,s)$, where:
  \begin{enumerate}
  \item $G$ is a compact Lie group,
  \item $\cQ$ is a model for manifolds with generalised corners,
  \item $X$ is a $ \cQ $-manifold with an action of $G$ that has finite stabilisers,
  \item $V$ is a real $G$-vector bundle over $X$, and
    \item $\fs$ is a $G$-equivariant section of $V$.
  \end{enumerate}
\end{defin}
The datum of the vector bundle can be expressed most simply as a vector bundle on the space $X$ associated to the minimal element of $\cQ$; this determines a vector bundle over $\partial^P X$, for every $P \in \cQ$, by pullback.

Kuranishi charts are the objects of a category which we denote $\Chart$, and whose morphisms are given by:
\begin{defin}
A morphism $f$ of  Kuranishi charts  from $\bX_1 = (G_1,\cQ_1,X_1,V_1,\fs_1)$ to $\bX_2 = (G_2,\cQ_2,X_2,V_2,\fs_2)$ consists of the following data:
  \begin{enumerate}
  \item A homomorphism $G_1 \to G_2$ whose kernel acts freely on $X_1$,
  \item A map of $f \co \cQ_1 \to \cQ_2$ of models in the sense of Definition \ref{def:model_for_corners}. We denote the image of the minimal element of $\cQ_1$ by $P_f$.
  \item A $G_1$-equivariant map $f \co X_1 \to \partial^{P_f} X_2$ (i.e. a natural transformation from $X_1$ to the pullback of $\partial^{P_f} X_2$).
  \item A $G_1$-equivariant embedding $V_1 \to f^* V_2$ of vector bundles over $X_1$.
  \end{enumerate}
  We require that the following diagram commute
  \begin{equation}
    \begin{tikzcd}
        V_1 \ar[r] & V_2\\
        X_1 \ar[r] \ar[u,"\fs_1"] &  \partial^{P_f} X_2 \ar[u,"\fs_2"],
    \end{tikzcd}
  \end{equation}
  and that it be transverse.
\end{defin}

Since we are in the setting of topological manifolds, we need to elaborate on our notion of transversality: letting
  \begin{equation}
 \pi_f \co  V^{\perp}_{f} \to X_1      
  \end{equation}
  denote the quotient by $V_1$ of the pullback of $V_2$ to $X_1$, we require the existence, for each point in $X_1$, of a $G_1$-invariant neighbourhood $U_f^\perp$ in $V_f^\perp$ (using the embedding of $V_1$ as the zero-section), and of a $G_2$-equivariant  open embedding
  \begin{equation} \label{eq:quotient_has_open_embedding}
         U^{\perp}_f \times_{G_1} G_2 \to \partial^{P_f} X_2,
       \end{equation}
respecting the stratification, so that the following diagram commutes:
    \begin{equation} \label{eq:diagram_charts_Kuranishi_map}
    \begin{tikzcd}
      X_1 \ar[r] &  \partial^{P_f} X_2 \ar[r,"\fs_2"] & V_2 \\
     U_f^\perp \cap X_1 \ar[r,hookrightarrow] \ar[u,hookrightarrow] &  U^{\perp}_f \ar[r,"\fs_1"] \ar[u]  & \pi_f^* V_1\ar[u].
    \end{tikzcd}
  \end{equation}

To further clarify this notion, note that the datum of the map in Equation \eqref{eq:quotient_has_open_embedding} is equivalent to a $G_1$-equivariant map
\begin{equation}
  U^{\perp}_f \to \partial^{P_f} X_2;
\end{equation}
we formulate it in terms of the fibre product in order to state the condition of being an open embedding. Next, the left square of Diagram \eqref{eq:diagram_charts_Kuranishi_map} asserts that the above map extends the map on $X_1$ that is part of the data of the morphism of charts. Finally, the right square Diagram \eqref{eq:diagram_charts_Kuranishi_map} is where the transversality constraint is imposed, for it implies that, if we split $f^* V_2$ at the direct sum of $V_1$ and $V_f^\perp$, then the pullback of $\fs_2$ agrees with the direct sum of $\fs_1$ with the identity section of $V_f^\perp$.  

The product of Kuranishi charts is given by
\begin{multline}
  (G_1,\cQ_1,X_1,V_1,\fs_1) \times  (G_2,\cQ_2,X_2,V_2,\fs_2) \\ \equiv  (G_1 \times G_2,\cQ_1 \times \cQ_2,X_1 \times X_2,V_1 \times V_2,\fs_1 \times \fs_2).
\end{multline}
It is straightforward to check that there is a well-defined induced map of products, which is associative and commutative, i.e. we have:
\begin{lem}
 The category of Kuranishi charts is symmetric monoidal. \qed 
\end{lem}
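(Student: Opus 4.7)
The plan is to verify the claim componentwise, since each of the five pieces of data defining a Kuranishi chart has an evident product structure, and the symmetric monoidal axioms reduce to the corresponding axioms for compact Lie groups, small categories, topological $G$-manifolds, and equivariant vector bundles with sections.

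The main substantive point to check is that $\cQ_1 \times \cQ_2$, equipped with codimension function $\codim(P,Q) = \codim P + \codim Q$ and initial object the pair of initial objects, is again a model for manifolds with generalised corners in the sense of Definition \ref{def:model_for_corners}. For an arrow $(\alpha_1,\alpha_2)$, there is a natural identification $(\cQ_1 \times \cQ_2)^{(\alpha_1,\alpha_2)} = \cQ_1^{\alpha_1} \times \cQ_2^{\alpha_2}$, and the essential verification is that deleting the pair-initial and pair-terminal objects leaves a poset whose geometric realisation is a sphere of dimension $d_1 + d_2 - 2$, where $d_i = \codim P_1^i - \codim P_0^i$. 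My approach is to build a hemisphere decomposition of the product realisation from those for each factor (using the description of $|\cQ_i^{\alpha_i}|$ as a disk with hemispheres $|\cQ_i^{\alpha_i} \setminus \{P_0\}|$ and $|\cQ_i^{\alpha_i} \setminus \{P_1\}|$ glued along $S^{d_i-2}$, stated after Definition \ref{def:model_for_corners}), and then identify the equator with the required realisation. The analogous statement for maps of models then reduces to the individual statements on each $\cQ^\alpha$.

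The remaining steps are essentially routine. The product of a $\cQ_1$-manifold and a $\cQ_2$-manifold is a $(\cQ_1 \times \cQ_2)$-manifold under the assignment $\partial^{(P_1,P_2)}(X_1 \times X_2) = \partial^{P_1} X_1 \times \partial^{P_2} X_2$, where the boundary colimit condition and the local model of Equation \eqref{eq:local_model} reduce to their counterparts on each factor; the product $(G_1 \times G_2)$-action has finite stabilisers because stabilisers of points in products are products of stabilisers, and local linearity is preserved by the external product of local vector bundle charts on each orbit. Equipping this with the external product bundle $V_1 \boxtimes V_2$ and section $\fs_1 \times \fs_2$ gives a Kuranishi chart. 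The product of morphisms is again a morphism: the kernel of the product group homomorphism acts freely as a product of free actions, the induced map of models is handled by the previous paragraph, bundle embeddings combine externally, and the transversality condition is witnessed by the product of the local open embeddings of Equation \eqref{eq:quotient_has_open_embedding}, using that the complementary bundles add. Finally, the unit is the trivial chart $(\{e\}, \{\ast\}, \pt, 0, 0)$, and the associator, unitors, and braiding are inherited from the symmetric monoidal structures on each of the underlying categories, so the pentagon and hexagon axioms hold by their validity in each factor.

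The only genuinely non-formal step is the combinatorial sphere-condition check in the second paragraph, which I expect to handle either by a small induction on the dimensions $d_i$ (built from the hemisphere decomposition in each factor) or by an explicit join-type description of the product realisation; all other steps are straightforward componentwise manipulations with no essential subtlety.
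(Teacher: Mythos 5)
Your proposal is correct, and it is essentially the argument the paper intends: the lemma is stated with a \qed immediately after the componentwise definition of the product, with the verification declared ``straightforward to check,'' so there is no written proof to diverge from. You have correctly isolated the one point that is not purely formal, namely that $\cQ_1\times\cQ_2$ is again a model for manifolds with generalised corners; the identification $(\cQ_1\times\cQ_2)^{(\alpha_1,\alpha_2)}\cong\cQ_1^{\alpha_1}\times\cQ_2^{\alpha_2}$ is right, and the ``join-type description'' you propose is exactly the standard canonical homeomorphism for bounded posets, $|\overline{P\times Q}|\cong|\overline{P}|*|\overline{Q}|*S^0$, which gives $S^{d_1-2}*S^{d_2-2}*S^0\cong S^{d_1+d_2-2}$ and closes the argument (the hemisphere-induction alternative also works but is more labor). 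The remaining componentwise checks (stabilisers, local linearity, the free-kernel and transversality conditions for products of morphisms, and the coherence axioms) are routine, as you say.
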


 We denote the subcategory of charts stratified by $\cQ$ by $\Chart_{\cQ} $, and  by $\Chart_{\downarrow \cQ}$ the category whose objects are pairs $(\cQ' \to \cQ, \bX')$, consisting of a chart stratified by a category $\cQ'$ equipped with a map of models $\cQ' \to \cQ$. Morphisms from $(\cQ'' \to \cQ, \bX'')$ to $(\cQ' \to \cQ, \bX')$ are given by a map $\bX'' \to \bX$ of Kuranishi charts, whose underlying functor $\cQ'' \to \cQ' $  fits in a commutative diagram
\begin{equation}
  \begin{tikzcd}
    \cQ'' \ar[r] \ar[dr] & \cQ' \ar[d] \\
    & \cQ.
  \end{tikzcd}
\end{equation}
Note that we are imposing equality of compositions here, rather than the existence of a natural transformation.

\subsection{Kuranishi presentations}
\label{sec:kuran-pres}

To introduce the notion of a Kuranishi presentation, it is useful to note that the category of Kuranishi charts is equipped with a forgetful functor
\begin{equation} \label{eq:footprint_functor}
\cM \co \Chart \to \Top,  
\end{equation}
which assigns to a Kuranishi chart the quotient of the zero-locus $\fs^{-1}(0)$ by $G$ (this is the footprint functor in the terminology of McDuff and Wehrheim \cite{McDuff2017}). 

\begin{defin} \label{def:Kuranishi-presentation}
  A \emph{Kuranishi presentation} consists of a category $\cQ$ (which is a model for manifolds with generalised corners), a category $A$, and a functor
  \begin{equation}
    \bX \co A \to \Chart_{\downarrow \cQ}
  \end{equation}
  such that the (ordinary) colimit $\cM$ of the stratified spaces $\cM(\bX_\alpha) $ over $\alpha$ in $A$ is a compact Hausdorff space with finitely many non-empty strata, and such that  \begin{equation} \label{eq:contractible_nerve_presentation}
    \parbox{31em}{for each point $u \in \cM$, the nerve of the category $A_{[u]}$ of charts which cover $u$ is contractible.}
  \end{equation}
\end{defin}
\begin{rem}
While Condition \eqref{eq:contractible_nerve_presentation} seems technical, some version of it is necessary for our approach: we shall construct various invariants of Kuranishi presentations as (homotopy) colimits over the indexing category $A$, and if we do not impose this condition, then the answers we would obtain would include spurious contributions. 
\end{rem}
In applications, one often encounters a stratified space $\cM$, which is \emph{equipped with a Kuranishi presentation} in the sense that we a functor $\bX$ as in Definition \ref{def:Kuranishi-presentation}, together with a homeomorphism
\begin{equation}
  \colim_{\alpha \in A}   \cM(\bX_\alpha) \cong \cM.
\end{equation}
In this case, we say that $\bX$ is a Kuranishi presentation of $\cM$, omitting the choice of homeomorphism from the notation.

It is convenient to introduce the notation 
\begin{equation} \label{eq:boundary_of_Kuranishi_presentation}
  \partial^{P} \bX \co \partial^{P} A \to \Chart_{\downarrow \partial^P \cQ}
\end{equation}
for the Kuranishi presentation associated to $\bX$ and to each element $P \in \cQ$, where $\partial^{P} A$ is the (categorical) fibre over $P$ of the functor $A \to \cQ$, which assigns to each chart $\alpha$ the image of the minimal element of the category $\cQ_\alpha$ in $\cQ$. Explicitly, an object of $\partial^{P} A$ consists of an arrow $P \to P'$ in $\cQ$, and an object $\alpha$ of $A$ over $P'$. The functor $\partial^{P} \bX $ assigns to such a pair the chart $\bX_{\alpha}$.

\begin{defin} \label{def:map_Kuranishi-presentation}
  A \emph{ map of Kuranishi presentations} from $(\cQ_1, A_1, \bX_1)$ to $(\cQ_2, A_2, \bX_2)$  consists of a map of models $\cQ_1 \to \cQ_2$, a functor $A_1 \to A_2$, and a natural transformation in the following diagram
\begin{equation}  \label{eq:commutative_diagram_map_of_presentations}
  \begin{tikzcd}
    A_1 \ar[r] \ar[d]  & \Chart_{\downarrow \cQ_1} \ar[d,""{name=fromhere}] \\
    A_2 \ar[r,""{name=tohere}] &  \Chart_{\downarrow \cQ_2}.
    \arrow[Rightarrow,from=fromhere, to=tohere, start anchor={east},
    end anchor={south}, shorten >=3pt, shorten <=6pt,bend right=30]
  \end{tikzcd}
\end{equation}
 If $P \in \cQ_2$ is the image of the minimal element of $\cQ_1$, we require the above map to induce a homeomorphism from $\cM_{1}$ to $\partial^{P} \cM_2$. 
\end{defin}
\begin{rem}
  If $P \in \cQ_2$ is the image of the minimal element of $\cQ_1$, then a morphism $\bX_1 \to \bX_2$ canonically factors through $\partial^{P} \bX_2$.
\end{rem}
To be more explicit, and omitting any notation for the map $A_1 \to A_2$, we recall that the datum of a natural transformation is a map of Kuranishi charts from  $\bX_{1,\alpha_1}$ to $ \bX_{2,\alpha_1}$ so that the following diagram commutes for each arrow $\alpha_1 \to \alpha'_1$ in $A_1$:
\begin{equation} \label{eq:natural_isomorphism_commutative_square}
  \begin{tikzcd}
    \bX_{1,\alpha_1} \ar[r] \ar[d] &   \bX_{2,\alpha_1} \ar[d] \\
    \bX_{1,\alpha'_1} \ar[r]  &   \bX_{2,\alpha'_1}.
  \end{tikzcd}
\end{equation}

We define the composition of maps of Kuranishi presentations in a straightforward way, noting that the composite natural transformation is given by horizontally stacking Diagram \eqref{eq:natural_isomorphism_commutative_square}. We denote the corresponding category by $\Kur$, and shall often abuse notation by denoting its objects $\bX$, leaving the indexing category $A$ and the stratifying category $\cQ$ implicit in the notation. 
\begin{prop}
  The category $\Kur$ is symmetric monoidal, with monoidal product given by
  \begin{equation}
    \bX_1 \times \bX_2  \co A_1 \times A_2 \to \Chart_{\downarrow \left(\cQ_1 \times \cQ_2\right)}.
  \end{equation}
  \qed
\end{prop}

\begin{rem}
 If we drop the natural isomorphism from Definition \ref{def:map_Kuranishi-presentation}, in the sense that we require that it agree with the identity, the resulting category fails to be monoidal, since the associator isomorphisms $(\bX \times \bY) \times \bZ \cong \bX \times (\bY \times \bZ)$ are absent.
\end{rem}

\subsection{Relatively oriented Kuranishi presentations}
\label{sec:absol-orient-kuran}

To pass from geometry to algebra, one often needs to prescribe (i) the dimension of Kuranishi presentations, as well as (ii) a choice of orientation. To this end, we fix a commutative ring $\Bbbk$, and an integer $d \in \bN$ which is required to be even unless $\Bbbk$ has characteristic $2$ (otherwise, we will not obtain a symmetric monoidal category).

To proceed with the definition, we need two basic notions: first, the (virtual) dimension of a chart $\bX$ refers to the integer
\begin{equation}
\dim \bX \equiv  \dim X - \dim V - \dim G.
\end{equation}
Next, we formulate the notion of relative orientation as follows: the groups $H_{\dim X}(X,X \setminus \{x\})$ form a local system over $X$, as do the groups $H_{\dim V}(V_x, V_{x} \setminus \{0\})$, where $V_x$ is the fibre of $V$ at $x \in X$. We obtain a local system of free abelian groups
\begin{equation} \label{eq:orientation_line_X}
  H_{\dim X}(X,X \setminus \{x\}) \otimes (H_{\dim V}(V_x, V_{x} \setminus \{0\}) \otimes H_{\dim G}(\fg, \fg \setminus \{ 0 \})^{-1}, 
\end{equation}
which is graded in degree $-\dim \bX$ (the negative sign is because we shall use cohomological conventions for homological algebra). An orientation of $\bX$ relative to a graded line $\ro_{\bX}$ is then an isomorphism from this local system to $\ro_{\bX}$.
\begin{defin}
  The category $\Chart^{\Bbbk,d}$ of \emph{ $\bZ/d$-graded and relatively $\Bbbk$-oriented Kuranishi charts} is the category whose objects are pairs $(\bX, \ro_{\bX})$, where $\ro_{\bX}$ is a $\bZ/d$-graded line, and  $\bX$ is a  Kuranishi chart such that all components of the underlying manifold have the same dimension modulo $d$, and which is equipped with a $\Bbbk$-orientation relative to $\ro_{\bX}$.

  A morphism in $\Chart^{\Bbbk, d}$ from $(\bX_0, \ro_{\bX_0}) $ to $(\bX_1, \ro_{\bX_1}) $ consists of a morphism $f \co \bX_0 \to \bX_1$ of the underlying charts and an isomorphism
  \begin{equation}
      \ro_{\bX_1} \cong   \ro_{\bX_0} \otimes \ro_{P_f},
  \end{equation}
where $\ro_{P_f}$ is the orientation line of the ball $\kappa^{P_f} $  associated to $f$,  with the property that Equation \eqref{eq:quotient_has_open_embedding} preserves relative orientations (using the fact that an orientation of $\kappa^{P_f}$ and a relative orientation of $X$ induces a relative orientation of $\partial^{P_f} X$ via Equation \eqref{eq:local_model_stratum_open}).
\end{defin}    
The most important cases are the category $\Chart^{\bF_2, 1}$, which is isomorphic to $\Chart$ since every manifold is oriented with respect to $\bF_2$ coefficients, and the category $\Chart^{\bZ,0}$ of oriented charts of pure dimension. We shall henceforth leave the parameter $d$ hidden from the notation. Moreover, since the graded line $\ro_{\bX}$ is isomorphic to the local system in Equation \eqref{eq:orientation_line_X}, which in an intrinsic invariant of $\bX$, we often omit it from the notation.

Note that, for an arrow $f \co \bX_1 \to \bX_2$, we have
\begin{equation}
  \dim \bX_1 + \dim \kappa^{P_f} = \dim \bX_2 \in  \bZ/ d \cdot \bZ,
\end{equation}
where $P_f$ is, as before, the image in $\cQ_2$ of the minimal element in $\cQ_1$. We define  $\Chart^{\Bbbk}_{\downarrow \cQ}$ to be the category of oriented charts whose stratifying category is equipped with a map of models  $f \co \cQ' \to \cQ$, and an orientation of the product of $\bX$ with $\kappa^{P_f}$. Given a pair $(\cQ' \to \cQ, \bX)$ in this category, we define
\begin{equation}
  \dim (\cQ' \to \cQ, \bX) \equiv \dim \bX + \dim \kappa^{P_f}  \in  \bZ/ d \cdot \bZ.
\end{equation}
We have an evident forgetful functor
\begin{equation}
   \Chart^{\Bbbk}_{\downarrow \cQ} \to  \Chart_{\downarrow \cQ}.
\end{equation}

\begin{defin}
  A \emph{$\Bbbk$-oriented and $\bZ/d$-graded Kuranishi presentation} is a Kuranishi presentation $(\cQ, A, \bX)$ equipped with a lift of $\bX$ to $\Chart^{\Bbbk}_{\downarrow \cQ}$.
\end{defin}

We define a map of $\Bbbk$-oriented, $\bZ/d$-graded Kuranishi presentations from $(\cQ_1, A_1, \bX_1)$ to $(\cQ_2, A_2, \bX_2)$ to be a map of the underlying Kuranishi presentations, and a lift of the natural transformation in Diagram \eqref{eq:commutative_diagram_map_of_presentations} to a natural transformation in the following diagram
\begin{equation}
  \begin{tikzcd}
    A_1 \ar[r] \ar[d]  & \Chart^{\Bbbk}_{\downarrow \cQ_1} \ar[d,""{name=fromhere}] \\
    A_2 \ar[r,""{name=tohere}] &  \Chart^{\Bbbk}_{\downarrow \cQ_2}.   \arrow[Rightarrow,from=fromhere, to=tohere, start anchor={east},
    end anchor={south}, shorten >=3pt, shorten <=6pt,bend right=30]
  \end{tikzcd}
\end{equation}

We denote the corresponding category by $\Kur^{\Bbbk}$, and shall often abuse notation by denoting its objects $\bX$, leaving the indexing category $A$, the stratifying set $\cQ$, and the graded line $\ro_{\bX}$ implicit in the notation.
\begin{prop}
  The category $\Kur^{\Bbbk}$ is symmetric monoidal, with monoidal structure given by
  \begin{equation}
    \bX_1 \times \bX_2  \co A_1 \times A_2 \to \Chart^{\Bbbk}_{\downarrow \left(\cQ_1 \times \cQ_2\right)}.
  \end{equation}\qed
\end{prop}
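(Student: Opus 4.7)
The plan is to leverage the fact, just established, that the underlying category $\Kur$ is symmetric monoidal, and to show that the lifting to $\Kur^{\Bbbk}$ is compatible with this structure once the orientation/grading data is handled correctly. First, I would check that the product operation is well-defined on objects and morphisms of $\Kur^{\Bbbk}$: for two $\bZ/d$-graded, $\Bbbk$-oriented presentations $(\cQ_i, A_i, \bX_i)$, the underlying stratifying category of the product is $\cQ_1 \times \cQ_2$ with additive codimension, and the chart-wise orientation line on $\bX_1 \times \bX_2$ is the tensor product
\begin{equation}
  \ro_{\bX_1 \times \bX_2} \equiv \ro_{\bX_1} \otimes \ro_{\bX_2}
\end{equation}
in degree $-(\dim \bX_1 + \dim \bX_2) \bmod d$. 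Since each $\bX_i$ has pure dimension mod $d$, so does the product, and the relative orientation on each chart is induced by the canonical isomorphism of local systems of Equation \eqref{eq:orientation_line_X} applied to products. Functoriality of $\times$ on morphisms is immediate once one checks that the cube orientation $\ro_{P_{f_1 \times f_2}} \cong \ro_{P_{f_1}} \otimes \ro_{P_{f_2}}$ is compatible with stratum orientations coming from Equation \eqref{eq:local_model_stratum_open}; this reduces to the fact that the product of two cubes is canonically oriented as a cube.

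Next, the associator and unitor isomorphisms in $\Kur$ lift to $\Kur^{\Bbbk}$ strictly: the canonical isomorphisms $(\ro_{\bX_1} \otimes \ro_{\bX_2}) \otimes \ro_{\bX_3} \cong \ro_{\bX_1} \otimes (\ro_{\bX_2} \otimes \ro_{\bX_3})$ of graded lines, together with the evident unit line (assigned to the terminal presentation over the trivial group with trivial stratification and $0$-dimensional ingredients), furnish the lifts, and they satisfy the pentagon and triangle axioms for purely formal reasons, inherited from the symmetric monoidal structure on graded $\Bbbk$-lines.

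The main point to verify is that the symmetry isomorphism $\bX_1 \times \bX_2 \cong \bX_2 \times \bX_1$ in $\Kur$ lifts to $\Kur^{\Bbbk}$ as an isomorphism of $\Bbbk$-oriented graded presentations. At the level of orientation lines, swapping factors in $\ro_{\bX_1} \otimes \ro_{\bX_2}$ introduces the Koszul sign $(-1)^{\dim \bX_1 \cdot \dim \bX_2}$; this sign is a priori only well-defined modulo $2$, so one must check that the parities of $\dim \bX_1$ and $\dim \bX_2$ are detected by their classes in $\bZ/d$. This is where the hypothesis enters that $d$ is even whenever $\Bbbk$ is not of characteristic $2$: under that hypothesis $\bZ/d \to \bZ/2$ is well-defined, so the braiding signs are unambiguous. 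This is the only nontrivial step of the argument, and the reason for the parity constraint on $d$ recorded before the definition.

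Finally, once the braiding is well-defined, the hexagon axioms and the identity $\tau_{\bX_2, \bX_1} \circ \tau_{\bX_1, \bX_2} = \id$ follow from the corresponding identities for the symmetric structure on $\Kur$, combined with the standard sign identity $(-1)^{ab} \cdot (-1)^{ba} = 1$ and $(-1)^{(a+b)c} = (-1)^{ac}(-1)^{bc}$ applied to the Koszul signs on orientation lines. Naturality of the braiding in morphisms reduces to naturality of tensor swap, since the cube orientation data behaves symmetrically under interchange of the two factors. I therefore expect the whole verification to be formal, with the only subtlety being the parity check in the preceding paragraph.
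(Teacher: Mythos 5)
Your proof is correct and matches what the paper intends: the paper states this proposition without proof, and the one genuine subtlety you isolate — that the Koszul sign in the braiding of orientation lines is only well-defined when the $\bZ/d$-grading determines parity, forcing $d$ even unless $\Bbbk$ has characteristic $2$ — is exactly the point the paper flags when it introduces the constraint on $d$ in Section 2.4. The remaining verifications (tensor of orientation lines, additivity of dimension, compatibility of cube orientations under products, pentagon/hexagon axioms) are formal, as you say.
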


\subsection{The cochain functor of strata}
\label{sec:chain-functor-strata}

In this section, we introduce a $\bZ/d$-graded cochain complex $ C^*_{st}(\bX)$ of \emph{strata} associated to each oriented Kuranishi presentation.  We define
\begin{equation} \label{eq:relative_cochains}
  C^{*}_{st}(\bX) \equiv \bigoplus_{\substack{P \subset \cQ(\bX)}} \ro_{\partial^P \bX},
\end{equation}
where we recall that $\cQ(\bX)$ is the subcategory of $\cQ$ consisting of non-empty strata, and that $\ro_{\partial^P \bX}$ is supported in degree $- \dim \partial^P \bX$.  In particular, assuming that $d$ vanishes, so that $ C^*_{st}(\bX)$ is an ordinary co-chain complex, the graded component $  C^k_{st}(\bX)$ vanishes for $k < - \dim \bX$, and if $\cM$ is non-empty, the component $  C^{-\dim \bX}_{st}(\bX)$ is of rank-$1$ corresponding to the minimal element of $\cQ$; the choice of orientation of $\bX$ determines a generator for this group. The differential on $C^k_{st}(\bX) $  is the sum
\begin{equation}
  d [\partial^P \bX] =  \sum_{\substack{\alpha \co P \to P' \\ \codim P +1  =  \codim P'}} [\partial^{P'} \bX]
\end{equation}
where we use the fact that whenever the codimension of $P$ and $P'$ differ by $1$, the interior of $\partial^{P} X$ is embedded as an open subset in the boundary of $\partial^{P'} X$, and hence that the product of an orientation of $\partial^{P} X$ with the canonical orientation of the normal direction determines an orientation of $\partial^{P'} X$. The fact that $d^2=0$ follows by consider an arrow $\alpha \co P_0 \to P_1$ between objects whose codimension differs by $2$. In this case, $|\cQ^{\alpha} |$ is $2$-dimensional, so it is a polygon with a vertex labelled by $P_0$, and edges adjacent to this vertex labelled by factorisations $P_0 \to P \to P_1$ of $\alpha$; there must therefore be exactly two such factorisations.

Let $\Ch$ be the symmetric monoidal category of $\bZ/d$-graded cochain complexes over the commutative ring $\Bbbk$.
\begin{lem}
  The assignment $\bX \mapsto  C^{*}_{st}(\bX)$  extends to a strong symmetric monoidal functor
\begin{equation}
  \begin{tikzcd}[column sep=large]
  \Kur^{\Bbbk}
  \ar[r,"C^*_{st}"] & \Ch.
  \end{tikzcd}
\end{equation}
\end{lem}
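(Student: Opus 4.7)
The functor $C^*_{st}$ on objects is already prescribed by Equation \eqref{eq:relative_cochains}; I would extend it to morphisms as follows. For a map $f \co \bX_1 \to \bX_2$ in $\Kur^{\Bbbk}$ with underlying map of models $f \co \cQ_1 \to \cQ_2$ sending $\min\cQ_1 \mapsto P_f$, I would define $C^*_{st}(f)$ summand-by-summand: for each $P \in \cQ(\bX_1)$ the corresponding stratum $\partial^{f(P)} \bX_2$ is non-empty since $f$ induces a homeomorphism $\cM_1 \cong \partial^{P_f} \cM_2$, and assembling the natural transformation data of Diagram \eqref{eq:commutative_diagram_map_of_presentations} with the chart-level isomorphisms $\ro_{\bX_{2,F(\alpha)}} \cong \ro_{\bX_{1,\alpha}} \otimes \ro_{P_f}$ yields a canonical isomorphism $\ro_{\partial^{f(P)} \bX_2} \cong \ro_{\partial^P \bX_1}$ once one cancels $\ro_{\kappa^{P_f}}$ against the extra factor present in $\ro_{\kappa^{f(P)}}$ relative to $\ro_{\kappa^P}$. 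Degree preservation follows from $\codim_{\cQ_2} f(P) = \codim_{\cQ_1} P + \codim_{\cQ_2} P_f$, which is forced by the isomorphism $\cQ_1^{\min \to P} \cong \cQ_2^{P_f \to f(P)}$.

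Next I would verify that $C^*_{st}(f)$ is a chain map: since $f$ is a map of models, it induces a bijection between codimension-$1$ arrows $P \to P'$ in $\cQ_1$ and codimension-$1$ arrows $f(P) \to Q$ in $\cQ_2$ (any such $Q$ must sit under $P_f$ because $f(P)$ does, and the factorisation-category isomorphism supplies the bijection). The signs on the two differentials match by the compatibility of relative orientations encoded in Equation \eqref{eq:quotient_has_open_embedding}. Functoriality of $C^*_{st}$ then follows from the coherence of the orientation data under composition, and preservation of identities is clear.

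For the strong symmetric monoidal structure, the plan is to exploit the fact that stratifications of products are products of stratifications: one has $\partial^{(P_1,P_2)}(\bX_1 \times \bX_2) = \partial^{P_1}\bX_1 \times \partial^{P_2}\bX_2$, with a factorisation of orientation lines $\ro_{\partial^{(P_1,P_2)}(\bX_1 \times \bX_2)} \cong \ro_{\partial^{P_1}\bX_1} \otimes \ro_{\partial^{P_2}\bX_2}$, and $\cQ(\bX_1 \times \bX_2) = \cQ(\bX_1) \times \cQ(\bX_2)$. Summing over pairs gives the desired natural isomorphism $C^*_{st}(\bX_1) \otimes C^*_{st}(\bX_2) \cong C^*_{st}(\bX_1 \times \bX_2)$. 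Compatibility with the differentials reduces to the observation that codimension-$1$ arrows in $\cQ_1 \times \cQ_2$ have one factor identity and the other codimension-$1$, with the Koszul signs on the second type arising automatically from the graded structure on orientation lines; the unit and associator are formal, and the symmetry constraint matches the sign-twisted symmetry on tensor products via the graded swap of orientation lines.

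The main obstacle will be the careful bookkeeping of orientation signs, particularly those appearing in the compatibility of the monoidal constraint with the differential and in the coherence of the symmetry isomorphism. Working consistently in the formalism of graded orientation lines reduces these to formal properties of the orientation line functor on stratified spaces, but every compatibility diagram must be inspected individually; the verification is routine but notationally heavy.
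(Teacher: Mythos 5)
Your proposal is correct and follows essentially the same route as the paper's (much terser) proof: the map on morphisms is the one induced by the orientation of $\kappa^{P_f}$, and the strong symmetric monoidal structure comes from the identification of strata of a product with products of strata together with Koszul signs. One small caveat: your parenthetical justification of the bijection between codimension-$1$ arrows out of $P$ and out of $f(P)$ is not quite right, since a map of models is only required to induce isomorphisms $\cQ_1^{\alpha} \cong \cQ_2^{f(\alpha)}$ for arrows $\alpha$ of $\cQ_1$, so $\cQ_2$ may well contain codimension-$1$ arrows $f(P) \to Q$ with $Q$ not in the image of $f$; what rescues the chain-map property is that any such $Q$ indexes an empty stratum (because a map of Kuranishi presentations induces a homeomorphism $\cM_1 \cong \partial^{P_f}\cM_2$), hence contributes no generator to $C^*_{st}(\bX_2)$.
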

\begin{proof}
We assign to $f \co \bX_1 \to \bX_2$ the natural map
  \begin{equation}
  C^*_{st}(\bX_1) \to  C^*_{st}(\bX_2)
  \end{equation}
  associated to the orientation of $\kappa^{P_f}$ which makes the middle vertical arrow in Diagram \eqref{eq:diagram_charts_Kuranishi_map} orientation preserving. The functor is a strong monoidal functor because an orientation of $\partial^{P_1} \bX_1$ and $\partial^{P_2} \bX_2$ induces an orientation of their product, which is identified with $\partial^{P_1 \times P_2} (\bX_1 \times \bX_2)$, and it symmetric monoidal because this assignment is symmetric with respect to reversing the order of $\bX_1$ and $\bX_2$ (keeping the Koszul signs in mind).
\end{proof}

\subsection{The category of Kuranishi presentations with factorised strata}
\label{sec:categ-kuran-pres}

The difficulty with the category $\Kur$ is that there are too many morphisms for inductive arguments to be possible. We shall resolve this by constructing a category in which each Kuranishi presentation which is a product is a product in a unique way up to reordering factors.

\begin{rem}
  Our construction is in the spirit of the Isbell construction \cite{Isbell1969} for strictifying every symmetric monoidal category to a permutative one, and it could be modified to yield a permutative category mapping to $\Kur$, though this functor is not going to be an equivalence as in Isbell's work.
\end{rem}

\begin{defin}
  A \emph{Kuranishi presentation with factorised strata} consists of the following data:
  \begin{enumerate}
  \item a model $\cQ$ for manifolds with generalised corners,
    \item for each object $P$ of $\cQ$, an integer $m_P$, a collection of Kuranishi presentations $ \bX_i(P) $ indexed by $i \in \{1, \ldots, m_P\}$, and a map of models 
\begin{equation} \label{eq:factorisation_model-boundary}
\prod_{i=1}^{m_P} \cQ_i(P) \to  \cQ
\end{equation}
of the product of the underlying stratifying categories of $\bX_i(P)$ to $\cQ$, mapping the minimal element to $P$,
\item for each arrow $P \to P'$ in $\cQ$, a surjective map
  \begin{equation}
   \pi_{P}^{P'} \co \{1, \ldots, m_{P'}\} \to  \{1, \ldots, m_{P}\} 
  \end{equation}
  and a map of Kuranishi presentations
  \begin{equation} \label{eq:compatibility_factorisations_tkur}
   \prod_{j \in  (\pi_{P}^{P'})^{-1} (i)} \bX_{j}(P')  \to \bX_i(P)
 \end{equation}
whose underlying map of stratifying categories is compatible with the maps to $\cQ$, and 
\item For each composition $P \to P' \to P''$, we require that the following diagram commute
  \begin{equation} \label{eq:decompositions_boundary_strata_commute}
    \begin{tikzcd}
\displaystyle{    \prod_{k \in (\pi_{P}^{P''})^{-1} (i)}} \bX_{k}(P'') \ar[dr] \ar[r] & \displaystyle{  \prod_{j \in(\pi_{P}^{P'})^{-1} (i) } } \bX_{j}(P') \ar[d] \\
 &  \bX_i(P) . 
    \end{tikzcd}
     \end{equation}
\end{enumerate}
  \end{defin}

  We write $\tilde{\bX}$ be the totality of such data, which we call a Kuranishi presentation with factorised strata. We also write
  \begin{equation} \label{eq:functor_Kurt_to_Kur}
    \bX \equiv  \prod_{i \in \{1, \ldots,m_{\min \cQ}\} } \bX_i(\min \cQ),
  \end{equation}
  which is a Kuranishi presentation stratified by $\cQ$.

     Morphisms between such data are given by inclusions of boundary strata equipped with the chosen factorisation of the boundary sub-strata:
     \begin{defin}
       A morphism  $ \tilde{\bX} \to \tilde{\bY}$ of Kuranishi presentations with factorisations of boundary strata consists of
       \begin{enumerate}
\item an object $P_f$ of $\cQ_{\tilde{\bY}} $, and an isomorphism  $f \co \cQ_{\tilde{\bX}} \cong \partial^{P_f} \cQ_{\tilde{\bY}}$, and
\item for each object $P$ of $\cQ_{\tilde{\bX}}$, a permutation $f \co \{1, \ldots, m_{P}\} \cong \{1, \ldots, m_{f(P)}\}$, so that $\bX_{i}(P) = \bY_{f(i)}(f(P))$ for each $1 \leq i \leq m_P$.
\end{enumerate}
Note that the data associated to $P_f$ induces a map $\bX \to \bY$ of Kuranishi presentation. We require the following conditions to hold:
\begin{enumerate}
\item  for each object $P$ of $\cQ_{\tilde{\bX}}$, the following diagram commutes
  \begin{equation} \label{eq:commuting_diagram_Kuranishi-space-factored}
     \begin{tikzcd}
  \displaystyle{     \prod_{i=1}^{m_P}  \bX_{i}(P')} \ar[d] \ar[r] &  \displaystyle{  \prod_{j=1}^{m_{f(P)}}  \bY_{j}(P') }  \ar[d] \\
  \bX \ar[r]  &   \bY .
    \end{tikzcd}   
  \end{equation}
  \item for each arrow $P \to P'$, the isomorphisms of sets labelling the factors of the strata of $\tilde{\bX}$ and $\tilde{\bY}$ fit in a commutative diagram
  \begin{equation}\label{eq:commutative_diagram_map_factorisation}
    \begin{tikzcd}
      \{1 , \ldots, m_{P'} \} \ar[r] \ar[d] & \{ 1, \ldots, m_{f(P')} \} \ar[d] \\
       \{1 , \ldots, m_{P} \}  \ar[r] & \{ 1, \ldots, m_{f(P)} \}. 
     \end{tikzcd}
   \end{equation}
\end{enumerate}
 \end{defin}

 This construction defines a category $\widetilde{\Kur}$ of \emph{Kuranishi presentations with factorisation of boundary strata}. We stress that our definition of morphisms in this category imposes the condition that  $\bX_{i}(P)$ and $ \bY_{f(i)}(f(P))$ be equal, rather than the (categorically) more natural notion of isomorphism.

 Next, we consider the multiplicative structure:
    \begin{lem}
      The category $\widetilde{\Kur}$ is a symmetric monoidal category, which is equipped with a strong symmetric monoidal functor
$\widetilde{\bX} \mapsto \bX$   to $\Kur$.
    \end{lem}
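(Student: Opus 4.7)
The plan is to define $\widetilde{\bX} \otimes \widetilde{\bY}$ by taking the product of stratifying models and concatenating factorisations stratum-by-stratum, and then to exhibit the forgetful functor to $\Kur$ as strong symmetric monoidal via the observation that the underlying presentation of a concatenation agrees, modulo the associator in $\Kur$, with the ordinary product of the underlying presentations. Concretely, set the stratifying model of $\widetilde{\bX} \otimes \widetilde{\bY}$ to be $\cQ \times \cQ'$, and for each object $(P, P')$ set $m_{(P, P')} \equiv m_P + m_{P'}$, with factorisation given by the ordered concatenation
\begin{equation*}
(\bX_1(P), \ldots, \bX_{m_P}(P), \bY_1(P'), \ldots, \bY_{m_{P'}}(P')),
\end{equation*}
and with the map required by \eqref{eq:factorisation_model-boundary} given by the product of the two existing maps. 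For an arrow $(P, P') \to (Q, Q')$, take the surjection in \eqref{eq:compatibility_factorisations_tkur} to be the block-diagonal combination of the surjections from each factor, and the associated map of presentations to be the product; then the compatibility \eqref{eq:decompositions_boundary_strata_commute} reduces to the analogous compatibility in each factor.

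Morphisms in $\widetilde{\Kur}$ are tensored by pairing the strata-labelling embeddings and taking block-sums of the permutations matching factors, so that equality of matched factors is preserved. Strict associativity of the tensor then holds because concatenation of ordered tuples and the product of categories are both strictly associative; the monoidal unit is the Kuranishi presentation with singleton stratifying category and empty factorisation at its unique object. The braiding $\widetilde{\bX} \otimes \widetilde{\bY} \to \widetilde{\bY} \otimes \widetilde{\bX}$ combines the interchange of $\cQ \times \cQ'$ with the block-swap permutation of $\{1, \ldots, m_P + m_{P'}\}$ at each stratum; the pentagon, hexagon, and unit axioms then reduce to the standard identities for block permutations of concatenated tuples. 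The forgetful functor sends $\widetilde{\bX}$ to \eqref{eq:functor_Kurt_to_Kur} on objects and to the associated map of underlying presentations on morphisms; its coherence isomorphism is the tautological identification in $\Kur$ of $\prod_i \bX_i(\min \cQ) \times \prod_j \bY_j(\min \cQ')$ with the product of the concatenated tuple, and the block-swap braiding is visibly sent to the symmetry of $\Kur$.

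\textbf{Main obstacle.} The only substantive technical point is verifying that $\cQ \times \cQ'$ is again a model for manifolds with generalised corners in the sense of Definition \ref{def:model_for_corners}: the factorisation category of a product arrow $(\alpha, \alpha')$ is $\cQ^\alpha \times (\cQ')^{\alpha'}$, and the realisation of this product with the initial and terminal pairs removed must be a sphere of dimension $\codim Q + \codim Q' - \codim P - \codim P' - 2$, which follows from the standard join-type description of the boundary of a product of cells. Given this, all remaining verifications are either strict equalities or well-known identities for block permutations, and strong symmetric monoidality of the forgetful functor is tautological from the construction.
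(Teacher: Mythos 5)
Your construction coincides with the paper's: the product of stratifying models, concatenation of the factorising tuples under the order-preserving identification of $\{1,\ldots,m_P\}\amalg\{1,\ldots,m_{P'}\}$ with $\{1,\ldots,m_P+m_{P'}\}$, the block-swap symmetry, and the forgetful functor given by the product of the factors at the minimal element, with its coherence isomorphism the reparenthesisation in $\Kur$. One small caveat: associativity of your tensor holds only up to the canonical isomorphism (products of categories, and hence of models $\cQ\times\cQ'$, are not strictly associative), which is all a symmetric monoidal structure requires and is consistent with the paper's remark that $\widetilde{\Kur}$ could merely be \emph{modified} to a permutative category.
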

    \begin{proof}
      We define $\tilde{\bX} \times \tilde{\bY}$ to be the product $\cQ_{\bX} \times \cQ_{\bY}$, and the assignment of the sets  $\{ \bX_i(P)\}_{i}$ and $\{  \bY_j(P')\}_{j} $ of Kuranishi presentations to the element  $(P, P')$, which we  index by the $\{ 1, \ldots, m_{P} + m_{P'} \}$ under the order preserving isomorphism
  \begin{equation}
    \{ 1, \ldots, m_P \} \amalg \{1, \ldots, m_{P'} \} \cong  \{ 1, \ldots, m_{P}, m_{P} +1, \ldots, m_{P} + m_{P'}\}.
  \end{equation}
The rest of the data is determined by this isomorphism.  The symmetry isomorphism $\tilde{\bX} \times \tilde{\bY} \cong  \tilde{\bY} \times \tilde{\bX}$ is given by permutation of factors and the reordering of $  \{ 1, \ldots,  m_{P} + m_{P'}\}$.    The forgetful map to $\Kur$ assigns to $\widetilde{\bX} $ the product $\bX$ of the factors assigned to the minimal element of $\cQ$, and is evidently symmetric monoidal.
       \end{proof}

       At this stage, we define $\widetilde{\Kur}^{\Bbbk}$ to be the fibre product of $\widetilde{\Kur}$ and $\Kur^{\Bbbk}$ over $\Kur$, i.e. an object is a Kuranishi presentation $\tilde{\bX}$ with a factorisation of its boundary strata, equipped with a relative orientation of the underlying presentation $\bX$, and morphisms are maps of Kuranishi presentation with factorised strata, together with a choice of an orientation of the corresponding normal orientation line. This is again a symmetric monoidal category, equipped with a strong symmetric monoidal functor to $\Kur^{\Bbbk}$. 
       \subsection{Lifting symmetric monoidal natural transformations}
\label{sec:strict-theor-virt}

We can now formulate the key technical result which will allow us to obtain a theory of virtual counts from a zig-zag of monoidal transformations between symmetric monoidal functors:
\begin{prop}
  \label{prop:cofibrant_property_of_lift_Kur}
Let $F$ and $G$ be symmetric monoidal functors from $\widetilde{\Kur}^{\Bbbk}$ to the category of chain complexes, which factor through $\Kur^{\Bbbk} $.  Given a monoidal weak equivalence $\pi \co G \Rightarrow F$, and a monoidal natural transformation $\eta \co  C^*_{st} \Rightarrow F$, there exists a monoidal natural transformation $C^*_{st} \to F $ so that the following homology-level diagram commutes:
  \begin{equation}
    \begin{tikzcd}
      & H^* G \ar[d,Rightarrow, "H^* \pi"] \\
      H^*_{st} \ar[ur,Rightarrow] \ar[r,Rightarrow, "H^* \eta"] & H^* F.
    \end{tikzcd}
  \end{equation}
\end{prop}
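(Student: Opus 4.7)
The strategy exploits two features of the category $\widetilde{\Kur}^{\Bbbk}$: every object has a canonical unordered factorisation into indecomposable Kuranishi presentations, and every morphism strictly preserves the factor data up to permutation, as witnessed by the equalities $\bX_i(P) = \bY_{f(i)}(f(P))$ in the definition of morphisms. Since $F$, $G$, and $C^*_{st}$ are strong symmetric monoidal, a monoidal natural transformation between them is determined by its values on indecomposable objects via the tensor formula $\tilde\eta_{\prod_i \bX_i} = \bigotimes_i \tilde\eta_{\bX_i}$; moreover, strict naturality for morphisms between composites reduces to strict naturality factor by factor on indecomposables. I therefore plan to construct $\tilde\eta$ on indecomposable objects by transfinite induction along a well-founded order (for instance lexicographic in virtual dimension and number of nonempty strata), and to extend multiplicatively.

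At each indecomposable $\bX$, the naturality requirement along the boundary inclusions $\iota_P \co \partial^P \bX \to \bX$ for $P \neq \min \cQ$ (with $\partial^P \bX$ possibly decomposable, in which case its component of $\tilde\eta$ is already pinned down by the monoidal extension of prior choices) determines $\tilde\eta_{\bX}$ as a chain map on the subcomplex $K \subset C^*_{st}(\bX)$ spanned by the proper boundary strata. It remains to choose the value $\tilde\eta_{\bX}([\bX])$ on the top generator in degree $-\dim \bX$, subject to the chain-map relation
\[
    d \tilde\eta_{\bX}([\bX]) = \tilde\eta_{\bX}|_K(d[\bX]).
\]

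The main obstacle is to produce this value while simultaneously propagating the cohomological constraint. I strengthen the induction by asserting that $\pi \circ \tilde\eta_{\bX'}([\bX']) - \eta_{\bX'}([\bX'])$ is a boundary in $F(\bX')$ for every previously-treated indecomposable $\bX'$. Combined with naturality of $\pi$, this implies that the cycle $\pi \tilde\eta_{\bX}|_K(d[\bX])$ differs from $\eta_{\bX}(d[\bX]) = d \eta_{\bX}([\bX])$ by a sum of boundaries in $F(\bX)$, hence is itself a boundary. Since $\pi$ is a quasi-isomorphism, any cycle in $G(\bX)$ whose image in $F(\bX)$ is a boundary must itself be a boundary, so the cycle $\tilde\eta_{\bX}|_K(d[\bX])$ admits a chain-level preimage under $d$, which I take as an initial candidate for $\tilde\eta_{\bX}([\bX])$. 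I then adjust by a suitable cycle $w \in G(\bX)$, whose existence in the required cohomology class is guaranteed by surjectivity of $H^* \pi$, to arrange that $\pi \tilde\eta_{\bX}([\bX]) - \eta_{\bX}([\bX])$ is a boundary in $F(\bX)$. This verifies the cohomological constraint at $\bX$ on cycles of the form $[\bX] + \text{(elements of } K\text{)}$, and propagates the strengthened hypothesis to the next inductive step.

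Finally, extending multiplicatively to composites via the tensor formula yields a symmetric monoidal natural transformation from $C^*_{st}$ to $G$. The strengthened property on individual factors, together with the monoidality of $\eta$ and $\pi$ and a Künneth argument, implies the cohomological constraint on composites. Strict naturality along morphisms between composites follows because such morphisms in $\widetilde{\Kur}^{\Bbbk}$ are factor preserving, reducing the check to the indecomposable case already settled.
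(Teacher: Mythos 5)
Your proposal follows essentially the same route as the paper's proof: induct over indecomposable objects, extend multiplicatively using the fact that morphisms in $\widetilde{\Kur}^{\Bbbk}$ only reorder factors, and produce the value on the top-degree generator by lifting the already-determined image of $d[\bX]$ through the quasi-isomorphism $\pi$ (your explicit bookkeeping that $\pi\tilde\eta - \eta$ is a boundary is exactly what makes the paper's terser "such an element must exist" work). The one caveat is your suggested induction order: the paper inducts on depth (the maximal codimension of a non-empty stratum), which guarantees that every factor of every proper boundary stratum is treated first, whereas ordering primarily by virtual dimension does not — virtual dimensions are unbounded below, and a factor of $\partial^{P}\tilde{\bX}$ can have \emph{larger} dimension than $\tilde{\bX}$ whenever another factor has negative dimension, so the restriction of $\tilde\eta$ to the proper boundary strata would not yet be pinned down.
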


\begin{rem}
If the ring $\Bbbk$ has characteristic $0$, the statement of Proposition \ref{prop:cofibrant_property_of_lift_Kur} can be strengthened to assert the existence of a unique chain-level lift, up to contractible choice. The proof would require introducing more tools of homotopical algebra (namely, the notion of homotopy of symmetric monoidal functors) than is justified by the applications. The restriction to characteristic $0$ appears because the proof known to the author uses Sullivan's PL differential forms on the interval as a model for a path object for the category of commutative differential graded algebras. 
\end{rem}

\begin{proof}[Proof  of Proposition \ref{prop:cofibrant_property_of_lift_Kur}]
\label{sec:proof-proposition-}

The construction of the lift in Proposition \ref{prop:cofibrant_property_of_lift_Kur} proceeds by induction on the \emph{depth} of objects of $\widetilde{\Kur}^{\Bbbk}$, i.e. by the maximal codimension of a non-empty stratum. We write $\widetilde{\Kur}^{\Bbbk}_{i}$ for the subcategory of objects of depth $i$, and note that:
\begin{enumerate}
    \item There are no morphisms from objects of  $\widetilde{\Kur}^{\Bbbk}_{i}$ to those of  $\widetilde{\Kur}^{\Bbbk}_{j}$ unless $i \leq j$.
\item Any morphism in $\widetilde{\Kur}^{\Bbbk}_{i}$ projects to reordering of factors in $\Kur^{\Bbbk}$ .
\item The monoidal structure maps $\widetilde{\Kur}^{\Bbbk}_{i} \times \widetilde{\Kur}^{\Bbbk}_{j}$ to $\widetilde{\Kur}^{\Bbbk}_{i+j}$.
\end{enumerate}

The base case of the lifting process is to consider the restriction to $ \widetilde{\Kur}^{\Bbbk}_{0}$. 
\begin{lem}
  The restriction of $ C^*_{st}$ to $ \widetilde{\Kur}^{\Bbbk}_{0}$ admits a monoidal natural transformation to $G$ which lifts $\eta$ on homology. 
\end{lem}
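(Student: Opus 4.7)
The plan is as follows. An object $\tilde{\bX}$ of depth $0$ has all boundary strata empty, so $C^*_{st}(\tilde{\bX}) = \ro_{\bX}$ is concentrated in the single degree $-\dim \bX$ with zero differential. By the itemized properties immediately preceding the lemma, morphisms in $\widetilde{\Kur}^{\Bbbk}_0$ project to reordering of factors in $\Kur^{\Bbbk}$, and the monoidal structure sends $\widetilde{\Kur}^{\Bbbk}_0 \times \widetilde{\Kur}^{\Bbbk}_0$ to $\widetilde{\Kur}^{\Bbbk}_0$. Call an object \emph{irreducible} if $m_{\min \cQ} = 1$; then every $\tilde{\bX} \in \widetilde{\Kur}^{\Bbbk}_0$ is, via its prescribed factorisation at the minimal stratum, a monoidal product of irreducibles, and the irreducibles have no non-identity endomorphisms in $\widetilde{\Kur}^{\Bbbk}_0$ (since any such endomorphism would be a permutation of a single-element set together with the identity on $\cQ$).

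Using the axiom of choice, select one representative $\tilde{\bX}$ from each isomorphism class of irreducibles. The composite $\eta_{\tilde{\bX}} \co \ro_{\bX} \to F(\tilde{\bX})$ determines a cohomology class $[\eta_{\tilde{\bX}}] \in H^{-\dim \bX} F(\tilde{\bX})$. Since $\pi$ is a weak equivalence, $H^*\pi$ is an isomorphism, so $[\eta_{\tilde{\bX}}]$ has a unique preimage in $H^{-\dim \bX} G(\tilde{\bX})$; pick any cocycle $c_{\tilde{\bX}} \in G(\tilde{\bX})$ representing it, and set $\tilde{\eta}_{\tilde{\bX}} \co \ro_{\bX} \to G(\tilde{\bX})$ to send the canonical generator to $c_{\tilde{\bX}}$. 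Transport this choice across isomorphisms to define $\tilde{\eta}_{\tilde{\bY}}$ for every irreducible $\tilde{\bY}$; this is well-defined precisely because irreducibles have no non-trivial automorphisms, so any two isomorphisms between chosen representatives and $\tilde{\bY}$ differ by an automorphism, which must be the identity.

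Next, extend $\tilde{\eta}$ to all of $\widetilde{\Kur}^{\Bbbk}_0$ monoidally: if $\tilde{\bX}$ factors as $\tilde{\bX}_1 \otimes \cdots \otimes \tilde{\bX}_m$ with the $\tilde{\bX}_i$ irreducible, define
\begin{equation}
  \tilde{\eta}_{\tilde{\bX}} = \mu^G_{\tilde{\bX}_1, \ldots, \tilde{\bX}_m} \circ \left(\tilde{\eta}_{\tilde{\bX}_1} \otimes \cdots \otimes \tilde{\eta}_{\tilde{\bX}_m}\right) \circ \left(\mu^{C^*_{st}}_{\tilde{\bX}_1, \ldots, \tilde{\bX}_m}\right)^{-1},
\end{equation}
where $\mu^G$ and $\mu^{C^*_{st}}$ are the strong symmetric monoidal structure maps (the latter is an isomorphism because $C^*_{st}$ is strong monoidal). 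The resulting $\tilde{\eta}$ is monoidal by construction. Naturality with respect to morphisms of $\widetilde{\Kur}^{\Bbbk}_0$ reduces to naturality with respect to reorderings of factors, which in turn reduces to the compatibility of $\mu^G$ and $\mu^{C^*_{st}}$ with the symmetry isomorphisms of $\Ch$: this is precisely the axiom of symmetric monoidality.

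Finally, the homology-level relation $H^* \pi \circ H^* \tilde{\eta} = H^* \eta$ holds on each irreducible by construction, and extends to products by applying the Künneth formula (over a commutative ring one uses that $G(\tilde{\bX}_i)$ and $F(\tilde{\bX}_i)$ consist of flat $\Bbbk$-modules, which we may assume by standard cofibrant-replacement considerations; alternatively, one tensors cocycle representatives) together with the fact that $\pi$ and $\eta$ are themselves monoidal. The only step that required a genuine choice was the chain-level lift of a cohomology class on each irreducible representative, and I expect this to be the only mild obstacle, the rest being formal consequences of the symmetric monoidal structures and of the list of properties characterising depth-$0$ morphisms in $\widetilde{\Kur}^{\Bbbk}$.
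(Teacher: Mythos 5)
Your proposal is correct and follows essentially the same route as the paper: pick representatives of the equivalence classes of indecomposables (which, since depth-$0$ morphisms require equality of factors, are what you call isomorphism classes), choose a cocycle in $G$ lifting the class determined by $\eta$ via the weak equivalence $\pi$, extend by the monoidal structure, and deduce naturality from the fact that depth-$0$ morphisms project to reorderings of factors. The only superfluous ingredient is the appeal to the K\"unneth formula and flatness in the last step: since $\pi$ and $\eta$ are monoidal chain maps, it suffices to observe that a tensor product of cocycles, each cohomologous to a given cocycle, remains cohomologous to the corresponding tensor product (the difference telescopes into exact terms), which is the elementary alternative you already mention.
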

\begin{proof}
  Consider the equivalence relation on the set indecomposable objects of $\widetilde{\Kur}^{\Bbbk}_{0} $ given by the relation of the existence of a morphism between them.  We pick a representative $\tilde{\bX}$ of such an equivalence class, and choose degree $-n$ cycle in $G(\tilde{\bX})$ which represents, in $H^* G(\tilde{\bX}) \cong H^* F(\tilde{\bX})$, the class induced by the image of the canonical generator of $ C^{-n}_{st}(\tilde{\bX}) \cong \Bbbk$ in $F(\tilde{\bX})$. Since all morphisms among indecomposable objects of $\widetilde{\Kur}^{\Bbbk}_{0} $ map to identity morphisms in $\Kur^{\Bbbk}$, this choice determines the lift on this full subcategory.

  The monoidal condition determines a unique extension of this choice to a natural transformation on all of $ \widetilde{\Kur}^{\Bbbk}_{0} $. Functoriality on morphisms follows from the fact that morphisms in $ \widetilde{\Kur}^{\Bbbk}_{0} $ project to reordering factors.
\end{proof}

Having completed the proof of the Lemma, we return to the proof of Proposition \ref{prop:cofibrant_property_of_lift_Kur}: let us write $\widetilde{\Kur}^{\Bbbk}_{\leq m} $ for the full subcategory of $\widetilde{\Kur}^{\Bbbk}$ consisting of objects of depth bounded by $m$. Our inductive assumption is then the existence of a natural transformation
\begin{equation}
 C^*_{st}| \widetilde{\Kur}^{\Bbbk}_{\leq m}  \Rightarrow G| \widetilde{\Kur}^{\Bbbk}_{\leq m},
\end{equation}
with the following properties:
\begin{enumerate}
\item (Lifting) The following diagram of homology level natural transformations commutes:
  \begin{equation}
    \begin{tikzcd}
      &   H^* G| \widetilde{\Kur}^{\Bbbk}_{\leq m} \ar[Rightarrow, d] \\
      H^{*}_{st}| \widetilde{\Kur}^{\Bbbk}_{\leq m}  \ar[Rightarrow, r] \ar[Rightarrow, ur] & H^* F| \widetilde{\Kur}^{\Bbbk}_{\leq m}
    \end{tikzcd}
  \end{equation}
\item (Monoidal structure) If $\tilde{\bX}$ and $\tilde{\bY}$ are objects of $\widetilde{\Kur}^{\Bbbk}_{i}$ and $\widetilde{\Kur}^{\Bbbk}_{j}$ with $i + j \leq m$, then the following diagram commutes
  \begin{equation} \label{eq:lift_is_monoidal}
  \begin{tikzcd}
     C^*_{st}(\tilde{\bX}) \otimes C^*_{st}(\tilde{\bY}) \ar[r] \ar[d] & C^*_{st}( \tilde{\bX} \times \tilde{\bY}) \ar[d] \\
     G(\tilde{\bX}) \otimes G(\tilde{\bY}) \ar[r]  & G(\tilde{\bX} \times \tilde{\bY})
  \end{tikzcd}
  \end{equation}
  \end{enumerate}

  We now proceed with the induction step:  since the functor $C^*_{st} $ is strongly symmetric monoidal, the value of the desired extension on every object of $\widetilde{\Kur}^{\Bbbk}_{m+1}  $ which decomposes as a product $\tilde{\bX} \times \tilde{\bY}$, is determined by Diagram \eqref{eq:lift_is_monoidal}. The resulting map is then functorial on the subcategory of decomposable objects of $\widetilde{\Kur}^{\Bbbk}_{m+1}  $ again because all morphisms project to reordering factors.

  Finally, we consider the equivalence relation among objects of $\widetilde{\Kur}^{\Bbbk}_{m+1}$ which are not products, given as above by the existence of a morphism. Choosing a representative $\tilde{\bX}$ of each such class, observe that $\partial^{P} \tilde{\bX}$ lies in $ \widetilde{\Kur}^{\Bbbk}_{\leq m} $ whenever $P$ is a non-minimal element of $\cQ_{\tilde{\bX}}$. The desired functoriality then determines the value of the natural transformation on the subcomplex of $  C^*_{st}(\tilde{\bX})$ consisting of all elements of degree strictly larger than $-\dim \tilde{\bX}$. It thus remains to extend the natural transformation to $  C^{- \dim \tilde{\bX}}_{st}(\tilde{\bX})$, which is a rank-$1$ free abelian group. We may pick any element of $G(\tilde{\bX}) $ of degree $-\dim \tilde{\bX}$ whose boundary is the image of the differential of this generator under the previously chosen lift; such an element must exist because of the isomorphism $H^* G(\tilde{\bX}) \cong H^* F(\tilde{\bX})$. The fact that all morphisms among irreducible objects map to identities
implies that this assignment is functorial.  This completes the inductive step in the construction of the lift, and hence the proof of the Proposition.
  
\end{proof}

\section{Virtual counts for charts and global quotients}
\label{sec:virt-counts-global}

In this section, we consider the special case of global quotients, and construct a theory of virtual counts using a variant of Pardon's notion of virtual cochains, which incorporates a twist inspired by \cite{AbouzaidBlumberg2021}. In order to avoid unnecessary repetition, we introduce most of our constructions for arbitrary Kuranishi charts.

\subsection{Virtual counts for global quotients I}
\label{sec:virt-counts-glob}

We start by constructing the desired zig-zag for a subcategory of Kuranishi charts, which we call the \emph{naive category of global charts,} whose objects are quintuples $(\cQ, G, X, V, \fs)$ so that $\fs^{-1}(0)$ is compact, and morphisms are those morphisms of Kuranishi charts as in Definition \ref{def:Kuranishi-charts} with the property that the map of vector bundles is an isomorphism.  For applications, this is an unreasonably small category because there are too few morphisms, but the restrictions allow us to explain some basic ideas with the minimal technical baggage. We will discuss a more reasonable category of global charts in Section \ref{sec:virt-counts-glob-II} below.

We begin by considering a general object of the category of Kuranishi charts: denote by $\ro_V^{-1}$ the inverse of the orientation line of $V$ as a local system on $X$, with associated twisted { singular} chains $ C_*(X; \ro_V^{-1}) $ (here, $\ro_V$ is supported in degree $n$ because we are temporarily using homological conventions). Recalling that $Z$ is the zero locus of $s$, we use the following notation for the twisted symmetric cubical chains (c.f. Appendix \ref{sec:symm-cubic-sets}) of the pair $(X,X \setminus Z)$:
\begin{equation}
     C_p(X|Z;\ro_V^{-1} ) \equiv \coker\left(C_p(X \setminus Z;\ro_V^{-1} ) \to   C_p(X;\ro_V^{-1} ) \right).
\end{equation}

\begin{rem}
  The reader may wonder at this stage why we are using symmetric cubical chains instead of singular ones. We refer to Remark \ref{rem:why-cubes} below for the answer.
\end{rem}

Since $X$, $V$, and $Z$ cary compatible actions of $G$, these relative cochains naturally form a module over the chains $C_* G$.  We write $ C_{*}(E G)$ for the standard bar resolution of $\Bbbk$ as a free $C_*G$-module, which is given in degree $n$ by $C_*G^{ \otimes n+1}$, with differential given by the alternating sum of the product of successive elements. 
\begin{defin} \label{def:twisted_compact_support_relative}
  The \emph{twisted compactly supported relative virtual cochains} of a Kuranishi chart is the complex
  \begin{equation}
    C^{-*,c}_{\rel \partial}(\bX) \equiv C_*(X|Z;\ro_V^{-1})   \otimes_{C_* G}  C_{*}(E G).
  \end{equation}
\end{defin}
\begin{rem}
Our terminology follows Pardon's in \cite{Pardon2016}, who studies an untwisted version. The notation is supposed to suggest that the cohomology of the relative cochains is isomorphic (up to shift by the virtual dimension) to the relative (twisted) Borel equivariant \v{C}ech cohomology of the pair $(Z,\partial Z)$ with compact support. If we restrict attention to the case where $\Bbbk$ is a field of characteristic $0$, the fact that the isotropy groups are finite implies that the cohomology of this complex is isomorphic to the compactly supported relative cohomology of the quotient of the zero locus by $G$. 
\end{rem}
While the input for the virtual cochains is a Kuranishi chart, this is not a functorial notion in this general context, and we will resolve this problem in Section \ref{sec:homot-norm-bundl}.  It is however straightforward to check that Definition \ref{def:twisted_compact_support_relative} yields a symmetric monoidal functor on the naive category of global Kuranishi charts.

Next, we observe that the section $\fs$ defines a natural $G$-equivariant map of pairs
\begin{equation}
  X|Z \to V|0,  
\end{equation}
which yields upon passing to twisted chains the map
\begin{equation}
  C_*(X|Z;\ro_V^{-1})   \to    C_*(V|0;\ro_V^{-1}).
\end{equation}
The right hand side has a quasi-isomorphic subcomplex
\begin{equation}
      C^{\pitchfork}_*(V|0;\ro_V^{-1}) \subset C_*(V|0;\ro_V^{-1}).
\end{equation}
consisting of maps from cubes whose strata of dimension smaller than the rank of $V$ are disjoint from the $0$ section. This subcomplex  admits a canonical map to the ground ring, given by the pairing between chains of degree $\dim V$ and the orientation line. Since this pairing is equivariant, we conclude:
\begin{lem} \label{lem:evaluation_map_naive}
  The twisted virtual cochains define a symmetric monoidal functor from the naive category of global charts to the category of cochain complexes, which is equipped with a morphism to the constant functor, in the homotopy category of symmetric monoidal functors. \qed
\end{lem}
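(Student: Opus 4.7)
The plan is to produce the desired morphism as an explicit zig-zag
\begin{equation*}
C^{-*,c}_{\rel \partial}(\bX) \xrightarrow{\fs_*} C_*(V|0;\ro_V^{-1}) \otimes_{C_* G} C_*(EG) \xleftarrow{\iota} C^{\pitchfork}_*(V|0;\ro_V^{-1}) \otimes_{C_* G} C_*(EG) \xrightarrow{\epsilon} \Bbbk
\end{equation*}
of symmetric monoidal natural transformations in which $\iota$ is a quasi-isomorphism. Three things then need to be verified: that each of the four functors of $\bX$ is symmetric monoidal; that each of the three arrows is a monoidal natural transformation; and that $\iota$ is a quasi-isomorphism.

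I would begin with functoriality, which is immediate on the naive category because its morphisms are required to be isomorphisms on vector bundles, so the twist $\ro_V^{-1}$ pulls back correctly, the section is preserved, and $G$-equivariance along $G_1 \to G_2$ is automatic. For symmetric monoidality the key input is the strict symmetric structure on the external product of symmetric cubical chains $C_*(A) \otimes C_*(B) \to C_*(A \times B)$, which is why symmetric cubes were chosen over singular chains in Definition \ref{def:twisted_compact_support_relative}. Combined with the identity $(\fs_1 \times \fs_2)^{-1}(0) = Z_1 \times Z_2$, the factorisation $EG_1 \times EG_2 \to E(G_1 \times G_2)$ of bar resolutions, and the multiplicativity $\ro_{V_1 \oplus V_2} \cong \ro_{V_1} \otimes \ro_{V_2}$ of orientation lines, this simultaneously equips each of the four functors with the structure of a symmetric monoidal functor. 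That $\fs_*$ and $\iota$ are monoidal natural transformations is then formal; the monoidality of $\epsilon$ amounts to the elementary fact that the signed intersection count of a product cube of dimension $\dim V_1 + \dim V_2$ with $Z_1 \times Z_2$ is the product of the signed counts of the factors, keeping track of Koszul signs via the orientation lines.

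The main technical step is showing that $\iota$ is a quasi-isomorphism. Since $C_*(EG)$ is a free $C_*G$-resolution, tensor product with it over $C_*G$ preserves quasi-isomorphisms, and so it suffices to check that the inclusion $C^{\pitchfork}_*(V|0;\ro_V^{-1}) \hookrightarrow C_*(V|0;\ro_V^{-1})$ is a quasi-isomorphism. The strategy is a general position/subdivision argument: an arbitrary cube $\sigma \co [0,1]^p \to V$ can, within its class modulo cubes landing in $V\setminus 0$, be perturbed to one whose strata of dimension smaller than $\dim V$ avoid the zero section, because transversality to the zero section is a generic condition in the linear fibres of $V$. The cleanest implementation is to produce a chain-homotopy inverse to $\iota$ by an acyclic-carrier argument, adapted to the cubical and equivariant setting and analogous in spirit to the corresponding transversality argument in Pardon's \v{C}ech-theoretic version \cite{Pardon2016}.

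The main obstacle I anticipate is assembling this general-position argument so that it respects simultaneously the cubical face/degeneracy structure, the additional symmetric cubical operations used to obtain strict monoidality, and the $G$-action on $V$; once the transversality quasi-isomorphism is in hand, the remainder of the proof is formal bookkeeping, and the zig-zag above furnishes the desired morphism to the constant functor in the homotopy category of symmetric monoidal functors from the naive category of global charts to $\Ch$.
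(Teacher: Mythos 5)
Your proposal reproduces exactly the zig-zag the paper uses (pushforward along $\fs$ to $C_*(V|0;\ro_V^{-1})$, the quasi-isomorphic transverse subcomplex $C^{\pitchfork}_*$, and the degree-$\dim V$ evaluation pairing against the orientation line), and your identification of the points needing verification — strict symmetric monoidality via symmetric cubical chains, the general-position quasi-isomorphism, and multiplicativity of the evaluation — matches what the paper leaves as "straightforward to check." This is the same approach as the paper's, with somewhat more detail supplied.
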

Our next task is to compare this construction with the stratum functor. To this end, we consider the complex
\begin{equation} \label{eq:compactly_supported_cochains_chart}
  C^{*,c}(\bX) \equiv  \left(\bigoplus_{P \in \Ob \cQ}   C^{*,c}_{\rel \partial}(\partial^P \bX) \otimes \ro_P, d \right)
\end{equation}
where $\ro_P$ is the orientation line of $\kappa^{P}$ (which has degree $-\codim P$ because we are back to using cohomological conventions), and the differential is the sum of the internal differentials of each summand with the tensor product, over all arrows $P \to P'$ with $\codim P' = \codim P + 1$, of the induced map of virtual cochains with the map
\begin{equation}
    \ro_{P'} \to \ro_{P}
  \end{equation}
  associated to the inclusion of $\kappa^{P}$ as a codimension-$1$ boundary stratum of $\kappa^{P'}$, and the standard normal orientation.

  The following is a consequence of the fact that the quotient $X/G$ is an orbifold with boundary, together with the compactness of $Z$, and can be proved by the same methods used to prove Poincar\'e duality for compact topological manifolds (c.f. \cite[Theorem 3.30]{Hatcher2002}). For a statement of a result directly applicable to the case $G$ is a finite group, the reader may consult  Section 4.3 of \cite{Pardon2016}.
\begin{lem} \label{lem:absolute_cochians_have_fundamental_class}
 If $\bX$ is a global Kuranishi chart, then assuming that either
  \begin{enumerate}
  \item the action of $G$ on $Z$ is free, or
    \item the ring $\Bbbk$ is a field of characteristic $0$,
    \end{enumerate}
the cohomology $H^{*,c}(\bX)$ is supported in degrees greater than or equal to $- \dim \bX$ and it is canonically isomorphic, in this degree, to a direct sum of copies of $\ro_{\bX}$, indexed by the components of $Z/G$. \qed
\end{lem}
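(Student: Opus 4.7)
The plan is to recognize $C^{*,c}(\bX)$ as a chain-level model for the twisted, compactly supported Borel-equivariant cohomology of the zero locus $Z$, and then identify its top cohomology via Poincar\'e--Lefschetz duality for the stratified space $Z/G$. The setup in Equation \eqref{eq:compactly_supported_cochains_chart} is modeled on the way one assembles compactly supported cohomology of a manifold with corners from its stratification: each summand $C^{*,c}_{\rel \partial}(\partial^P \bX) \otimes \ro_P$ is a relative chain complex for a stratum, and the boundary operator implements the connecting homomorphisms in the long exact sequences associated to pairs of successive skeleta.

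The first step is to identify the cohomology of each summand. Using the section $\fs$ to map $X|Z \to V|0$ and the Thom-like trivialization, together with the twist by $\ro_V^{-1}$, the cohomology of $C^{-*}_{\rel \partial}(\partial^P \bX)$ is (up to a shift by $\dim \partial^P \bX$) the twisted compactly supported Borel-equivariant cohomology of $(\partial^P Z, \partial \partial^P Z)$. By excision this is the compactly supported cohomology of the interior of the stratum of $Z/G$ labeled by $P$, with coefficients in the appropriate orientation local system. This is the point where the hypotheses enter: in case (1) the Borel construction is just the quotient, and in case (2) averaging over the finite isotropy using that $|\Bbbk| \neq 0$ identifies the Borel cohomology with the ordinary cohomology of the quotient (rationally). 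The shift by $\ro_P$ realigns these to sit in degrees $\geq -\dim \bX$.

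The second step is to filter $C^{*,c}(\bX)$ by codimension of strata: let $F_k C^{*,c}(\bX)$ be the subcomplex spanned by summands with $\codim P \geq k$. The associated graded identifies with the sum over codimension-$k$ strata of their relative compactly supported cochains, and the $d_1$-differential recovers the boundary of the pair $(Z_k/G, Z_{k-1}/G)$ of successive skeleta. The resulting spectral sequence is exactly the skeletal spectral sequence computing $H^{*,c}(Z/G; \ro_{Z/G})$ from its stratification, and the total complex is therefore a model for this compactly supported twisted cohomology of $Z/G$.

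For the final step, apply Poincar\'e--Lefschetz duality. Under either hypothesis, $Z/G$ is a (rational) topological manifold with generalised corners of dimension $\dim \bX$, because the $G$-action is locally linear with finite stabilisers; the twist by $\ro_V^{-1}$ together with the orientation of the ambient $X$ and of $G$ assembles to the orientation line $\ro_{\bX}$ of the virtual tangent bundle. Duality identifies $H^{k,c}(Z/G; \ro_{\bX})$ with $H_{-k+\dim \bX}(Z/G; \Bbbk)$ (rationally in case (2)); this vanishes below $-\dim \bX$ and, in degree $-\dim \bX$, is canonically the free $\Bbbk$-module on the components of $Z/G$, tensored with $\ro_{\bX}$. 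The main technical obstacle I expect is verifying Poincar\'e--Lefschetz duality carefully in this topological, non-smooth, $G$-equivariant setting with generalised corners; for this one invokes the local linearity assumption to reduce to a local calculation in a $G$-representation, and handles the orientability via the explicit $\ro_V^{-1}$ twist, following the approach in Section~4.3 of \cite{Pardon2016} for the finite-group case and extending it to compact Lie groups by observing that free actions of the identity component do not affect the equivariant computation.
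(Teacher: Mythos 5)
Your overall strategy—identify $H^{*,c}(\bX)$ with a (twisted, equivariant) cohomology of $Z/G$ via duality, handling finite isotropy by averaging in characteristic $0$ or by freeness—is the route the paper indicates (it cites Hatcher's Poincar\'e--Lefschetz duality and Section~4.3 of Pardon for the finite-group case, and gives no further detail). But your final step contains a genuine error. You assert that $Z/G$ is a (rational) topological manifold with generalised corners of dimension $\dim\bX$ and apply Poincar\'e--Lefschetz duality \emph{to $Z/G$ itself}. This is false: $Z=\fs^{-1}(0)$ is merely the zero locus of a continuous section, with no transversality assumed, so it is an arbitrary compact $G$-invariant subset of $X$; the number $\dim\bX=\dim X-\dim V-\dim G$ is only a virtual dimension, and the entire virtual-chain formalism exists because $Z/G$ is generally not a manifold of that dimension. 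Local linearity of the $G$-action on $X$ gives you control of $X/G$, not of $Z/G$.

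The duality you actually need is Poincar\'e--Alexander--Lefschetz duality for the \emph{compact subset} $Z$ of the ambient oriented manifold-with-corners $X$ (equivariantly, of the orbifold $X/G$): for $Z$ compact in an oriented $n$-manifold one has $H_k(X,X\setminus Z)\cong\check H^{\,n-k}(Z)$, which after the twist by $\ro_V^{-1}$, the Borel construction, and the descent permitted by hypotheses (1) or (2) identifies $H^{*,c}(\bX)$ with $\check H^{*+\dim\bX}(Z/G;\ro_{\bX})$. The conclusion then follows not from duality of $Z/G$ but from elementary properties of \v{C}ech cohomology of a compact space: $\check H^{<0}=0$ gives the vanishing below degree $-\dim\bX$, and $\check H^{0}(Z/G)$ is free on the components of $Z/G$. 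Relatedly, your step 1 attributes the identification of $H^*(C^{*,c}_{\rel\partial}(\partial^P\bX))$ with cohomology of the stratum of $Z$ to ``the section $\fs$ and a Thom-like trivialization''; the map $X|Z\to V|0$ only furnishes the evaluation to $\Bbbk$, while the identification with $\check H^*$ of $Z$ is precisely the ambient duality above, so the key input is asserted rather than proved there and then misapplied in step 3.
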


To relate this to our previous construction, we consider the direct sum
\begin{equation}
  \bigoplus_{P \in \Ob \cQ}   C^{*,c}( \partial^P \bX).
\end{equation}
We shall define a differential which is the sum of the differential on each factor and of terms associated to each arrow in $\cQ$. To this end, we consider an arrow $\alpha \co P_0 \to P_1$ with $\codim P_1 = \codim P_1 + 1$, and we shall define a map
\begin{multline} \label{eq:differential_reconstructed_relative_chains}
  C^{*,c}( \partial^{P_0} \bX)  \cong \bigoplus_{\beta \co P_0 \to P} C^{*,c}_{\rel \partial}(\partial^{P} \bX) \otimes \ro_\beta  \to  \\  C^{*,c}( \partial^{P_1} \bX) \cong \bigoplus_{\gamma \co P_1 \to P} C^{*,c}_{\rel \partial}(\partial^{P} \bX) \otimes \ro_\gamma, 
\end{multline}
where $\ro_\alpha$ is the orientation line of $\kappa^\alpha$. The desired map vanishes except on morphisms which factor through $\alpha$, in which case it is given by the tensor product of the identity with the degree-$1$ map from the orientation line of $\kappa^{\gamma \circ \alpha}$ to the orientation line of $\kappa^{\gamma}$ associated to the corresponding codimension $1$ boundary inclusion.
\begin{lem} \label{lem:relative_into_reconstructed_quasi-iso}
  The inclusion
  \begin{equation}
     C^{*,c}_{\rel \partial}(\bX) \to  \left(  \bigoplus_{P \in \cQ}   C^{*,c}( \partial^P \bX), d \right) 
     \end{equation}
     induces an isomorphism on cohomology.
\end{lem}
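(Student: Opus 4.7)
The plan is to reorganize the target complex by the index of the innermost stratum, then deploy a spectral sequence that collapses onto $C^{*,c}_{\rel \partial}(\bX)$. Expanding each $C^{*,c}(\partial^{P_0}\bX)$ using (\ref{eq:compactly_supported_cochains_chart}) and regrouping by the innermost stratum index $P$ yields
\begin{equation*}
\bigoplus_{P_0 \in \cQ} C^{*,c}(\partial^{P_0}\bX) \;\cong\; \bigoplus_{P \in \cQ} C^{*,c}_{\rel \partial}(\partial^{P}\bX) \otimes \mathcal{L}_P,
\end{equation*}
where $\mathcal{L}_P = \bigoplus_{P_0 \to P} \ro_{P_0 \to P}$, indexed by objects of the overcategory $\cQ/P$, is graded with $\ro_{P_0 \to P}$ in cohomological degree $-(\codim P - \codim P_0)$. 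Under this identification the full differential splits into three kinds of terms: (i) the internal differentials on the $C^{*,c}_{\rel \partial}(\partial^P \bX)$-factors, which preserve both indices $P_0$ and $P$; (ii) the $\mathcal{L}_P$-differential, coming from the cross-term (\ref{eq:differential_reconstructed_relative_chains}), which raises $\codim P_0$ by one while fixing $P$; and (iii) the stratum-boundary differential of (\ref{eq:compactly_supported_cochains_chart}), which raises $\codim P$ by one while fixing $P_0$.

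Next, I would filter by descending $\codim P$: (i) and (ii) preserve filtration while (iii) strictly raises it. The $E_0$-page is $\bigoplus_P C^{*,c}_{\rel \partial}(\partial^P \bX) \otimes \mathcal{L}_P$ with the tensor-product differential (i)+(ii), so by K\"unneth
\begin{equation*}
E_1 \;=\; \bigoplus_{P \in \cQ} H^{*,c}_{\rel \partial}(\partial^P \bX) \otimes H^*(\mathcal{L}_P).
\end{equation*}
The key technical input is the computation $H^*(\mathcal{L}_M) = \Bbbk$ concentrated in degree $0$, and $H^*(\mathcal{L}_P) = 0$ for all $P \neq M$, where $M := \min \cQ$. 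For $P = M$ this is immediate, since $\mathcal{L}_M$ has the single generator $\ro_{\mathrm{id}_M}$ in degree $0$ with vanishing differential. For $P \neq M$, one identifies $\mathcal{L}_P$ with the cellular cochain complex of the stratified ball $\kappa^P = |\cQ^{M \to P}|$: each object $P_0 \to P$ contributes an open cell of dimension $\codim P - \codim P_0$, with the top cell (interior of $\kappa^P$) corresponding to $P_0 = M$ and the bottom $0$-cell corresponding to $P_0 = P$. Acyclicity then follows from the contractibility of $\kappa^P$; alternatively, one proves it by induction on $\codim P$, exploiting the cone-like structure of $\cQ^{M \to P}$ provided by its initial and terminal objects.

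Granting this computation, $E_1$ is concentrated in the $P = M$ summand, where it equals $H^{*,c}_{\rel \partial}(\bX)$. Any higher differential induced by (iii) would land in summands with $\codim P \geq 1$, which vanish on $E_1$, so the spectral sequence collapses at $E_1$ and the target's cohomology equals $H^{*,c}_{\rel \partial}(\bX)$. The canonical inclusion $C^{*,c}_{\rel \partial}(\bX) \hookrightarrow \bigoplus_P C^{*,c}(\partial^P \bX)$ maps into the $(P_0,P) = (M, M)$ summand, which represents the nonzero class on $E_1$, so it induces the asserted isomorphism. The main obstacle is the acyclicity of $\mathcal{L}_P$ for $P \neq M$; once signs are chosen compatibly with (\ref{eq:compactly_supported_cochains_chart}) and (\ref{eq:differential_reconstructed_relative_chains}), this reduces to a combinatorial-topological statement about the chain complex of a stratified disk.
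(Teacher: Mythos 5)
Your overall strategy is the same as the paper's: regroup the double sum by the inner stratum index $P$, filter so that the associated graded becomes $C^{*,c}_{\rel \partial}(\partial^P \bX) \otimes \mathcal{L}_P$, and reduce the lemma to the vanishing of $H^*(\mathcal{L}_P)$ for non-minimal $P$. The gap sits exactly where you locate the crux. You identify $\mathcal{L}_P$ with ``the cellular cochain complex of the stratified ball $\kappa^P$'' and deduce acyclicity from the contractibility of $\kappa^P$ --- but the cellular chain complex of a nonempty contractible complex has $H_0 = \Bbbk$, not $0$, so if your identification were correct the complex would \emph{not} be acyclic. And the identification is not correct: the cells $\kappa^{P_0 \to P}$ indexed by arrows into $P$ are precisely the faces of $\kappa^P$ that contain the terminal vertex $P$, and their interiors do not cover $\kappa^P$. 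Already for $\codim P = 1$ one has $\kappa^P \cong [0,1]$ with endpoints $M$ and $P$; the two cells are $\{P\}$ and $[0,1]$, whose interiors cover only $(0,1]$, and $\mathcal{L}_P$ has rank one in each of two adjacent degrees (hence is acyclic), whereas the cellular complex of $[0,1]$ has rank two in degree zero.

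The correct statement, which is what the paper's proof uses, is that $\mathcal{L}_P$ is the cellular chain complex of the \emph{pair} $(\kappa^P, \partial_- \kappa^P)$, where $\partial_- \kappa^P = |\cQ^{M \to P} \setminus \{P\}|$ is the closed hemisphere of $\partial \kappa^P$ swept out by the faces not containing the vertex $P$. For $P \neq M$ this subspace is a nonempty contractible subspace of the contractible ball $\kappa^P$, so the relative homology vanishes; for $P = M$ it is empty and one recovers $\Bbbk$ in degree $0$, as you say. With this substitution your argument goes through; your fallback suggestion (induction exploiting the initial/terminal objects of $\cQ^{M \to P}$) is also viable but is only a sketch as written. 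One further convention point: as the differential in \eqref{eq:compactly_supported_cochains_chart} is written in the paper (via the maps $\ro_{P'} \to \ro_P$ and the covariance of virtual cochains for inclusions of boundary strata), the stratum-boundary term \emph{lowers} the codimension of the inner index rather than raising it, so the filtration must be taken by increasing rather than decreasing $\codim P$; this does not affect the structure of the argument.
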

\begin{proof}
  The filtration of each chain complex $ C^{*,c}( \partial^P \bX) $ indexed by $\cQ$ induces a filtration on $ C^{*,c}_{\rel \partial}(\bX)$. The associated graded chain complex associated to $P$ is naturally isomorphic to the tensor product of $C^{*,c}_{\rel \partial}(\partial^P \bX) $  with the cellular chain complex of the pair $(\kappa^P, \partial_{-} \kappa^P)$, where $\partial_{-} \kappa^P$ is the complement of the union of faces which contain the vertex associated to $P$. Since the pair $(\kappa^P, \partial_{-} \kappa^P) $ is contractible unless $P$ is the minimum, the result follows.
\end{proof}

\begin{lem} \label{lem:morphism_to_constant_functor_global}
  If $\Bbbk$ has characteristic $0$ there is a morphism from the stratum functor to the virtual cochain functor, in the homotopy category of symmetric monoidal functors with domain the naive category of $\Bbbk$-oriented global charts. If $\Bbbk$ is an arbitrary ring, such a morphism exists on the subcategory of charts with trivial isotropy. 
\end{lem}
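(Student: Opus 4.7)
The plan is to produce the morphism as a zig-zag
\begin{equation*}
  C^*_{st} \Rightarrow C^{*,c} \Leftarrow C^{*,c}_{\rel \partial}
\end{equation*}
of monoidal natural transformations, where the right-pointing arrow is the natural quasi-isomorphism of Lemma \ref{lem:relative_into_reconstructed_quasi-iso}. This reduces the task to constructing a monoidal natural transformation $\Phi \colon C^*_{st} \Rightarrow C^{*,c}$ on the naive category of $\Bbbk$-oriented global charts (restricted to the trivial-isotropy subcategory when $\Bbbk$ has positive characteristic).

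The guiding observation is that the direct-sum decomposition of Equation \eqref{eq:compactly_supported_cochains_chart} equips $C^{*,c}(\bX)$ with a natural filtration by codimension whose $E_1$ page, by Lemma \ref{lem:absolute_cochians_have_fundamental_class} applied separately to each substratum $\partial^P \bX$, consists of the orientation lines $\ro_{\partial^P \bX} \otimes \ro_P$ concentrated in a single cohomological degree per summand. The induced $d_1$ differential agrees with the stratum differential of $C^*_{st}(\bX)$, since both are governed by codimension-one boundary inclusions with their canonical normal orientations; this yields a tautological identification between $C^*_{st}(\bX)$ and the $E_1$ page of $C^{*,c}(\bX)$ as differential graded modules.

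To construct $\Phi_\bX$ at the chain level, I would use the double-induction template of Proposition \ref{prop:cofibrant_property_of_lift_Kur}. Proceeding in order of decreasing codimension within each chart, at each stratum $P$ one chooses a chain in $C^{*,c}(\bX)$ representing the class that corresponds to $\ro_{\partial^P \bX}$ in the $E_1$ identification, whose differential matches the image under $\Phi$ of $d[\partial^P \bX]$ already determined by the previous steps. Existence of such a chain follows because the obstruction class lies in a cohomology group that vanishes by Lemma \ref{lem:absolute_cochians_have_fundamental_class}. Naturality across charts is ensured by choosing representatives only for one orbit-representative of each equivalence class of indecomposable charts at each depth; morphisms in the naive category preserve the vector-bundle data (they are required to induce isomorphisms of the bundles) and hence preserve the fundamental classes up to the relative orientation data. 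The monoidal condition is built in by handling product charts first via the K\"unneth isomorphism (valid under our hypotheses), before the choice on indecomposables, exactly as in Proposition \ref{prop:cofibrant_property_of_lift_Kur}.

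The main obstacle is coordinating these simultaneous constraints, namely naturality, monoidal compatibility, and strict chain-level matching of differentials, during the inductive construction. This is the same difficulty resolved in Proposition \ref{prop:cofibrant_property_of_lift_Kur}, and it is handled by the same organizational scheme: an outer induction on the depth of the chart, an inner induction on the codimension of the stratum, monoidal forcing on product charts via the K\"unneth isomorphism, and free choice on indecomposable-orbit representatives.
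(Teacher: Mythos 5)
Your overall shape---one leg of the zig-zag supplied by Lemma \ref{lem:relative_into_reconstructed_quasi-iso}, the other relating $C^*_{st}$ to the reconstructed complex via Lemma \ref{lem:absolute_cochians_have_fundamental_class}---matches the paper, but the way you realize the second leg has two genuine gaps.

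First, the ``$E_1$ identification'' you invoke is false. Lemma \ref{lem:absolute_cochians_have_fundamental_class} says only that $H^{*,c}(\partial^P \bX)$ is \emph{bounded below} by $-\dim \partial^P \bX$ and agrees with a sum of copies of $\ro_{\partial^P \bX}$ in that bottom degree; it is not concentrated there (the \v{C}ech cohomology of a compact $Z_P/G$ lives in a whole range of degrees), and the relative groups $H^{*,c}_{\rel \partial}(\partial^P \bX)$ that actually appear as the associated graded of the codimension filtration on $C^{*,c}(\bX)$ are likewise spread out. So $C^*_{st}(\bX)$ is only the bottom piece of the $E_1$ page, not the whole page. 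The canonical repair is the truncation $C^{*,c}(\partial^P \bX)_{(-\infty,-\dim \partial^P \bX]}$: its cohomology \emph{is} concentrated in one degree, the projection onto that cohomology is a canonical (hence automatically functorial and monoidal) quasi-isomorphism onto $\ro_{\partial^P \bX}$, and one obtains the choice-free zig-zag
\begin{equation*}
  C^*_{st} \Leftarrow \Bigl(\bigoplus_P C^{*,c}(\partial^P \bX)_{(-\infty,-\dim \partial^P \bX]}\Bigr) \Rightarrow \Bigl(\bigoplus_P C^{*,c}(\partial^P \bX)\Bigr) \Leftarrow C^{*,c}_{\rel \partial}.
\end{equation*}

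Second, your chain-level inductive lift modeled on Proposition \ref{prop:cofibrant_property_of_lift_Kur} does not go through on the naive category of global charts. That induction depends on the rigidity of $\widetilde{\Kur}^{\Bbbk}$: every morphism between indecomposables projects to an identity, and product decompositions are unique up to reordering. In the naive category of global charts, morphisms are genuine open embeddings $X_1 \times_{G_1} G_2 \to \partial^{P_f} X_2$ with possibly varying groups, a chart admits many morphisms and automorphisms and possibly inequivalent product factorisations, so a chain representative chosen on one ``orbit representative'' does not propagate functorially or monoidally. This is precisely the obstruction the paper cites as the reason for introducing $\widetilde{\Kur}$, and the remark following the lemma notes that upgrading the zig-zag to an honest natural transformation would require an analogue of $\widetilde{\Kur}$ for global charts. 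Since the statement only asks for a morphism in the homotopy category, no chain-level lift is needed at all.
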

\begin{proof}
  Consider the canonical truncation
\begin{equation}
  C^{*,c}( \partial^P \bX)_{(-\infty, - \dim \partial^P \bX]}    \subset   C^{*,c}( \partial^P \bX)
\end{equation}
given by all element of degree strictly smaller than $- \dim \partial^P \bX$, together with the kernel of the differential
\begin{equation}
   C^{- \dim \partial^P \bX,c}( \partial^P \bX) \to  C^{1- \dim \partial^P \bX,c}( \partial^P \bX).
 \end{equation}
 Taking the direct sum over all $P \in \cQ$, we obtain a subcomplex 
\begin{equation}
 \left(\bigoplus_{P \in \cQ}   C^{*,c}( \partial^P \bX), d \right)_{(-\infty, - \dim \partial^P \bX]}  \subset \left( \bigoplus_{P \in \cQ}   C^{*,c}( \partial^P \bX), d \right),
\end{equation}
where we use the differential from Equation \eqref{eq:differential_reconstructed_relative_chains}. This truncation is preserved by the monoidal structure, and hence yields a symmetric monoidal functor.

Next, we use Lemma \ref{lem:relative_into_reconstructed_quasi-iso} to see that the projection map
\begin{equation}
 C^{*,c}( \partial^P \bX)_{(-\infty, - \dim \partial^P \bX]}  \to  H^{- \dim \partial^P \bX,c}( \partial^P \bX)  \cong \ro_{\partial^P \bX}.
  \end{equation}
  is a quasi-isomorphism, which is functorial and monoidal. We thus obtain a zig-zag
  \begin{equation}
    \begin{tikzcd}
      C^*_{st}(\bX) & \ar[l] \displaystyle{ \left(\bigoplus_{P \in \cQ}   C^{*,c}( \partial^P \bX)_{(-\infty, - \dim \partial^P \bX]} , d \right)} \ar[d] \\
C^{*,c}_{\rel \partial}(\bX)   \ar[r]  & \displaystyle{ \left( \bigoplus_{P \in \cQ}   C^{*,c}( \partial^P \bX), d \right)},    
    \end{tikzcd}
  \end{equation}
  where the two horizontal arrows are quasi-isomorphisms. 
\end{proof}

\begin{rem}
  At this stage, one can construct an analogue of the category $\widetilde{\Kur}^{\Bbbk}$, lifting the naive category of global charts, and which satisfies Proposition \ref{prop:cofibrant_property_of_lift_Kur}, so that the above zig-zag gives rise to a symmetric monoidal functor, yielding a multiplicative theory of virtual counts. We shall not do this because, as noted at the beginning of this section, our goal is to explain a basic idea in a toy situation which we do not expect to be sufficiently flexible for substantial applications.
\end{rem}

\subsection{The Thom isomorphism via homotopy normal bundles}
\label{sec:thoms-isom-via}

There are two fundamental problems in extending Lemma \ref{lem:morphism_to_constant_functor_global} to Kuranishi presentations: the first is that a map of Kuranishi charts does not induce a map on virtual cochains, and the second is that the non-compactness of the local charts implies that there are two local notions of virtual cochains, depending on whether a condition of compact support is imposed. In this section, we explain how to resolve the first problem using homotopy normal bundles, which we discuss in more generality in the next section.

It is easiest to understand our strategy starting with a simpler situation: let $X$ be a topological manifold, and $V$ a vector bundle on $X$ equipped as before with a section whose zero-locus we denote $Z$. Let $Y$ be a manifold equipped with an open embedding in $Z$, and assume that $\fs$ is transverse along the image of $Y$; in particular, the composite $Y \to X$ is a locally flat embedding. Our goal is to construct a map
\begin{equation}
  C_*(Y) \dashrightarrow C_*(X|Z; \ro_{V}^{-1})  
\end{equation}
in the homotopy category, which we will represent by an explicit zig-zag involving one middle term.

We denote by $N X$ the homotopy normal bundle of $Z$ in $X$: this is the space over $Z$ with fibre at $z$ given by
\begin{equation}
    N_{z} X = X \setminus \left(Z \setminus z \right).
\end{equation}
We call the point $z \in N_{z} X$ the origin. Since we have not imposed any global transversality assumption, the local homotopy type of the fibre $N_zX$ near the origin may depend on the point $z$. However, the restriction to $Y$ is a fibre bundle; we shall abuse notation and write $NX$ for this restriction. The  projection map $NX \to Y$ has a canonical section, which we call the $0$-section.  Note as well that we have a natural map
\begin{equation}
  N X \to X  
\end{equation}
which is the inclusion of each fibre. A key point which we shall use later is that this map takes the complement of the zero section in $NX$ to the complement of $Z$; this would fail if we were to
use the homotopy normal bundle of $Y$ in $X$.

Let us denote by
\begin{equation}
    C_{*}(Y; C_*(NX) \otimes \ro_{V}^{-1}) 
\end{equation}
the  twisted  symmetric bi-cubical chain complex of the fibre bundle $NX \to Y$. This is the direct sum of the pullback of the line $\ro_{V}^{-1}$ by maps
\begin{equation}
\square^p \times \square^q \to NX   
\end{equation}
whose projection to $Y$ is independent of the second variable (i.e. factors through $\square^p$), modulo those which are degenerate, and the ideal generated by a cube and its image under a permutation of any two factors of $\square^p$ or of $\square^q$. The differential is the sum of two (sets of) terms, corresponding to the boundaries of the factors of $\square^p \times \square^q$.

We write $NX|0$ for the pair $(NX, NX \setminus Y)$, and obtain a corresponding relative chain complex
\begin{equation}
    C_{*}(Y; C_*(NX|0) \otimes \ro_{V}^{-1}) 
\end{equation}
by taking the quotient by the subcomplex generated by chains which lie in the complement of the zero section. Finally, we consider the subcomplex
\begin{equation}
     C^{\pitchfork}_{*}(Y; C_*(NX|0) \otimes \ro_{V}^{-1}) 
   \end{equation}
   generated by those chains with domain $\square^p \times \square^q$ satisfying the following condition:
   \begin{equation}
\parbox{31em}{the image of each stratum $\square^p \times \square^i $ for $i < \dim V$ is  disjoint from $Y$.}          
   \end{equation}
\begin{lem} \label{lem:pairing_orientation_line_simplex}
  There is a natural equivalence
  \begin{equation}
     C^{\pitchfork}_{*}(Y; C_*(NX|0) \otimes \ro_{V}^{-1}) \to C_*(Y).   
  \end{equation}
\end{lem}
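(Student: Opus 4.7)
The plan is to construct $\Phi$ as a chain-level Thom pairing along the fibres of $NX \to Y$, and to prove it is a quasi-isomorphism by combining the Thom isomorphism for a single vector space with a local-to-global argument on $Y$.

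First I would construct $\Phi$ explicitly. A generator of the source is a pair $(\sigma, o)$ where $\sigma \colon \square^p \times \square^q \to NX$ is a bi-cube whose composition with the projection $NX \to Y$ factors through a cube $\bar{\sigma} \colon \square^p \to Y$, and $o$ is a section of $\bar{\sigma}^* \ro_V^{-1}$. The transversality condition defining $C^{\pitchfork}_*$ forces all strata $\square^p \times \square^i$ with $i < \dim V$ to be disjoint from $Y$, so the only non-zero contribution comes from bi-cubes with $q = \dim V$. In that case, for each $x \in \square^p$ the restriction $\sigma|_{\{x\} \times \square^q}$ meets the origin of the fibre $N_{\bar{\sigma}(x)}X$ in a compact zero-dimensional set whose signed count $N(x) \in \mathbb{Z}$, obtained by pairing the local orientation of the fibre with $o$, is locally constant in $x$ by transversality. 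I would define $\Phi(\sigma, o) = N \cdot \bar{\sigma} \in C_p(Y)$, and verify that this is a chain map by observing that boundary strata of the form $\square^p \times \partial \square^q$ either lie in the prohibited range $i < \dim V$ (hence disjoint from $Y$) or transfer the intersection count to the boundary of $\bar{\sigma}$.

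To show $\Phi$ is a quasi-isomorphism, I would begin with the local case: if $U \subset Y$ is an open set over which $NX|_U \cong U \times V$ is trivial, then a Künneth argument gives a quasi-isomorphism
\[
  C^{\pitchfork}_{*}(U; C_{*}(NX|_U \,|\, 0) \otimes \ro_V^{-1}) \simeq C_*(U) \otimes \bigl( C^{\pitchfork}_*(V \,|\, 0) \otimes \ro_V^{-1} \bigr),
\]
and the classical Thom isomorphism identifies the second tensor factor with $\Bbbk$ concentrated in degree $0$, via the same fibrewise signed intersection count. To globalise, I would view both source and target as functors from open subsets of $Y$ to chain complexes and use that each satisfies a Mayer--Vietoris / small-chains property: the cubical subdivision operator preserves the transversality condition because transversality is a condition on low-dimensional strata that is stable under subdivision. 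The local equivalence then extends to a global one via a standard hypercover spectral sequence argument applied to a good open cover of $Y$ over which $NX$ trivialises.

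The main obstacle will be the descent step for the transverse subcomplex: one must verify that cubical subdivision can be performed while preserving transversality to $Y \subset NX$, so that $C^{\pitchfork}_*$ inherits the small-chains property of ordinary cubical chains and thereby descends along the cover. A secondary technical point is the coherence of the pairing signs under the boundary operator and the symmetric cubical structure, which I would handle by choosing a local model for $\sigma^{-1}(Y)$ as a transverse codimension-$\dim V$ submanifold of $\square^p \times \square^q$ and tracking the induced orientation.
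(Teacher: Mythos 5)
Your construction of the map and your strategy for proving it is an equivalence (vanishing except on bi-cubes with $q=\dim V$, fibrewise pairing, then excision/Mayer--Vietoris to reduce to a single fibre) are essentially the paper's sketch, so the overall architecture is fine. Two corrections are needed, though. First, since $X$ is only a topological manifold, the "compact zero-dimensional set whose signed count is $N(x)$" is not well defined: the preimage of the origin under a continuous map $\{x\}\times\square^{\dim V}\to N_{\bar\sigma(x)}X$ need not be finite. The pairing must instead be defined homologically: the restriction to $\{x\}\times\square^{\dim V}$ is a relative cycle for the pair $(N_yX, N_yX\setminus y)$ (its boundary avoids the origin by transversality), and the section $\fs$ identifies $H_{\dim V}(N_yX, N_yX\setminus y)$ with $H_{\dim V}(V_y,V_y\setminus 0)=\ro_{V_y}$, which then pairs with the coefficient in $\ro_V^{-1}$; local constancy in $x$ is then homotopy invariance of the relative class. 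Second, the local model $NX|_U\cong U\times V$ is wrong: $NX$ is a fibre bundle over $Y$ with fibre $N_yX = X\setminus(Z\setminus y)$, which is not a vector space; only the relative homology of $(N_yX, N_yX\setminus y)$ is identified with that of $(V_y, V_y\setminus 0)$, via $\fs$. So the local computation should trivialise the bundle as $U\times N_{y_0}X$ and use that the pair $N_{y_0}X|0$ has reduced homology concentrated in degree $\dim V$, rather than invoking the Thom isomorphism for a literal vector bundle. Finally, the subdivision issue you flag is real but resolvable more cheaply than you suggest: the cover is of $Y$ and the chains are required to factor through $\square^p\to Y$ in the base direction only, so one subdivides only the $\square^p$ factor; the fibre-direction strata are merely restricted over smaller base cubes, and disjointness from the zero section is preserved under restriction, so the transverse subcomplex inherits the small-chains property with no extra work.
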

\begin{proof}[Sketch of proof:]
The section $\fs$ identifies the homology of $N_y X$ relative the complement of the point $y$ with the orientation line of $V_y$.   The map vanishes except on chains whose domain is $\square^p \times \square^{\dim V}$, where it is given by the pairing of the fibre with the orientation line, which is well-defined because the image of $ \square^p \times \partial \square^{\dim V}$ is disjoint from the $0$-section. Excision and Mayer-Vietoris reduce the proof that the map is an equivalence to the corresponding statement for a fibre, which follows from the fact that the pair $N_yX|0$ has the homotopy type of a sphere, hence has (reduced) homology supported in a single degree.
\end{proof}
On the other hand, the map $NX \to X$, and the identification $\square^p \times \square^q \cong  \square^{p+q}$ induce a map
\begin{equation} \label{eq:evaluation_map_normal_to_total}
     C^{\pitchfork}_{*}(Y; C_*(NX|0) \otimes \ro_{V}^{-1}) \to C_*(X|Z;\ro_{V}^{-1}),
\end{equation}
which together with Lemma \ref{lem:pairing_orientation_line_simplex} yields the desired zig-zag:
\begin{equation}
    C_*(Y) \leftarrow  C^{\pitchfork}_{*}(Y; C_*(NX|0) \otimes \ro_{V}^{-1}) \to C_*(X|Z;\ro_{V}^{-1}).
\end{equation}

\subsection{Homotopy normal bundles and multisimplicial chains I: fixed automorphism groups}
\label{sec:homot-norm-bundl}

We now generalise the discussion of the previous section to the setting of Kuranishi charts. Since the map we constructed is represented by a zig-zag, it is natural to expect that its functoriality will be formulated as a homotopy coherent functor on the category of charts. We thus start by considering a sequence of maps
\begin{equation}
  \begin{tikzcd}
    \bX_0 \ar[r,"f^1"] &  \bX_1 \ar[r,"f^2"] & \cdots \ar[r,"f^n"] & \bX_n
  \end{tikzcd}
\end{equation}
of Kuranishi charts; we write $f_i^{j}$ for the composite map from $\bX_i$ to $\bX_j$. While we shall consider the general case later, we begin by assuming that the underlying maps of groups are isomorphisms; we write $G$ for this group. We thus obtain a commutative diagram of $G$-equivariant embeddings
\begin{equation}
    \begin{tikzcd}
      X_0 \ar[r,"f^1"] &  X_1 \ar[r,"f^2"] & \cdots \ar[r,"f^n"] & X_n \\
       Z_0 \ar[r] \ar[u] &  Z_1 \ar[r]  \ar[u]  & \cdots \ar[r]  \ar[u]  & Z_n . \ar[u] 
  \end{tikzcd}  
\end{equation}
We associate to each of these maps the bundle $N f^i$ over $X_{i-1}$, whose fibre at a point $x$ is
\begin{equation}
 \partial^{P_{f_i}} X_{i} \setminus \left( X_{i-1} \cup Z_i\right) \cup \{x\}  \subset  \partial^{P_{f_i}} X_i,
\end{equation}
where we recall that $\partial^{P_{f_i}} X_{i} $ is the stratum of $X_i$ receiving the map from $X_{i-1}$. Note that this is not quite the homotopy normal bundle of $f^i$, because we also  use the data of the sets $Z_i$ in its definition. The fact that this is a bundle follows from the transversality assumption in the definition of maps of Kuranishi charts, which implies that there is a neighbourhood of the image of $X_{i-1}$ in $ \partial^{P_{f_i}} X_i$ with the property that its intersection with $Z_i$ agrees with $Z_{i-1}$ (and in particular is contained in $X_{i-1}$).

We have a natural map
\begin{equation}
\iota_i \co N f^i \to X_{i},
\end{equation}
and we inductively define $N^{\vec{f}} f^{i+1}$ to be the fibration over $N^{\vec{f}} f^i$ obtained by pulling back $N f^{i+1}$. This construction yields a sequence of fibre bundles
\begin{equation}
  \begin{tikzcd}
      X_0  &  N^{\vec{f}} f^1 \ar[l] &   N^{\vec{f}} f^2 \ar[l] & \cdots \ar[l] & N^{\vec{f}} f^n \ar[l]
  \end{tikzcd}
\end{equation}
over $X_0$. We shall consider maps
  \begin{equation}
    \square^{k_0} \times \cdots \times \square^{k_n} \to N^{\vec{f}} f^n ,  
  \end{equation}
  such that the projection to $N^{\vec{f}} f^i$ factors through the projection of the domain to $\square^{k_0} \times \cdots \times \square^{k_i}$, i.e. so that we have a commutative diagram
  \begin{equation}
    \begin{tikzcd}[column sep=15]
    \square^{k_0} \ar[d] & \square^{k_0} \times \square^{k_1} \ar[d] \ar[l] &  \square^{k_0} \times \square^{k_1} \times \square^{k_2} \ar[l] \ar[d]  &  \ar[l] \ar[d]  \cdots  &  \ar[l]  \square^{k_0} \times \cdots \times \square^{k_n} \ar[d] \\
      X_0  &  N^{\vec{f}} f^1 \ar[l] &   N^{\vec{f}} f^2 \ar[l] & \cdots \ar[l] & N^{\vec{f}} f^n \ar[l].
  \end{tikzcd}
\end{equation}
We further restrict attention to those maps which are transverse to the zero section
\begin{equation}
   N^{\vec{f}} f^{i-1} \to    N^{\vec{f}} f^i
\end{equation}
in the sense that, for each point in $ \square^{k_0} \times \cdots \times  \square^{k_{i-1}}$,
  \begin{equation}
    \label{eq:chains_transverse_to_o-section}
    \parbox{32em}{the image  in $ N^{\vec{f}} f^i$  of each $d$-dimensional stratum of $\square^{k_{i}} \times \cdots \times \square^{k_j}$ lies in the complement of the zero section if $d < \dim V_{i}/V_{i-1}$.}
  \end{equation}

  \begin{rem} \label{rem:why-cubes}
   In the singular theory, one would consider multi-simplices, i.e. diagrams of the form
    \begin{equation}
    \begin{tikzcd}[column sep=15]
    \Delta^{k_0} \ar[d] & \Delta^{k_0} \times \Delta^{k_1} \ar[d] \ar[l] &  \Delta^{k_0} \times \Delta^{k_1} \times \Delta^{k_2} \ar[l] \ar[d]  &  \ar[l] \ar[d]  \cdots  &  \ar[l]  \Delta^{k_0} \times \cdots \times \Delta^{k_n} \ar[d] \\
      X_0  &  N^{\vec{f}} f^1 \ar[l] &   N^{\vec{f}} f^2 \ar[l] & \cdots \ar[l] & N^{\vec{f}} f^n \ar[l].
  \end{tikzcd}
\end{equation}
It would seem straightforward to define transversality as above in terms of the assumption that strata of the form $ \Delta^{k_0} \times \cdots \times \Delta^{d} \subset  \Delta^{k_0} \times \cdots \times \Delta^{k_i} $ are disjoint from the zero section of $ N^{\vec{f}} f^i$ whenever $d < \dim V_{i}/V_{i-1}$. However, this condition is not compatible with the maps associated to forgetting an element of the sequence $\vec{f}$, because these maps involve applying the prismatic subdivision, which introduces new strata. One can in fact restore functoriality at this stage by imposing the appropriate transversality for all prismatic subvisions, which ultimately defines a functor on the category of simplices of $\Chart$, but this functor fails to be multiplicative because there are new transversality conditions that have to be imposed whenever one considers a product of sequences.
  \end{rem}

This leads us to our main definition:
\begin{defin} \label{def:twisted_vcochains-constant-G}
  The \emph{twisted virtual cochains} of a sequence $\vec{f} = \{f^i\}$ of composable morphisms of Kuranishi charts with constant automorphism group $G$ is the co-chain complex
  \begin{equation}
    C^{-*,c}_{\rel \partial}(\vec{f}) \equiv C^{\pitchfork}_*(X_0|Z_0; C_*(N^{\vec{f}}f^1|0) \otimes \cdots \otimes C_*(N^{\vec{f}}f^n|0)  \otimes \ro_{V_n}^{-1}) \otimes_{C_* G} C_* (EG),
  \end{equation}
  generated by the pullback of $ \ro_{V_n}^{-1}$ by multi-cubes satisfying Condition \eqref{eq:chains_transverse_to_o-section}, modulo those whose projection to $N^{\vec{f}} f^i$ lies in the complement of the origin (the case $i=0$ is implicitly included in these conditions if we use the convention that $f^0=\id_{\bX_0}$). The differential is given by the alternating sum of the restriction to the boundary facets of the domain.
\end{defin}

The following result states the key structural properties of this construction; we write $\partial^i \vec{f}$ for the sequence obtained by (i) omitting $f^1$ if $i=0$, (ii) composing $f^i$ and $f^{i+1}$ if $1 \leq i < n $, (iii) omitting $f^n$ if $i=n$. Note that if $0 \leq i < j \leq n$ we have $\partial^i \partial^j \vec{f}  = \partial^{j-1} \partial^i \vec{f}$.
\begin{lem}
  There are natural maps
  \begin{equation} \label{eq:restriction_map}
    \partial^i \co   C^{*,c}_{\rel \partial}(\vec{f}) \to     C^{*,c}_{\rel \partial}(\partial^i \vec{f}),
  \end{equation}
which are quasi-isomorphisms except possibly for $i=0$, and which yield a commutative diagram 
           \begin{equation}
             \begin{tikzcd}
               C^{*,c}_{\rel \partial}(\vec{f})  \ar[r] \ar[d] & C^{*,c}_{\rel \partial}(\partial^i \vec{f})   \ar[d] \\
   C^{*,c}_{\rel \partial}(\partial^j \vec{f})              \ar[r] &
     C^{*,c}_{\rel \partial}( \partial^{j-1} \partial^i \vec{f}) 
     =   C^{*,c}_{\rel \partial}( \partial^i \partial^j \vec{f} )  .
             \end{tikzcd}
           \end{equation}
\end{lem}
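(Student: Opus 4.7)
The plan is to treat the three types of face operations separately. For $\partial^i$ with $1 \le i < n$, I will define the restriction map using the natural comparison between the iterated homotopy normal bundle and the homotopy normal bundle of the composite: a point of $N^{\vec{f}} f^{i+1}$ projecting to $w \in N^{\vec{f}}f^{i-1}$ is a pair $(y,z)$ with $y$ in the fibre of $Nf^i$ over $\iota_{i-1}(w)$ and $z$ in the fibre of $Nf^{i+1}$ over $y$, and the assignment $(y,z) \mapsto z$ defines a map into $X_{i+1}$. The transversality condition \eqref{eq:chains_transverse_to_o-section} on the source cubes (applied at stage $i$) forces the image to land in the fibre of $N^{\partial^i \vec{f}} (f^{i+1} \circ f^i)$ over $w$, so that combining with the canonical concatenation $\square^{k_i} \times \square^{k_{i+1}} \cong \square^{k_i + k_{i+1}}$ gives a well-defined chain map to $C^{*,c}_{\rel \partial}(\partial^i \vec{f})$. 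For $i = 0$, the map is induced by the projection $\iota_1 \co N^{\vec{f}}f^1 \to X_1$, which collapses $\square^{k_0} \times \square^{k_1}$ to a single cube over the new base $X_1$. For $i = n$, the map integrates out the last cube factor by fibrewise pairing with the orientation line of $V_n/V_{n-1}$, exactly as in the proof of Lemma \ref{lem:pairing_orientation_line_simplex}, converting the twist by $\ro_{V_n}^{-1}$ into the twist by $\ro_{V_{n-1}}^{-1}$ that appears in $C^{*,c}_{\rel \partial}(\partial^n \vec{f})$.

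The simplicial identities $\partial^{j-1} \circ \partial^i = \partial^i \circ \partial^j$ for $0 \le i < j \le n$ are verified by case analysis; in every case the two operations act on disjoint portions of the cube data (distant composition nodes, a composition node and a terminal pairing, or two terminal pairings on different ends), and the identity is immediate from the manifest commutativity of the underlying geometric operations on the tower $X_0 \leftarrow N^{\vec{f}}f^1 \leftarrow \cdots \leftarrow N^{\vec{f}}f^n$.

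To show that $\partial^n$ is a quasi-isomorphism, I would apply Lemma \ref{lem:pairing_orientation_line_simplex} fibrewise over the first $n$ cube factors: filter by the skeleton in $\square^{k_0} \times \cdots \times \square^{k_{n-1}}$ and use that the graded pieces are exactly the mapping spaces that Lemma \ref{lem:pairing_orientation_line_simplex} analyses, so that the spectral sequence collapses to identify the source with the target. For $\partial^i$ with $1 \le i < n$, the key point is that the comparison map
\[ N^{\vec{f}}f^{i+1} \longrightarrow N^{\partial^i \vec{f}}(f^{i+1} \circ f^i) \]
is a fibrewise equivalence relative to $N^{\vec{f}}f^{i-1}$: the fibre of the source consists of pairs $(y, z)$ where $y$ is a point in a punctured neighbourhood of $\iota_{i-1}(w)$ in $\partial^{P_{f_i}} X_i$ and $z$ is in a punctured neighbourhood of $y$ in $\partial^{P_{f_{i+1}}} X_{i+1}$, while the fibre of the target is a punctured neighbourhood of $\iota_{i-1}(w)$ in $\partial^{P_{f_{i+1} \circ f_i}} X_{i+1}$; the transversality in the definition of a map of Kuranishi charts provides a local product model that identifies the two up to deformation retraction onto the image of the composite. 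The induced map on transverse multi-cube chains is then a quasi-isomorphism by the standard filtration argument.

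The main obstacle is verifying the fibrewise equivalence in the intermediate case, which requires carefully combining the local product structure of Equation \eqref{eq:quotient_has_open_embedding} for $f^i$ and $f^{i+1}$ to produce a neighbourhood model of $N^{\partial^i \vec{f}}(f^{i+1}\circ f^i)$ inside the iterated bundle. The reason that $\partial^0$ cannot in general be a quasi-isomorphism is that it changes the base of the tower from $X_0$ to $X_1$, replacing the relative pair $(X_0, X_0 \setminus Z_0)$ by $(X_1, X_1 \setminus Z_1)$; the additional ambient room in $X_1$ outside the image of $f^1$ supports extra relative classes, consistent with the intended geometric interpretation of $\partial^0$ as encoding the inclusion of a boundary stratum rather than a homotopical equivalence.
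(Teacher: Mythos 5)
Your construction of the three face maps is exactly the one the paper uses: projection $N^{\vec{f}}f^1 \to X_1$ for $i=0$, the comparison $\iota_i^*Nf^{i+1} \to N(f^{i+1}\circ f^i)$ of towers together with the cube concatenation $\square^{k_i}\times\square^{k_{i+1}}\cong\square^{k_i+k_{i+1}}$ for $1\le i<n$, and the fibrewise pairing with $\ro_{V_n/V_{n-1}}$ as in Lemma \ref{lem:pairing_orientation_line_simplex} for $i=n$. Your additional arguments for the preservation of the transversality conditions, the simplicial identities, and the quasi-isomorphism claims are correct elaborations of points the paper's proof leaves implicit.
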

\begin{proof}
  The maps are defined in each of the three cases as follows, where we implicitly use the identification $ \square^{k} \times  \square^{k'}  \cong \square^{k+k'}$ in the first two cases:
  \begin{enumerate}
  \item For $i=0$, we project from $N^{\vec{f}}f^1$ to $X_1$.
  \item For $1 \leq i < n$, we observe that we have a commutative diagram
    \begin{equation}
      \begin{tikzcd}
     \iota_i^*    N f^{i+1} \ar[r] \ar[d]  & N \left( f^i \circ f^{i+1}\right) \ar[d] \\
          N f^i \ar[r] & X_{i-1}.
      \end{tikzcd}
    \end{equation}
    Pulling back to $N^{\vec{f}} f^{i-1}$, we obtain the map of towers
\begin{equation}
  \begin{tikzcd}[column sep=8]
    X_0  \ar[d] & \cdots \ar[l] \ar[d]  &   N^{\vec{f}} f^{i-1} \ar[l] \ar[d,"="]  & N^{\vec{f}} f^{i} \ar[l]   &  N^{\vec{f}} f^{i+1} \ar[d] \ar[l] & \cdots \ar[l] \ar[d]  & N^{\vec{f}} f^n \ar[l]\ar[d]   \\
    X_0  & \cdots \ar[l] &  N^{\partial^i \vec{f}} f^{i-1} \ar[l] &   &  N^{\partial^i \vec{f}} \left( f^{i+1} \circ f^i \right) \ar[ll] & \cdots \ar[l] & N^{\partial^i \vec{f}} f^n \ar[l].
  \end{tikzcd}
\end{equation}
Composing each multi-cube with target $N^{\vec{f}} f^n $ with the rightmost vertical map, we obtain a multi-cube with the desired target.
\item For $i=n$, we use the canonical isomorphism $\ro_{V_n} \cong \ro_{V_{n-1}} \otimes \ro_{V_n/V_{n-1}} $ of local systems over $X_n$. As in Lemma \ref{lem:pairing_orientation_line_simplex}, the map is then induced by the pairing of $ \ro_{V_n/V_{n-1}}$ with cubes in the fibre of $N f^n$ of dimension $ \dim V_{n}/V_{n-1}$, whose boundary maps away from the origin. 
  \end{enumerate}
\end{proof}
The above result can be restated as the construction of a semi-simplicial chain complex, i.e. a functor from the category of non-degenerate simplices of $\Delta^n$ to the category of cochain complexes. It will be convenient later to know that this extends to a simplicial chain complex; to this end, we write $s^i \vec{f}$ for the sequence $(f^1, \ldots, f^{i}, \id_{\bX_i}, f^{i+1}, \ldots f^n )$. Since the homotopy normal bundle of the identity map is the trivial bundle with fibre a point, we have: 
\begin{lem}
  There is a natural isomorphism
  \begin{equation}
        C^{*,c}_{\rel \partial}(\vec{f}) \cong C^{*,c}_{\rel \partial}(s^i \vec{f})
      \end{equation}
      which extend $C^{*,c}_{\rel \partial}(\vec{f})$ to a functor on the category of simplices of $\Delta^n$. \qed 
\end{lem}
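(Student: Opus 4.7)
The plan is to exploit the essentially trivial nature of the homotopy normal bundle of an identity map, and then to verify that this isomorphism, combined with the face maps from the previous lemma, satisfies the simplicial identities.

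First I would compute $N \id_{\bX_i}$: by the defining formula, the fibre at $x \in X_i$ is $\partial^{P_{\id}} X_i \setminus (X_i \cup Z_i) \cup \{x\} = \{x\}$, since $P_{\id}$ is the minimal element of $\cQ_i$ and so $\partial^{P_{\id}} X_i = X_i$. Thus $N \id_{\bX_i}$ is the trivial point bundle over $X_i$, and after pulling back, the tower for $s^i \vec{f}$ inserts a copy of $N^{\vec{f}} f^i$ in the $(i+1)$-st position, with the map to the succeeding term being the identity and the map from the preceding term also being the identity.

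Next I would analyze the multi-cubes. A map $\square^{k_0} \times \cdots \times \square^{k_i} \times \square^{k_{i+1}} \times \cdots \to N^{s^i \vec{f}} f^{n+1}$ covering the tower must factor so that the $\square^{k_{i+1}}$ factor maps, over $N^{s^i \vec{f}} f^i$, into the fibres of the trivial point bundle. Hence it is constant in the $\square^{k_{i+1}}$ direction and factors through the projection $\square^{k_{i+1}} \to \square^0$. In the symmetric cubical framework (cf.\ Appendix \ref{sec:symm-cubic-sets}) such a cube is degenerate unless $k_{i+1}=0$. Moreover the transversality condition \eqref{eq:chains_transverse_to_o-section} at the inserted position is vacuous (since $\dim V_i/V_i = 0$), and the relative quotient contributes nothing because the complement of the zero section in a point bundle is empty. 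Therefore the non-degenerate generators of $C^{*,c}_{\rel \partial}(s^i \vec{f})$ are in canonical bijection with those of $C^{*,c}_{\rel \partial}(\vec{f})$, which gives the desired natural isomorphism after checking compatibility with the differentials (the boundary terms for the inserted $\square^0$ direction are empty).

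Finally I would promote the previous lemma's face maps $\partial^i$ and these degeneracies $s^i$ to a functor on the category of simplices of $\Delta^n$. The category of simplices is generated by face and degeneracy maps subject to the simplicial identities, so one has to check: $\partial^i s^j = s^{j-1} \partial^i$ for $i<j$, $\partial^i s^i = \partial^{i+1} s^i = \id$, $\partial^i s^j = s^j \partial^{i-1}$ for $i>j+1$, and $s^i s^j = s^{j+1} s^i$ for $i\leq j$. Each identity reduces, by the explicit description of both operations in terms of the multi-cube data, to a combinatorial statement about reindexing and inserting trivial factors, which is straightforward.

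The main obstacle is purely bookkeeping: making sure the transversality and quotient conditions defining $C^{*,c}_{\rel \partial}(s^i \vec{f})$ correspond exactly to those defining $C^{*,c}_{\rel \partial}(\vec{f})$ under insertion of a trivial point-bundle factor, and then verifying the simplicial identities on the nose rather than up to isomorphism. Since no choices are involved in the degeneracy — the inserted factor is canonically a point — the identities hold strictly, so the extension to a functor is genuine and not merely up to coherent isomorphism.
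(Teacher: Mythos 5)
Your proposal is correct and follows exactly the route the paper intends: the paper's entire justification is the observation that the homotopy normal bundle of the identity is the trivial point bundle (stated in the sentence preceding the lemma, which is then left as \qed), and your elaboration — the inserted factor forces $k_{i+1}=0$ up to degeneracy, the transversality and quotient conditions at that slot are vacuous, and the simplicial identities hold strictly because no choices are made — is the intended bookkeeping. Nothing to add.
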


\subsection{Homotopy normal bundles and multisimplicial chains II: the general case}
\label{sec:homot-norm-bundl-2}

We now consider an $n$-simplex of Kuranishi charts, i.e. a functor
\begin{equation}
  \sigma \co \bfn \to \Chart,
\end{equation}
whose domain is the category $\bfn = \{ 0 < 1 < \cdots < n \}$. We write $f^i_{\sigma} \co \bX_{\sigma_{i-1}} \to \bX_{\sigma_{i}}$ for the corresponding maps, and do not assume that the underlying maps $f^i_{\sigma} \co G_{i-1} \to G_i$ are isomorphisms. We denote the induced maps of charts with automorphism group $G_{\sigma_n}$ by
\begin{equation}
  f^i_{\sigma} \times_{G_{\sigma_i}} G_{\sigma_n} \co \bX_{\sigma_{i-1}} \times_{G_{\sigma_{i-1}}} G_{\sigma_n} \to \bX_{\sigma_i} \times_{G_{\sigma_i}} G_{\sigma_n},
\end{equation}
which define a functor
\begin{equation}
  \sigma \times_{G_\sigma} G_{\sigma_n} \co \bfn \to   \Chart.
\end{equation}
More generally given a commutative diagram
\begin{equation}
  \begin{tikzcd}
    \mathbf{k} \ar[r,"\tau"] \ar[d] & \cC . \\
    \mathbf{m} \ar[ur,"\sigma"] & 
  \end{tikzcd}
\end{equation}
we obtain a map
\begin{equation}
  \tau \times_{G_\tau} G_{\sigma_n} \co  \Bbbk \to \Chart.
\end{equation}
We write $\tau \subset \sigma$ when the vertical arrow above is an inclusion.
\begin{defin} \label{def:virtual_cochains_chart}
  The \emph{twisted virtual cochains} of a simplex $\sigma \co \bfn \to \Chart$ is the co-chain complex
  \begin{equation} \label{eq:virtual_cochain_simplex}
    C^{*,c}_{\rel \partial}(\sigma) \equiv \bigoplus_{\tau \subset \sigma}  C^{*,c}_{\rel \partial}(\tau \times_{G_{\tau}} G_{\sigma_n} )[\dim \tau]
  \end{equation}
 with differential given by the alternating sum of the restriction maps in Equation \eqref{eq:restriction_map} from a stratum to its boundary facets ($\dim \tau$ refers to the dimension of the simplex corresponding to the domain of $\tau$).
\end{defin}
The following result will be essentially used, in later arguments, to control the quasi-isomorphism type of the virtual cochains that we assign to Kuranishi presentation:
\begin{lem} \label{lem:compute_virtual_cochains_simplex}
  The inclusion of the last vertex induces a quasi-isomorphism
  \begin{equation}
     C^{*,c}_{\rel \partial}(\bX_{\sigma_n}) \to   C^{*,c}_{\rel \partial}(\sigma).
   \end{equation}
\end{lem}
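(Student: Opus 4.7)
The plan is to introduce a decreasing filtration of $C^{*,c}_{\rel \partial}(\sigma)$ indexed by the "starting vertex" of a subsimplex $\tau \subset \sigma$, and to show that the associated graded pieces are acyclic by a second spectral sequence argument that converts the data into the augmented reduced cochain complex of a simplex.

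Concretely, identify subsimplices $\tau \subseteq \sigma$ with nonempty subsets $S \subseteq \{0, 1, \ldots, n\}$, and let $F_k \subseteq C^{*,c}_{\rel \partial}(\sigma)$ be the subcomplex generated by summands corresponding to $S$ with $\min S \geq k$. Then $F_0 = C^{*,c}_{\rel \partial}(\sigma)$, while $F_n$ is precisely the image of $C^{*,c}_{\rel \partial}(\bX_{\sigma_n})$ under the inclusion of the last vertex. First I would verify that $F_k$ is a subcomplex: among the face maps $\partial^i$ appearing in the differential, those with $i > 0$ preserve the first vertex of $S$, whereas $\partial^0$ drops the first vertex and therefore strictly increases $\min S$; in either case $F_k$ is preserved. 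Hence it suffices to show $F_k / F_{k+1}$ is acyclic for all $k < n$.

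Next, observe that $F_k/F_{k+1}$ is generated by those $S$ with $\min S = k$, i.e.\ $S = \{k\} \cup T$ with $T \subseteq \{k+1, \ldots, n\}$ (possibly empty), and that $\partial^0$ is killed in the quotient, leaving only the face maps $\partial^i$ with $i > 0$ (which correspond to removing elements of $T$). Now filter $F_k/F_{k+1}$ by $|T|$: the associated graded has zero induced differential, so the $E_1$-page of the resulting spectral sequence is
\begin{equation}
E_1^{j,*} \;=\; \bigoplus_{T \subseteq \{k+1, \ldots, n\},\, |T|=j}\; H^*\!\bigl(C^{*,c}_{\rel \partial}(\sigma|_{\{k\} \cup T} \times_{G} G_{\sigma_n})\bigr)[j],
\end{equation}
with $d_1$ given by the alternating sum of the maps induced by $\partial^i$ for $i > 0$. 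By the preceding lemma these maps are all quasi-isomorphisms at the cochain level, and the coherence of the simplicial structure identifies the $H^*$ of all these complexes with the common module $H^* C^{*,c}_{\rel \partial}(\bX_{\sigma_n})$. Under this identification $(E_1, d_1)$ becomes the tensor product of this module with the augmented reduced cochain complex of the simplex on vertex set $\{k+1, \ldots, n\}$ (the summand $T = \emptyset$ plays the role of the augmentation). Since a simplex is contractible this complex is acyclic, so $E_2 = 0$ and $F_k/F_{k+1}$ is acyclic as desired; the lemma then follows by induction on $k$.

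The main obstacle is the final identification in step three: we must check that the cohomology-level isomorphisms induced by the various $\partial^i$ ($i>0$) are compatible enough to assemble $E_1$ into the augmented reduced cochain complex of a simplex (as opposed to some twisted variant), and that the signs introduced by the ordering of faces match those in the classical reduced chain complex. This is a bookkeeping exercise using the simplicial-object structure underlying the definition of virtual cochains and the commutation relations $\partial^i \partial^j = \partial^{j-1} \partial^i$ for $i < j$ recorded earlier, together with the fact that all the $\partial^i$ under consideration have $i > 0$ so we never touch the common first vertex $\sigma_k$.
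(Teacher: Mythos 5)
Your proposal is correct, and it rests on the same key input as the paper's argument, namely that the face maps $\partial^i$ are quasi-isomorphisms for $i>0$; but the decomposition you use is different. The paper filters $C^{*,c}_{\rel \partial}(\sigma)$ so that each subsimplex $\tau$ not containing the last vertex is paired with $\tau \cup \{n\}$ in the same graded piece; the associated graded is then $C^{*,c}_{\rel \partial}(\bX_{\sigma_n})$ plus a direct sum of shifted mapping cones of the maps $\partial^n \colon C^{*,c}_{\rel \partial}(\tau \times_{G_\tau} G_{\sigma_n}) \to C^{*,c}_{\rel \partial}(\partial^n\tau \times_{G_{\partial^n\tau}} G_{\sigma_n})$, each of which is acyclic because $\partial^n$ is a quasi-isomorphism. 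Your route instead filters by the minimal vertex and then identifies each graded piece, via a second spectral sequence, with (a module tensored with) the augmented chain complex of a nonempty simplex. The paper's pairing is a one-step argument that avoids the second spectral sequence and the bookkeeping you flag at the end; yours is slightly longer but makes the combinatorial mechanism (contractibility of the simplex on the remaining vertices) more visible, and the compatibility check you defer is indeed just the simplicial identities $\partial^i\partial^j = \partial^{j-1}\partial^i$ together with the fact that you never use $\partial^0$.

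One small correction: in your $E_1$-identification, the common module is not $H^* C^{*,c}_{\rel \partial}(\bX_{\sigma_n})$. The quasi-isomorphisms available to you in $F_k/F_{k+1}$ (all $\partial^i$ with $i>0$) retract every $\tau = \{k\}\cup T$ onto the $0$-simplex $\{k\}$, so the common module is $H^* C^{*,c}_{\rel \partial}(\bX_{\sigma_k} \times_{G_{\sigma_k}} G_{\sigma_n})$; identifying it with the cohomology at the last vertex would require $\partial^0$, which is precisely the map that need not be a quasi-isomorphism. This does not affect your argument, since acyclicity of $F_k/F_{k+1}$ only needs the existence of a single coefficient module, not its identity.
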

\begin{proof}
  Filter the complex in Equation \eqref{eq:virtual_cochain_simplex} by the difference between the dimension of $\tau$ and the maximal element of $\bfn$ appearing in $\tau$. The associated graded group is the direct sum of $C^{*,c}_{\rel \partial}(\bX_{\sigma_n})$ with (a shift) of the cone of each map
  \begin{equation}
      C^{*,c}_{\rel \partial}(\tau \times_{G_{\tau}} G_{\sigma_n} ) \to  C^{*,c}_{\rel \partial}(\partial^n \tau \times_{G_{\partial^n \tau}} G_{\sigma_n} ).
    \end{equation}
    Since this map is a quasi-isomorphism, the result follows. 
\end{proof}

There are natural maps of twisted virtual cochains associated to maps of simplices: the key point is that, if $\vec{f}$ is a sequence of composable morphisms of Kuranishi charts with underlying group $G$, and  $G \to G'$ is a map of compact Lie groups whose kernel acts freely on the sequence of objects associated to $\vec{f}$, there is a natural equivalence
  \begin{equation}
    C^{*,c}_{\rel \partial}(\vec{f})    \to   C^{*,c}_{\rel \partial}(\vec{f} \times_{G} G').
  \end{equation}  

We formulate this naturality in terms of the category $  \Delta  \Chart$ of simplices in $\Chart $ as described in Appendix \ref{sec:mult-colim-via}:
\begin{lem} \label{lem:functoriality_twisted_cochains}
  The twisted virtual cochains define a functor
  \begin{equation}
 \Delta  \Chart \to \Ch.
\end{equation} \qed
\end{lem}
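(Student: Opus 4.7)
The plan is to decompose an arbitrary morphism in $\Delta\Chart$ into elementary generators, define the induced map on twisted virtual cochains for each generator, and then verify the requisite relations. The generating morphisms fall into three families: face inclusions $\sigma \circ d^i \hookrightarrow \sigma$, degeneracies $\sigma \to \sigma \circ s^i$, and vertexwise natural transformations between two simplices sharing the same underlying shape $\bfn$, each component of which is a morphism of Kuranishi charts (possibly involving a nontrivial map of automorphism groups whose kernel acts freely on the source).

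For a face inclusion, every sub-simplex of $\sigma \circ d^i$ remains a sub-simplex of $\sigma$, so the direct sum decomposition of Equation \eqref{eq:virtual_cochain_simplex} realises $C^{*,c}_{\rel \partial}(\sigma \circ d^i)$ as a subcomplex of $C^{*,c}_{\rel \partial}(\sigma)$, and the inclusion commutes with the differential because the restriction maps $\partial^j$ send summands indexed by $\tau \subset \sigma \circ d^i$ to summands of the same kind. For a degeneracy, the isomorphism $C^{*,c}_{\rel \partial}(\vec{f}) \cong C^{*,c}_{\rel \partial}(s^i \vec{f})$ recorded just before Definition \ref{def:virtual_cochains_chart} extends summand by summand to an isomorphism $C^{*,c}_{\rel \partial}(\sigma) \cong C^{*,c}_{\rel \partial}(\sigma \circ s^i)$, which is manifestly compatible with the restriction maps that enter the differential.

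The substantive work lies in handling vertexwise natural transformations. Given such a morphism $\sigma \Rightarrow \sigma'$, every sub-simplex $\tau \subset \sigma$ corresponds to a sub-simplex $\tau' \subset \sigma'$, and the natural transformation supplies compatible morphisms of charts at every vertex. After base-change to the terminal group $G_{\sigma'_n}$, the chain-level pushforward on the twisted symmetric cubical chains with coefficients in the homotopy normal bundles of the sequence yields the desired map on each summand; naturality with respect to a map of groups with freely acting kernel is the equivalence $C^{*,c}_{\rel \partial}(\vec{f}) \to C^{*,c}_{\rel \partial}(\vec{f} \times_G G')$ stated just before this Lemma, and this is what allows the summands to be reassembled into a map of total complexes.

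The main obstacle will be verifying that these three families of assignments satisfy the simplicial relations and that their composite commutes with the total differential. The simplicial identities reduce to the compatibilities of the restriction operations $\partial^j$ across boundary facets established in Section \ref{sec:homot-norm-bundl}, while the chain map property is automatic because the differential on $C^{*,c}_{\rel \partial}(\sigma)$ is itself built from alternating sums of $\partial^j$'s whose functoriality in each generator type has just been established. The bookkeeping for signs induced by the shifts $[\dim \tau]$ in Equation \eqref{eq:virtual_cochain_simplex}, and in particular their interaction with the Koszul signs of the symmetric cubical model, is the one place where errors are easily made; this is the step I would handle last and most carefully, though the argument is entirely standard.
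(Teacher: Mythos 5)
Your overall strategy---decompose morphisms of $\Delta\Chart$ into generators, define the map on each, and check the relations---is the right one, and it matches what the paper leaves implicit behind the \qed. But your decomposition of the morphisms is not the one the category actually has, and this causes you to misplace the single nontrivial step. By the definition in Appendix \ref{sec:mult-colim-via}, a morphism in $\Delta\Chart$ from $\sigma\co\bfn\to\Chart$ to $\sigma'\co\bfm\to\Chart$ is an order-preserving map $\bfn\to\bfm$ making the triangle \emph{strictly} commute; there is no third family of ``vertexwise natural transformations between two simplices of the same shape,'' so that part of your argument addresses morphisms that do not exist in this category.

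The consequence is that the base-change equivalence $C^{*,c}_{\rel\partial}(\vec{f})\to C^{*,c}_{\rel\partial}(\vec{f}\times_G G')$, which you correctly identify but deploy only for the phantom third family, is in fact needed for the \emph{face maps}. Your claim that a face inclusion realises $C^{*,c}_{\rel\partial}(\sigma\circ d^i)$ as a subcomplex of $C^{*,c}_{\rel\partial}(\sigma)$ on the nose is false when $d^i$ omits the terminal vertex: the summand of the source indexed by $\tau$ is $C^{*,c}_{\rel\partial}(\tau\times_{G_\tau}G_{\sigma_{n-1}})$, whereas the corresponding summand of the target is $C^{*,c}_{\rel\partial}(\tau\times_{G_\tau}G_{\sigma_n})$, and one must apply the base change along $G_{\sigma_{n-1}}\to G_{\sigma_n}$ to every summand before the inclusion makes sense. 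This is exactly what the paper singles out as ``the key point'' in the sentence preceding the lemma. Your ingredients are all present---the $\partial^j$ compatibilities from Section \ref{sec:homot-norm-bundl}, the degeneracy isomorphism $C^{*,c}_{\rel\partial}(\vec{f})\cong C^{*,c}_{\rel\partial}(s^i\vec{f})$, and the base-change equivalence---but the argument only closes once the base change is attached to the last-face map rather than to a nonexistent class of morphisms, and once the faces preserving the terminal vertex (where the groups literally agree) are separated from the one that does not.
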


\subsection{Multiplicativity of the virtual cochains}
\label{sec:mult-virt-coch}

The main purpose of this section is to prove that the twisted virtual cochains are externally multiplicative, with commutative multiplication. The basic ingredient of the proof is the following observation: let $\vec{f}$ and $\vec{g}$ be sequences of composable morphisms of the same length, with underlying maps of groups that are isomorphisms.  Let $\vec{f} \times \vec{g}$ denote the product of these morphisms, i.e. the sequence $(f^1 \times g^1, f^2 \times g^2, \cdots, f^n \times g^n)$:
\begin{lem}
  There is a natural map
  \begin{equation} \label{eq:simplicial_map_virtual_cochains}
        C^{*,c}_{\rel \partial}(\vec{f})  \otimes C^{*,c}_{\rel \partial}(\vec{g}) \to  C^{*,c}_{\rel \partial}(\vec{f} \times \vec{g}). 
      \end{equation}
      This product is associative and commutative in the sense that the next two diagrams commute.
     \begin{equation}\label{eq:associative_simplicial_map_virtual}
        \begin{tikzcd}[column sep = 10]
          C^{*,c}_{\rel \partial}( \vec{f})  \otimes \left( C^{*,c}_{\rel \partial}(\vec{g})  \otimes  C^{*,c}_{\rel \partial}(\vec{h})\right) \ar[r,"\cong"] \ar[d]  & \left( C^{*,c}_{\rel \partial}(\vec{f})  \otimes  C^{*,c}_{\rel \partial}(\vec{g})\right) \otimes C^{*,c}_{\rel \partial}( \vec{h})  \ar[d]  \\
         C^{*,c}_{\rel \partial}( \vec{f})  \otimes  C^{*,c}_{\rel \partial}(\vec{g} \times \vec{h}) \ar[d]  & C^{*,c}_{\rel \partial}(\vec{f} \times \vec{g})  \otimes  C^{*,c}_{\rel \partial}(\vec{h})  \ar[d] \\
     C^{*,c}_{\rel \partial}\left( \vec{f} \times ( \vec{g} \times \vec{h} )\right)  \ar[r,"\cong"]    &   C^{*,c}_{\rel \partial}\left( (\vec{f} \times \vec{g}) \times \vec{h}\right)
        \end{tikzcd}
      \end{equation}
      \begin{equation} \label{eq:commutative_simplicial_map_virtual}
        \begin{tikzcd}
          C^{*,c}_{\rel \partial}(\vec{f})  \otimes C^{*,c}_{\rel \partial}(\vec{g}) \ar[r] \ar[d] &  C^{*,c}_{\rel \partial}(\vec{f} \times \vec{g}) \ar[d] \\
          C^{*,c}_{\rel \partial}(\vec{g})  \otimes C^{*,c}_{\rel \partial}(\vec{f}) \ar[r] & C^{*,c}_{\rel \partial}(\vec{g} \times \vec{f})
        \end{tikzcd}
      \end{equation}
\end{lem}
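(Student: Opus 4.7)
\medskip

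\noindent\textbf{Plan of proof.} The construction of the product map \eqref{eq:simplicial_map_virtual_cochains} proceeds by combining three external product structures. First, for each $1 \leq i \leq n$, the homotopy normal bundle construction is fibrewise, so we have a natural identification
\begin{equation}
  N^{\vec{f} \times \vec{g}}(f^i \times g^i) \cong N^{\vec{f}} f^i \times N^{\vec{g}} g^i
\end{equation}
of bundles over $X_0 \times Y_0$, carrying the zero section to the product of zero sections. Second, given multi-cubes $c \co \square^{k_0} \times \cdots \times \square^{k_n} \to N^{\vec{f}} f^n$ and $c' \co \square^{l_0} \times \cdots \times \square^{l_n} \to N^{\vec{g}} g^n$ satisfying the projection conditions, their external product is the multi-cube
\begin{equation}
 c \times c' \co \square^{k_0+l_0} \times \cdots \times \square^{k_n+l_n} \to N^{\vec{f} \times \vec{g}}(f^n \times g^n)
\end{equation}
under the identifications $\square^{k_i} \times \square^{l_i} \cong \square^{k_i+l_i}$. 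Third, we use the Alexander--Whitney (or shuffle) map $C_*(EG) \otimes C_*(EG') \to C_*(E(G \times G'))$ of bar resolutions, together with the canonical isomorphism $\ro_{V_n}^{-1} \otimes \ro_{W_n}^{-1} \cong \ro_{V_n \oplus W_n}^{-1}$ of orientation lines.

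The nontrivial point is that the product of transverse multi-cubes is transverse: a $d$-dimensional stratum of $\square^{k_i+l_i}$ factors as a product of a $d_1$-stratum of $\square^{k_i}$ and a $d_2$-stratum of $\square^{l_i}$ with $d_1+d_2 = d$. If $d < \dim V_i/V_{i-1} + \dim W_i/W_{i-1}$, then either $d_1 < \dim V_i/V_{i-1}$ or $d_2 < \dim W_i/W_{i-1}$, so by Condition \eqref{eq:chains_transverse_to_o-section} at least one of the factor images avoids its zero section, hence so does the product image in $N^{\vec{f}} f^i \times N^{\vec{g}} g^i$. A Leibniz calculation, together with the usual Leibniz identity for the bar resolution differential, verifies that the formula is a chain map. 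The fact that the product respects the quotient by the subcomplex of chains whose projection to $N^{\vec{f}} f^i$ (resp. $N^{\vec{g}} g^i$) misses the origin is immediate, since the product of a chain missing the origin with anything still misses the product origin.

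Associativity \eqref{eq:associative_simplicial_map_virtual} reduces to the strict associativity of cartesian product of cubes under the identification $\square^a \times (\square^b \times \square^c) = (\square^a \times \square^b) \times \square^c$, together with the associativity of the shuffle map on bar complexes and the associativity of the tensor product of orientation lines. Commutativity \eqref{eq:commutative_simplicial_map_virtual} is the step where we must work. The swap $c \times c' \leftrightarrow c' \times c$ differs from the Koszul-twisted identity by applying, in each factor $\square^{k_i+l_i}$, the permutation interleaving the two blocks of coordinates. Here the use of \emph{symmetric} cubical chains becomes essential: by construction we have quotiented out by the action of the coordinate-permuting symmetric group on each factor $\square^{k_i+l_i}$, so this permutation becomes the identity on chains, and one is left only with the Koszul sign from swapping $\ro_{V_n}^{-1} \otimes \ro_{W_n}^{-1}$ and from the analogous sign in the symmetric comparison of the bar resolutions of $G \times G'$ and $G' \times G$, which combine to the expected Koszul sign on the domain.

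\medskip

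\noindent\textbf{Expected obstacle.} The main bookkeeping difficulty is verifying the signs throughout: one must carefully reconcile the permutation sign convention for symmetric cubical chains with the Koszul signs coming from the graded tensor product of orientation lines, the grading shift in Equation \eqref{eq:virtual_cochain_simplex}, and the sign conventions for the bar resolution. Once the signs are fixed consistently, associativity is essentially automatic, and commutativity follows from the symmetric cubical identification; it is only the sign computation that requires real care.
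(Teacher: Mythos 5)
Your proposal is correct and follows essentially the same route as the paper's (much terser) proof: external product of multi-cubes via the identifications $\square^{k}\times\square^{\ell}\cong\square^{k+\ell}$, a dimension-count to verify that Condition \eqref{eq:chains_transverse_to_o-section} is preserved, multiplicativity of orientation lines, and invariance under permutation of cube factors to get commutativity. The details you supply (the transversality count, the shuffle map on bar resolutions, and the observation that the product descends to the relative quotient) are exactly the checks the paper leaves to the reader.
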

\begin{proof}
  This is a straightforward generalisation of the product map on symmetric cubical chains. Let us write $f^i \co \bX_{i-1} \to \bX_i$ and $g^i \co \bY_{i-1} \to \bY_i$. A product of multi-simplices gives rise to a map
  \begin{equation}
\square^{k_1} \times \square^{\ell_1} \times \cdots \times \square^{k_n} \times \square^{\ell_n} \to N^{\vec{f} \times \vec{g}} \left(f^n \times g^n \right).  
\end{equation}
Using the identification $\square^{k} \times \square^{\ell} \cong \square^{k+\ell}$, we obtain a map
 \begin{equation}
\square^{k_1 + \ell_1} \times \cdots \times \square^{k_n + \ell_n} \to N^{\vec{f} \times \vec{g}} \left(f^n \times g^n \right).
\end{equation}

This construction is invariant under the permutation of factors, and so the result follows by checking that Condition \eqref{eq:chains_transverse_to_o-section} is preserved, and using the multiplicativity of orientation lines.
\end{proof}

We now consider a pair $\sigma^1 \co \bfn_1 \to  \Chart $ and $ \sigma^2 \co \bfn_2 \to  \Chart$ of simplices of the nerve of the category of Kuranishi charts. Their product is the functor
\begin{equation}
  \sigma^1 \times \sigma^2 \co   \bfn_1  \times \bfn_2 \to  \Chart
\end{equation}
obtained by using the monoidal structure on $\Chart$.

Given a simplex $\iota \co \bfk \to \bfn_1  \times \bfn_2 $ of the prismatic subdivision of the product $\Delta^{n_1} \times \Delta^{n_2} $, we obtain a composite functor
\begin{equation}
 \left( \sigma^1 \times \sigma^2 \right) \circ \iota \co \Bbbk \to \Chart,
\end{equation}
to which we associate the virtual cochains
\begin{equation}
     C^{*,c}_{\rel \partial}(  \left( \sigma^1 \times \sigma^2 \right) \circ \iota  ).
\end{equation}
This construction is functorial with respect to inclusions of strata of the product $\Delta^{n_1} \times \Delta^{n_2} $. This leads us to define the cochains of the product $\sigma^1 \times \sigma^2$ as a colimit
\begin{equation}
   C^{*,c}_{\rel \partial}( \sigma^1 \times \sigma^2) \equiv \colim_{\iota \co \bfk \to  \bfn_1  \times \bfn_2 }  C^{*,c}_{\rel \partial}( ( \sigma^1 \times \sigma^2 ) \circ \iota  ).
 \end{equation}
We note that we are using the ordinary colimit here (rather than a homotopy colimit). It is straightforward to see that this construction defines a functor
 \begin{equation}
    C^{*,c}_{\rel \partial} \co   \Delta \Chart \times \Delta \Chart \to \Ch.
 \end{equation}

\begin{rem}
   In the language of Appendix \ref{sec:mult-colim-via}, the functor $C^{*,c}_{\rel \partial}$ is naturally isomorphic to the functor $\left(C^{*,c}_{\rel \partial} \right)^{\RS{lr}}$, which is obtained from the virtual cochains (on simplices) as a left Kan extension in the diagram
     \begin{equation}
     \begin{tikzcd}
       \Delta^{\RS{lr} }\Chart^2 \ar[r] \ar[d] &\Delta \left( \Chart \times \Chart \right) \ar[r] &  \Delta \Chart  \ar[r]   & \Ch, \\
      \Delta  \Chart \times \Delta \Chart \ar[urrr,dashed] & & &
    \end{tikzcd}
  \end{equation}
where $ \Delta^{\RS{lr} }\Chart^2$ is the category whose objects are compositions $  \mathbf{k} \to \mathbf{n}_1 \times \mathbf{n}_2 \to \Chart \times \Chart$. 
 \end{rem}
 
\begin{lem} \label{lem:virtual_cochains_product_commutative}
  The Eilenberg-Zilber shuffle maps define a natural transformation
  \begin{equation}
         C^{*,c}_{\rel \partial} \otimes C^{*,c}_{\rel \partial} \Rightarrow C^{*,c}_{\rel \partial}
  \end{equation}
  of functors on $\Delta  \Chart \times \Delta \Chart  $, which is commutative in the sense that the following diagram commutes for each pair $\sigma^1$ and $\sigma^2$ of simplices:
  \begin{equation}
    \begin{tikzcd}
    C^{*,c}_{\rel \partial}( \sigma^1) \otimes C^{*,c}_{\rel \partial}( \sigma^2) \ar[r] \ar[d] &   C^{*,c}_{\rel \partial}( \sigma^1 \times \sigma^2) \ar[d]  \\
     C^{*,c}_{\rel \partial}( \sigma^2) \otimes C^{*,c}_{\rel \partial}( \sigma^1) \ar[r] & C^{*,c}_{\rel \partial}( \sigma^2 \times \sigma^1). 
   \end{tikzcd}
 \end{equation}
\end{lem}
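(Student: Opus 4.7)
The plan is to build the natural transformation summand-by-summand, transporting the product established in Equation \eqref{eq:simplicial_map_virtual_cochains} through the combinatorics of prismatic subdivisions via the Eilenberg--Zilber shuffle. Given subsimplices $\tau_1 \co \mathbf{k}_1 \to \bfn_1$ and $\tau_2 \co \mathbf{k}_2 \to \bfn_2$ of $\sigma^1$ and $\sigma^2$, each $(k_1,k_2)$-shuffle
\[
\iota \co \mathbf{k_1 + k_2} \to \mathbf{k}_1 \times \mathbf{k}_2 \to \bfn_1 \times \bfn_2
\]
produces a non-degenerate simplex of $\Delta^{n_1} \times \Delta^{n_2}$ on which $(\sigma^1 \times \sigma^2)\circ \iota$ is a sequence of product charts of equal length $k_1+k_2$. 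The two projections of $\iota$ factor through $\tau_1$ and $\tau_2$, so after base change to $G_{\sigma^1_{n_1}} \times G_{\sigma^2_{n_2}}$, the pointwise multiplication from Equation \eqref{eq:simplicial_map_virtual_cochains} applied to $(\sigma^1 \times \sigma^2)\circ \iota$ gives a map from $C^{*,c}_{\rel \partial}(\tau_1 \times_{G_{\tau_1}} G_{\sigma^1_{n_1}}) \otimes C^{*,c}_{\rel \partial}(\tau_2 \times_{G_{\tau_2}} G_{\sigma^2_{n_2}})$ into a summand of $C^{*,c}_{\rel \partial}((\sigma^1 \times \sigma^2)\circ \iota)$; composing with the structure map of the colimit yields a map into $C^{*,c}_{\rel \partial}(\sigma^1 \times \sigma^2)$. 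The product is then defined as the signed sum of these maps over all shuffles, with signs as in the classical Eilenberg--Zilber formula.

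Next, I would verify that this assembly is a chain map. The internal differential on the virtual cochains of a simplex decomposes into the internal differentials of the summands in Equation \eqref{eq:virtual_cochain_simplex} and the restriction maps from Equation \eqref{eq:restriction_map}. For the former, the fact that the pointwise multiplication is a chain map is a direct consequence of Definition \ref{def:twisted_vcochains-constant-G} and the standard compatibility of shuffle products with boundaries in the multi-cubical setting. For the latter, the signs appearing in the shuffle formula are precisely designed so that restricting to a face of $\tau_1$ or $\tau_2$ on the left matches restricting to the corresponding face of the shuffled simplex on the right, up to the canonical isomorphism under the colimit. Naturality in morphisms of simplices $\tau \subset \sigma$ in $\Delta \Chart$ follows because both the pointwise product and the shuffle combinatorics are compatible with the base change maps described before Lemma \ref{lem:functoriality_twisted_cochains}.

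For commutativity, the key observation is that $(k_1,k_2)$-shuffles and $(k_2,k_1)$-shuffles are in canonical bijection via the transposition on $\{1,\dots,k_1+k_2\}$ swapping the two blocks, and under this bijection the signed shuffle product on $C_*(\square^{k_1}) \otimes C_*(\square^{k_2})$ is symmetric up to the Koszul sign, in complete analogy with the singular case. Combining this combinatorial symmetry with the commutativity of the pointwise product established in Diagram \eqref{eq:commutative_simplicial_map_virtual} translates directly into the commutativity of the desired diagram, once one keeps track of the Koszul signs arising from the degree shifts $[\dim \tau_i]$ in Equation \eqref{eq:virtual_cochain_simplex}.

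The main obstacle I anticipate is bookkeeping the interplay between three independent sources of signs: the Koszul signs on tensor products of twisted chains, the shuffle signs, and the signs in the restriction maps of Equation \eqref{eq:restriction_map} associated to the orientation lines $\ro_{V_i/V_{i-1}}$ and to the face maps in the simplex direction. Once one establishes that these signs combine correctly on a single summand — which reduces to the classical Eilenberg--Zilber sign computation tensored with the orientation lines — associativity and commutativity follow formally by comparing shuffles of $(\tau_1,\tau_2,\tau_3)$ via the two bracketings, respectively by comparing a shuffle with its transpose, exactly as in the singular setting.
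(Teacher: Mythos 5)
Your proposal matches the paper's proof in all essentials: the product is defined summand-by-summand on the decomposition of $C^{*,c}_{\rel \partial}(\sigma^1)\otimes C^{*,c}_{\rel \partial}(\sigma^2)$ indexed by pairs of subsimplices, with each $(k_1,k_2)$-shuffle contributing a map built from the constant-group product of Equation \eqref{eq:simplicial_map_virtual_cochains} together with the functoriality of Lemma \ref{lem:functoriality_twisted_cochains}, weighted by the classical shuffle sign, and commutativity is deduced from Diagram \eqref{eq:commutative_simplicial_map_virtual} combined with the symmetry of the Eilenberg--Zilber map. The paper likewise omits the detailed sign and functoriality bookkeeping that you flag as the main remaining obstacle, so no further comparison is needed.
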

\begin{proof}
Returning to the definition of the virtual cochains, the source of the desired map is 
  \begin{equation}
         \bigoplus_{\tau^1 \times \tau^2 \subset \sigma^1 \times \sigma^2}  C^{*,c}_{\rel \partial}(\tau^1 \times_{G_{\tau^1}} G_{\sigma^1_n} ) \otimes C^{*,c}_{\rel \partial}(\tau^2 \times_{G_{\tau^2}} G_{\sigma^2_n} )  [\dim \tau^1 + \dim \tau^2].
  \end{equation}
  On the other hand, the target of the map has the following explicit description
  \begin{equation}
       C^{*,c}_{\rel \partial}( \sigma^1 \times \sigma^2) \cong \bigoplus_{\rho \subset \sigma^1 \times \sigma^2}  C^{*,c}_{\rel \partial}(\rho \times_{G_{\rho}} G_{\sigma^1_{n_1} \times \sigma^2_{n_2}})[\dim \rho] .
     \end{equation}
The desired map will be given on each factor labelled by the product $\tau^1 \times \tau^2  $  by a sum of maps with target labelled by the non-degenerate simplices $\rho$ of $\tau^1 \times \tau^2$. Letting $\bfk_1$ and $\bfk_2$ denote the domains of $\tau^1$ and $\tau^2$, each such simplex is determined by a map
   \begin{equation}
       \bfk_{12} \to \bfk_1 \times \bfk_2,
     \end{equation}
     where $k_{12} = k_1 + k_2$, i.e. by a pair consisting of (possibly degenerate) simplices in the domains of $\Delta^{k_1}$ and $\Delta^{k_2}$, which we write as $\rho_1$ and $\rho_2$. The functoriality of virtual cochains (i.e. Lemma \ref{lem:functoriality_twisted_cochains}), together with the product of virtual cochains for morphisms with constant groups yield a composite map
                    \begin{align}
                      &  C^{*,c}_{\rel \partial}(\tau^1 \times_{G_{\tau^1}} G_{\sigma^1_n} ) \otimes C^{*,c}_{\rel \partial}(\tau^2 \times_{G_{\tau^2}} G_{\sigma^2_n} )  \\
                       & \to  C^{*,c}_{\rel \partial}(\rho^1 \times_{G_{\rho^1}} G_{\sigma^1_n} ) \otimes C^{*,c}_{\rel \partial}(\rho^2 \times_{G_{\tau^2}} G_{\sigma^2_n} )  \\ 
      &  \to  C^{*,c}_{\rel \partial}(\rho \times_{G_{\rho}} G_{\sigma^1_{n_1} \times \sigma^2_{n_2}}).
     \end{align}
     The desired map is the sum of these map, multiplied by the standard shuffle sign which we now describe: because we have set $k_{12} = k_1 + k_2$, and $\rho$ is non-degenerate, the repeated elements associated to the maps $\bfk_{12} \to \bfk_1$ and $\bfk_{12} \to \bfk_{2}$ are distinct. This determines a partition of $\bfk_{12} \setminus \{0\}  $ into disjoint sets with $k_1$ and $k_2$ elements, which is called a $(k_1, k_2)$-shuffle. The shuffle sign is the sign of the permutation that maps this partition into the partition of $\bfk_{12} \setminus \{0\}$ consisting of the first $k_1$ elements and their complement.

     We omit the proof of the functoriality of this construction, and note that its commutativity is a consequence of the commutativity of Diagram \eqref{eq:commutative_simplicial_map_virtual} and of the Eilenberg-Zilber map.
\end{proof}

In order to formulate the associativity of our construction, we consider a tri-simplex in $\Chart$, and define
\begin{equation}
   C^{*,c}_{\rel \partial}( \sigma^1 \times \sigma^2 \times \sigma^3) \equiv \colim_{\iota \co \bfk \to  \bfn_1  \times \bfn_2 \times \bfn_3}  C^{*,c}_{\rel \partial}( ( \sigma^1 \times \sigma^2 \times \sigma^3 ) \circ \iota  ).
 \end{equation}
This construction gives a functor on $\left(\Delta \cC\right)^3  $ and  Lemma \ref{lem:multiply_twice}, which is the key result of Appendix \ref{sec:mult-colim-via}, implies that the product which we have constructed determines natural transformations
    \begin{align}
      C^{*,c}_{\rel \partial} \otimes ( C^{*,c}_{\rel \partial} \otimes C^{*,c}_{\rel \partial}) \Rightarrow &  C^{*,c}_{\rel \partial} \\
      ( C^{*,c}_{\rel \partial}  \otimes C^{*,c}_{\rel \partial} )  \otimes  C^{*,c}_{\rel \partial} \Rightarrow & C^{*,c}_{\rel \partial}.
    \end{align}
    We omit the proof of the following result, as it is a slightly tedious imitation of the proof of associativity of the Eilenberg-Zilber product, combined with the commutativity of Diagram \eqref{eq:associative_simplicial_map_virtual}.
    \begin{lem}
      \label{lem:assocativity_virtual_cochains_product}
      The following diagram is commutative
      \begin{equation}
        \begin{tikzcd}[column sep = 10]
          C^{*,c}_{\rel \partial}( \sigma^1)  \otimes \left( C^{*,c}_{\rel \partial}(\sigma^2)  \otimes  C^{*,c}_{\rel \partial}(\sigma^3)\right) \ar[r,"\cong"] \ar[d]   & \left( C^{*,c}_{\rel \partial}(\sigma^1)  \otimes  C^{*,c}_{\rel \partial}(\sigma^2)\right) \otimes C^{*,c}_{\rel \partial}( \sigma^3)  \ar[d]  \\
         C^{*,c}_{\rel \partial}( \sigma^1)  \otimes  C^{*,c}_{\rel \partial}(\sigma^2 \times \sigma^3) \ar[d] & C^{*,c}_{\rel \partial}(\sigma^1 \times \sigma^2)  \otimes  C^{*,c}_{\rel \partial}(\sigma^3)  \ar[d] \\
      C^{*,c}_{\rel \partial}( \sigma^1 \times (\sigma^2 \times \sigma^3))  \ar[r,"\cong"]  &  C^{*,c}_{\rel \partial}( (\sigma^1 \times \sigma^2) \times \sigma^3) .
        \end{tikzcd}
      \end{equation} \qed
    \end{lem}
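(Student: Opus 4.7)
The plan is to reduce the statement to the analogous associativity for sequences of morphisms with constant automorphism groups, where it is essentially an assertion about Eilenberg--Zilber shuffle signs twisted by the product of multi-cubes, i.e.\ Diagram \eqref{eq:associative_simplicial_map_virtual}. First I would invoke Lemma \ref{lem:multiply_twice} to identify the two corners at the bottom of the diagram, $C^{*,c}_{\rel \partial}(\sigma^1\times(\sigma^2\times\sigma^3))$ and $C^{*,c}_{\rel \partial}((\sigma^1\times\sigma^2)\times\sigma^3)$, as the same colimit over the category of simplices $\iota\co\bfk\to\bfn_1\times\bfn_2\times\bfn_3$ of the prismatic subdivision of $\Delta^{n_1}\times\Delta^{n_2}\times\Delta^{n_3}$; the bottom horizontal arrow is then the identity on this colimit, and the top associator isomorphism is the usual one. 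It therefore suffices to check commutativity on each summand of the triple direct-sum appearing in the upper corners.

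Fixing a triple of subsimplices $(\tau^1,\tau^2,\tau^3)\subset(\sigma^1,\sigma^2,\sigma^3)$ with $\bfk_i$ denoting the domain of $\tau^i$, both composites project onto the direct sum, over non-degenerate simplices $\rho\subset\tau^1\times\tau^2\times\tau^3$ with domain $\bfk_{123}$ (where $k_{123}=k_1+k_2+k_3$), of
\begin{equation}
    C^{*,c}_{\rel \partial}\bigl(\rho\times_{G_\rho}G_{\sigma^1_{n_1}\times\sigma^2_{n_2}\times\sigma^3_{n_3}}\bigr).
\end{equation}
Each such $\rho$ is determined by a map $\bfk_{123}\to\bfk_1\times\bfk_2\times\bfk_3$, equivalently by a $(k_1,k_2,k_3)$-shuffle of $\bfk_{123}\setminus\{0\}$. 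On the $\rho$-component, one composite factors through an intermediate $(k_2,k_3)$-shuffle followed by a $(k_1,k_2+k_3)$-shuffle, whereas the other factors through a $(k_1,k_2)$-shuffle followed by a $(k_1+k_2,k_3)$-shuffle. The classical associativity of the Eilenberg--Zilber product says these two factorisations of the same shuffle carry equal signs; combined with Diagram \eqref{eq:associative_simplicial_map_virtual}, applied to the constant-group sequences $\rho\circ(\bfk_{123}\to\bfk_i)$, this gives the agreement of the two virtual-cochain maps on the $\rho$-summand.

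The main obstacle is not any genuine mathematical content but the bookkeeping of orientation lines and transversality conditions through the two sides: one must verify that the iterated shuffle maps respect Condition \eqref{eq:chains_transverse_to_o-section} in both orders of application, and that the canonical associator of local systems $\ro_{V^1}^{-1}\otimes(\ro_{V^2}^{-1}\otimes\ro_{V^3}^{-1})\cong(\ro_{V^1}^{-1}\otimes\ro_{V^2}^{-1})\otimes\ro_{V^3}^{-1}$ intertwines the two product maps built from \eqref{eq:simplicial_map_virtual_cochains}. Once the indexing above is set up, both of these are formal consequences of the associativity of the underlying tensor product of chain complexes and of the Koszul sign rule, and the lemma follows summand by summand.
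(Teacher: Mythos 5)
Your proposal follows exactly the route the paper indicates (the proof is omitted there as "a slightly tedious imitation of the proof of associativity of the Eilenberg--Zilber product, combined with the commutativity of Diagram \eqref{eq:associative_simplicial_map_virtual}"): identify the two bottom corners via Lemma \ref{lem:multiply_twice}, reduce summand by summand to the shuffle-sign associativity, and invoke Diagram \eqref{eq:associative_simplicial_map_virtual} for the constant-group sequences. The argument is correct and matches the intended one.
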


\subsection{Evaluation map on virtual cochains}
\label{sec:eval-map-virt}

In Lemma \ref{lem:evaluation_map_naive}, we stated the existence of a morphism in the homotopy category from the virtual cochains of naive charts to the constant functor. The goal of this section is to formulate the corresponding statement for the virtual cochains associated to arbitrary simplices in $\Chart$, i.e. to drop the condition that the map of vector bundles be an isomorphism.

The starting point is to consider, for each sequence $\vec{f}$ of morphisms of Kuranishi charts with isomorphic automorphism groups, the projections of vector bundles
\begin{equation}
  \begin{tikzcd}
       V_0 &  \left(f^1\right)^* V_1 \ar[l] & \left(f^2_0 \right)^* V_2 \ar[l] & \cdots \ar[l] & \left(f^n_0 \right)^* V_n \ar[l]
  \end{tikzcd}
\end{equation}
over $X_0$. We consider as before maps from a multi-cube to $ \left(f^n_0 \right)^* V_n $, whose projection to $ \left(f^i_0 \right)^* V_i $ factors through the projection to the $i$\th  factor, i.e. so that we have a commutative diagram
\begin{equation}
  \begin{tikzcd}
 \square^{k_0} \ar[d] & \square^{k_0}  \times  \square^{k_1} \ar[l] \ar[d] & \cdots \ar[l] \ar[d] &  \square^{k_0} \times \cdots \times \square^{k_n} \ar[d] \ar[l] \\
       V_0 &  \left(f^1\right)^* V_1 \ar[l]  & \cdots \ar[l] & \left(f^n_0 \right)^* V_n. \ar[l]
  \end{tikzcd}
\end{equation}
We define a twisted cochain complex of multi-cubes
\begin{equation}\label{eq:multi-simplex-sequence-of-vector-bundles}
  C_{-*}(\vec{V}|0; \ro_{V}^{-1})  
\end{equation}
associated to the pull back of the orientation line of $V_n$ by the rightmost vertical map, by taking the quotient by those cubes which project to the complement of the $0$-section of each of the above vector bundle, and restricting to those which satisfy the following property for each $i \in \{1, \ldots, n\}$:
\begin{equation} \label{eq:transverse_chains_positive}
  \parbox{31em}{the image  in $\left(f^i_0 \right)^* V_i $  of each stratum  $\square^{k_0} \times \cdots \times \square^{d} \subset \square^{k_0} \times \cdots \times \square^{k_i}$ lies in the complement of $ \left(f^{i-1}_0 \right)^* V_{i-1} $ if $d < \dim V_{i}/V_{i-1}$.}      
 \end{equation}
 At this stage, we note that the section $\fs_n$ defines a natural map
 \begin{equation}
     C^{*,c}_{\rel \partial}(\vec{f}) \to    C_{-*}(\vec{V}|0; \ro_{V}^{-1})  .
 \end{equation}

We would like to compose the above map with an evaluation map from the right handside to the ground ring, but this is only possible after restricting to a subcomplex. Indeed, observe that Condition \eqref{eq:transverse_chains_positive} does not include a constraint on the map $\square^{k_0} \to    V_0$. We write
\begin{equation} \label{eq:inclusion_transverse_chains_0}
 C_{-*}^{\pitchfork_0}(\vec{V}|0; \ro_{V}^{-1})   \subset  C_{-*}(\vec{V}|0; \ro_{V}^{-1})  
\end{equation} 
for the subcomplex of where we impose in addition the condition that
\begin{equation}
  \parbox{31em}{the image  in $ V_0 $  of each stratum  $ \square^{d} \subset \square^{k_0}$ lies in the complement of the zero-section if $d < \dim V_0$.}      
 \end{equation}
 \begin{lem}
   The inclusion map in Equation \eqref{eq:inclusion_transverse_chains_0} is a quasi-isomorphism, and its domain is equipped with a natural evaluation map
   \begin{equation}
      C_{-*}^{\pitchfork_0}(\vec{V}|0; \ro_{V}^{-1})  \to \Bbbk.     
   \end{equation} \qed
 \end{lem}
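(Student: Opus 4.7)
Both parts reduce to iterated application of the classical Thom isomorphism along the filtration $V_0 \subset V_1 \subset \cdots \subset V_n$ of pulled-back vector bundles over $X_0$; the transversality data built into $\pitchfork$ and $\pitchfork_0$ are exactly what is needed to carry out this iteration strictly at the chain level.

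For the quasi-isomorphism, I plan to exhibit a chain-level deformation retraction onto $C^{\pitchfork_0}_{-*}(\vec V|0;\ro_V^{-1})$ by a subdivision-and-perturbation procedure acting only in the $\square^{k_0}$-direction. Given a generator $\sigma\colon \square^{k_0}\times\cdots\times\square^{k_n} \to (f_0^n)^*V_n$ of $C_{-*}(\vec V|0;\ro_V^{-1})$, the factored map $\sigma_0\colon \square^{k_0}\to V_0$ need not be stratum-wise transverse to the origin. Performing a sufficiently fine cubical subdivision of $\square^{k_0}$ (leaving the other factors untouched) and post-composing with a small $C^0$-isotopy of $V_0$ supported near $0$, one arranges that every face of dimension $d<\dim V_0$ of the subdivided cube maps into $V_0\setminus 0$. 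Both the subdivision and the isotopy are chain-homotopic to the identity and preserve the factorisation compatibility as well as the $\pitchfork$ conditions in the factors $\square^{k_i}$ with $i\geq 1$ (since those do not involve $V_0$); concatenating them yields the required chain-level retraction and chain homotopy.

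For the evaluation, the combined conditions $\pitchfork$ and $\pitchfork_0$ are precisely what is needed to make sense of an iterated pairing against the Thom classes of $V_0$, $V_1/V_0$, \ldots, $V_n/V_{n-1}$. Concretely, in degree $0$ (where $\sum_i k_i = \dim V_n$) the composite map $\square^{k_0}\times\cdots\times\square^{k_n}\to V_n$ satisfies enough face-wise disjointness from the filtration strata that one may inductively pair against the zero section of $V_0$, then of $V_1/V_0$, and so on up the filtration; the resulting top-dimensional count is paired against the fixed element of $\ro_{V_n}^{-1}$ to yield a scalar in $\Bbbk$. Equivalently, one may package this as pulling back the Thom class of the pair $(V_n, V_n\setminus 0)$ twisted by $\ro_{V_n}^{-1}$ along $\sigma$, the transversality hypotheses supplying exactly the statement that the resulting relative cochain is well-defined. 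In lower degrees the map is zero for dimensional reasons, and the chain-map property follows from face-wise transversality applied to the codimension-one boundary.

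The main obstacle I anticipate is the bookkeeping of Koszul and orientation signs in the iterated pairing, which must be chosen to align with the shuffle signs from Lemmas \ref{lem:virtual_cochains_product_commutative} and \ref{lem:assocativity_virtual_cochains_product} so that the resulting map participates monoidally in the zig-zag used downstream. A secondary technical point is choosing the general-position perturbation in the first part in a way that is sufficiently intrinsic to be functorial with respect to the whole diagram in $\Delta\Chart$; if a direct construction proves too delicate, one can fall back on an abstract acyclicity argument for the quotient complex, filtered by the strata on which $\pitchfork_0$ fails.
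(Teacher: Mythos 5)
The paper states this lemma with no proof, but its intended argument is the one it spells out for the closely analogous Lemma \ref{lem:pairing_orientation_line_simplex}: the evaluation map vanishes except on multi-cubes of total dimension $\dim V_n$, where it is the pairing of the top factor with the orientation line (well defined because the relevant boundary faces avoid the zero section), and the quasi-isomorphism is obtained by excision and Mayer--Vietoris, reducing to a single fibre where the relative homology of $V_x|0$ is concentrated in degree $\dim V_0$. Your treatment of the evaluation map is correct and agrees with this; the ``iterated Thom class'' packaging is equivalent to the single pairing against $\ro_{V_n}^{-1}$ via the canonical isomorphism $\ro_{V_n}\cong\ro_{V_0}\otimes\ro_{V_1/V_0}\otimes\cdots\otimes\ro_{V_n/V_{n-1}}$, and the symmetric cubical conventions (quotienting by transpositions with sign) absorb the shuffle-sign bookkeeping you worry about.

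The genuine gap is in your first step. In this paper the manifolds and bundles are only topological and the chains are arbitrary continuous multi-cubes, so a face $\square^d\to V_0$ with $d<\dim V_0$ can have space-filling image; no subdivision followed by a $C^0$-small isotopy of the target supported near the zero section can force such a face into $V_0\setminus 0$. General position in this sense is simply unavailable, which is why the paper never argues by perturbation. The correct route is homological: either your own fallback (show the quotient by the $\pitchfork_0$-subcomplex is acyclic, pushing the low-dimensional skeleton off the zero section inductively using the vanishing of $H_d(V_0,V_0\setminus 0)$ for $d<\dim V_0$), or the excision/Mayer--Vietoris reduction to a fibre used in Lemma \ref{lem:pairing_orientation_line_simplex}. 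Either of these should be promoted from ``fallback'' to the actual proof; as written, the primary argument would not survive in the topological category the paper works in.
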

 The functoriality of these constructions is the same as what we considered in Section \ref{sec:homot-norm-bundl}:
 \begin{lem}
   The assignments $\vec{f} \mapsto C_{-*}(\vec{V}|0; \ro_{V}^{-1})  $ and  $\vec{f} \mapsto C_{-*}^{\pitchfork_0}(\vec{V}|0; \ro_{V}^{-1})  $ extend to simplicial chain complexes associated to each sequence of morphisms of Kuranishi charts with constant automorphism groups. The maps
   \begin{equation}
     \begin{tikzcd}
       C^{*,c}_{\rel \partial}(\vec{f}) \ar[r] & C_{-*}(\vec{V}|0; \ro_{V}^{-1})  & \ar[l]  C_{-*}^{\pitchfork_0}(\vec{V}|0; \ro_{V}^{-1})  \ar[r] & \Bbbk
     \end{tikzcd}
   \end{equation}
   are morphisms of simplicial chain complexes. \qed
 \end{lem}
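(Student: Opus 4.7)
The plan is to adapt, almost verbatim, the construction of the simplicial structure on $\vec{f} \mapsto C^{*,c}_{\rel\partial}(\vec{f})$ carried out earlier in Section~\ref{sec:homot-norm-bundl}, with the tower of homotopy normal bundles $(N^{\vec{f}} f^i)_i$ replaced by the tower of pulled-back vector bundles $((f^i_0)^* V_i)_i$ over $X_0$. Both are sequences of fibre bundles with compatible zero sections, and Condition~\eqref{eq:transverse_chains_positive} is the precise analogue of the transversality constraint \eqref{eq:chains_transverse_to_o-section}, so the combinatorial arguments transport directly; I will only sketch the construction and highlight the genuinely new points.

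\emph{Face and degeneracy maps.} For each $0 \le i \le n$, define $\partial^i \colon C_{-*}(\vec{V}|0; \ro_V^{-1}) \to C_{-*}(\partial^i \vec{V}|0; \ro_V^{-1})$ by the three rules mirroring the earlier case. For $i=0$, compose each multi-cube with the bundle projection $(f^j_0)^* V_j \to (f^j_1)^* V_j$ and identify $\square^{k_0} \times \square^{k_1} \cong \square^{k_0+k_1}$ to absorb the first two factors. For $1 \le i < n$, use the equality $(f^{i+1}_0)^* V_{i+1} = (f^{i-1}_0)^*(f^{i+1}_{i-1})^* V_{i+1}$ so that the tower simply collapses in its $i$-th term, together with the identification $\square^{k_i} \times \square^{k_{i+1}} \cong \square^{k_i+k_{i+1}}$. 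For $i=n$, use the splitting $\ro_{V_n} \cong \ro_{V_{n-1}} \otimes \ro_{V_n/V_{n-1}}$ and pair the last factor $\square^{k_n}$ against $\ro_{V_n/V_{n-1}}$; Condition~\eqref{eq:transverse_chains_positive} specialised to $i=n$ ensures that the boundary of $\square^{k_n}$ maps into the complement of $(f^{n-1}_0)^* V_{n-1}$, so the pairing is defined. The simplicial identities $\partial^i \partial^j = \partial^{j-1} \partial^i$ for $i<j$ reduce case-by-case to associativity of the cube identifications and to multiplicativity of orientation lines. Degeneracies $s^i$ are defined by inserting $\id_{\bX_i}$, for which the inserted pulled-back bundle is equal to its neighbour and the new cube factor is zero-dimensional; this promotes the semi-simplicial structure to a simplicial one.

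\emph{Compatibility of the inclusion and the section-induced map.} Every face $\partial^i$ preserves the transversality of $\square^{k_0}\to V_0$ imposed in $C_{-*}^{\pitchfork_0}$. For $i \ge 2$ and for $i=n$ this is immediate since $\partial^i$ does not touch the base factor. For $i=1$, the identification $\square^{k_1} \times \square^{k_2} \cong \square^{k_1+k_2}$ also leaves the base map untouched. For $i=0$, the splitting $(f^1)^* V_1 \cong V_0 \oplus V_{f^1}^{\perp}$ provided by the transversality in Diagram~\eqref{eq:diagram_charts_Kuranishi_map} shows that the $\pitchfork_0$ condition on $\square^{k_0}$ and Condition~\eqref{eq:transverse_chains_positive} for $i=1$ combine to give the required transversality of the merged cube $\square^{k_0+k_1}$ relative to $V_1$. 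The map induced by $\fs_n$ is natural with respect to the face and degeneracy operators because $\fs_i$ and $\fs_{i+1}$ intertwine $f^i$ by the definition of a map of Kuranishi charts; this verification is term-by-term on each kind of face.

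\emph{Compatibility of the evaluation map and main difficulty.} The evaluation $C_{-*}^{\pitchfork_0}(\vec{V}|0; \ro_V^{-1}) \to \Bbbk$ is nonzero only on multi-cubes whose total dimension equals $\dim V_n$, on which the combined transversality conditions force $k_0 = \dim V_0$ and $k_i = \dim V_i/V_{i-1}$ for $i \ge 1$, so that the evaluation factors through the iterated Thom pairing along the filtration $V_0 \subset V_1 \subset \cdots \subset V_n$. Compatibility with each $\partial^i$ then reduces, via the isomorphisms $\ro_{V_i} \cong \ro_{V_{i-1}} \otimes \ro_{V_i/V_{i-1}}$, to multiplicativity of Thom classes along this filtration. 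The main obstacle, as in Section~\ref{sec:homot-norm-bundl}, is the face $\partial^0$: it is the unique face that need not be a quasi-isomorphism, and verifying both that it preserves the $\pitchfork_0$ condition and that it matches the two successive pairings against $\ro_{V_0}$ and $\ro_{V_1/V_0}$ with a single pairing against $\ro_{V_1}$ requires careful use of the splitting $(f^1)^* V_1 \cong V_0 \oplus V_{f^1}^{\perp}$ and of the attendant Koszul sign bookkeeping. All other face and degeneracy maps are essentially combinatorial.
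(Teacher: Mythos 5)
The paper states this lemma with no proof beyond the remark that ``the functoriality of these constructions is the same as what we considered in Section \ref{sec:homot-norm-bundl},'' and your proposal carries out exactly that transposition: the three types of face maps (projection to $X_1$ for $\partial^0$, collapsing the tower for the middle faces, pairing the last cube factor with $\ro_{V_n/V_{n-1}}$ for $\partial^n$), the degeneracies via inserted identities killed by the degenerate-cube quotient, and the checks that the transversality conditions and the evaluation pairing are preserved. This matches the intended argument, and your verification that $\partial^0$ preserves the $\pitchfork_0$ condition via the splitting $(f^1)^*V_1 \cong V_0 \oplus V_{f^1}^{\perp}$ is the one genuinely new point beyond Section \ref{sec:homot-norm-bundl}, correctly handled.
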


 Next, we consider a simplex $  \sigma \co \bfn \to \Chart$, and we write 
 \begin{equation}
   C_{-*}^{\pitchfork_0}(\vec{V}_{\tau} \times_{G_\tau} G_{\sigma_n}|0; \ro_{V}^{-1})  \subset C_{-*}(\vec{V}_{\tau} \times_{G_\tau} G_{\sigma_n}|0; \ro_{V}^{-1})
 \end{equation}
 for the result of applying the construction of Equation \eqref{eq:multi-simplex-sequence-of-vector-bundles} to the sequence of vector bundles over $X_{\tau_0} \times_{G_{\tau_0}} G_{\sigma_n}$ obtained by pullback from $X_{\tau_i}$. We obtain chain complexes
 \begin{align}
   C_{-*}(\vec{V}_{\sigma}|0; \ro_{V}^{-1}) & \equiv \bigoplus_{\tau \subset \sigma} C_{-*}(\vec{V}_{\tau} \times_{G_\tau} G_{\sigma_n}|0; \ro_{V}^{-1})  [\dim \tau] \\
  C_{-*}^{\pitchfork_0}(\vec{V}_{\sigma}|0; \ro_{V}^{-1}) & \equiv \bigoplus_{\tau \subset \sigma}  C_{-*}^{\pitchfork_0}(\vec{V}_{\tau} \times_{G_\tau} G_{\sigma_n}|0; \ro_{V}^{-1}) [\dim \tau]
 \end{align}
that are both equipped with the differential given by the sum of the internal differential of each summand with the alternating sum of the restriction maps from each stratum of $\Delta^n$ to its boundary. These constructions are functorial on the category of simplices, and the methods of the previous section show that they admit associative and commutative multiplicative structures in the sense of Appendix \ref{sec:mult-colim-via}.  The comparison maps with the virtual cochains and the constant functor are functorial and multiplicative:
 \begin{lem} \label{lem:zig-zag-multiplicative-functors}
   There are natural transformations
   \begin{equation}
     \begin{tikzcd}
       C^{*,c}_{\rel \partial} \ar[r,Rightarrow] & C_{-*}(\vec{V}_{\_}|0; \ro_{V}^{-1})   & \ar[l,Rightarrow]  C_{-*}^{\pitchfork_0}(\vec{V}_{\_}|0; \ro_{V}^{-1})  \ar[r,Rightarrow] & \Bbbk
     \end{tikzcd}
   \end{equation}
   of commutative multiplicative functors on $\Delta \Chart $. \qed
 \end{lem}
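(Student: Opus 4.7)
The plan is to verify each of the three arrows separately, checking in each case (i) that the map is defined on each simplex $\sigma$, (ii) that it is functorial on $\Delta\Chart$, (iii) that it is compatible with the external product, and (iv) that the resulting product structure is commutative and associative. Almost all of this follows formally by reassembling the pieces already constructed, the only genuinely new input being the definitions of the intermediate chain complexes $C_{-*}(\vec{V}_{\_}|0;\ro_V^{-1})$ and $C_{-*}^{\pitchfork_0}(\vec{V}_{\_}|0;\ro_V^{-1})$ on general simplices.

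First I would check functoriality.  For a simplex $\sigma\co \bfn\to\Chart$ each summand $\tau\subset\sigma$ was already handled for sequences of morphisms with constant automorphism group in the preceding lemma, and the maps induced by extensions of scalars $G_\tau\to G_{\sigma_n}$ behave in the same way as they do for $C^{*,c}_{\rel\partial}$; hence the assignment $\sigma\mapsto C_{-*}(\vec V_\sigma|0;\ro_V^{-1})$ and likewise the subcomplex variant define functors on $\Delta\Chart$ by the exact same argument that proves Lemma \ref{lem:functoriality_twisted_cochains}.  The three maps are functorial on the full subcategory of sequences with constant $G$ by inspection: the section $\fs$ is $G$-equivariant and respects pullbacks, the transversality condition of Equation \eqref{eq:inclusion_transverse_chains_0} is preserved under the structure maps $\partial^i$ and $s^i$, and the evaluation pairing is given by a single pairing with $\ro_{V_0}$ that is independent of the rest of the simplicial data. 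Summing over subsimplices $\tau\subset\sigma$ with appropriate shifts then promotes these to natural transformations of functors on all of $\Delta\Chart$.

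Next I would address multiplicativity.  On a pair of sequences $\vec f,\vec g$ with constant automorphism groups, the product map
\begin{equation}
  C_{-*}(\vec V_{\vec f}|0;\ro_V^{-1})\otimes C_{-*}(\vec V_{\vec g}|0;\ro_V^{-1})\to C_{-*}(\vec V_{\vec f\times\vec g}|0;\ro_V^{-1})
\end{equation}
is defined exactly as in Equation \eqref{eq:simplicial_map_virtual_cochains} by identifying $\square^k\times\square^\ell\cong\square^{k+\ell}$ and using multiplicativity of orientation lines; the transversality conditions are preserved because the condition on the $i$\th factor in the product is the conjunction of the corresponding conditions in $\vec f$ and $\vec g$.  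The map $C^{*,c}_{\rel\partial}\Rightarrow C_{-*}(\vec V_{\_}|0;\ro_V^{-1})$ commutes with these products because $\fs_{n}\times\fs_m$ is the section of the product bundle, the inclusion $C_{-*}^{\pitchfork_0}\subset C_{-*}$ is tautologically multiplicative, and the evaluation map to $\Bbbk$ commutes with products by the standard multiplicativity of the pairing with an orientation line of a direct sum.  Extending to arbitrary simplices $\sigma^1,\sigma^2$ via the colimit formula and applying the shuffle construction from Lemma \ref{lem:virtual_cochains_product_commutative} assembles these into natural transformations of functors on $\Delta\Chart\times\Delta\Chart$ in the sense of Appendix \ref{sec:mult-colim-via}.

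The remaining task is to verify commutativity and associativity of these product maps; here the essential content is the commutativity of Diagrams \eqref{eq:associative_simplicial_map_virtual} and \eqref{eq:commutative_simplicial_map_virtual} for the new complexes, and the fact that the three comparison maps intertwine these diagrams.  This is again formal: the section $\fs$, the inclusion of the transverse subcomplex, and the pairing with $\ro_V$ are each strictly symmetric and strictly associative with respect to the product of vector bundles, and the shuffle signs are the same on both sides because they depend only on the cubical dimensions.  Invoking Lemma \ref{lem:multiply_twice} to extend to the three-fold product then yields the associativity required by the notion of commutative multiplicative functor.  The only mildly delicate point — and the step I expect to require the most care — is checking that the transversality condition for the $\pitchfork_0$ subcomplex is preserved under the Eilenberg--Zilber shuffles, since shuffling can reorder faces of the $\square^{k_0}$ factors; but this condition only constrains strata of dimension $<\dim V_0$ and is symmetric in the factors of $\square^{k_0}$, so the relevant transversality survives the shuffle.
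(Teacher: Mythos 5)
Your proposal is correct and follows exactly the route the paper intends: the lemma is stated without proof precisely because it is a formal reassembly of the constructions of Sections \ref{sec:homot-norm-bundl}--\ref{sec:mult-virt-coch} (functoriality as in Lemma \ref{lem:functoriality_twisted_cochains}, products via $\square^k\times\square^\ell\cong\square^{k+\ell}$ and the shuffle construction of Lemma \ref{lem:virtual_cochains_product_commutative}, with the transversality conditions checked to be preserved), and your sketch identifies all the points that actually need verification, including the observation that the $\pitchfork_0$ condition is symmetric in the cubical factors and so survives the symmetric-cubical identifications.
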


 \subsection{Virtual counts for global quotients II}
\label{sec:virt-counts-glob-II}

We now return to the setting of Section \ref{sec:virt-counts-glob}, providing a more geometrically meaningful discussion of global charts:
\begin{defin}\label{def:global_Kuranishi} 
  The category of \emph{Kuranishi global quotients} $\Kur^{\gl}$ is the full subcategory of $\Kur$ with objects Kuranishi presentation $\bX \co A \to \Chart$ with the property that the natural induced map from $A$ to $\cQ^{op}$ is an embedding with image the non-empty boundary strata $\cQ(\bX)$. 
\end{defin}
Explicitly, this means that an object of $\Kur^{\gl}$ consists of a Kuranishi chart $\bX^{P}$ for each $P \in \cQ$ with the property that this stratum is non-empty, and a morphism of Kuranishi $\bX^{P} \to \bX^{P'}$ for each arrow in $\cQ$. These maps are required to be compatible with compositions, and the underlying map of stratifying categories are given by the induced map of undercategories in $\cQ$.

By abuse of notation, we  denote by $C^{*,c}_{\rel \partial}(\sigma) $ the virtual cochains  associated by Definition \ref{def:virtual_cochains_chart} to  the composition with $\bX$ of each simplex $\sigma \co \bfn \to A$.
\begin{defin}
  The \emph{virtual cochains of a Kuranishi global quotient} is the functor
\begin{align}
  C^{*,c}_{\rel \partial} \co  \Kur^{\gl} & \to \Ch   \\
  \bX & \mapsto \colim_{\sigma \in \Delta A}  C^{*,c}_{\rel \partial}(\sigma).
\end{align}
\end{defin}

  This definition conflicts with the one which we shall introduce later for general Kuranishi presentations, and which will consist of taking the homotopy colimit over $A$ of the colimit of $C^{*,c}_{\rel \partial}(\sigma) $ over the fibre of $\Delta A \to A$. The essential point is that, in the global quotient case, the category $A$ has a terminal element, which allows us to better control the ordinary colimit, which is the point of the next result:
  \begin{lem}
If $\bX^{\min \cQ} $ is the chart associated by the presentation $\bX$ to the minimal element, the natural map
    \begin{equation}
        C^{*,c}_{\rel \partial}(\bX^{\min \cQ}) \to    C^{*,c}_{\rel \partial}(\bX)  
      \end{equation}
      is a chain equivalence. \qed
  \end{lem}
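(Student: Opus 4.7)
The plan is to exploit the fact that $A$, being the category of non-empty strata in $\cQ(\bX) \subset \cQ^{op}$, has the minimal element $t = \min \cQ$ of $\cQ$ as a terminal object (since $\bX^{\min\cQ}$ is non-empty by hypothesis). The natural map in the lemma is obtained by recognising $C^{*,c}_{\rel \partial}(\bX^{\min\cQ})$ as the value $C^{*,c}_{\rel \partial}(\sigma_t)$ at the $0$-simplex $\sigma_t \co \bfzero \to A$ sending $0$ to $t$, composed with the canonical map from this summand into the colimit defining $C^{*,c}_{\rel \partial}(\bX)$.

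First I would introduce the full subcategory $\Delta^t A \subset \Delta A$ of simplices $\sigma \co \bfn \to A$ whose last vertex $\sigma_n$ equals $t$. For any $\sigma \in \Delta A$, the unique morphism $\sigma_n \to t$ in $A$ produces a canonical extension $\sigma^+ \co \bfn{+}1 \to A$ with $\sigma^+_{n+1} = t$, equipped with a morphism $\sigma \to \sigma^+$ in $\Delta A$. A standard argument shows that each coslice $\sigma / \Delta^t A$ is connected: any two extensions of $\sigma$ ending at $t$ can themselves be extended to a common target by concatenation with further copies of $t$, and $\sigma^+$ provides an initial-like object. Consequently the inclusion $\Delta^t A \hookrightarrow \Delta A$ is cofinal, so the colimit may be computed over $\Delta^t A$.

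Next, on $\Delta^t A$, Lemma \ref{lem:compute_virtual_cochains_simplex} provides for each $\sigma$ a quasi-isomorphism $C^{*,c}_{\rel \partial}(\bX^t) \xrightarrow{\sim} C^{*,c}_{\rel \partial}(\sigma)$ (since $\sigma_n = t$), and the functoriality of Lemma \ref{lem:functoriality_twisted_cochains} assembles these into a natural weak equivalence from the constant functor at $C^{*,c}_{\rel \partial}(\bX^t)$ to $C^{*,c}_{\rel \partial}(-)$ restricted to $\Delta^t A$. The category $\Delta^t A$ is connected (and in fact contractible, with $\sigma_t$ mapping canonically into every $\sigma \in \Delta^t A$ via the last-vertex inclusion), so the colimit over $\Delta^t A$ of the constant functor is simply $C^{*,c}_{\rel \partial}(\bX^t)$. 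Combining this with the natural weak equivalence identifies the canonical map with a quasi-isomorphism onto $\colim_{\Delta^t A} C^{*,c}_{\rel \partial}(\sigma) \cong C^{*,c}_{\rel \partial}(\bX)$.

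The main obstacle is the interplay between the ordinary colimit appearing in the definition and the homotopy colimit implicit in the cofinality and constant-functor arguments. This is handled via a filtration of $\Delta^t A$ by the dimension of the underlying simplex: at each stage, the natural transformation from the constant functor is a termwise quasi-isomorphism, and the compatibility with the filtration (together with the global-quotient hypothesis making $A$ a finite poset) ensures that the ordinary colimit realises the correct homotopy type.
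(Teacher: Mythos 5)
Your skeleton is the right one, and it is what the paper's preceding remark gestures at (the paper itself omits the proof): $t=\min\cQ$ is terminal in $A$, the simplices ending at $t$ are cofinal in $\Delta A$ via the weakly initial object $\sigma^{+}$, and Lemma \ref{lem:compute_virtual_cochains_simplex} supplies the last-vertex quasi-isomorphism. But the step you defer to your final paragraph is the entire content of the lemma, and as written it does not go through. An objectwise quasi-isomorphism of diagrams of chain complexes does not induce a quasi-isomorphism of \emph{ordinary} colimits (this already fails for pushouts over a three-object poset), and ``the compatibility with the filtration\dots ensures that the ordinary colimit realises the correct homotopy type'' restates the conclusion rather than proving it; note also that the colimit of the constant functor is constant along your filtration while the colimit of $C^{*,c}_{\rel \partial}$ genuinely grows, so there is no stage-by-stage comparison of the kind you describe. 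What rescues the argument is the specific shape of Definition \ref{def:virtual_cochains_chart}: since $C^{*,c}_{\rel \partial}(\sigma)=\bigoplus_{\tau\subset\sigma}C^{*,c}_{\rel \partial}(\tau\times_{G_\tau}G_{\sigma_n})[\dim\tau]$, the latching maps split off as direct summands, and the ordinary colimit over $\Delta A$ is computed explicitly as a direct sum indexed by the simplices $\tau$ of $A$, with all groups stabilised at $G_t$ by the terminal object. Your proof never invokes this structure, and without it the colimit comparison is unjustified; with it, the cleanest route is to filter this explicit complex by $\dim\tau$ (or contract it onto the $0$-simplex at $t$ using the ``append $t$'' extra degeneracy), exactly as in the proof of Lemma \ref{lem:compute_virtual_cochains_simplex}.

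A secondary gap: your ``natural weak equivalence from the constant functor'' on $\Delta^{t}A$ is not obviously natural, because a morphism $\sigma\to\sigma'$ in $\Delta^{t}A$ need not carry the last vertex of $\sigma$ to the last vertex of $\sigma'$ (e.g.\ when $\sigma'$ extends $\sigma$ by identity arrows on $t$), so the square comparing the two last-vertex inclusions does not commute on the nose. This is repairable — restrict to last-vertex-preserving morphisms, which your weakly-initial-object argument shows are still cofinal, or use that the only arrow out of $t$ in $A$ is the identity together with the degeneracy isomorphisms — but it needs to be said.
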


For the next result, note that the category of Kuranishi global quotients inherits a symmetric monoidal structure from the category of Kuranishi presentations. Given a pair $\bX_1$ and $\bX_2$, we have a natural map
\begin{align}
  C^{*,c}_{\rel \partial} (\bX_1) \otimes    C^{*,c}_{\rel \partial} (\bX_2)   & \cong \colim_{\sigma_1 \in \Delta A_1}  C^{*,c}_{\rel \partial}(\sigma_1) \otimes  \colim_{\sigma_2 \in \Delta A_2}  C^{*,c}_{\rel \partial}(\sigma_2) \\
  & \to \colim_{\sigma_1 \times \sigma_2 \in \Delta A_1 \times \Delta A_2}   C^{*,c}_{\rel \partial}(\sigma_1 \times \sigma_2) \\
  & \cong \colim_{\sigma_1 \times \sigma_2 \in \Delta A_1 \times \Delta A_2}  \colim_{\tau \to \sigma_1 \times \sigma_2 }  C^{*,c}_{\rel \partial}(\tau) \\
  & \to \colim_{\tau \in  \Delta \left(A_1 \times A_2\right)}  C^{*,c}_{\rel \partial}(\tau) \\
  & \cong C^{*,c}_{\rel \partial} (\bX_1 \times \bX_2). 
\end{align}
Applying Lemma \ref{lem:virtual_cochains_product_commutative} and \ref{lem:assocativity_virtual_cochains_product}, implies that this product is associative and commutative. Moreover, Lemma \ref{lem:zig-zag-multiplicative-functors} together with Lemma \ref{lem:multiplicative-map-hocolim-functorial} relate this functor to the constant functor:
\begin{lem}
  The virtual cochains define a symmetric monoidal functor on the category of Kuranishi global quotients, which is equipped with a morphism to the constant functor, in the homotopy category of symmetric monoidal functors. \qed
\end{lem}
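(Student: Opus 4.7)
My plan is to transport the simplex-level structure already established to the colimit-level statement, using the categorical machinery from Appendix \ref{sec:mult-colim-via}. The proof breaks into two halves: first the symmetric monoidal structure on $C^{*,c}_{\rel \partial}$, then the zig-zag to the constant functor.

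For the first half, I would verify that the product maps displayed just before the lemma assemble into a lax symmetric monoidal structure on $C^{*,c}_{\rel \partial} \co \Kur^{\gl} \to \Ch$. Associativity of the product reduces, after commuting the tensor product through the colimit, to the commutativity of the hexagonal diagram on $\Delta A_1 \times \Delta A_2 \times \Delta A_3$; this is the content of Lemma \ref{lem:assocativity_virtual_cochains_product} applied pointwise. Commutativity reduces in the same way to Lemma \ref{lem:virtual_cochains_product_commutative}. The unit is the virtual cochains of the terminal Kuranishi presentation stratified by the one-point category with trivial chart, and the unitors are induced from the canonical identifications of $\Delta A \times \Delta \{\ast\}$ with $\Delta A$. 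The pentagon and hexagon axioms at the level of $\bX$ follow from the corresponding axioms at the level of simplices, since the colimit construction respects the indexing bookkeeping.

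For the second half, I apply Lemma \ref{lem:zig-zag-multiplicative-functors} pointwise to each simplex $\sigma \in \Delta A$, yielding a zig-zag
\begin{equation}
  C^{*,c}_{\rel \partial}(\sigma) \Rightarrow C_{-*}(\vec{V}_{\sigma}|0; \ro_V^{-1}) \Leftarrow C_{-*}^{\pitchfork_0}(\vec{V}_{\sigma}|0; \ro_V^{-1}) \Rightarrow \Bbbk
\end{equation}
of commutative multiplicative functors on $\Delta \Chart$. Composing with $\bX \co A \to \Chart$ and taking the colimit over $\Delta A$ gives a zig-zag
\begin{equation}
  C^{*,c}_{\rel \partial}(\bX) \to \colim_{\sigma \in \Delta A} C_{-*}(\vec{V}_{\sigma}|0; \ro_V^{-1}) \leftarrow \colim_{\sigma \in \Delta A} C_{-*}^{\pitchfork_0}(\vec{V}_{\sigma}|0; \ro_V^{-1}) \to \Bbbk
\end{equation}
at the level of Kuranishi global quotients. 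The leftward arrow remains a quasi-isomorphism because each pointwise map is one and because, in our setting, $\Delta A$ has the terminal object $\bX^{\min \cQ}$ under which ordinary colimits agree with pointwise evaluation up to chain equivalence (this is the same observation that gives the chain equivalence $C^{*,c}_{\rel \partial}(\bX^{\min \cQ}) \to C^{*,c}_{\rel \partial}(\bX)$ in the previous lemma). The resulting diagram is thus a morphism in the homotopy category of functors, and the hard step is to lift it to the homotopy category of symmetric monoidal functors.

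The main technical obstacle is precisely this last lifting: one must check that the ordinary colimit defining $C^{*,c}_{\rel \partial}$ transports the lax monoidal structure on the simplex-level functors to a compatible lax monoidal structure on the colimit, with all three arrows of the zig-zag monoidal. This is exactly the content of Lemma \ref{lem:multiplicative-map-hocolim-functorial} from Appendix \ref{sec:mult-colim-via}, whose operadic formalism (based on the auxiliary categories $\Delta^{\RS{lr}} \cC^n$) encodes associativity and commutativity of products after Kan extension, thereby sidestepping the coherence bookkeeping that would otherwise be extremely tedious. Once that result is invoked, the proof is complete by concatenating the zig-zag.
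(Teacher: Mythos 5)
Your proof is correct and follows essentially the same route as the paper: the product is assembled from the colimit comparison maps and its associativity and commutativity are reduced to Lemmas \ref{lem:assocativity_virtual_cochains_product} and \ref{lem:virtual_cochains_product_commutative}, while the morphism to the constant functor is obtained by combining Lemma \ref{lem:zig-zag-multiplicative-functors} with Lemma \ref{lem:multiplicative-map-hocolim-functorial}. Your additional remarks (that the wrong-way arrow is a weak equivalence, checked via the terminal object of $A$) make explicit a point the paper leaves implicit.
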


Next, we formulate the analogue of Lemma \ref{lem:morphism_to_constant_functor_global}:
\begin{lem}
  If $\Bbbk$ has characteristic $0$, there is a natural transformation from the stratum functor to the constant functor, in the homotopy category of symmetric monoidal functors on the category of Kuranishi global quotients. If $\Bbbk$ is an arbitrary ring, such a natural transformation exists on the subcategory of global quotients with trivial isotropy.
\end{lem}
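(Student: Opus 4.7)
The plan is to adapt the strategy of Lemma \ref{lem:morphism_to_constant_functor_global} to the colimit-based definition of virtual cochains for global quotients. Writing $\bX^P$ for the chart assigned by the presentation to each non-empty stratum $P \in \cQ(\bX)$, I first define a reconstructed complex
\begin{equation*}
  C^{*,c}(\bX) \equiv \bigoplus_{P \in \cQ(\bX)} C^{*,c}_{\rel \partial}(\partial^P \bX) \otimes \ro_P,
\end{equation*}
whose differential combines the internal differentials of each summand with codimension-one restriction maps modelled on Equation \eqref{eq:differential_reconstructed_relative_chains}. Since each $\partial^P \bX$ is again a global quotient, and arrows $P \to P'$ in $\cQ$ induce morphisms $\partial^{P'} \bX \to \partial^P \bX$ in $\Kur^{\gl}$, this assembles into a symmetric monoidal functor on $\Kur^{\gl}$, together with a natural inclusion $C^{*,c}_{\rel \partial}(\bX) \hookrightarrow C^{*,c}(\bX)$ playing the role of Lemma \ref{lem:relative_into_reconstructed_quasi-iso}.

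The first technical step is to verify that this inclusion is a quasi-isomorphism, functorially and monoidally. Following the proof of Lemma \ref{lem:relative_into_reconstructed_quasi-iso}, one filters by the codimension label in $\cQ$, identifies the associated graded piece at $P$ with the tensor product of the relative virtual cochains of $\partial^P \bX$ with the cellular chain complex of the pair $(\kappa^P, \partial_{-} \kappa^P)$, and uses contractibility of this pair for $P$ non-minimal. The extension from the chart setting requires checking only that the filtration commutes with the colimit over $\Delta A$ appearing in the definition of the virtual cochains of a presentation, which is formal.

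The second and essential technical step is the global quotient analogue of Lemma \ref{lem:absolute_cochians_have_fundamental_class}: that $H^{*,c}(\bX)$ is supported in degrees at least $-\dim \bX$, with the bottom degree a direct sum of copies of $\ro_{\bX}$ indexed by the components of the footprint $\cM(\bX)$. Lemma \ref{lem:compute_virtual_cochains_simplex} reduces the contribution of each simplex to its terminal vertex, so that locally the cohomology is computed by a single chart and the chart-level fundamental class applies. A Mayer-Vietoris descent over $A$, controlled by the contractibility hypothesis \eqref{eq:contractible_nerve_presentation}, then patches these local calculations into the global statement. The characteristic zero hypothesis enters at exactly this point to ensure, via an averaging argument, that the compactly supported cohomology of a chart with non-trivial isotropy computes that of the quotient $\fs^{-1}(0)/G$; the trivial isotropy hypothesis serves the same purpose when $\Bbbk$ is arbitrary. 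This is the main obstacle in the proof.

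Granted these two ingredients, the morphism arises by canonical truncation: let $F(\bX) \subset C^{*,c}(\bX)$ be the subcomplex consisting of elements of degree strictly less than $-\dim \partial^P \bX$ on each summand labelled by $P$, together with cocycles in that degree. This truncation is preserved by the symmetric monoidal product, and projection to top cohomology provides a natural quasi-isomorphism $F \Rightarrow C^*_{st}$. Assembled, these constructions give a zig-zag of symmetric monoidal functors
\begin{equation*}
  C^*_{st} \Longleftarrow F \Longrightarrow C^{*,c} \Longleftarrow C^{*,c}_{\rel \partial}
\end{equation*}
on $\Kur^{\gl}$, which composed with the morphism $C^{*,c}_{\rel \partial} \Rightarrow \Bbbk$ from the preceding lemma produces the desired morphism in the homotopy category of symmetric monoidal functors.
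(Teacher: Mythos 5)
Your outline follows the paper's strategy, but there is a genuine gap: you have collapsed two distinct complexes into one, and the argument needs both. The complex you define, $\bigoplus_{P} C^{*,c}_{\rel\partial}(\partial^P\bX)\otimes\ro_P$, is the \emph{absolute} cochain complex of Equation \eqref{eq:compactly_supported_cochains_chart}; it computes the compactly supported cohomology of $Z/G$ and is the object to which the fundamental-class statement (Lemma \ref{lem:absolute_cochians_have_fundamental_class}) applies. The ``reconstructed'' complex of Lemma \ref{lem:relative_into_reconstructed_quasi-iso} is the different, doubly-indexed complex $\bigoplus_{P} C^{*,c}(\partial^P\bX)$, i.e.\ the sum over $P$ of the \emph{absolute} cochains of each stratum, with differential built from Equation \eqref{eq:differential_reconstructed_relative_chains}. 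The filtration argument you invoke — regrouping by the target of arrows so that the associated graded at $P$ is $C^{*,c}_{\rel\partial}(\partial^P\bX)$ tensored with the cellular chains of $(\kappa^P,\partial_-\kappa^P)$ — only makes sense for the doubly-indexed complex, where each $C^{*,c}_{\rel\partial}(\partial^P\bX)$ appears once for every arrow into $P$. For the single sum there is nothing to regroup, and the inclusion $C^{*,c}_{\rel\partial}(\bX)\hookrightarrow C^{*,c}(\bX)$ is the comparison map from relative to absolute compactly supported cohomology of $(Z,\partial Z)$, which is not a quasi-isomorphism whenever $\partial Z\neq\emptyset$. So the rightmost arrow of your zig-zag cannot be inverted.

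The truncation step has the same problem. To recover $C^*_{st}(\bX)=\bigoplus_P\ro_{\partial^P\bX}$ you need, for each $P$, a summand whose cohomology is concentrated in degree $-\dim\partial^P\bX$ and equal to $\ro_{\partial^P\bX}$ on each component; that is $C^{*,c}(\partial^P\bX)$, by Lemma \ref{lem:absolute_cochians_have_fundamental_class} applied to the stratum $\partial^P\bX$. The summand $C^{*,c}_{\rel\partial}(\partial^P\bX)\otimes\ro_P$ of your single sum computes relative cohomology of the pair (stratum, its boundary), whose bottom degree sees only the closed components, so the summand-wise truncation and projection does not produce $C^*_{st}$. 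The repair is exactly the paper's: keep your complex as the absolute cochains, introduce the second complex $\bigl(\bigoplus_P C^{*,c}(\partial^P\bX),d\bigr)$, establish the monoidal quasi-isomorphism of Equation \eqref{eq:map_virtual_cochains_global_to_filtered} from $C^{*,c}_{\rel\partial}(\bX)$ into it, and perform the truncation summand by summand there, as in Lemma \ref{lem:morphism_to_constant_functor_global}. As a minor point, your descent argument for the fundamental class is more than is needed here: for a global quotient the indexing category has a terminal object, so the virtual cochains are chain equivalent to those of the terminal chart and Lemma \ref{lem:absolute_cochians_have_fundamental_class} applies directly; the $\cK$-sheaf machinery is only required for general Kuranishi presentations.
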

\begin{proof}
  As in Equation \eqref{eq:compactly_supported_cochains_chart}, we define
  \begin{equation}
     C^{*,c}(\bX)   \equiv \left(   \bigoplus_{P \in \cQ}   C^{*,c}_{\rel \partial} (\partial^P \bX) \otimes \ro_{P}   , d \right),
  \end{equation}
  with differential that incorporates the map $\ro_{P'} \to \ro_{P}$ associated to each codimension $1$ inclusion. Lemma \ref{lem:absolute_cochians_have_fundamental_class} implies that the homology of this complex is supported in degree greater than or equal to $- \dim \bX$ and it is canonically isomorphic in this degree to a direct sum of copies of $\ro_{\bX}$.
  The functoriality of this construction can be formulated in terms of a contravariant symmetric monoidal functor
  \begin{align}
    \Kur^{\gl}  & \to \Ch \\
    \bX & \mapsto C^{*,c}(\bX) \otimes \ro^{-1}_{\bX}.
  \end{align}

      The inclusion into the factor corresponding to the minimal element of $\cQ$ defines a quasi-isomorphism of complexes
       \begin{equation} \label{eq:map_virtual_cochains_global_to_filtered}
           C^{*,c}_{\rel \partial} (\bX) \cong  \left(   \bigoplus_{P \in \cQ}   C^{*,c} (\partial^P \bX)   , d \right)
       \end{equation}
where the differential in the right hand side is the sum of the internal differential of $C^{*,c} (\partial^P \bX)$ with the map
       \begin{equation}
         C^{*,c}(\partial^P \bX) \to    C^{*,c}(\partial^{P'} \bX)
       \end{equation}
       for arrows $\alpha \co P \to P'$ with $\dim \kappa^{\alpha} = 1$, which is associated to the co-normal orientation of the boundary of the interval. The symmetric monoidal structure on the functor $ C^{*,c}$ induces a map
       \begin{multline}
         \bigoplus_{P_1 \in \cQ_1}   C^{*,c} (\partial^{P_1} \bX_1) \otimes   \bigoplus_{P_2 \in \cQ_2}   C^{*,c} (\partial^{P_2} \bX_2)  \to \\
         \bigoplus_{P_1 \times P_2 \in \cQ_1 \times \cQ_2}  C^{*,c} (\partial^{P_1} \bX_1) \otimes  C^{*,c} (\partial^{P_2} \bX_2) \to  \\
         \bigoplus_{P_1 \times P_2 \in \cQ_1 \times \cQ_2}  C^{*,c} \left( \partial^{P_1 \times P_2} \left(\bX_1 \times \bX_2\right) \right),
       \end{multline}
       which is associative and commutative, so that Equation \eqref{eq:map_virtual_cochains_global_to_filtered} becomes a monoidal natural transformation of symmetric monoidal functors.

       The remainder of the proof amounts to the observation that the filtration of chain complexes by degree is evidently symmetric monoidal, hence that the zig-zag constructed in Lemma \ref{lem:morphism_to_constant_functor_global} satisfies the desired properties.
\end{proof}

We now combine this result with Proposition \ref{prop:cofibrant_property_of_lift_Kur}. We define the category of \emph{Kuranishi global quotients with factorised boundary strata} $\widetilde{\Kur}^{\gl}$ to be the full subcategory of $\widetilde{\Kur}$ with objects those Kuranishi presentations with factorised strata whose image under the forgetful functor to $\Kur$ is a Kuranishi global quotient. 
\begin{thm}
  If $\Bbbk$ has characteristic $0$, the category  $\widetilde{\Kur}^{\gl}$ is equipped with a multiplicative theory of virtual counts, with coefficients in $\Bbbk$, which is moreover commutative.  If $\Bbbk$ is an arbitrary ring, such a natural transformation exists after restriction to the subcategory of quotients with trivial isotropy. \qed
\end{thm}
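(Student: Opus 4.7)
The plan is to assemble the result by combining two ingredients already established: the explicit zig-zag of symmetric monoidal functors from the preceding two lemmas on $\Kur^{\gl,\Bbbk}$, and the lifting machinery of Proposition~\ref{prop:cofibrant_property_of_lift_Kur}. Concretely, the preceding lemmas yield, in combination, a zig-zag
\[
  C^*_{st} \Leftarrow F_1 \Rightarrow F_2 \Leftarrow \cdots \Leftarrow F_k \Rightarrow \Bbbk
\]
of symmetric monoidal functors and monoidal natural transformations on $\Kur^{\gl,\Bbbk}$ in which each leftward arrow is a monoidal weak equivalence. The intermediate functors are built from the virtual cochains $C^{*,c}_{\rel \partial}$ of Definition~\ref{def:virtual_cochains_chart}, its completion $C^{*,c}$ obtained by summing over strata as in \eqref{eq:compactly_supported_cochains_chart}, the canonical truncation to non-positive virtual degrees used in the proof of Lemma~\ref{lem:morphism_to_constant_functor_global}, and the transverse-chains complex $C^{\pitchfork_0}_{-*}$ together with the evaluation map of Lemma~\ref{lem:zig-zag-multiplicative-functors}.

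The first step is to pull this zig-zag back along the forgetful functor $\widetilde{\Kur}^{\gl,\Bbbk} \to \Kur^{\gl,\Bbbk}$, so that each $F_i$ becomes a symmetric monoidal functor on $\widetilde{\Kur}^{\gl,\Bbbk}$ which factors through $\Kur^{\gl,\Bbbk}$. This is precisely the factorisation hypothesis required by Proposition~\ref{prop:cofibrant_property_of_lift_Kur}.

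The second step is to traverse the zig-zag, producing monoidal natural transformations segment by segment. We begin with the identity $\id \co C^*_{st} \Rightarrow C^*_{st}$; feeding this together with the first leftward (equivalence) arrow $F_1 \Rightarrow C^*_{st}$ into Proposition~\ref{prop:cofibrant_property_of_lift_Kur} produces a monoidal lift $\eta_1 \co C^*_{st} \Rightarrow F_1$. Composing with the next forward arrow yields $C^*_{st} \Rightarrow F_2$, and feeding this together with the next weak equivalence $F_3 \Rightarrow F_2$ into the proposition produces $C^*_{st} \Rightarrow F_3$. Iterating in this manner along the whole zig-zag, and composing at the end with the final forward arrow to $\Bbbk$, we obtain a monoidal natural transformation $\cV \co C^*_{st} \Rightarrow \Bbbk$, which is the desired multiplicative theory of virtual counts. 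Commutativity in the symmetric monoidal sense is inherited from the commutativity of each stage of the zig-zag (cf.\ Lemmas~\ref{lem:virtual_cochains_product_commutative} and \ref{lem:assocativity_virtual_cochains_product}) together with the fact that the lifts produced by Proposition~\ref{prop:cofibrant_property_of_lift_Kur} are automatically symmetric, since its inductive construction over depth uses nothing but the monoidal structure.

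The main obstacle, and the origin of the characteristic-zero versus trivial-isotropy dichotomy, lies in Lemma~\ref{lem:absolute_cochians_have_fundamental_class}: one needs the compactly supported virtual cohomology of a chart to be concentrated in the top degree with value the orientation line, so that the truncation $C^{*,c}(\partial^P \bX)_{(-\infty, -\dim \partial^P \bX]}$ is quasi-isomorphic to the stratum cochain. In characteristic zero the order of the finite stabilisers is invertible in $\Bbbk$, and the Borel-type chain-level construction correctly computes the compactly supported cohomology of the quotient; without invertibility, one needs the stabilisers to be trivial so that this identification holds on the nose, whence the restriction to the trivial-isotropy subcategory in the general case. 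Once this fundamental-class statement is in hand, the remainder of the argument is purely formal, driven by the functoriality and monoidality of the virtual cochain constructions of Sections~\ref{sec:homot-norm-bundl-2} and~\ref{sec:mult-virt-coch} together with Proposition~\ref{prop:cofibrant_property_of_lift_Kur}.
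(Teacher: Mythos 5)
Your proposal is correct and follows exactly the route the paper intends: the theorem is stated without proof precisely because it is the combination of the preceding lemma (the zig-zag of monoidal natural transformations on $\Kur^{\gl}$, with the characteristic-zero/trivial-isotropy dichotomy entering through Lemma \ref{lem:absolute_cochians_have_fundamental_class}) with the lifting Proposition \ref{prop:cofibrant_property_of_lift_Kur}, applied iteratively across the backward equivalences just as you describe. The only point worth making explicit is that Proposition \ref{prop:cofibrant_property_of_lift_Kur} is stated for $\widetilde{\Kur}^{\Bbbk}$ rather than $\widetilde{\Kur}^{\gl}$, so one should observe (as the paper does in the remark after Lemma \ref{lem:morphism_to_constant_functor_global}) that its depth-induction uses only structural properties that hold verbatim on this full monoidal subcategory.
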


\section{Virtual cochains for Kuranishi presentations}
\label{sec:virt-coch-kuran-1}

We shall prove Theorem \ref{thm:virtual_counts_exist} in this section, following the strategy given in the previous section for global Kuranishi quotients. The key point is that we shall define the virtual cochains of such a Kuranishi presentation, and show that it satisfies the analogue of Lemma \ref{lem:absolute_cochians_have_fundamental_class}.

\subsection{Relative virtual cochains}
\label{sec:relat-virt-coch-1}

Let $\bX \co A \to \Chart_{\downarrow \cQ}$ be a Kuranishi presentation. Passing to the category of simplices, and using the twisted relative cochains from Definition \ref{def:virtual_cochains_chart}, we obtain a composite functor
\begin{equation}
    C^{*,c}_{\rel \partial} \co  \Delta A \to \Delta \Chart_{\downarrow \cQ} \to \Ch.
\end{equation}
Using the projection map $\Delta A \to A$, associated to the last element of a simplex, we obtain the left Kan extension
\begin{equation}
  L    C^{*,c}_{\rel \partial} \co  A \to \Ch.
\end{equation}
In our setting, this Kan extension is given by
\begin{equation}
    L    C^{*,c}_{\rel \partial} (\alpha) \cong \colim_{\substack{ \sigma \co \bfn \to A \\ \sigma_n = \alpha} } C^{*,c}_{\rel \partial}(\bX_{\sigma}).
\end{equation}
\begin{defin}
  The \emph{relative virtual cochains of $\bX$} is the homotopy colimit:
  \begin{equation}
  C^{*,c}_{\rel \partial} \bX \equiv \hocolim_{A} L C^{*,c}_{\rel \partial}.    
  \end{equation}
\end{defin}

Implicit in the above definition is the fact that we model the homotopy colimit by a bar construction, as discussed in Appendix \ref{sec:mult-colim-via}.
\begin{lem}
  The relative virtual cochains define a symmetric monoidal functor
  \begin{equation}
        C^{*,c}_{\rel \partial} \co \Kur \to \Ch.
  \end{equation}
\end{lem}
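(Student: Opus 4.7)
The plan is to verify three pieces of structure in order: first functoriality on morphisms of Kuranishi presentations, then the lax monoidal structure maps, and finally associativity and symmetry of these maps. Each step should rely on transporting structure already established at the level of $\Delta\Chart$ (Lemmas \ref{lem:functoriality_twisted_cochains}, \ref{lem:virtual_cochains_product_commutative}, and \ref{lem:assocativity_virtual_cochains_product}) through the left Kan extension and the homotopy colimit via the formal tools of Appendix \ref{sec:mult-colim-via}.

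For functoriality, a map of Kuranishi presentations $(\cQ_1,A_1,\bX_1)\to(\cQ_2,A_2,\bX_2)$ consists of a functor $A_1\to A_2$ and a natural transformation from $\bX_1$ to the composite $A_1\to A_2\to \Chart_{\downarrow \cQ_2}$. Applying $\Delta$ and composing with this natural transformation yields, for each simplex $\sigma \co \bfn \to A_1$, a morphism in $\Delta\Chart_{\downarrow \cQ_2}$ from the image of $\sigma$ to the corresponding simplex in $A_2$, whose image under $C^{*,c}_{\rel\partial}$ gives the induced map on the $\Delta$-level functors. Functoriality of the left Kan extension along $\Delta A \to A$ then produces a morphism $L C^{*,c}_{\rel\partial}(\bX_1)\to L C^{*,c}_{\rel\partial}(\bX_2)$ lying over the functor $A_1\to A_2$, and functoriality of homotopy colimits along functors of indexing categories yields the desired map on $C^{*,c}_{\rel\partial}\bX_i$. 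The verification that compositions are respected and identities preserved is routine because every ingredient (Kan extension, homotopy colimit) is itself functorial.

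For the monoidal structure, note that the product of presentations is indexed by $A_1\times A_2$ while the Eilenberg–Zilber maps constructed in Lemma \ref{lem:virtual_cochains_product_commutative} live on $\Delta\Chart \times \Delta\Chart$. The bridge is supplied by Lemma \ref{lem:multiplicative-map-hocolim-functorial} (and the setup of Appendix \ref{sec:mult-colim-via}): given multiplicative natural transformations between functors on $\Delta\cC \times \Delta\cC$ and $\Delta\cC$ that are compatible with ordinary colimits under the Kan extension along the last-vertex map, one obtains a multiplicative natural transformation at the level of homotopy colimits. Applied to our situation, this first gives the lax structure map
\[
C^{*,c}_{\rel\partial}\bX_1 \otimes C^{*,c}_{\rel\partial}\bX_2 \to C^{*,c}_{\rel\partial}(\bX_1 \times \bX_2)
\]
via the chain $\hocolim_{A_1}\otimes\hocolim_{A_2} \to \hocolim_{A_1\times A_2}$ applied after the Eilenberg–Zilber product, and then promotes associativity (Lemma \ref{lem:assocativity_virtual_cochains_product}) and commutativity (Lemma \ref{lem:virtual_cochains_product_commutative}) at the $\Delta\Chart$ level to the corresponding diagrams of natural transformations of functors on $\Kur$.

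The main obstacle, I expect, is the bookkeeping around the left Kan extension step: the ordinary colimit formula for $L C^{*,c}_{\rel\partial}(\alpha)$ runs over simplices ending at $\alpha$, and to pass the Eilenberg–Zilber product through this colimit one must compare $\Delta(A_1\times A_2)$ with the product $\Delta A_1 \times \Delta A_2$, which is exactly the prismatic-subdivision comparison already exploited when defining the product on multi-simplices. The axiomatic framework of Appendix \ref{sec:mult-colim-via} is designed to handle precisely this comparison, so the verification reduces to confirming that our diagrams fit its hypotheses; once this is done, all coherence for the symmetric monoidal structure is inherited automatically from the $\Delta\Chart$-level results.
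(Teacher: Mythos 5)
Your proposal is correct and follows essentially the same route as the paper: the monoidal structure comes from the Eilenberg--Zilber multiplicative structure of Lemma \ref{lem:virtual_cochains_product_commutative} on $\Delta\Chart\times\Delta\Chart$, with commutativity and associativity (Lemma \ref{lem:assocativity_virtual_cochains_product}) transported through the Kan extension and homotopy colimit by the machinery of Appendix \ref{sec:mult-colim-via} (Lemmas \ref{lem:commutative_map_hocolim} and \ref{lem:associative_map_hocolim}). The only slip is a citation: the lax structure map on homotopy colimits is supplied by the unnamed lemma producing $\hocolim_{A_1}LF\otimes\hocolim_{A_2}LF\to\hocolim_{A_{12}}LF$ from a multiplicative structure, not by Lemma \ref{lem:multiplicative-map-hocolim-functorial}, which concerns naturality in the functor $F$.
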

\begin{proof}
  The monoidal structure is induced by Lemma \ref{lem:virtual_cochains_product_commutative}
  which defines a multiplicative structure in the sense of Definition \ref{def:mult_structure_functor}. The commutativity and associativity of the Eilenberg-Zilber map imply that these properties hold for this multiplicative structure hence that the result follows from Lemma \ref{lem:commutative_map_hocolim} and \ref{lem:associative_map_hocolim}.
\end{proof}

Next, we observe that Lemma \ref{lem:zig-zag-multiplicative-functors}, together with Lemma \ref{lem:multiplicative-map-hocolim-functorial}, implies:
\begin{lem}
  There is a morphism from the virtual cochains to the constant functor, in the homotopy category of symmetric monoidal functors on $\Kur $. \qed
\end{lem}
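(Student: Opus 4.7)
The plan is to repeat the construction of $C^{*,c}_{\rel \partial}\bX$ for each of the intermediate functors appearing in the zig-zag of Lemma~\ref{lem:zig-zag-multiplicative-functors}, then transport that zig-zag from $\Delta \Chart$ to $\Kur$ by functoriality of left Kan extension and homotopy colimit. The symmetric monoidal structures will then be automatic from the same formal arguments already invoked for $C^{*,c}_{\rel \partial}$ itself.

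First, for each Kuranishi presentation $\bX \co A \to \Chart_{\downarrow \cQ}$, I would define
\begin{align}
C_{-*}(\vec{V}_{\bX}|0; \ro_V^{-1}) & \equiv \hocolim_A L\, C_{-*}(\vec{V}_{\_}|0; \ro_V^{-1}) \\
C^{\pitchfork_0}_{-*}(\vec{V}_{\bX}|0; \ro_V^{-1}) & \equiv \hocolim_A L\, C^{\pitchfork_0}_{-*}(\vec{V}_{\_}|0; \ro_V^{-1}),
\end{align}
following the exact template from Section~\ref{sec:relat-virt-coch-1}, with $L$ the left Kan extension along the last-vertex projection $\Delta A \to A$. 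Lemma~\ref{lem:zig-zag-multiplicative-functors} provides each of the underlying functors on $\Delta \Chart$ with a commutative multiplicative structure in the sense of Appendix~\ref{sec:mult-colim-via}, and so the same appeal to the formalism used for $C^{*,c}_{\rel \partial}$ in the preceding lemma upgrades these to symmetric monoidal functors $\Kur \to \Ch$. The constant functor $\Bbbk$ is trivially symmetric monoidal.

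Next, I would apply the functoriality of $\hocolim \circ L$ to the natural transformations in Lemma~\ref{lem:zig-zag-multiplicative-functors}, invoking Lemma~\ref{lem:multiplicative-map-hocolim-functorial} to ensure that each such transformation lifts to a monoidal natural transformation between the associated symmetric monoidal functors on $\Kur$. This yields a zig-zag
\begin{equation}
C^{*,c}_{\rel \partial} \Rightarrow C_{-*}(\vec{V}_{\_}|0; \ro_V^{-1}) \Leftarrow C^{\pitchfork_0}_{-*}(\vec{V}_{\_}|0; \ro_V^{-1}) \Rightarrow \Bbbk
\end{equation}
of symmetric monoidal functors on $\Kur$.

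The main obstacle is verifying that the leftward-pointing arrow is a weak equivalence so that it can be inverted in the homotopy category of symmetric monoidal functors. On each simplex $\sigma$, the inclusion $C^{\pitchfork_0}_{-*}(\vec{V}_\sigma|0; \ro_V^{-1}) \hookrightarrow C_{-*}(\vec{V}_\sigma|0; \ro_V^{-1})$ is a quasi-isomorphism by Equation~\eqref{eq:inclusion_transverse_chains_0} and the accompanying lemma. Since homotopy colimits preserve objectwise equivalences, and the whole apparatus of Appendix~\ref{sec:mult-colim-via} carries this through to the monoidal setting, the induced map on $\Kur$ is an equivalence of symmetric monoidal functors. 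Inverting it in the homotopy category produces the claimed morphism from $C^{*,c}_{\rel \partial}$ to the constant functor $\Bbbk$.
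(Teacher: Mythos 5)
Your proposal is correct and follows essentially the same route as the paper, which simply cites Lemma \ref{lem:zig-zag-multiplicative-functors} together with Lemma \ref{lem:multiplicative-map-hocolim-functorial}: apply $\hocolim_A \circ L$ to the zig-zag on $\Delta\Chart$, use the multiplicativity formalism of Appendix \ref{sec:mult-colim-via} to retain the symmetric monoidal structures, and invert the objectwise quasi-isomorphism in the homotopy category. Your write-up just makes explicit the details the paper leaves implicit.
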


\subsection{Virtual cochains and virtual counts}
\label{sec:absol-virt-coch}

We continue to follow the strategies developed in Section \ref{sec:virt-counts-glob} and \ref{sec:virt-counts-glob-II} for the construction of a theory of virtual counts. The first step is to assign to each Kuranishi presentation the absolute cochains
\begin{equation} \label{eq:absolute_virtual_cochains-presentatoin}
    C^{*,c}(\bX) \equiv  \left(   \bigoplus_{P \in \cQ}   C^{*,c}_{\rel \partial} (\partial^P \bX) \otimes \ro_{P}   , d \right),
\end{equation}
with differential as in Equation \eqref{eq:compactly_supported_cochains_chart}. We shall postpone the proof of the following result, which is the analogue of Lemma \ref{lem:absolute_cochians_have_fundamental_class} to the next section; the proof is more involved:
\begin{lem} \label{lem:absolute_cochians_have_fundamental_class-local-to-global}
 The cohomology $H^{*,c}(\bX)$ of a $\Bbbk$-oriented Kuranishi presentation is supported in degrees greater than or equal to $- \dim \bX$ and it is canonically isomorphic to a direct sum of copies of $\ro_{\bX}$, indexed by the components of $Z/G$, in this degree, whenever either of the following conditions holds:
  \begin{enumerate}
  \item the action of $G$ on $Z$ is free, or
    \item the ring $\Bbbk$ is a field of characteristic $0$,
    \end{enumerate}
\end{lem}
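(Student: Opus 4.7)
The strategy is to reduce the statement for presentations to the already-proven chart-level statement (Lemma \ref{lem:absolute_cochians_have_fundamental_class}) by means of a descent argument along the indexing category $A$, and then assemble across strata of $\cQ$ just as in the proof of Lemma \ref{lem:relative_into_reconstructed_quasi-iso}. The plan proceeds in three steps.

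First, I would extract a homotopy colimit description of the relative virtual cochains. Combining Lemma \ref{lem:compute_virtual_cochains_simplex} (that the last-vertex inclusion $C^{*,c}_{\rel\partial}(\bX_{\sigma_n}) \to C^{*,c}_{\rel\partial}(\sigma)$ is a quasi-isomorphism) with the colimit formula for the left Kan extension $LC^{*,c}_{\rel\partial}$, one sees that
\begin{equation}
    C^{*,c}_{\rel\partial}(\bX) \equiv \hocolim_{A} LC^{*,c}_{\rel\partial} \simeq \hocolim_{\alpha \in A} C^{*,c}_{\rel\partial}(\bX_\alpha).
\end{equation}
Running the Bousfield-Kan spectral sequence for this homotopy colimit, one can identify the $E_2$-page as a derived colimit of the functor which assigns to each chart $\bX_\alpha$ the twisted compactly supported Borel-equivariant (\v{C}ech) cohomology of the pair $(Z_\alpha,\partial Z_\alpha)$. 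Under either of the two hypotheses—free $G$-action, or $\Bbbk$ of characteristic $0$—the chart-level computation in Lemma \ref{lem:absolute_cochians_have_fundamental_class} shows that this functor is concentrated in a single cohomological degree and agrees pointwise with the relative compactly supported cohomology of the quotient $Z_\alpha/G$ twisted by the orientation sheaf $\ro_{V_\alpha}^{-1}$.

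Second, the crucial globalisation step uses the contractibility assumption \eqref{eq:contractible_nerve_presentation} in Definition \ref{def:Kuranishi-presentation}: for each point $u \in \cM$, the nerve of the category $A_{[u]}$ of charts covering $u$ is contractible. This is exactly the descent/cosheaf condition required to show that the above derived colimit localises pointwise over $\cM$ and computes the twisted compactly supported cohomology of the pair $(\cM, \partial \cM)$. In particular, $H^*C^{*,c}_{\rel\partial}(\bX)$ is supported in a single degree and is a free $\Bbbk$-module with one copy of $\ro_{\bX}$ per connected component of the interior of $Z/G$.

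Third, to pass from the relative to the absolute cochains defined in \eqref{eq:absolute_virtual_cochains-presentatoin}, I would run the filtration-by-codimension argument of Lemma \ref{lem:relative_into_reconstructed_quasi-iso} verbatim at the level of presentations. The associated graded of the filtration on $C^{*,c}(\bX)$ by $\codim_{\cQ} P$ is a direct sum, over $P \in \cQ$, of $C^{*,c}_{\rel\partial}(\partial^P \bX) \otimes C^{\mathrm{cell}}_*(\kappa^P, \partial_- \kappa^P)$, and since the relative cellular pair $(\kappa^P, \partial_- \kappa^P)$ is contractible whenever $P$ is not the minimum of $\cQ$, the only surviving contribution is that of the top stratum. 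Combined with Step 2, this yields the claimed description of $H^{*,c}(\bX)$.

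The main obstacle is rigorising the descent argument in Step 2 in the purely topological (non-smooth, orbifold-with-corners) setting: one does not have a ready-made sheaf theory of chain complexes on $\cM$, so the reduction to a local calculation has to be made directly from contractibility of the nerves of covering families, tracking the twist by the orientation lines $\ro_{V_\alpha}^{-1}$ whose underlying local systems may not globalise to $\cM$ in any naive way. Handling this twist carefully, and verifying that the Bousfield-Kan spectral sequence degenerates as expected under the finite-isotropy and characteristic-zero assumptions, is where the substantive work lies; the rest of the argument is formal manipulation of homotopy colimits and filtrations already present in the chart-level proof.
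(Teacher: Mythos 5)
Your overall strategy---reduce to the chart-level Lemma \ref{lem:absolute_cochians_have_fundamental_class} and globalise using the contractibility condition \eqref{eq:contractible_nerve_presentation}---is the right one, and Step 1 is fine. But Step 2, which you yourself flag as ``where the substantive work lies,'' is precisely the content of the proof and is missing. The contractibility of the nerves $A_{[u]}$ is a pointwise statement over $\cM$, whereas the homotopy colimit defining $C^{*,c}_{\rel\partial}(\bX)$ is taken over the single global category $A$; the footprints of the charts overlap, so the Bousfield--Kan $E_2$-page of that colimit is not in any direct way the cohomology of $\cM$. To make the localisation precise one needs a mechanism for restricting supports: the paper introduces the supported complexes $C^{*,c}_K(\bX)$ for compact $K \subset \cM$ (Definition \ref{def:rel_compact_virt_support}), verifies that they form a homotopy $\cK$-sheaf (acyclicity on $\emptyset$, Mayer--Vietoris, continuity), computes the stalks $H^{*,c}_{\{z\}}$ using the nerve contractibility (Lemma \ref{lem:compute_virtual-cochains-chart} and Corollary \ref{cor:purity}), proves the weak vanishing property via the finite Lebesgue covering dimension of $\cM$ (Lemma \ref{lem:weak_vanishing}), and only then invokes Pardon's local-to-global result \cite[Proposition A.5.4]{Pardon2016} to get $H^{*,c}(\bX) \cong \check{H}^*(\cM;\ro_\bX)$. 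None of the $\cK$-sheaf axioms, and in particular no boundedness statement guaranteeing convergence of the local-to-global spectral sequence, appears in your proposal; asserting that the derived colimit ``localises pointwise'' is exactly the claim that needs proof.

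Two further points. First, your Step 3 misdescribes the codimension filtration: the associated graded of $C^{*,c}(\bX) = \bigoplus_P C^{*,c}_{\rel\partial}(\partial^P\bX)\otimes\ro_P$ is just that direct sum with its internal differentials; the tensor factors $C^{\mathrm{cell}}_*(\kappa^P,\partial_-\kappa^P)$ arise in the \emph{different} complex $\bigoplus_P C^{*,c}(\partial^P\bX)$ of Lemma \ref{lem:relative_into_reconstructed_quasi-iso}, and the cancellation you invoke does not occur for the filtration you name---the boundary strata genuinely contribute to $E_1$ and must be absorbed into a \v{C}ech-type computation of $\check H^*(\cM;\ro_\bX)$. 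In the paper this relative-to-absolute bookkeeping is folded into the stalk computation (the supported complex $C^{*,c}_K(\sigma)$ already includes the sum over $P \in \cQ$), so no separate filtration step is needed. Second, the twist by $\ro_{V_\alpha}^{-1}$ and the passage to $\ro_\bX$ is handled in the paper at the level of stalks, where the relative orientation hypothesis identifies the local cohomology with $\ro_\bX$ chart by chart; your worry about globalising the twist is resolved by Corollary \ref{cor:purity}, which only needs the stalkwise identification plus nerve contractibility.
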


Given the above result, the proof of the main result of this paper is now a reiteration of the techniques developed in the previous section:
\begin{proof}[Proof of Theorem \ref{thm:virtual_counts_exist}]
 By Proposition \ref{prop:cofibrant_property_of_lift_Kur}, it suffices to construct a morphism in the homotopy category of symmetric monoidal functors; this is done in exactly the same way as in Lemma \ref{lem:morphism_to_constant_functor_global}:

  We introduce the complex
  \begin{equation}
 \left( \bigoplus_{P \in \cQ}   C^{*,c}( \partial^P \bX), d \right)
\end{equation}
with differential given by the projection maps from the cochains associated to a stratum to those associated to its codimension $1$ strata (this map has degree $1$ because it entails trivialising the conormal orientation of such a stratum). This is a symmetric monoidal functor on $\Kur$, equipped with a monoidal equivalence
\begin{equation}
    C^{*,c}_{\rel \partial}(\bX) \to  \left( \bigoplus_{P \in \cQ}   C^{*,c}( \partial^P \bX), d \right).
\end{equation}
Lemma \ref{lem:absolute_cochians_have_fundamental_class} then yields, by truncation, a monoidal zig-zag of natural transformations from $C^*_{st}$ to the right hand side.
\end{proof}

\subsection{Virtual cochains with support}
\label{sec:virt-coch-with}

Our goal in the remaining part of Section \ref{sec:virt-coch-kuran-1} is to prove Lemma \ref{lem:absolute_cochians_have_fundamental_class}. We note that, as we formulated it, this is a statement about a single Kuranishi presentation, and we do not have to make sure that the way that we prove this result is in any sense functorial or multiplicative. As will be clear below, Condition \eqref{eq:contractible_nerve_presentation} will play an essential role.

Given a Kuranishi presentation $\bX$ of $\cM$, with domain $A$, and a subset $S$ of $\cM$, we define for each object $\alpha \in A$ the subset
\begin{equation}
  Z_{\alpha}(S) \subset Z_{\alpha}  
\end{equation}
to be the inverse image of $S$ under the projection to $\cM$. Given a sequence $\vec{f}$ of composable morphisms in $A$ whose underlying maps of groups are isomorphisms, we define $N f^i(S)$ to be the bundle over $X_{i-1}$, whose fibre at a point $x$ is
\begin{equation}
\partial^{P_{f_i}}  X_{i} \setminus \left( X_{i-1} \cup Z_i(S)\right) \cup \{x\}  \subset \partial^{P_{f_i}} X_i. 
\end{equation}
As in Section \ref{sec:homot-norm-bundl}, by iteratively pulling back $Nf^{i+1}(S)$ under the evaluation map
\begin{equation}
    N f^i(S) \to X_i,
\end{equation}
we obtain an associated sequence
\begin{equation}
  \begin{tikzcd}
      X_0  &  N^{\vec{f}} f^1(S) \ar[l] &   N^{\vec{f}} f^2(S) \ar[l] & \cdots \ar[l] & N^{\vec{f}} f^n(S) \ar[l]
  \end{tikzcd}
\end{equation}
of fibrations. We have the following variant of Definition \ref{def:twisted_vcochains-constant-G}:
\begin{defin}
  The \emph{relative virtual cochains with support in $S$} of a sequence $\vec{f} = \{f^i\}$ of composable morphisms of Kuranishi charts with constant automorphism group $G$ is the co-chain complex
  \begin{multline}
    C^{-*,c}_{\rel \partial,S}(\vec{f}) \equiv \\ C^{\pitchfork}_*(X_0|Z_0; C_*(N^{\vec{f}}f^1(S)|0) \otimes \cdots \otimes C_*(N^{\vec{f}}f^n(S)|0)  \otimes \ro_{V_n}^{-1}) \otimes_{C_* G} C_* (EG),
  \end{multline}
of symmetric multi-cubical chains, generated by the pullback of $ \ro_{V_n}^{-1}$ under maps
  \begin{equation}
    \square^{k_0} \times \cdots \times \square^{k_n} \to N^{\vec{f}} f^n(S) ,  
  \end{equation}
  such that the projection to $N^{\vec{f}} f^i(S)$ factors through the projection of the domain to $\square^{k_0} \times \cdots \times \square^{k_i}$, modulo (i) degenerate cubes, (ii) the ideal generated by a cube and its image under a permutation of any of the factors, and (iii) those cubes whose projection to $N^{\vec{f}} f^i(S)$ lies in the complement of the origin (the case $i=0$ is implicitly included in these conditions if we use the convention that $f^0=\id_{\bX_0}$).   We assume as well that each generator satisfies the following property:
  \begin{equation}
    \label{eq:chains_transverse_to_o-section-support-S}
    \parbox{30em}{the image  in $Nf^i(S)$  of each stratum  $\square^{k_0} \times \cdots \times \square^{d} \subset \square^{k_0} \times \cdots \times \square^{k_i}$ lies in the complement of the zero section if $d < \dim V_{i}/V_{i-1}$.}
  \end{equation}
\end{defin}

Considering a general simplex $\sigma$ in $A$, and returning to the construction of Section \ref{sec:homot-norm-bundl-2}, we then obtain a cochain complex
 \begin{equation}
 C^{*,c}_{\rel \partial,S}(\sigma) \equiv \bigoplus_{\tau \subset \sigma}  C^{*,c}_{\rel \partial,S}(\tau \times_{G_{\tau}} G_{\sigma_n} )[\dim \tau],
  \end{equation}
by imposing in Definition  \ref{def:virtual_cochains_chart} the support condition in Equation \eqref{eq:virtual_cochain_simplex}. This leads us to introduce the absolute cochains
  \begin{equation}
       C^{*,c}_{S}(\sigma) =   \left(   \bigoplus_{P \in \cQ}   C^{*,c}_{\rel \partial} (\partial^P \sigma) \otimes \ro_{P}   , d \right).
  \end{equation}

This construction is contravariantly functorial in $S$, in the sense that an inclusion $S' \subset S$ induces a restriction map
\begin{equation}
    C^{*,c}_{S}(\sigma) \to   C^{*,c}_{S'}(\sigma). 
\end{equation}

This construction is also functorial with respect to simplices in $A$, and hence descends to Kuranishi presentations:
\begin{defin} \label{def:rel_compact_virt_support}
  The \emph{compactly supported virtual cochain of a Kuranishi presentation $\bX$ with support in $S$} is the complex
  \begin{equation}
        C^{*,c}_{S}(\bX) \equiv \hocolim_{ \alpha \in A}  \left(LC^{*,c}_{S}\right)(\alpha).
      \end{equation}
\end{defin}
Having passed from simplices to presentations, the construction remains functorial with respect to inclusions $S' \subset S$, in the sense that they induce maps
\begin{equation}
  C^{*,c}_{S}(\bX) \to   C^{*,c}_{S'}(\bX)
\end{equation}
For the terminal element $S = \cM$, we have
\begin{equation}
  C^{*,c}_{\cM}(\bX) \cong C^{*,c}(\bX).
 \end{equation}

   \subsection{Computing the cohomology of Kuranishi presentations}
\label{sec:constr-non-mult}

We now specialise the above discussion to the case of compact subsets. We use Pardon's terminology from \cite{Pardon2016}
\begin{lem}
  The assignment of the cochain complex $ C^{*,c}_{K}(\bX) $ to each compact subset $K$ of $\cM$  defines a homotopy $\cK$-sheaf on $\cM$.
\end{lem}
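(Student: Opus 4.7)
The plan is to verify the two defining conditions of a homotopy $\cK$-sheaf: vanishing on the empty set, and the homotopy Mayer--Vietoris property for pairs of compact subsets. Čech descent for an arbitrary finite compact cover will then follow from Mayer--Vietoris by induction and the usual spectral sequence argument. The vanishing $C^{*,c}_\emptyset(\bX) \simeq 0$ is immediate from the definitions: when $S = \emptyset$, each $Z_i(\emptyset) = \emptyset$, so the underlying relative chain complex degenerates and forces each $C^{*,c}_{\rel \partial, \emptyset}(\vec{f})$ to vanish, whence $C^{*,c}_\emptyset(\bX)$ vanishes as a homotopy colimit of vanishing complexes.

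For the Mayer--Vietoris property, given compact $K_1, K_2 \subseteq \cM$, the goal is to establish a distinguished triangle
\begin{equation}
C^{*,c}_{K_1 \cup K_2}(\bX) \to C^{*,c}_{K_1}(\bX) \oplus C^{*,c}_{K_2}(\bX) \to C^{*,c}_{K_1 \cap K_2}(\bX).
\end{equation}
Since $C^{*,c}_S(\bX) = \hocolim_{\alpha \in A} (L C^{*,c}_S)(\alpha)$ and homotopy colimits of chain complexes preserve exact triangles, it is enough to establish the analogous triangle for each $(L C^{*,c}_S)(\alpha)$; via the left Kan extension formula and the direct sum decomposition of $C^{*,c}_{\rel \partial, S}(\sigma)$ over sub-simplices, this further reduces to a statement about each $C^{*,c}_{\rel \partial, K}(\vec{f})$ associated to a sequence of composable chart morphisms. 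The chart-level Mayer--Vietoris will then follow from the classical subdivision argument for multi-cubical chains with prescribed support: after iteratively subdividing any generating multi-cube whose image lies in $N^{\vec{f}} f^n(K_1 \cup K_2)$ into pieces whose images lie in either $N^{\vec{f}} f^n(K_1)$ or $N^{\vec{f}} f^n(K_2)$, an acyclic carrier argument produces a chain homotopy inverse to the natural map. The transversality constraint of Equation \eqref{eq:chains_transverse_to_o-section-support-S}, the quotient by cubes projecting into the complement of the origin at each stage, and the twist by $\ro_{V_n}^{-1}$ are all preserved under such subdivisions, because they are defined pointwise on the image of each stratum of the domain.

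The main obstacle is to carry out this subdivision in a manner that is compatible with the tensor product with $C_*(EG)$ over $C_*G$, i.e.\ to promote a chain-level subdivision homotopy to a $G$-equivariant one. For charts with trivial isotropy this is automatic, while in general the standing hypothesis that $\Bbbk$ has characteristic zero, together with the finiteness of stabilizers of the $G$-action, permits the usual equivariant averaging procedure over the compact Lie group $G$ to achieve equivariance. Modulo this technical point, the entire argument is a direct adaptation of the corresponding $\cK$-sheaf property established by Pardon in \cite{Pardon2016} for the singular model of the virtual cochain complex.
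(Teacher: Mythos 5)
Your verification of conditions (i) and (ii) is essentially the paper's argument (the paper is equally terse on Mayer--Vietoris, reducing it to the Mayer--Vietoris property of relative chains, which your subdivision/acyclic-carrier sketch is the standard proof of). But there is a genuine gap: Pardon's notion of homotopy $\cK$-sheaf, which is the one the paper explicitly adopts, has \emph{three} defining conditions, not two. Besides acyclicity on $\emptyset$ and homotopy Mayer--Vietoris, one must check the continuity axiom: that the natural map $\colim_{K \Subset K'} C^{*,c}_{K'}(\bX) \to C^{*,c}_{K}(\bX)$ is an isomorphism. This is what makes the stalk computations and the local-to-global argument of the following lemmas work, and it is not a formal consequence of the other two axioms. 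In the present setting it is easy --- each generating multi-cube has compact image, so any chain satisfying the support and transversality conditions relative to $K$ already satisfies them relative to some $K'$ with $K \Subset K'$, and the (homotopy) colimit defining $C^{*,c}_S(\bX)$ commutes with the filtered colimit over $K'$ --- but it must be stated and checked.

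Separately, the ``main obstacle'' you identify in the last paragraph is not actually an obstacle, and your proposed fix would weaken the lemma. The subdivision and acyclic-carrier operators on (multi-)cubical chains are natural transformations of the chain functor, hence automatically commute with the $G$-action induced by functoriality; the resulting Mayer--Vietoris total complex is therefore an acyclic complex of $C_*G$-modules, and applying $- \otimes_{C_*G} C_*(EG)$ preserves acyclicity because $C_*(EG)$ is the bar resolution, built from free $C_*G$-modules. No averaging is needed, which is just as well: averaging chain homotopies over a positive-dimensional compact Lie group is not an operation available at the level of cubical chains, and the finiteness of stabilisers concerns the action on $X$, not on the chain complex. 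More importantly, the lemma as stated carries no hypothesis on $\Bbbk$ (the characteristic-zero and trivial-isotropy hypotheses enter only later, in the computation of stalks), so a proof that invokes characteristic zero here proves a strictly weaker statement than the one the paper needs.
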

\begin{proof}
  The result entails checking three properties: (i) $C^{*,c}_{\emptyset}(\bX) $ is acyclic because $ C^{*,c}_{\emptyset}(\sigma)$ is the trivial chain complex for every simplex, (ii) given a cover $K_1 \cup K_2$ of $K$, the Mayer-Vietoris property for (relative) singular chains implies that the total complex
  \begin{equation}
    C^{*,c}_{K}(\bX) \to C^{*,c}_{K_1}(\bX) \oplus  C^{*,c}_{K_2}(\bX) \to C^{*,c}_{K_1 \cap K_2}(\bX)
  \end{equation}
  is acyclic, (iii) the fact that singular chains are compactly supported implies that the map
  \begin{equation}
    \colim_{K \Subset K'}    C^{*,c}_{K'}(\bX) \to   C^{*,c}_{K}(\bX)   
  \end{equation}
  is an isomorphism of chain complexes.
\end{proof}

Next, we check that the virtual cochains define a pure homotopy sheaf. This entails computing the stalks at a point:
 \begin{lem} \label{lem:compute_virtual-cochains-chart}
   If $\sigma$ is a simplex of charts in $A$ whose footprints include a point $z$, the cohomology of $ H^{*,c}_{\{z\}}(\sigma) $ is naturally isomorphic to $\ro_{\bX}$ whenever either of the following conditions hold:
  \begin{enumerate}
  \item the action of $G$ on $Z$ is free, or
    \item the ring $\Bbbk$ is a field of characteristic $0$,
    \end{enumerate} 
  \end{lem}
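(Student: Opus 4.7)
The plan is to reduce to a single chart by a spectral-sequence/cone argument, excise to a neighbourhood of a single $G$-orbit, linearise, apply a Thom isomorphism and conclude from a computation of Borel equivariant cohomology of a point with finite isotropy.

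First I would establish the support-version of Lemma \ref{lem:compute_virtual_cochains_simplex}: the inclusion of the top vertex induces a quasi-isomorphism
\begin{equation}
  C^{*,c}_{\{z\}}(\bX_{\sigma_n}) \xrightarrow{\;\sim\;} C^{*,c}_{\{z\}}(\sigma).
\end{equation}
The argument is identical to that of Lemma \ref{lem:compute_virtual_cochains_simplex}: filter by the difference between $\dim \tau$ and the maximal vertex of $\bfn$ appearing in $\tau$, and observe that each face map $\tau \to \partial^n \tau$ is sent to a quasi-isomorphism (the fibres of the homotopy normal bundle collapse, and the transversality condition \eqref{eq:chains_transverse_to_o-section-support-S} interacts well with the filtration because the support is unchanged). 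The key point is that imposing a support condition is preserved under the filtration, so the argument carries over verbatim.

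This reduces the problem to computing $H^{*,c}_{\{z\}}(\bX_{\sigma_n})$ for a single chart $(G,\cQ,X,V,\fs)$ with $z$ in the footprint. I would then pick a preimage $\tilde z\in Z$ of $z$ and excise away from its $G$-orbit. More precisely, by the definition of $C^{*,c}_{\{z\}}$ together with the compact-support statement in the proof that we have a homotopy $\cK$-sheaf, only chains whose projection to the normal bundle tower lies over a neighbourhood of $G\cdot \tilde z$ contribute. The absolute cochains decompose by stratum $P\in\cQ$; for the constraint of support at $z$ to be satisfied by a non-vanishing class, $G\cdot \tilde z$ must lie in the corresponding stratum, and an excision argument on symmetric cubical (relative) chains identifies the contributing term with the local virtual cochains of a $G$-invariant neighbourhood $U$ of $G\cdot \tilde z$.

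Next I would use local linearity: there exists a $G$-equivariant homeomorphism $U\cong G\times_\Gamma U_0$, where $\Gamma$ is the (finite) stabiliser of $\tilde z$ and $U_0$ is an open neighbourhood of $0$ in a $\Gamma$-representation, together with a compatible linearisation of $V|_U \cong G\times_\Gamma (U_0\times W)$ for some $\Gamma$-representation $W$ and a $\Gamma$-equivariant section $\fs_0$ of $W$ over $U_0$ with $\fs_0(0)=0$. Then I would invoke the homotopy normal bundle construction of Section \ref{sec:thoms-isom-via} and Lemma \ref{lem:pairing_orientation_line_simplex}: pairing the transverse cubical chains in the $V$-direction with the orientation line $\ro_V^{-1}$ produces a quasi-isomorphism identifying the local twisted virtual cochains with
\begin{equation}
  C_*(\pt; \ro_{\tilde z}) \otimes_{C_*\Gamma} C_*(E\Gamma),
\end{equation}
where $\ro_{\tilde z}$ is the $1$-dimensional graded $\Gamma$-module $H_{\dim X}(X,X\setminus\{\tilde z\})\otimes H_{\dim V}(V_{\tilde z},V_{\tilde z}\setminus\{0\})^{-1}\otimes H_{\dim G}(\fg,\fg\setminus\{0\})^{-1}$, which is supported in degree $-\dim \bX$ and is canonically isomorphic as an ungraded line to the stalk of $\ro_\bX$ at $z$.

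Finally I would compute this equivariant complex. Its cohomology is the group cohomology $H^*(\Gamma;\ro_{\tilde z})$, sitting in degrees $\geq -\dim\bX$. If $\Gamma$ is trivial (the free case), then $H^*(\Gamma;\ro_{\tilde z})=\ro_{\tilde z}$ concentrated in degree $-\dim\bX$. If $\Bbbk$ has characteristic $0$, then $H^{>0}(\Gamma;\ro_{\tilde z})=0$ by the standard averaging/transfer argument, and $H^0(\Gamma;\ro_{\tilde z})=\ro_{\tilde z}^\Gamma\cong\Bbbk$ (identified with $\ro_\bX|_z$ via the canonical $\frac{1}{|\Gamma|}$ normalisation). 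Either way, the cohomology is canonically isomorphic to $\ro_\bX$, as claimed.

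The main obstacle I anticipate is the excision/linearisation step: since we work with symmetric cubical chains on topological (locally linear) manifolds rather than smooth ones, and with a compact Lie group $G$ whose quotient is only topologically controlled, one must carefully justify both the excision on the relative symmetric cubical complex and the identification of the resulting local model with a $\Gamma$-equivariant Thom-type complex. The rest is essentially a repackaging of the Borel equivariant Thom isomorphism.
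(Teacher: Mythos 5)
Your plan is correct and, at its core, performs the same computation as the paper, but it packages it differently. The paper's own proof is very short: it reduces (implicitly, via the supported analogue of Lemma \ref{lem:compute_virtual_cochains_simplex}) to the last chart of the simplex, and then simply invokes Lemma \ref{lem:absolute_cochians_have_fundamental_class} — i.e.\ equivariant Poincar\'e--Lefschetz--Alexander duality for the pair $(X_\sigma, X_\sigma \setminus Z_\sigma(\{z\}))$ twisted by $\ro_{V}^{-1}$ — together with a dimension count. You instead re-derive the needed local statement by hand: excision to a $G$-invariant neighbourhood of the orbit, local linearity, a Shapiro-type reduction from $C_*G$ to $C_*\Gamma$, and the vanishing of higher group (co)homology of the finite stabiliser. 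This is more self-contained (it does not lean on the unproved duality Lemma \ref{lem:absolute_cochians_have_fundamental_class}), at the cost of having to justify excision and the local model for symmetric cubical chains, which you correctly identify as the delicate point.

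Two small corrections. First, $C_*(\pt;\ro_{\tilde z})\otimes_{C_*\Gamma}C_*(E\Gamma)$ computes group \emph{homology} $H_*(\Gamma;\ro_{\tilde z})$, which under the paper's cohomological conventions sits in degrees $\leq -\dim\bX$, not $\geq -\dim\bX$; the conclusion is unaffected because in both allowed cases ($\Gamma$ trivial, or $\Bbbk$ of characteristic $0$) the higher group homology vanishes and only the (co)invariants in degree $-\dim\bX$ survive. Second, your claim that only one summand of $\bigoplus_P C^{*,c}_{\rel\partial,\{z\}}(\partial^P\sigma)\otimes\ro_P$ contributes needs a word more: the orbit $G\tilde z$ lies in $\partial^P X$ for \emph{every} $P$ admitting an arrow to the stratum $P_0$ containing $z$, but for $P\neq P_0$ it lies in the boundary of $\partial^P X$, where the local homology of a manifold with boundary vanishes; this is what kills all summands except $P=P_0$, whose degree shift by $\ro_{P_0}$ then produces the answer in degree $-\dim\bX$ rather than $-\dim\partial^{P_0}\bX$.
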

  \begin{proof}
Let $K=\{z\}$. Recall that $X_{\sigma}$ is a manifold of dimension $\dim \bX_{\sigma} + \dim V_\sigma + \dim G_\sigma $, equipped with an action of $G_\sigma$ with finite isotropy. This implies that the $G_\sigma$-equivariant relative homology of the pair $X_\sigma| Z_\sigma(K)$ is supported in degree less than or equal to $\dim X_\sigma + \dim V_\sigma$. Twisting by the bundle $V_\sigma$, and using the fact that $X_\sigma$ is oriented relative this bundle implies, by Lemma \ref{lem:absolute_cochians_have_fundamental_class}, that the cohomology of $ C^{*}_{K}(\sigma)$ in degree $\dim \bX_\sigma$ is isomorphic to $\ro_{\bX}$, and is trivial in lower degrees.
\end{proof}
Since $ H^{*,c}_{\{z\}}(\sigma)$ vanishes for simplices of charts one of which fails to include $\{z\}$, and since we have assumed that the nerve of the category of charts covering $\{z\}$ is contractible, we conclude:
\begin{cor}  \label{cor:purity}
Under the hypotheses of Lemma \ref{lem:compute_virtual-cochains-chart}, the stalks $ H^{*,c}_{\{z\}}(\bX)$ define a local system on $\cM$ which is naturally isomorphic to $ \ro_{\bX}$. \qed
\end{cor}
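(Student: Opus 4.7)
The plan is to reduce the stalk to a homotopy colimit of a constant functor over a contractible indexing category, and then to check that the assembly varies locally constantly in $z$. Unwinding Definition \ref{def:rel_compact_virt_support}, we have
\begin{equation}
   C^{*,c}_{\{z\}}(\bX) \simeq \hocolim_{\alpha \in A} \left(L C^{*,c}_{\{z\}}\right)(\alpha).
\end{equation}
The first step is to observe that for any object $\alpha$ whose footprint in $\cM$ does not contain $z$, the set $Z_\alpha(\{z\})$ is empty, so all the homotopy normal bundles $N^{\vec{f}}f^{i}(\{z\})$ attached to simplices with terminal vertex $\alpha$ are trivial (their fibres reduce to the basepoint), forcing $\left(L C^{*,c}_{\{z\}}\right)(\alpha)$ to be acyclic. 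Consequently the homotopy colimit is computed up to quasi-isomorphism by restricting to the full subcategory $A_{[z]}$ of charts that cover $z$.

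Next, I would apply Lemma \ref{lem:compute_virtual-cochains-chart} to each simplex $\sigma$ of $A_{[z]}$, which identifies the cohomology $H^{*,c}_{\{z\}}(\sigma)$ with a single copy of $\ro_{\bX}$ concentrated in degree $-\dim \bX$. The naturality of the construction and the requirement that the structure maps of a $\Bbbk$-oriented Kuranishi presentation preserve relative orientations ensure that, for a morphism $\sigma \to \sigma'$ inside $A_{[z]}$, the induced map on stalks is precisely the identity of $\ro_{\bX}$. Thus the restriction of $L C^{*,c}_{\{z\}}$ to $A_{[z]}$ is objectwise quasi-isomorphic, as a functor, to the constant functor with value $\ro_{\bX}$.

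For the homotopy colimit, the standard identification
\begin{equation}
    \hocolim_{A_{[z]}} (\text{constant functor } \ro_{\bX}) \simeq C_{*}(|A_{[z]}|;\Bbbk) \otimes \ro_{\bX}
\end{equation}
combined with the defining hypothesis \eqref{eq:contractible_nerve_presentation}, that $|A_{[z]}|$ is contractible, yields the desired isomorphism $H^{*,c}_{\{z\}}(\bX) \cong \ro_{\bX}$ concentrated in degree $-\dim \bX$.

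It remains to upgrade this pointwise statement to a local system. For a sufficiently small open neighborhood $U$ of $z$ (inside a single stratum), the same reasoning applied to $S = \overline{U}$ shows that the cohomology of $C^{*,c}_{\overline{U}}(\bX)$ is again isomorphic to $\ro_{\bX}$: the categories of charts covering $\overline{U}$ and covering $\{z\}$ have naturally quasi-isomorphic nerves, both contractible, and the twisted cochains are compatible under the restriction maps. The restriction map $C^{*,c}_{\overline{U}}(\bX) \to C^{*,c}_{\{z'\}}(\bX)$ is then a quasi-isomorphism for every $z' \in U$, providing the comparison isomorphisms that define the local system structure, and identifying it with $\ro_{\bX}$ as a local system. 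The main technical point I expect to check carefully is that the two reductions, first to $A_{[z]}$ via the acyclicity argument and then to a constant functor via Lemma \ref{lem:compute_virtual-cochains-chart}, can be carried out compatibly as $z$ varies, so that the resulting identification is natural in $z$ and not merely pointwise; this is where the functoriality of the homotopy $\cK$-sheaf structure established in the previous lemma is crucial.
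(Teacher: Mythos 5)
Your proposal is correct and follows essentially the same route as the paper: the stalk complexes vanish for simplices involving a chart whose footprint misses $z$, each remaining simplex contributes a copy of $\ro_{\bX}$ by Lemma \ref{lem:compute_virtual-cochains-chart}, and the contractibility hypothesis \eqref{eq:contractible_nerve_presentation} on the nerve of $A_{[z]}$ collapses the homotopy colimit to a single copy of $\ro_{\bX}$. The only quibble is that the vanishing for charts not covering $z$ comes from the relative chains $C_*(X_0|Z_0(\{z\}))$ being zero when $Z_0(\{z\})=\emptyset$ (and from the support conditions on the normal bundles for intermediate charts), not from the normal bundles "reducing to the basepoint," but this does not affect the argument.
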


We need to check one more technical condition, which is referred to in \cite{Pardon2016} as the \emph{weak vanishing property}. Our proof will use the fact that $\cM$ is locally a finite quotient of a closed subset of a Euclidean space, hence has finite Lebesgue covering dimension:
\begin{lem} \label{lem:weak_vanishing}
If $\cF$ is a $\cK$-sheaf whose stalks form a local system, then  the cohomology of $\cF(K) $ is bounded below for each compact subset $K$ of $\cM$.
\end{lem}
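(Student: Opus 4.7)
The plan is to combine the Mayer–Vietoris axiom for a homotopy $\cK$-sheaf with the finite Lebesgue covering dimension of $\cM$, which follows from the local description of $\cM$ as a finite quotient of a closed subset of Euclidean space. The stalk computation of Corollary \ref{cor:purity}, which identifies the stalk local system with $\ro_{\bX}$ concentrated in degree $-\dim \bX$, will ultimately supply the lower bound.

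Fix $K \subset \cM$ compact, and let $d$ bound the covering dimension in an open neighborhood of $K$. For $\epsilon > 0$, choose a finite compact cover $\{K_i\}_{i=1}^m$ of $K$ by sets of diameter less than $\epsilon$ such that any $d+2$ of them have empty common intersection. The homotopy $\cK$-sheaf axioms produce a \v{C}ech--Mayer--Vietoris spectral sequence
\[
E_1^{p,q} = \bigoplus_{|S|=p+1} H^q(\cF(K_S)) \Longrightarrow H^{p+q}(\cF(K)),
\]
where $K_S = \bigcap_{i \in S} K_i$ and $E_1^{p,q}$ vanishes for $p > d$. Hence a uniform lower bound on $H^q(\cF(K_S))$ yields a lower bound on $H^n(\cF(K))$ shifted by at most $d$, so it suffices to bound $H^q(\cF(K_S))$ below by a constant that does not depend on $S$.

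For the latter, I would argue by shrinking $K_S$: the $\cK$-sheaf axioms give $\cF(K_S) \cong \colim_{K_S \Subset \overline{U}} \cF(\overline{U})$, with the colimit filtered and commuting with cohomology. For $\epsilon$ sufficiently small, $K_S$ is contained in an open set $W$ on which the stalk local system is trivial, and on which Corollary \ref{cor:purity} identifies each stalk with $\ro_{\bX}$ in degree $-\dim \bX$. Refining the cover of $K_S$ inside $W$ and iterating Mayer--Vietoris produces successively smaller compacta; at each stage the $p$-range remains bounded by $d$, and in the limit the complex $\cF(K_S)$ is computed by the stalks. The stalk vanishing below degree $-\dim \bX$ then propagates back through the spectral sequences to give $H^q(\cF(K_S)) = 0$ for $q < -\dim \bX$, and hence $H^n(\cF(K)) = 0$ for $n < -\dim \bX$.

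The principal obstacle is the passage from pointwise stalk information to a genuine lower bound on a compact neighborhood: Corollary \ref{cor:purity} only describes $H^{*,c}_{\{z\}}$, and one must globalise this to $H^{*,c}_{K_S}$ uniformly. The argument above depends on each iterated Mayer--Vietoris refinement introducing only a bounded horizontal shift, controlled by the covering dimension $d$, which does not grow under refinement; combined with the colimit formula, this makes the induction terminate. Ensuring that the filtered colimit indeed computes the stalk, and that finite covering dimension really does control the spectral sequence degeneration at each scale, is the delicate point.
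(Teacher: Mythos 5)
Your first reduction (a finite cover of $K$ of multiplicity bounded by the covering dimension, together with the Mayer--Vietoris spectral sequence, so that a uniform lower bound on the pieces gives a lower bound on $K$ up to a shift by $d$) is consistent with the paper, which likewise invokes exactness to reduce to small compacta. The gap is in the second half. The assertion that ``in the limit the complex $\cF(K_S)$ is computed by the stalks'' and that ``the stalk vanishing then propagates back through the spectral sequences to give $H^q(\cF(K_S))=0$'' does not follow from iterating Mayer--Vietoris. Each individual cover gives a convergent (finite) spectral sequence, but an infinite tower of refinements has no a priori convergence; there is no limiting spectral sequence whose $E_1$ page consists of stalks. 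Moreover the colimit axiom runs in the wrong direction for your purposes: from $H^q(\text{stalk at }z)=\colim_{z\in K'}H^q(\cF(K'))=0$ one only learns that every class in $H^q(\cF(K_S))$ dies after restriction to some smaller compactum, not that $H^q(\cF(K_S))$ itself vanishes. A class could persist at every finite stage and only die in the colimit, and nothing in your argument rules this out. (Relatedly, your claimed bound $H^n(\cF(K))=0$ for $n<-\dim\bX$ is stronger than the lemma asserts; the correct statement acquires a shift by the covering dimension.)

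The missing idea is precisely the device the paper uses to convert ``every class dies upon refinement'' into an actual vanishing statement. The paper first subtracts the locally constant stalk contribution (choosing $K$ small enough that the stalk lifts to $K$), reducing to a $\cK$-presheaf with \emph{trivial} stalks, and then runs a contradiction argument: for a putative nonzero class $\phi$, define its depth with respect to a cover as the last page of the (convergent, since the cover is finite) \v{C}ech spectral sequence on which $\phi$ survives; the depth is bounded above by the covering dimension, can only decrease under refinement by functoriality, and \emph{strictly} decreases for a suitable refinement because the stalks vanish. This bookkeeping of a single class across the tower of covers is what replaces the non-existent ``limit spectral sequence.'' Without it (or without invoking a hypercompleteness result of the type cited in the paper's remark following the lemma), your argument does not close.
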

\begin{proof}
  By exactness, it suffices to prove this result whenever $K$ is sufficiently small. We thus pick a point $z \in \cM$, and choose $K$ so that the stalk at $z$ lifts to $K$. This reduces the problem to the case of a $\cK$-presheaf on $K$ with trivial stalks. We shall prove that such a sheaf identically vanishes (a stronger result than asserted).

  Assume, by contradiction, that $\cF(K)$ is not acyclic, and fix a nontrivial element $\phi$ of the cohomology of $\cF(K)$. Recall that each cover $\{K_i\}$ of $K$ determines a spectral sequence, converging to $0$, associated to the filtration of the extended \v{C}ech complex
  \begin{equation}
        \cF(K) \to \bigoplus_{i} \cF(K_i) \to \bigoplus_{i < j} \cF(K_{ij}) \to \cdots
  \end{equation}
  This is a right plane spectral sequence, with subspaces of $\cF(K)$ sitting along the edge (vertical axis). We define the depth of $x$ with respect to a cover to be the maximal $r$ so that $\phi$ survives to the $E^r$ page but no further (i.e. so that all previous differentials vanish on $\phi$, and $d^r$ is non-zero).

  By the functoriality of the spectral sequence associated to a filtration, we see that the depth can only decrease under a refinement $\{K'_j\}$ of $\{K_i\}$ (i.e. if a differential vanishes on the $r$\th page for a cover, it also vanishes for every refinement). This implies that depth of $\phi$ is no larger than the covering dimension of $K$.

  On the other hand if the restriction maps
  \begin{equation}
    \cF(K_I) \to \cF(K'_J)    
  \end{equation}
are trivial on the image of $d^r$, then the depth of must strictly decrease. The vanishing of the stalks implies that we can always find a refinement for which these restrictions vanish, which contradicts the bound on the depth.
\end{proof}
\begin{rem}
  As pointed out to the author by Pardon, the above result is a consequence of a general result of Lurie about sheaves of spaces \cite[7.2.3.6 and 7.2.1.17]{Lurie2009}, and the enrichment of the category of chain complexes over spaces.
\end{rem}
We complete this section by establishing the last step in the proof of Theorem \ref{thm:virtual_counts_exist}, which was postponed from Section \ref{sec:absol-virt-coch}:
\begin{proof}[Proof of Lemma \ref{lem:absolute_cochians_have_fundamental_class-local-to-global}]
Corollary \ref{cor:purity} and Lemma \ref{lem:weak_vanishing} allows us to appeal to Pardon's result \cite[Proposition A.5.4]{Pardon2016} which implies that we have a natural isomorphism
\begin{equation}
  H^{*,c}(\bX) \cong \check{H}^*(\cM; \ro_\bX).
\end{equation}

\end{proof}

\section{Semipositive Kuranishi presentations}
\label{sec:monot-kuran-pres}

In this section, we adapt the theory developed in the text to the (much simpler) case of semipositive Kuranishi presentations, i.e. to the subcategory
\begin{equation}
  \Kur_{+} \subset \Kur  
\end{equation}
consisting of Kuranishi presentations all of whose components have non-negative virtual dimension (i.e. the dimension at each point of $X$ is larger than the sum of the dimension of $G$ and the rank of $V$), and have the additional property that those components of virtual dimension $0$ and $1$ have trivial stabilisers. We write $\Kur^{\Bbbk}_{+}$ for the subcategory of $\Kur^{\Bbbk}$ consisting of presentations lying over $\Kur_+$, and $\widetilde{\Kur}^{\Bbbk}_+$ for the lift to the category of Kuranishi presentation with factorised strata $\widetilde{\Kur}^{\Bbbk}$. We being by noting that the monoidal structures on $\Kur$, $\Kur^{\Bbbk}$, and $\widetilde{\Kur}^{\Bbbk}$ preserve $\Kur_{+}$, $\Kur^{\Bbbk}_{+}$, and $\widetilde{\Kur}^{\Bbbk}_+$.

As alluded to in the introduction, despite the fact that Floer theory only uses moduli spaces of virtual dimension $0$ and $1$, one cannot work out a general theory of virtual counts without considering moduli spaces of arbitrary virtual dimension. The essential difficulty is that the product of a Kuranishi presentation of positive virtual dimension and one of negative virtual may arise as a boundary of a presentation of virtual dimension $0$ (or $1$), so that there is no way to truncate the stratum functor on $\Kur^{\Bbbk} $ while preserving its monoidal structure.

In the semi-positive case, however, we can define a (lax) symmetric monoidal functor
\begin{equation}
    C^*_{st,+} \co \Kur^{\Bbbk}_+ \to \Bbbk 
\end{equation}
given by
\begin{equation}
C^*_{st,+}(\bX) \equiv
\begin{cases}
  C^*_{st}(\bX) & \textrm{ if } \dim \bX = 0 , 1 \\
  0 & \textrm{ otherwise.}
\end{cases}
\end{equation}
The proof of the next result is a simplified version of the proof of Proposition \ref{prop:cofibrant_property_of_lift_Kur}, as the notion of depth can be replaced by the dimension.
\begin{prop}
  \label{prop:cofibrant_property_of_lift_Kur+}
  Given a monoidal weak equivalence $\pi \co G \Rightarrow F$ of symmetric monoidal functors from $\widetilde{\Kur}^{\Bbbk}_+$ to the category of chain complexes, which factor through $\Kur^{\Bbbk}_+ $, and a monoidal natural transformation $\eta \co  C^*_{st,+} \Rightarrow F$, there exists a symmetric monoidal natural transformation $C^*_{st,+} \Rightarrow F $ so that the following homology-level diagram commutes:
  \begin{equation}
    \begin{tikzcd}
      & H^* G \ar[d,Rightarrow] \\
      H^*_{st,+} \ar[ur,Rightarrow] \ar[r,Rightarrow] & H^* F.
    \end{tikzcd}
  \end{equation} \qed
\end{prop}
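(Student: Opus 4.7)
The proof will follow the inductive scheme of Proposition \ref{prop:cofibrant_property_of_lift_Kur}, with the depth filtration of $\widetilde{\Kur}^{\Bbbk}$ replaced by the filtration of $\widetilde{\Kur}^{\Bbbk}_+$ by virtual dimension. The three structural properties of the depth filtration used earlier transfer directly: morphisms respect the filtration in the sense that $\bX \to \bY$ forces $\dim \bX \leq \dim \bY$; morphisms between objects of the same dimension are reorderings of the factorisation data (the corresponding stratum has codimension zero); and the monoidal product of objects of dimensions $i$ and $j$ has dimension $i+j$. The key simplification is that $C^*_{st,+}$ vanishes identically on objects of dimension at least two, so the lift $\lambda \co C^*_{st,+} \Rightarrow G$ needs to be genuinely constructed only on objects of dimension $0$ and $1$.

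For the base case of dimension $0$, I pick a representative $\tilde{\bX}$ of each equivalence class of indecomposable dim-$0$ objects (under the relation generated by morphisms, all of which are reorderings) and choose a degree-$0$ cycle $\lambda(\tilde{\bX}) \in G(\tilde{\bX})$ whose class is mapped by $H^0(\pi)$ to $[\eta(\tilde{\bX})]$; existence is immediate since $\pi$ is a quasi-isomorphism. Decomposable dim-$0$ objects then receive lifts determined by the monoidal compatibility. Passing to dimension $1$, decomposable objects are again fixed by monoidality (their factorisation necessarily consists of one dim-$0$ piece and one dim-$1$ piece), while for an indecomposable representative $\tilde{\bX}$ the values of $\lambda(\tilde{\bX})$ on the degree-$0$ generators of $C^*_{st,+}(\tilde{\bX})$ (attached to dim-$0$ boundary strata) are prescribed by naturality with respect to the inclusions $\partial^P \tilde{\bX} \to \tilde{\bX}$. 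It remains to choose a degree-$(-1)$ chain representing the image of $[\tilde{\bX}]$ whose differential realises the prescribed boundary; exactly as in the original proof, this exists because $H^*(\pi)$ identifies the relevant obstruction with a class that already vanishes on the $F$-side.

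For objects of dimension at least two I set $\lambda = 0$. The naturality conditions left to verify are those associated to morphisms $f \co \bX \to \bY$ with $\dim \bX \leq 1$ and $\dim \bY \geq 2$: since $C^*_{st,+}(f) = 0$ and $\lambda(\bY)=0$, they require that $G(f) \circ \lambda(\bX)$ vanish on the chain level. This is the main obstacle, and the only feature genuinely new compared with Proposition \ref{prop:cofibrant_property_of_lift_Kur}. The analogous identity $F(f) \circ \eta(\bX) = 0$ is automatic from the naturality of $\eta$, and I plan to meet the corresponding chain-level condition for $\lambda$ by making the earlier choices of cycles $\lambda(\tilde{\bX})$ inside the subcomplex $\bigcap_f \ker G(f) \subset G(\tilde{\bX})$ rather than in $G(\tilde{\bX})$ itself. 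This refinement is possible because $\pi$ restricts to the corresponding subcomplexes of kernels, and the class $H^*(\pi)^{-1}[\eta(\tilde{\bX})]$ is represented there; combined with the remark that the relevant morphisms to higher-dimensional targets factor through finitely many maximal ones up to reordering, this allows the construction to proceed as before. Monoidality of the resulting transformation and the homology-level commutativity of the final triangle follow exactly as in the proof of Proposition \ref{prop:cofibrant_property_of_lift_Kur}.
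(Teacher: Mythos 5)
The paper offers no argument here beyond the remark that the induction on depth is replaced by an induction on dimension, and your proposal correctly fleshes out that scheme: the filtration properties transfer, the base case in dimension $0$ and the extension step in dimension $1$ are exactly the intended ones. You have also put your finger on the one point where the truncation genuinely changes the situation: for a morphism $f \co \bX \to \bY$ with $\dim \bX \leq 1$ and $\dim \bY \geq 2$, the value of the lift at $\bY$ is forced to be zero, so the naturality square imposes the condition $G(f) \circ \lambda(\bX) = 0$ on chains that have already been chosen, whereas in Proposition \ref{prop:cofibrant_property_of_lift_Kur} the value at the deeper object is \emph{defined} so as to make these squares commute.

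Your repair of this point does not, however, close the gap. It is true that $\pi$ restricts to a map $\bigcap_f \ker G(f) \to \bigcap_f \ker F(f)$ and that $\eta(\bX)$ lands in the target, but it does not follow that the class $H^*(\pi)^{-1}[\eta(\bX)]$ is represented by a cycle in $\bigcap_f \ker G(f)$: a quasi-isomorphism restricted to subcomplexes need not remain surjective on homology, and pushing a given representative $z_0$ into the intersection of kernels requires a boundary $dw$ with $G(f)(dw) = G(f)(z_0)$ for all $f$ simultaneously, which is not available in general. The appeal to finitely many maximal morphisms does not help, since a fixed $0$- or $1$-dimensional presentation occurs as a boundary stratum of infinitely many non-isomorphic higher-dimensional ones; and in the dimension-$1$ step you would further need the degree-$(-1)$ chain to have a prescribed differential \emph{and} lie in the intersection of kernels. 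Finally, the same truncation problem recurs for the monoidal squares: when $\dim \tilde{\bX} + \dim \tilde{\bY} \geq 2$ one needs $\mu_G \circ (\lambda \otimes \lambda) = 0$, and the kernel condition says nothing about the structure maps $\mu_G$. The intended resolution is that in the only place the proposition is used, $G$ and $F$ are the semipositive virtual cochain functors, which are defined to vanish identically on presentations of dimension at least $2$; under that hypothesis every problematic square has zero target and the proof really is Proposition \ref{prop:cofibrant_property_of_lift_Kur} with depth replaced by dimension. You should either add this vanishing hypothesis to the statement or supply a genuine argument for the kernel claim; as written, the general case is not established.
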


We formulate the notion of a semipositive theory of virtual counts in the same way: we define the symmetric monoidal functor
\begin{equation}
  \Bbbk_+ \co  \Kur^{\Bbbk}_+ \to \Bbbk  
\end{equation}
to be the constant functor with value $\Bbbk$ on Kuranishi presentations of dimension $0$ and $1$, and $0$ on presentations of dimension greater than or equal to $2$. 

\begin{defin}
  A \emph{multiplicative theory of semi-positive virtual counts with coefficients in $\Bbbk$} is a monoidal natural transformation from $C^*_{st,+}$ to the functor $\Bbbk_+$, considered as symmetric monoidal functors from  $\widetilde{\Kur}^{\Bbbk}_+$ to $\Ch_{\Bbbk}$:
   \begin{equation} 
\begin{tikzcd}[column sep=huge]
\widetilde{\Kur}^{\Bbbk}_+
  \arrow[bend left=50]{r}[name=U,label=above:$C^*_{st}$]{}
  \arrow[bend right=50]{r}[name=D,label=below:$\Bbbk$]{} &
\Ch.
\arrow[from=U.south-|D,to=D,Rightarrow,shorten=5pt]{r}{\cV}
\end{tikzcd}   
   \end{equation}
\end{defin}

To construct such a theory, we proceed by considering the semipositive virtual cochains
\begin{equation}
    C^{-*,c}_{\rel \partial,+}(\bX) \equiv
    \begin{cases}
      C^{-*,c}_{\rel \partial}(\bX) & \textrm{ if } \dim \bX = 0,1 \\
      0 & \textrm{otherwise.}
    \end{cases}
\end{equation}

Following exactly the same strategy as in the proof of Theorem \ref{thm:virtual_counts_exist}, we build morphisms from the semipositive stratum functor to $C^{-*,c}_{\rel \partial,+} $, and thence to $\Bbbk_+$. Applying Lemma \ref{prop:cofibrant_property_of_lift_Kur+}, we conclude:
\begin{thm}
  A multiplicative theory of semipositive virtual counts exists for any ring $\Bbbk$. \qed
\end{thm}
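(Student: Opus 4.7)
The strategy is to mirror the proof of Theorem~\ref{thm:virtual_counts_exist}, with the key simplification that the semipositive assumption confines every relevant computation to virtual dimensions $0$ and $1$, where the isotropy is trivial and no characteristic hypothesis on $\Bbbk$ is required. First I would check that the truncated relative cochains $C^{-*,c}_{\rel\partial,+}$ and their absolute companion
\[
C^{*,c}_{+}(\bX) \equiv \Bigl(\bigoplus_{P \in \cQ}\, C^{*,c}_{\rel \partial,+}(\partial^P \bX) \otimes \ro_P,\, d\Bigr)
\]
assemble into lax symmetric monoidal functors on $\widetilde{\Kur}^{\Bbbk}_+$. Since $\dim(\bX\times\bY)=\dim\bX+\dim\bY$, the Eilenberg--Zilber product of Section~\ref{sec:mult-virt-coch} automatically respects the dimension truncation (any product whose output has dimension $\geq 2$ lands in a summand where the target vanishes), and the boundary-codimension differential preserves the relevant range since all strata of a presentation of dimension $0$ or $1$ have dimension $0$ or $1$.

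The cohomological input comes from rerunning Section~\ref{sec:constr-non-mult} in the restricted range. For a presentation of virtual dimension $0$ or $1$, every non-empty boundary stratum has virtual dimension $0$ or $1$, hence trivial isotropy by hypothesis, so Lemma~\ref{lem:absolute_cochians_have_fundamental_class-local-to-global} applies via its \emph{free-action} clause, over an arbitrary ring. It identifies $H^{*,c}_{+}(\bX)$ with a direct sum of copies of $\ro_\bX$ concentrated in degree $-\dim\bX\in\{0,-1\}$, indexed by the components of $\cM$. For presentations of virtual dimension $\geq 2$ both functors in sight are identically zero, so nothing further is required.

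With these ingredients, the zig-zag used in the proof of Theorem~\ref{thm:virtual_counts_exist} can be transcribed verbatim:
\[
C^*_{st,+} \;\Leftarrow\; \bigl(C^{*,c}_{+}\bigr)_{(-\infty,\,-\dim\bX]} \;\Rightarrow\; C^{*,c}_{+} \;\Leftarrow\; C^{-*,c}_{\rel \partial,+} \;\Rightarrow\; \Bbbk_+,
\]
where the leftmost two arrows come from the canonical truncation together with the identification of top cohomology with $\ro_\bX$, the middle arrow is the analogue of the quasi-isomorphism of Equation~\eqref{eq:map_virtual_cochains_global_to_filtered}, and the rightmost arrow is the evaluation morphism of Section~\ref{sec:eval-map-virt}. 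Each arrow is symmetric monoidal by the same verifications as in the non-semipositive case, and the backward arrows are quasi-isomorphisms by the previous paragraph. Proposition~\ref{prop:cofibrant_property_of_lift_Kur+} then rectifies this zig-zag into a single monoidal natural transformation $C^*_{st,+}\Rightarrow \Bbbk_+$ on $\widetilde{\Kur}^{\Bbbk}_+$, producing the desired theory of semipositive virtual counts.

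The only point I would want to scrutinise, and really the sole potential obstacle, is the compatibility between the dimension truncation and the monoidal structure at the nose level required for a lax symmetric monoidal functor: one must check that the structure map $F(\bX)\otimes F(\bY)\to F(\bX\times\bY)$ is well-defined after truncation, which reduces to checking that the source vanishes whenever the target does. This is exactly additivity of virtual dimension under product, so no difficulty arises. Every other ingredient is an almost verbatim transcription of the arguments already developed, with the characteristic-zero hypothesis simply replaced throughout by the now-automatic triviality of isotropy at every stratum encountered.
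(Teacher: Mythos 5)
Your proposal is correct and follows essentially the same route as the paper: truncate the virtual cochains to dimensions $0$ and $1$, observe that the semipositivity hypothesis forces trivial isotropy there so that the fundamental-class computation holds over an arbitrary ring, reproduce the zig-zag of monoidal natural transformations from the proof of Theorem~\ref{thm:virtual_counts_exist}, and rectify with Proposition~\ref{prop:cofibrant_property_of_lift_Kur+}. The point you single out for scrutiny — compatibility of the truncation with the monoidal structure via additivity of virtual dimension — is exactly the observation the paper relies on implicitly, so nothing is missing.
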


\part{From virtual counts to algebraic structures}
\label{part:from-virtual-counts}

\section{Extending the category of Kuranishi presentations}
\label{sec:extend-categ-kuran}

\subsection{Diagrams of Kuranishi presentations}
\label{sec:diagr-kuran-pres}
We fix a ground ring $\Bbbk$, an integer $d \in \bN$, and write $\Kur$ for the category of Kuranishi presentations 
which are relatively $\Bbbk$-oriented and have constant dimension modulo $d$ (i.e. we omit the subscript from the notation). Recall that an object of this category consists in part of a category $\cQ$ which is a model for manifolds with generalised corners, which includes the assumption that it has an initial element. 

We shall need a generalisation of this notion: the basic idea is that we want to drop the condition that $\cQ$ have an initial element. Geometrically, the minimal element of $\cQ$ corresponds to the top stratum, and hence dropping this condition amounts to considering spaces which are stratified in such a way that there is more than one top-dimensional stratum.
\begin{rem}
  There are two reasons for considering this generalisation. Most importantly for this paper, the constructions of Section \ref{sec:flow-categ-chain} seem difficult to implement entirely in the context of Kuranishi presentations: the problem is that, after introducing our notion of flow categories, we shall define a notion of flow bimodule, and we will want to define the tensor product of flow bimodules. Unfortunately, even if we start with Kuranishi presentations, the tensor product doesn't assign to a pair of objects a bimodule built from single Kuranishi presentations, but a diagram thereof.

 The second reason for this generalisation arises in applications: when coupling Floer and Morse theory by constructing algebraic structures on Morse complexes which include holomorphic curve corrections, one tends not to encounter a single Kuranishi presentation associated to each operation, but rather a collection indexed by combinatorial data consisting of configurations of discs and gradient flow lines. The formalism we introduce is designed to apply in this setting.
\end{rem}

To proceed with our construction, we consider a more general class of categories than in the definition of Kuranishi presentations, and which will index the relevant class of Kuranishi diagrams which we will later introduce:
\begin{defin}  
  A category $\cP$ equipped with a functor $\codim \cP \to \bN$ is a \emph{model for partitioned manifolds with generalised corners} if (i) for each element $P$ of $\cP$, the geometric realisation of the over category $\cP^{P}$ of $P$ is homeomorphic to a ball of dimension $\codim P$, and (ii) for each arrow $\alpha$, the category $\cP^{\alpha} \setminus \{ P_0, P_1\}$ is a partially ordered set  whose geometric realisation
  is a sphere of dimension
  \begin{equation}
    \codim P_1 - \codim P_0 - 2.
  \end{equation}
\end{defin}
\begin{rem}
  Since the results of this paper are not concerned with Floer homotopical constructions, we expect that only the conditions for codimension $1$ strata are in fact needed.
\end{rem}
We denote the geometric realisation of $\cP^{P}$ by $\kappa^P$ as in Section \ref{sec:stratified-spaces}, and observe that there are two cases to consider: the point of $\kappa^P$ associated to the terminal element $P$ of $\cP^P$ lies either in the interior or in the boundary. We say that $P$ is an interior element in the first case, and a boundary element in the second; these correspond to the condition that the geometric realisation of $\cP^P \setminus \{ P\} $ be a sphere or a ball.   If $\cP$ only has interior elements we say that it is a \emph{model for partitioned manifolds without boundary.}

\begin{lem}
The subcategory $\partial \cP$ of $\cP$ consisting of boundary elements is downward closed, and is a model for partitioned manifolds without boundary.
\end{lem}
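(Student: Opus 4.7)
The plan is to verify the statement in two steps: first downward closure, then the structural conditions that make $\partial \cP$ a model for partitioned manifolds without boundary.

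For downward closure, fix a non-identity arrow $f \co P \to P'$ in $\cP$ with $P$ a boundary element, and seek to show $P'$ is a boundary element. The key input is condition (ii) applied to $f$: the factorization category $\cP^f$ has geometric realization a ball $\kappa^f$ of dimension $\codim P' - \codim P$, carrying $P$ (the initial factorization $P \xrightarrow{\id} P \xrightarrow{f} P'$) and $P'$ (the terminal factorization $P \xrightarrow{f} P' \xrightarrow{\id} P'$) as two distinguished vertices on its boundary sphere. The forgetful functor $\cP^f \to \cP^{P'}$, sending $(P \to Q \to P')$ to $(Q \to P')$, is injective on objects and faithful, hence embeds $\kappa^f$ as a subcomplex of $\kappa^{P'}$ with $\id_{P'}$ in its image. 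I would then argue that because $P$ is a boundary vertex of $\kappa^P$, i.e.\ $|\cP^P \setminus \{P\}|$ is a ball (not a full sphere), the image of the ball $\kappa^f$ meets $\partial \kappa^{P'}$ along a subcomplex containing the vertex $P$; propagating this along the ball $\kappa^f$ (whose boundary structure forces opposite poles to both lie on $\partial \kappa^{P'}$) places $\id_{P'}$ on $\partial \kappa^{P'}$, showing $P'$ is a boundary element.

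For the structural conditions, endow $\partial \cP$ with $\codim_{\partial \cP}(P) \equiv \codim_\cP(P) - 1$; this is well-defined because every element of $\partial \cP$ has codimension at least one in $\cP$ (strata of codimension zero have $\cP^P$ a point, hence are trivially interior). Condition (ii) for $\partial \cP$ is automatic: for an arrow $\alpha \co P_0 \to P_1$ in $\partial \cP$, every factorization $P_0 \to Q \to P_1$ has $Q$ receiving the arrow from the boundary element $P_0$, hence $Q \in \partial \cP$ by downward closure. Thus $(\partial \cP)^\alpha = \cP^\alpha$ as posets, and the codimension difference is unchanged, so the sphere condition from (ii) in $\cP$ transfers verbatim.

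For condition (i) on $\partial \cP$, I would identify $|(\partial \cP)^P|$ combinatorially with the subcomplex of $\kappa^P$ obtained by removing the open stars of the interior vertices of $\cP^P$. The geometric content is that the interior vertices of $\cP^P$ are precisely the objects $(Q \to P)$ for which $Q$ is a top-dimensional local stratum, and the union of their open stars is a collar neighborhood of the interior of $\kappa^P$; removing this collar leaves a top face of $\partial \kappa^P$, hence a $(\codim_\cP P - 1)$-ball containing $P$ on its interior. The interiorness condition for $\partial \cP$ then follows immediately: removing $P$ from this ball, in whose interior it now lies, yields a $(\codim_\cP P - 2)$-sphere, as required. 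The main obstacle I anticipate is making the identification of $|(\partial \cP)^P|$ with a top face of $\kappa^P$ precise; I expect this to proceed by induction on codimension, using the structural conditions (i) and (ii) for lower codimension strata together with the downward closure just established.
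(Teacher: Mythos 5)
Your overall strategy matches the paper's: prove downward closure first, observe that condition (ii) transfers because $(\partial \cP)^{\alpha} = \cP^{\alpha}$ by downward closure, and identify $|(\partial\cP)^{P}\setminus\{P\}|$ with the boundary of the ball $|\cP^{P}\setminus\{P\}|$. The middle point is correct as you state it. But the downward-closure argument has a genuine gap. The step that fails is the ``propagation along the ball $\kappa^f$'': you assert that because one pole of the embedded ball $\kappa^f\subset\kappa^{P'}$ lies on $\partial\kappa^{P'}$, the opposite pole $\id_{P'}$ must as well. This is false for a general embedded ball (a $1$-ball in a disk can have one endpoint on the boundary circle and the other in the interior), and nothing in your argument supplies the structure needed to rule this out. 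The preceding claim --- that the vertex $(P\to P')$ lies on $\partial\kappa^{P'}$ because $P$ is a boundary element --- is also unjustified: being a boundary element is a statement about $\kappa^{P}$, and you must relate the local structure of $\kappa^{P'}$ near that vertex to $\kappa^{P}$. The paper supplies exactly this missing input: there is an embedding
\begin{equation*}
|\cP^{P}\setminus\{P\}|\times|\cP^{\alpha}\setminus\{P'\}|\to|\cP^{P'}\setminus\{P'\}|
\end{equation*}
which is open in a neighbourhood of $|\cP^{P}\setminus\{P\}|\times\{P'\}$. When $P$ is a boundary element the source is a closed ball with non-empty boundary, so the target $|\cP^{P'}\setminus\{P'\}|$ contains manifold-with-boundary points and cannot be a closed manifold; hence it is a ball and $P'$ is a boundary element. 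Note that the object of the argument is whether $|\cP^{P'}\setminus\{P'\}|$ is a sphere or a ball, not the position of the vertex $(P\to P')$ inside $\kappa^{P'}$.

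A smaller issue in the second half: your final step (``removing $P$ from this ball, in whose interior it now lies, yields a sphere'') is circular. Since $P$ is terminal in $(\partial\cP)^{P}$, the realization $|(\partial\cP)^{P}|$ is the cone on $|(\partial\cP)^{P}\setminus\{P\}|$, and the cone point lies in the interior precisely when that link is a sphere; you cannot first place $P$ in the interior and then deduce the sphere condition. The paper's route is to show directly that $|(\partial\cP)^{P}\setminus\{P\}|\to|\cP^{P}\setminus\{P\}|$ is the inclusion of the boundary of a ball, again using the open embedding above. Also, your assertion that the interior vertices of $\cP^{P}$ are precisely the $(Q\to P)$ with $Q$ a top-dimensional local stratum is not correct: interior elements occur in every codimension (in a model for partitioned manifolds without boundary all elements are interior), so the ``collar'' description of the union of their open stars needs to be rethought.
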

\begin{proof}[Sketch of proof]
Given an arrow $\alpha \co P \to P'$, we have an embedding
  \begin{equation} \label{eq:open_embedding_arrow_in_cP}
    |\cP^{P} \setminus \{ P\}| \times |\cP^{\alpha} \setminus \{ P'\}| \to |\cP^{P'} \setminus \{ P'\}|,
  \end{equation}
  which by the existence of the codimension functor is open in a neighbourhood of $ |\cP^{P} \setminus \{ P\}| \times \{P'\}$. By assumption, $|\cP^{\alpha} \setminus \{ P\}| $ is homeomorphic to a ball, with $P'$ mapping to the origin.
  
  For the first part, we assume that $P$ is a boundary element, and hence that $|\cP^P \setminus \{ P\}|$ is a ball.  The source of Equation \eqref{eq:open_embedding_arrow_in_cP} is then a closed ball, to that the right hand side cannot be a closed manifold, which implies that $P'$ is a boundary point as well.

  For the second part, we use Equation \eqref{eq:open_embedding_arrow_in_cP} to show that the embedding
  \begin{equation}
    |\left(\partial \cP\right)^{P'} \setminus \{ P'\}| \to |\cP^{P'} \setminus \{ P'\}| 
  \end{equation}
  is the inclusion of the boundary.   
\end{proof}

In practice, the Kuranishi  diagrams that we consider  are often parametrised by certain manifolds with corners, but with the property that the diagram has a more refined partition than the manifold parametrising it. Instead of developing the theory of such parametrisation at the level of spaces, it shall be sufficient for our purposes to record the corresponding data at the level of the categories labelling the partition. For the next definition, we recall that the right fibre of a functor $\pi \co \cQ \to \cP$ over an object $P$ of $\cP$ is the category whose objects are pairs $(Q, \beta)$, with $Q$ an object of $\cQ$ and $\beta$ a morphism in $\cP$ from $P$ to $\pi(Q)$. We shall denote this category by $\partial^P \cQ$. We shall also need an analogous construction for arrows: given a morphism $\alpha$ in $\cP$, we define $\partial^{\alpha} \cQ$ to consists of pairs $(\alpha',\beta)$, with $\alpha'$ a morphism in $\cP$ and $\beta$ a morphism in $\cQ$, so that $\pi (\alpha') \circ \beta = \alpha$.
\begin{defin} \label{def:refinement-poset}
  A functor $\pi \co \cQ \to \cP$ of categories which model manifolds with partitions is a \emph{refinement} if the following properties hold for each object $P$ of $\cP$:
  \begin{enumerate} 
    \item  the minimal codimension of an object of $\cQ$ lying over $P$ is $\codim P$.
    \item the right fibre $\partial^P \cQ$ is a model for partitioned manifolds,
      \item \label{item:preserve_boundary} the natural functor from the colimit over $\alpha \in \partial^P \cP \setminus \{ P \}$ of the categories $\partial^{\alpha} \cQ$, to $\partial^P \cQ$ is an isomorphism onto the boundary.
     \item \label{item:degree_1} If $\codim Q = \codim \pi Q$, the natural map $\cQ^{Q} \to \cP^{\pi Q}$ induces a degree $1$ map on geometric realisations. 
  \end{enumerate}
\end{defin}
Note that the last condition makes sense because the first condition implies that $\cQ^{Q}$ and $\cP^{\pi Q}$ are balls of equal dimension, and that the map takes boundary to boundary.

The most important case of the above definition is the case where $P \in \cP$ is a boundary element of codimension $1$: the category $\partial^{P} \cQ$ can then be thought of as assembling the codimension $1$ strata indexed by $Q \in \pi^{-1} (P)$ into a single boundary stratum.

Having introduced the requisite notions at the level of stratifying categories, we now consider the desired class of diagrams of Kuranishi presentations:
\begin{defin}
  A \emph{Kuranishi diagram} consists of the following data:
  \begin{enumerate}
  \item a refinement $\pi \co \cQ \to \cP$ of categories modelling manifolds with partitions,
  \item a functor $\bX \co \cQ^{op} \to \Kur$,
  \item for each  $Q \in \cQ$, an isomorphism of the undercategory $\partial^Q \cQ$ with the stratifying category $\cQ_{\bX^Q}$ of the Kuranishi presentation  $\bX^Q$, and
    \item a graded line $\ro_{\bX}$.
  \end{enumerate}
  which satisfy the following properties:
  \begin{enumerate}
  \item The Kuranishi presentation $\bX^{Q}$ is oriented relative $\ro^{-1}_{Q} \otimes \ro_{\bX} $ (here, $\ro_Q$ is the orientation line of the ball $\kappa^Q$).
\item The morphism of Kuranishi presentations induced by an arrow $\alpha \co Q \to Q'$ is given at the level of stratifying sets by the induced map $\partial^{Q'} \cQ \to \partial^Q \cQ$, and at the level of orientation line by the isomorphism
  \begin{equation}
    \ro_{\alpha} \otimes   \ro^{-1}_{Q'} \cong \ro^{-1}_{Q}, 
  \end{equation}
  where $ \ro_{\alpha}  $ is the orientation line of the cell associated to $\cQ^{\alpha}$.
  \item  The presentation $\bX^Q$ is empty for all but finitely many objects $Q$ of $\cQ$.
\end{enumerate}

\end{defin}
Unpacking the data, and omitting the orientation data, we observe that each diagram induces an assignment of a category $A_{Q}$ and a functor
\begin{equation}
  A_{Q} \to \Chart_{\downarrow \partial^Q \cQ},
\end{equation}
 to each object $Q$ of $\cQ$, and to each arrow $Q \to Q'$, of a functor $A_{Q'} \to A_{Q}$ and of a natural transformation in the diagram 
\begin{equation} \label{eq:iso_strata_Kuranishi_diagram}
  \begin{tikzcd}
    A_{Q'} \ar[r] \ar[d] & \Chart_{\downarrow \partial^{Q'} \cQ }\ar[d,""{name=fromhere}] \\
    A_{Q} \ar[r,""{name=tohere}] &  \Chart_{\downarrow \partial^{Q} \cQ}.
      \arrow[Rightarrow,from=fromhere, to=tohere, start anchor={east},
    end anchor={south}, shorten >=3pt, shorten <=6pt,bend right=30]
  \end{tikzcd}
\end{equation}
These data are then required to be compatible with compositions in $\cQ$.
 
We define a morphism of Kuranishi diagrams to consist of a commutative diagram
\begin{equation}
  \begin{tikzcd}
    \cQ \ar[r] \ar[d] \ar[rr,bend left=60,""{name=fromhere}]  & \cQ' \ar[r,""{name=tohere}] \ar[d] & \Kur \\
    \cP \ar[r] &  \cP' &
     \arrow[Rightarrow,from=fromhere, to=tohere, start anchor={west},
    end anchor={south}, shorten >=3pt, shorten <=6pt]
  \end{tikzcd}
\end{equation}
in which the map $\cQ \to \cQ'$ is an embedding inducing an isomorphisms of the categories associated to each arrow in $\cQ$ as in Definition \ref{def:model_for_corners}.  We denote the category of such diagrams by $\cD \Kur$.
\begin{rem}
At this stage, we do not impose any assumption on the map $\cP \to \cP'$, as we will be providing an explicit map in all constructions that use it. We expect that, in general, one should assume that this image is provided with a factorisation as the composition of a refinement, and an embedding inducing an isomorphisms of arrow categories.
\end{rem}
\begin{lem}
  \label{lem:dKur_has_tensor_and_coproduct}
  The category $\cD \Kur$ is symmetric monoidal, and admits finite coproducts. \qed
\end{lem}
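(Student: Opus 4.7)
The strategy is to define both structures componentwise, transporting the symmetric monoidal structure on $\Kur$ (already established) through the Cartesian product on stratifying categories, and handling the coproduct by disjoint union.

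For the tensor product of Kuranishi diagrams $(\pi_i \co \cQ_i \to \cP_i, \bX_i, \ro_{\bX_i})$ for $i = 1, 2$, I would take the refinement $\pi_1 \times \pi_2 \co \cQ_1 \times \cQ_2 \to \cP_1 \times \cP_2$ with codimension given by the sum, the underlying functor $(Q_1, Q_2) \mapsto \bX_1^{Q_1} \times \bX_2^{Q_2}$ (using the monoidal structure on $\Kur$ pointwise), and orientation line $\ro_{\bX_1} \otimes \ro_{\bX_2}$. The canonical identification $\partial^{(Q_1, Q_2)}(\cQ_1 \times \cQ_2) \cong \partial^{Q_1} \cQ_1 \times \partial^{Q_2} \cQ_2$ then matches the stratifying category of $\bX_1^{Q_1} \times \bX_2^{Q_2}$. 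The unit is the empty diagram; the associator, unitor, and braiding are inherited in the evident way from $\Kur$ and the Cartesian product of small categories.

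For coproducts, I would take the disjoint union on every piece of data: $\cP_1 \amalg \cP_2$ and $\cQ_1 \amalg \cQ_2$ for the stratifying refinement, the underlying functor defined componentwise, and orientation data preserved. The inclusions realise this as the coproduct, with initial object the empty diagram. The universal property is immediate because morphisms in $\cD\Kur$ are determined by restriction to each connected component of the indexing data $\cP$ and $\cQ$.

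The main technical obstacle is verifying that the product of two refinements is again a refinement in the sense of Definition \ref{def:refinement-poset}. This reduces to the following combinatorial facts about the product: the right fibre of $\pi_1 \times \pi_2$ over $(P_1, P_2)$ is canonically isomorphic to the product of the individual right fibres; the property of being a model for partitioned manifolds with generalised corners is preserved under products, where the key point is to identify the realisation of the relevant slices $(\cP_1 \times \cP_2)^{(\alpha_1,\alpha_2)} \setminus \{P_0, P_1\}$ with a join-type construction yielding a sphere of the correct dimension; the boundary-preservation condition follows from the standard decomposition $\partial(\cQ_1 \times \cQ_2) = (\partial \cQ_1 \times \cQ_2) \cup (\cQ_1 \times \partial \cQ_2)$ combined with the inductive application of the boundary condition to each factor; and the degree condition for stratum-preserving arrows is multiplicative. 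These verifications are elementary but somewhat intricate, and would constitute the bulk of the work.
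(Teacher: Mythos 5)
Your construction is the intended one: the paper gives no proof of this lemma, and the componentwise tensor product, the disjoint-union coproduct, and the verification that products (via the join/suspension identification of proper parts of product posets) and disjoint unions of refinements are again refinements are exactly what is needed. One slip: the monoidal unit is not the empty diagram but the one-point diagram (the trivial Kuranishi presentation of a point over $\cQ = \cP = \ast$); the empty diagram is the initial object, i.e.\ the unit for the coproduct, as you correctly state in your second paragraph.
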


Recalling that $\widetilde{\Kur}$ is the category of Kuranishi diagrams with factorised corner strata, we define $\cD \widetilde{\Kur}$ to be the category whose objects and morphisms are lifts of Kuranishi diagrams to $\widetilde{\Kur}$. 
This category is again symmetric monoidal.

\begin{rem}
  The Grothendieck construction provides an alternate way of thinking about Kuranishi diagrams: we can assemble all the categories $A_Q$ indexing the Kuranishi charts associated to objects of $\cQ$ into a single category $A$ equipped with a functor to $\cQ$, and a functor to the category of charts. One can then formulate the notion of Kuranishi diagram directly in terms of properties of these two functors on $A$, without appealing to Kuranishi presentations as an intermediate notion.
\end{rem}

\subsection{Virtual counts for Kuranishi diagrams}
\label{sec:virt-counts-kuran}

Given a Kuranishi diagram $\bX$, we define
\begin{equation} \label{eq:relative_cochains_diagram}
  C^{*}_{st}(\bX) \equiv \bigoplus_{\substack{P \subset \cP_{\bX}(\bX)}} \ro_{\partial^P \bX},
\end{equation}
where $\cP_{\bX}(\bX) $ is the subcategory of $\cP_{\bX}$ consisting of non-empty strata, and $\ro_{\partial^P \bX} $ is the tensor product of $\ro_{\bX} $ with $\ro^{-1}_{P}$. This complex is equipped with a differential induced by restriction to codimension $1$ strata, and is a symmetric monoidal functor in the same way as in Section \ref{sec:chain-functor-strata}.
\begin{lem} \label{lem:virtual_counts_diagrams}
  A theory for virtual counts determines a monoidal natural transformation  \begin{equation} \label{eq:virtual_counts_diagram}
\begin{tikzcd}[column sep=huge]
\cD \widetilde{\Kur}
  \arrow[bend left=50]{r}[name=U,label=above:$C^*_{st}$]{}
  \arrow[bend right=50]{r}[name=D,label=below:$\Bbbk$]{} &
\Ch.
\arrow[from=U.south-|D,to=D,Rightarrow,shorten=5pt]{r}{\cV}
\end{tikzcd}   
   \end{equation} 
 \end{lem}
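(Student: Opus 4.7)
The plan is to define $\cV_\bX$ on a Kuranishi diagram $\bX$ (with refinement $\pi \co \cQ \to \cP$) by aggregating the virtual counts of its constituent Kuranishi presentations $\bX^Q$, each an object of $\widetilde{\Kur}^\Bbbk$. For $P \in \cP(\bX)$, write $\cQ(P)$ for the set of $Q \in \cQ$ with $\pi Q = P$ and $\codim Q = \codim P$; by Condition (\ref{item:degree_1}) of Definition \ref{def:refinement-poset}, the map $\cQ^Q \to \cP^P$ is degree $1$ on geometric realisations for such $Q$, yielding canonical isomorphisms $\ro_Q \cong \ro_P$, and hence $\ro_{\bX^Q} \cong \ro_Q^{-1} \otimes \ro_\bX \cong \ro_P^{-1} \otimes \ro_\bX = \ro_{\partial^P \bX}$. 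On the summand of $C^*_{st}(\bX)$ corresponding to $P$ I would set
\[
\cV_\bX(\ro_{\partial^P \bX}) \equiv \sum_{Q \in \cQ(P)} \cV_{\bX^Q}(\ro_{\bX^Q}),
\]
which is automatically concentrated in degree $0$, since $\cV_{\bX^Q}$ vanishes outside that degree and $\dim \partial^P \bX = 0$ if and only if $\dim \bX^Q = 0$ for any (all) $Q \in \cQ(P)$.

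To verify that $\cV_\bX$ is a chain map, it suffices to check that $\cV_\bX(d \ro_{\partial^P \bX}) = 0$ whenever $\dim \partial^P \bX = 1$. For each $Q \in \cQ(P)$ the presentation $\bX^Q$ has virtual dimension $1$, so the chain map property of $\cV_{\bX^Q}$ gives $\sum_{Q \to Q''} \cV(\bX^{Q''}) = 0$, the sum ranging over codimension $1$ arrows out of $Q$ in $\cQ$. Summing these vanishing identities over $Q \in \cQ(P)$ and reorganising by $\pi Q''$ separates the terms into two classes. \emph{Interior} terms, where $\pi Q'' = P$: such $Q''$ is a codimension $1$ interior stratum of the manifold $\partial^P \cQ$, adjacent to precisely two top-dimensional cells $Q, \tilde Q \in \cQ(P)$ with opposite conormal orientations, so these contributions cancel in pairs. \emph{Boundary} terms, where $\pi Q'' = P'$ for some codimension $1$ arrow $P \to P'$ in $\cP$: by Condition (\ref{item:preserve_boundary}) of a refinement, such $Q''$ is precisely an element of $\cQ(P')$, and each element of $\cQ(P')$ arises from a unique $Q \in \cQ(P)$. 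The surviving sum is therefore $\sum_{P \to P'} \sum_{Q' \in \cQ(P')} \cV(\bX^{Q'}) = \cV_\bX(d \ro_{\partial^P \bX})$, which must vanish.

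For monoidality, the tensor product in $\cD \widetilde{\Kur}$ satisfies $(\bX \otimes \bY)^{(Q, Q')} = \bX^Q \otimes \bY^{Q'}$ with $\cP_{\bX \otimes \bY} = \cP_\bX \times \cP_\bY$ and $\cQ_{\bX \otimes \bY}(P, P') = \cQ_\bX(P) \times \cQ_\bY(P')$. The defining sum factors over this product, and the monoidality of $\cV$ on $\widetilde{\Kur}^\Bbbk$ identifies $\cV_{\bX \otimes \bY}$ with $\cV_\bX \otimes \cV_\bY$. Naturality with respect to morphisms in $\cD \widetilde{\Kur}$ follows because such a morphism embeds $\cQ$ into $\cQ'$ compatibly with $\cP \to \cP'$ while inducing isomorphisms on arrow categories, whence $\cQ(P) \to \cQ'(f(P))$ is a bijection, and the chart-level naturality of $\cV$ on $\widetilde{\Kur}^\Bbbk$ transports the sums accordingly.

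The main obstacle is the combinatorial cancellation of interior terms in the chain map verification, which requires tracking the conormal orientations associated to the two top-dimensional cells adjacent to an interior codimension $1$ stratum of $\partial^P \cQ$ and confirming that they appear with opposite signs. This relies essentially on the manifold-with-generalised-corners structure on $\partial^P \cQ$ guaranteed by the refinement axioms, together with compatibility of the relative orientation data $\ro_Q^{-1} \otimes \ro_\bX$ with the change of top-dimensional cell across an interior hypersurface.
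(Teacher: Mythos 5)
Your proof is correct and follows essentially the same route as the paper: the paper factors the construction through the $\cQ$-indexed stratum complex $\bigoplus_{Q}\ro_{\partial^Q\bX}$ (to which the theory of virtual counts applies directly) and checks that the comparison map $\ro_{\partial^P\bX}\to\bigoplus_{\pi Q=P,\,\codim Q=\codim P}\ro_{\partial^Q\bX}$, built from the degree-$1$ isomorphisms of Part~(4) of Definition~\ref{def:refinement-poset}, is a cochain map via Part~(3). You instead verify the chain-map property of the composite directly, but the interior/boundary dichotomy and the pairwise cancellation across interior walls of $\partial^P\cQ$ is exactly the content of that intermediate cochain-map check, so the two arguments coincide.
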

 \begin{proof}
   A theory of virtual counts determines a monoidal natural transformation from the functor given by
   \begin{equation}
         \bigoplus_{\substack{Q \in \cQ_{\bX}(\bX)}} \ro_{\partial^Q \bX}, 
   \end{equation}
   to the constant functor. Consider the map 
   \begin{equation}
       \ro_{\partial^P \bX} \to    \bigoplus_{\substack{\pi Q = P \\ \codim Q = \codim P }}  \ro_{\partial^Q \bX},
       \end{equation}
       which on each factor is given by the isomorphisms in Part \ref{item:degree_1} of Definition \ref{def:refinement-poset}. Part \ref{item:preserve_boundary} shows that this is a cochain map,  yielding the desired natural transformation.
 \end{proof}
\subsection{Kuranishi presentations indexed by monoids}
\label{sec:kuran-pres-index}

Let $\Gamma$ be an abelian monoid equipped with a homomorphism
\begin{equation}
  \cA \co \Gamma \to \bR,
\end{equation}
which we refer to as the action homomorphism. The canonical example is $\Gamma = \bR$, and the reader may want to perform that substitution throughout. We denote by $\Gamma_+$ the submonoid which is the inverse image of the non-negative real numbers.

We introduce the category ${\cD \Kur}^{\Gamma}$ whose objects consist of a graded line $\ro_{\bX}$, and a collection of objects of $\cD {\Kur}$ indexed by elements of $\Gamma$, of constant dimension modulo $d$, which are oriented relative to $\ro_{\bX}$, and which are required to be empty except for a set which is proper over $\bR$, and whose image is bounded below. This is equivalent to requiring that the number of elements mapping to any interval $(-\infty,E]$ be finite. A morphism consists as before of an isomorphism of graded lines, together with morphisms of Kuranishi diagrams for each element of $\Gamma$. This is again a symmetric monoidal category, with the product of objects $\{ \bX^{\lambda} \}_{\lambda \in \Gamma}$ and $\{ \bY^{\rho}\}_{\rho \in \Gamma}$ given by
\begin{equation}
  \left( \bX \times \bY \right)^{\mu} = \coprod_{\rho + \lambda = \mu} \bX^{\lambda} \times \bY^{\rho}.
\end{equation}
Note that this coproduct is finite, and hence well-defined, because we have assumed that the objects of ${\cD \Kur}^{\Gamma}$ are proper over $\bR$ and have bounded below action.  Note that we have a natural fully faithful embedding
\begin{equation}
    {\cD \Kur}^{\Gamma_+} \to {\cD \Kur}^{\Gamma}
\end{equation}
given by objects for which the presentations for elements of negative action is empty.

It is convenient for later purposes to record a category lying under ${\cD \Kur}^{\Gamma}$: let 
$\Cat^{\Gamma}$ be the category whose objects are assignments of categories to each element of $\Gamma$ (we do not impose any properness conditions here), with morphisms given by collections of functors (natural transformations will play no role here). This is a symmetric monoidal category.  We note that the functor
\begin{align} \label{eq:remember_coarse_stratification}
  \cD \Kur & \to \Cat \\
  (\bX \co \cQ \to \cD \Kur, \pi \co \cQ \to \cP) &  \mapsto \cP
\end{align}
naturally induces a functor
\begin{equation}
    {\cD \Kur}^{\Gamma}  \to \Cat^{\Gamma}  .
\end{equation}

\section{The Morse-Witten-Floer complex of a flow category}
\label{sec:chain-compl-assoc-1}

\subsection{Flow categories}
\label{sec:flow-categories}

Let $\cP$ be a partially ordered set. The key example to have in mind is the set of critical points of a Morse function, chosen for example on a Morse-Bott critical locus. In the next definition, $\Gamma_+$ refers to the \emph{elements of non-negative action} in an abelian monoid as in Section \ref{sec:kuran-pres-index}.
\begin{defin} \label{def:poset-flow-category}
For each pair $(p,p')$ of elements of $\cP$, and each element $\lambda \in \Gamma_+$, the partially ordered set $\cP^{\lambda}(p,p')$  consists of sequences
\begin{equation}
  (p, \lambda_0, p_1, \lambda_1, p_2, \cdots, p_{k}, \lambda_k, p')
\end{equation}
with $\{\lambda_i\}$ a collection of elements of $\Gamma_+$ whose sum is $\lambda$, and $\{p_i\}$ a collection of elements of $\cP$, so that
\begin{equation}
  \parbox{31em}{if $\cA(\lambda_i)=0$, then $p_i < p_{i+1}$.}
\end{equation}
The ordering on $\cP^{\lambda}(p,p') $ is generated by the relation that
\begin{equation}
 (p, \cdots,  \lambda_{i-1} + \lambda_i ,  \cdots, p') <   (p,  \cdots,  \lambda_{i-1}, p_i, \lambda_i ,  \cdots, p') .
\end{equation}  
\end{defin}

It is convenient for later generalisations to have a graphical representation of $\cP^{\lambda}(p,p')$ as shown in Figure \ref{fig:arc-morphism-flow-category}: its elements can be displayed as a directed arc (i.e. a directed tree in which all vertices are bivalent), with internal edges successively labelled by the elements $p_i$, and vertices labelled by the elements $\lambda_i$. The order is generated by collapsing an edge (hence forgetting the corresponding label by an element of $\cP$), and adding the elements of $\Gamma$ associated to its endpoints.

\begin{figure}[h]
  \centering
  \begin{tikzpicture}
    \draw[thick] (-1,0) -- (7,0);
    \node[label=left:{$p$}] at (-1,0) {};
    \node[label=right:{$p'$}] at (7,0) {};
    \foreach \i in {0,...,6}{ \filldraw (\i,0) circle (1pt);
      \node[label=below:{$\lambda_{\i}$}] at (\i,0) {};
     \ifthenelse{\i=0}{}{\node[label=above:{$p_{\i}$}] at (\i-.5,0) {}};
    };
  \end{tikzpicture}
  \caption{Graphical representation of an object of $\cP^{\lambda}(p,p') $}
  \label{fig:arc-flow-category}
\end{figure}

It follows from the definition of $ \cP^{\lambda}(p,p')$ that this set has minimal element the triple $(p, \lambda, p')$, and models manifolds with generalised corners, since the geometric realisation of the partially ordered set of elements lying between any pair of elements is a cube. Moreover, concatenation defines a natural map
  \begin{align}
    \cP^{\lambda}(p,p')  \times \cP^{\mu}(p',p'') & \to \cP^{\lambda + \mu}(p,p'') \\
\{ (p, \lambda_0,  \cdots,  \lambda_k, p') , (p', \mu_0,  \cdots,  \mu_l,p'') \} & \mapsto  (p, \lambda_0,  \cdots,  \lambda_k, p', \mu_0,  \cdots,  \mu_l,p'').
\end{align}
The above data encodes a category $\cP^{\Gamma_+}$ with object set $\cP$, and enriched in $\Cat^{\Gamma}$, with the property that the partially ordered set assigned to elements of negative action is empty. Noting that the minimal generator on the left is the product of $(p, \lambda, p')$ with $(p', \mu , p'')$, and that its image in $  \cP^{\lambda + \mu}(p,p'') $ is the element $(p,\lambda, p', \mu, p'') $ which has codimension $1$, we shall presently write $\ro_{\bR^{p'}}^{-1}  $ for the inverse of the corresponding conormal orientation line, which we consider as a trivial graded line in degree $1$, with the standard trivialisation of $\bR$ corresponding to the inward pointing direction.

\begin{defin} \label{def:flow_category}
  A \emph{Kuranishi flow category} $\bX$ with object set $\cP_{\bX}$ consists of (i) a choice of graded $\Bbbk$-line $\ro_{p}$ for each element of $\cP_{\bX}$, and (ii) a non-unital category enriched in $\cD{\Kur}_{\Gamma_+}$ with object set $\cP_{\bX}$ and lying over $\cP_{\bX}^{\Gamma_+}$. We require that:
  \begin{enumerate}
  \item the morphisms from $p$ to $p'$ are oriented relative 
\begin{equation} \label{eq:orientation-line-flow-category}
 \ro_{p,p'} \equiv  \ro_{p} \otimes \ro^{-1}_{p'} \otimes \ro^{-1}_{\bR^{p'}}.
\end{equation}
\item at the level of graded lines, the composition is given by the isomorphism
 \begin{align}
   \ro_{p,p'} \otimes \ro_{p',p''} & \cong   \ro_{p} \otimes \ro^{-1}_{p'} \otimes \ro^{-1}_{\bR^{p'}} \otimes \ro_{p'} \otimes \ro^{-1}_{p''} \otimes \ro^{-1}_{\bR^{p''}} \\
   & \cong \ro_{p}  \otimes \ro^{-1}_{\bR^{p'} } \otimes \ro^{-1}_{p''} \otimes \ro^{-1}_{\bR^{p''}} \\
   & \cong \ro_{p,p''} \otimes \ro^{-1}_{\bR^{p'}} .
 \end{align}
\item the following finiteness condition holds:
 \begin{equation} \label{eq:compactness-property-morphisms}
   \parbox{31em}{for each element $p \in \cP_{\bX}$, and real number $E$, the set of elements $p' \in \cP_{\bX}$ for which there is a label $\lambda \in \Gamma$ with $\cA(\lambda) \leq E$ and $\bX^\lambda(p,p')$ is non-empty and has virtual dimension $0$ is finite.}
 \end{equation}
\end{enumerate}
\end{defin}
\begin{rem}
  The reader may want to assume that $\cP_{\bX}$ is finite, in which case Condition \eqref{eq:compactness-property-morphisms} automatically holds.
\end{rem}
To clarify the first condition in the definition, let us point out that the datum of a lift of $\cP_{\bX}^{\Gamma_+}$ to $\cD \Kur$ entails a choice of category $\cQ^{\lambda}_{\bX}(p,p')$ for each pair of objects $(p,p')$ and for each element $\lambda \in \Gamma_+$, equipped with a map to $\cP^{\lambda}_{\bX}(p,p')$, and with a functor to $\cD \Kur$. The object $(p, \lambda,  p', \mu, p'' ) $ of $\cP^{\lambda + \mu}_{\bX}(p,p'') $ thus determines the subcategory $\partial^{ (p, \lambda,  p', \mu, p'' )} \cQ^{\lambda}_{\bX}(p,p')$. 
The following technical result is what allows us to use the theory of virtual counts developed in Part \ref{part:constr-theory-virt} in this setting:
\begin{lem} \label{lem:lift_flow_cat-to-factorised}
  Every Kuranishi flow category $\bX$ lifts to a category $\widetilde{\bX}$ enriched in $\cD \widetilde{\Kur}$.
\end{lem}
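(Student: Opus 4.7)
The plan is to use the composition structure of $\bX$ itself to supply canonical factorisations at every boundary stratum of every hom-object, so that the lift to $\cD\widetilde{\Kur}$ is tautological on objects and on composition. Concretely, each hom-object $\bX^\lambda(p,p')$ is a Kuranishi diagram whose stratifying data is controlled by the arcs
\[
P = (p,\lambda_0,p_1,\lambda_1,\ldots,p_k,\lambda_k,p'),
\]
and for each such arc I would set $m_P = k+1$ and declare the ordered tuple of factors to be
\[
\bigl(\bX_1(P),\ldots,\bX_{k+1}(P)\bigr) = \bigl(\bX^{\lambda_0}(p,p_1),\,\ldots,\,\bX^{\lambda_k}(p_k,p')\bigr),
\]
each of which is itself a hom-object of $\bX$. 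The map of stratifying models $\prod_i \cQ_i(P) \to \cQ$ required in the definition of $\widetilde{\Kur}$ is the one encoded by concatenation of arcs.

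For the coherence between factorisations, an arrow $P \to P'$ in a stratifying category corresponds to a refinement of an arc by inserting additional breaking vertices; the associated surjection $\pi_P^{P'}\co\{1,\ldots,m_{P'}\}\to\{1,\ldots,m_P\}$ is the unique order-preserving surjection whose fibres are the consecutive subsets of segments of $P'$ that collapse to a single segment of $P$, and the factorisation map
\[
\prod_{j\in(\pi_P^{P'})^{-1}(i)} \bX_j(P')\longrightarrow \bX_i(P)
\]
is the iterated composition in the flow category $\bX$. Commutativity of diagram~\eqref{eq:decompositions_boundary_strata_commute} for a tower $P\to P'\to P''$ is then exactly the strict associativity of composition in $\bX$ as a $\cD{\Kur}^{\Gamma_+}$-enriched category.

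To promote composition itself to a morphism in $\cD\widetilde{\Kur}$, note that the composition
\[
\bX^{\lambda}(p,p')\times\bX^{\rho}(p',p'')\longrightarrow \bX^{\lambda+\rho}(p,p'')
\]
factors through the codimension-one boundary stratum of $\bX^{\lambda+\rho}(p,p'')$ indexed by the arc $(p,\lambda,p',\rho,p'')$. The monoidal structure of $\cD\widetilde{\Kur}$ concatenates the trivial (one-element) factorisations on the two source hom-objects into the ordered pair $\bigl(\bX^{\lambda}(p,p'),\bX^{\rho}(p',p'')\bigr)$, and by the construction above the prescribed factorisation at the target boundary arc is literally the same ordered pair; the lift is therefore realised by the identity permutation, and is tautologically a morphism in $\cD\widetilde{\Kur}$. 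Iterating, any $n$-fold composition lands in the depth-$(n-1)$ corner indexed by the concatenated arc, equipped with the same ordered $n$-tuple of factors regardless of bracketing, which supplies strict associativity of the lifted composition.

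The main bookkeeping obstacle is the orientation data: every factorisation map and every composition lift must be a morphism of $\Bbbk$-oriented Kuranishi presentations, with relative orientations as in~\eqref{eq:orientation-line-flow-category}. This reduces, by induction on the arc length $k$, to identifying the iterated conormal line $\ro^{-1}_{\bR^{p_1}}\otimes\cdots\otimes\ro^{-1}_{\bR^{p_k}}$ with the conormal orientation of the depth-$k$ corner stratum relative to the top stratum $(p,\lambda,p')$. Both sides are determined by the composition isomorphism of graded lines in Definition~\ref{def:flow_category}, and the strict associativity of the flow category identifies them, completing the lift to $\cD\widetilde{\Kur}$.
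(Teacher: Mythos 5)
Your strategy is the paper's: the factorisation of each boundary stratum is the canonical one supplied by the composition structure of $\bX$, obtained by decomposing the indexing arc maximally into composable segments, with coherence for $P\to P'\to P''$ coming from strict associativity of the enriched composition. The one place where your formulation is too coarse for the generality of the paper is the level of stratification at which the factorisation data must be specified. The hom-objects $\bX^{\lambda}(p,p')$ are Kuranishi \emph{diagrams}, i.e.\ they carry a refinement $\cQ^{\lambda}_{\bX}(p,p')\to\cP^{\lambda}(p,p')$ in which the fine category $\cQ^{\lambda}_{\bX}(p,p')$ may strictly refine the poset of arcs and may have several codimension-zero objects. The definition of a lift to $\cD\widetilde{\Kur}$ requires a factorisation for each object $P$ of the \emph{fine} category, and the factors are required to be Kuranishi \emph{presentations}, not diagrams. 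Accordingly the paper represents each such $P$ as the image of a unique maximal-length composition
\begin{equation*}
\cQ_{\bX}^{\lambda_1}(p,p_1)\times\cdots\times\cQ_{\bX}^{\lambda_k}(p_k,p')\to\cQ_{\bX}^{\lambda}(p,p'),
\end{equation*}
so that $P$ determines not only a decomposition of the arc but also a tuple $(P_1,\ldots,P_k)$ of codimension-zero objects in the factors, and the chosen factorisation is $\partial^{P_1}\bX^{\lambda_1}(p,p_1)\times\cdots\times\partial^{P_k}\bX^{\lambda_k}(p_k,p')$ rather than the tuple of whole hom-objects $\bX^{\lambda_i}(p_{i-1},p_i)$, which are diagrams and hence cannot serve as the factors $\bX_i(P)$. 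In the special case where each $\cQ^{\lambda}_{\bX}(p,p')$ equals $\cP^{\lambda}(p,p')$ your construction coincides with the paper's, so this is a repairable imprecision rather than a wrong idea; your additional verifications (coherence, compatibility of composition with the factorisations via the identity permutation, and the orientation bookkeeping) go beyond what the paper writes out and are sound.
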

\begin{proof}
  We specify the lift of each morphism $\bX^{\lambda}(p,p')$ as follows: 
  each object $P$ of the stratifying category $\cQ_{\bX}^{\lambda}(p,p')$ can be represented as the image of unique composition
  \begin{equation}
     \cQ_{\bX}^{\lambda_1}(p,p_1) \times  \cQ_{\bX}^{\lambda_2}(p_1,p_2) \times \cdots \times   \cQ_{\bX}^{\lambda_k}(p_k,p') \to \cQ_{\bX}^{\lambda}(p,p')
   \end{equation}
   of maximal length, i.e. so that the inverse image of $P$ is a product of codimension $0$ objects $(P_1, \ldots, P_k)$. We thus obtain a lift
  \begin{equation}
  \partial^{P_1}   \bX^{\lambda_1}(p,p_1) \times  \partial^{P_2} \bX^{\lambda_2}(p_1,p_2) \times \cdots \times   \partial^{P_k} \bX^{\lambda_k}(p_k,p') \cong \partial^P \bX^{\lambda}(p,p')
   \end{equation}
   of the corresponding stratum.
 \end{proof}
 
\subsection{The chain complex associated to a flow category}
\label{sec:chain-compl-assoc}

Given a theory of virtual counts over a ring $\Bbbk$, we shall explain in this section a construction of a chain complex associated to each flow category. Let $\Lambda_0$ denote the Novikov ring with coefficients in $\Bbbk$ and exponents in $\Gamma$, i.e. the completion of the group ring of $\Gamma_+$ with respect to the topology induced by the action filtration.

The starting point is to assign to each flow category $\bX$ the graded $\Lambda_0$ module
\begin{equation}
  CF^*(\bX) \equiv \widehat{\bigoplus_{p \in \cP_{\bX}}} \ro_{p} \otimes_{\Bbbk} \Lambda_0,
\end{equation}
where $\widehat{\bigoplus}$ denotes the completion with respect to the $T$-adic topology on $\Lambda_0$.
\begin{rem}
  If $\cP_{\bX}$ is finite, then the map from the direct sum to its completion is an isomorphism. More generally, whenever we can ensure that, for each $p \in \cP_{\bX}$, there are only finitely many $p' \in \cP_{\bX}$ for which $\bX(p,p')$ is non-empty, we need not pass to the completion.
\end{rem}
We define an operator
\begin{equation}
  \bfm \co   CF^*(\bX)  \to   CF^{*+1}(\bX)
\end{equation}
as a sum of matrix coefficients labelled by triples $(p,\lambda,p')$, with $p$ and $p'$ in $\cP_{\bX}$, and $\lambda \in \Gamma_+$:
\begin{equation}
   \ro_{p} \otimes_{\Bbbk} \Lambda_0 \to \ro_{p'} \otimes_{\Bbbk} \Lambda_0.
\end{equation}
This term is given as the tensor product of $T^{\lambda}$ with a degree $1$ map
\begin{equation} \label{eq:component_of_differential}
   \bfm^{\lambda}_{p,p'} \co  \ro_{p} \to \ro_{p'}
 \end{equation}
 which we presently define.

 Start by recalling that the chain complex of strata $ C^*_{st}(\bX^\lambda(p,p'))$ is generated by the direct sum of $\Bbbk$-orientation lines $\ro_P$ for each element $P$ of the partially ordered set $\cP^{\lambda}_{\bX}(p,p')$ from Definition \ref{def:poset-flow-category}.  In the case of the element $\{p,\lambda,p'\}$, the orientation assumption in Definition \ref{def:flow_category} fixes an isomorphism of the associated line with $\ro_{p} \otimes \ro^{-1}_{p'} \otimes \ro^{-1}_{\bR^{p'}}$. On the other hand, combining Lemmas \ref{lem:virtual_counts_diagrams} and \ref{lem:lift_flow_cat-to-factorised}, we see that a theory of virtual counts for relatively oriented presentations determines a cochain map
 \begin{equation}
    C^*_{st}(\bX^{\lambda}(p,p'))  \to \Bbbk.
  \end{equation}
We thus obtain a map  \begin{equation}
    \ro_{p} \otimes \ro^{-1}_{p'} \otimes \ro^{-1}_{\bR} \to \Bbbk,
  \end{equation}
  which we can rearrange to the desired map in Equation \eqref{eq:component_of_differential}.

  \begin{lem} \label{lem:Floer-differential}
    The operator
    \begin{equation}
      \bfm \equiv \sum_{\lambda \in [0,\infty)}  \bigoplus_{p,p' \in \cP_{\bX}} T^{\lambda} \bfm^{\lambda}_{p,p'} 
    \end{equation}
    defines a differential on $ CF^*(\bX)$. 
  \end{lem}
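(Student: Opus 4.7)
The plan is to identify each matrix coefficient of $\bfm^2$ with the image, under the theory of virtual counts, of the differential in $C^*_{st}$ of the top-dimensional generator of a moduli space of virtual dimension $1$, and then use the monoidal structure to split this into products of components of $\bfm$. Well-definedness of $\bfm$, and of $\bfm^2$, on the $T$-adic completion follows from the finiteness condition \eqref{eq:compactness-property-morphisms} together with the observation that a nontrivial $\bfm^\lambda_{p,p'}$ requires $\dim \bX^\lambda(p,p') = 0$, since the target $\Bbbk$ of the virtual count is concentrated in degree $0$.

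Fix $p, p'' \in \cP_{\bX}$ and $\mu \in \Gamma_+$, and consider the Kuranishi diagram $\bX^\mu(p,p'')$ in the case where it has virtual dimension $1$ (the only case that contributes to the $(p,p'')$-entry of $\bfm^2$ at energy $\mu$). The stratum complex $C^*_{st}(\bX^\mu(p,p''))$, described by Equation \eqref{eq:relative_cochains_diagram}, then has a unique degree $-1$ generator coming from the top object $\{p,\mu,p''\}$ of $\cP^\mu_{\bX}(p,p'')$, and degree $0$ generators indexed by the codimension $1$ objects, i.e.\ sequences $(p,\lambda,p',\rho,p'')$ with $\lambda + \rho = \mu$ and $p' \in \cP_{\bX}$. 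Combining Lemmas \ref{lem:lift_flow_cat-to-factorised} and \ref{lem:virtual_counts_diagrams}, the theory of virtual counts yields a cochain map $C^*_{st}(\bX^\mu(p,p'')) \to \Bbbk$. Since $\Bbbk$ is supported in degree $0$, the image of the degree $-1$ generator vanishes; applying the chain-map property then forces the image of its differential, which is the signed sum of codimension $1$ generators, to vanish.

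The final step is to recognise each term of this sum as an entry of $\bfm^2$. By Lemma \ref{lem:lift_flow_cat-to-factorised}, the codimension $1$ stratum indexed by $(p,\lambda,p',\rho,p'')$ is identified in $\cD\widetilde{\Kur}$ with the product $\bX^\lambda(p,p') \times \bX^\rho(p',p'')$, so the monoidality of the virtual count natural transformation identifies its count with $\bfm^\rho_{p',p''} \circ \bfm^\lambda_{p,p'}$ (nonzero only when both factors are zero-dimensional). Summing over all codimension $1$ objects of $\cP^\mu_{\bX}(p,p'')$ yields the $T^\mu$-coefficient of $(\bfm^2)_{p,p''}$, which is therefore zero; taking the sum over $\mu$ gives $\bfm^2 = 0$, with convergence in the $T$-adic topology guaranteed by the finiteness condition. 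The only delicate point is the matching of signs: the conormal orientation of a codimension $1$ inclusion, used in the differential of $C^*_{st}$, must agree with the orientation isomorphism $\ro_{p,p'} \otimes \ro_{p',p''} \cong \ro_{p,p''} \otimes \ro^{-1}_{\bR^{p'}}$ of the second axiom of Definition \ref{def:flow_category}, and the latter axiom is precisely what encodes this compatibility.
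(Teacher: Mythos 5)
Your proposal is correct and follows essentially the same route as the paper's proof: convergence from Condition \eqref{eq:compactness-property-morphisms}, identification of the codimension $1$ strata of $\bX^{\mu}(p,p'')$ with products $\bX^{\lambda}(p,p')\times\bX^{\rho}(p',p'')$ via the factorised lift, and the cochain-map plus monoidality properties of the virtual count to conclude that the sum of the resulting compositions vanishes. The only additions are explicit remarks (the restriction to virtual dimension $0$ and $1$, and the sign bookkeeping via the orientation axiom of Definition \ref{def:flow_category}) that the paper leaves implicit.
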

  \begin{proof}
    The convergence of $\bfm$ in the $T$-adic topology follows from Condition \eqref{eq:compactness-property-morphisms}. To see that $\bfm$ squares to $0$, we observe that the coefficient of the component of $\bfm^2$ mapping the factor associated to $p$ to the one associated to $p''$ with valuation $\rho$ is the sum of all compositions
    \begin{equation}
       \bfm^{\lambda}_{p,p'} \circ \bfm^{\mu}_{p',p''},
     \end{equation}
     with $\lambda + \mu = \rho$. These terms exactly enumerate the codimension $1$ boundary strata of $\bX^{\rho}(p,p'')$, which in terms of the chain complex of strata corresponds to the fact that we have a map
     \begin{equation}
       C^*_{st}(\bX^\lambda(p,p')) \otimes C^*_{st}(\bX^\mu(p',p''))  \to  C^*_{st}(\bX^{\rho}(p,p'')),
     \end{equation}
     and that
     \begin{equation} \label{eq:boundary_of_fundamental_chain}
       \partial [\bX^{\rho}(p,p'')] = \sum_{\substack{p' \in \cP_{\bX} \\ \lambda + \mu = \rho }} [\bX^{\lambda}(p,p'')] \times [\bX^{\mu}(p,p'')].
     \end{equation}
     At this stage, we recall that the datum of a theory of virtual counts gives a cochain map
 \begin{equation}
    C^*_{st}(\bX(p,p'')) \to \Bbbk,
  \end{equation}     
     which implies that the right hand-side in Equation \eqref{eq:boundary_of_fundamental_chain}  maps trivially to $\Bbbk$. Unwinding the definitions, this is precisely the vanishing of the square of $\bfm$.
  \end{proof}

\section{The category of flow categories and the Floer functor}
\label{sec:flow-categ-chain}

The goal of this section is to lift the assignment of a cochain complex to each flow category to a functor from a category of flow categories to the category of chain complexes. The key task is to construct a cubically enriched category whose objects are flow categories, in which the morphisms are bimodules; this will itself be done in two steps, the first of which results in a category in which compositions are only associative up to homotopy, which is then rectified, in analogy with the construction of Section \ref{sec:categ-kuran-pres}.

\subsection{The partially ordered set controlling morphisms}
\label{sec:morph-flow-categ}

We now consider a pair $\bX$ and $\bY$ of flow categories, indexed by partially ordered sets $\cP_{\bX}$ and $\cP_{\bY}$.  In the simplest situations, the partially ordered set stratifying a bimodule is given as follows: we first define
\begin{equation}
  \bar{\cP}_{\bX}^{\mu} (p,p') = \begin{cases} \{p\} & \textrm{ if } p=p' \textrm{ and } \mu = 0 \\
    \cP_{\bX}^{\mu} (p,p') & \textrm{otherwise},
    \end{cases}
\end{equation}
and similarly for $\cP_{\bY}$. Then, for each pair $(p,q)$ of elements of $\cP_{\bX}$ and $\cP_{\bY}$, and real number $\mu$, we define the set
\begin{equation} \label{eq:poset-continuation}
  \cP_{\bX; \bY}^{\mu}(p,q) \equiv  \coprod_{\substack{ p' \in \cP_{\bX}, \, q'  \in \cP_{\bY} \\ \mu_- \in \Gamma_+, \,  \mu_+ \in \Gamma_+} } \bar{\cP}_{\bX}^{\mu_-}(p,p') \times \{\mu - \mu_- - \mu_+ \} \times \bar{\cP}_{\bY}^{\mu_+}(q',q),
\end{equation}
where we recall that $\Gamma_+$ is the set of elements of $\Gamma$ with non-negative action. This set is equipped with a partial ordering which is induced by the partial order on $\cP_{\bX} $ and  $\cP_{\bY} $. To write it out explicitly, we start by setting $\mu_0$ to be the difference $\mu - \mu_- - \mu_+$. We can then write the above as the set of (non-empty) sequences
\begin{equation}
  (p = p_{-\ell-1}, \mu_{-\ell}, p_{-\ell}, \ldots,  \mu_{-1}, p_{-1}, \mu_0, q_{1}, \mu_{1}, q_2 , \ldots,q_{r}, \mu_{r} , q = q_{r+1})
\end{equation}
with $\mu_i$ a collection of elements of $\Gamma$ which are assumed to have non-negative action, except for $\mu_0$ which is allowed to be arbitrary, and with $\{p_i\}_{i = -\ell}^{-1}$ and $\{q_i\}_{i=1}^{r}$ a collection of elements of $\cP_{\bX}$ and  $ \cP_{\bY}$ (see Figure \ref{fig:arc-morphism-flow-category}). We impose as before the condition that,
\begin{equation}
  \parbox{31em}{if $\cA(\mu_i)=0$, then $p_i < p_{i+1}$ if $i < 0$, and $q_i < q_{i+1}$ if $0 < i $.}
\end{equation}

\begin{figure}[h]
  \centering
  \begin{tikzpicture}
    \draw[thick] (-1,0) -- (7,0);
    \node[label=left:{$p$}] at (-1,0) {};
    \node[label=right:{$q$}] at (7,0) {};
    \foreach \i in {0,...,6}{\pgfmathtruncatemacro{\j}{\i-2};
     \ifthenelse{\i=2}{\filldraw (\i,0) circle (3pt)}{\filldraw (\i,0) circle (1pt)};
     \node[label=below:{$\mu_{\j}$}] at (\i,0) {};
     \ifthenelse{\i<2}{\node[label=above:{$p_{\j}$}] at (\i+.5,0) {}}{};
     \ifthenelse{2<\i}{\node[label=above:{$q_{\j}$}] at (\i-.5,0) {}}{};
    };
  \end{tikzpicture}
  \caption{Graphical representation of an object of $\cP^{\mu}_{\bX ; \bY}(p,q) $}
  \label{fig:arc-morphism-flow-category}
\end{figure}
Given a pair $(p,p')$ of elements of $\cP_{\bX}$, and an element $q$ of $\cP_{\bY}$, we have a natural concatenation map
  \begin{equation}
   \cP_{\bX}^{\lambda}(p,p') \times  \cP_{\bX ; \bY}^{\mu}(p',q) \to \cP_{\bX ; \bY}^{\mu + \lambda}(p,q),
 \end{equation}
 and for each element $p$ of $\cP_{\bX}$ and pair $(q',q)$ of elements of $\cP_{\bY}$, we have a map
  \begin{equation}
     \cP_{\bX ; \bY}^{\mu}(p,q') \times \cP_{\bY}^{\rho}(q',q) \to \cP_{\bX ; \bY}^{\mu + \rho}(p,q).
   \end{equation}
   These operations are the structure maps of a $\Cat^{\Gamma} $-enriched bimodule $ \cP_{\bX ; \bY}^{\Gamma}$ over the categories $\cP_{\bX}^{\Gamma_+}$ and $\cP_{\bY}^{\Gamma_+} $. We also introduce the sub-bimodule
   \begin{equation}
      \cP_{\bX ; \bY}^{\Gamma_+} \subset  \cP_{\bX ; \bY}^{\Gamma}
   \end{equation}
   which assigns to a pair $(p,q)$ and a label $\mu$ the subset of $\cP_{\bX ; \bY}^{\mu}(p,q) $ consisting of sequences for which all elements of $\Gamma$ have non-negative action.

\subsection{A symmetric semi-cubical set of morphisms}
\label{sec:cubic-set-morph}

Our next task is to construct a symmetric semi-cubical set of morphisms between flow modules. The naive idea is to consider bimodules enriched in $\cD \Kur^{\Gamma}$, lying over the product of the bimodule $ \cP_{\bX ; \bY}^{\Gamma}$ with $\Face \square^n$ (the set of faces of the cube, ordered by reverse inclusion).

\begin{rem}
We can extend the construction of this section to produce degeneracies as well, but this will cause some technical difficulties later, essentially because the product of cubical sets involves taking the quotient by redundant degeneracies.
\end{rem}

It is not too difficult to construct a semi-cubical set in terms of the above product, but the action of symmetries fails to model what is provided by geometry (the natural symmetric action is by post-composition with automorphisms of $\Face \square^n$, so there are no fixed points).

Our roundabout solution to this problem is to include in the datum of a $n$-cube additional data that specifies its image under all symmetries of cubes.  Given a pair $\bX$ and $\bY$ of flow categories and  a natural number $n$, we thus introduce a set $\flow_n(\bX, \bY)$ whose elements are given by the following:
   \begin{enumerate}
   \item For each permutation $\phi$ of $\{1, \ldots, n\}$, an $\bX$-$\bY$-bimodule $\bM_{\phi}$ enriched in $\cD \Kur^{\Gamma}$, whose image under the forgetful functor to  $\Cat^{\Gamma}$-enriched bimodules with respect to $\cP_{\bX}^{\Gamma_+}$ and $\cP_{\bY}^{\Gamma_+} $ is $\cP_{\bX ; \bY}^{\Gamma}  \times \Face  \square^{n}$.
   \item For each composition of permutations  $\phi_1 \circ \phi_2 $ an isomorphism
     \begin{equation} \label{eq:choice_iso_cube_Kur}
        \bM_{\phi_1  \circ \phi_2} \cong \phi_2^* \bM_{\phi_1}      
     \end{equation}
     where the right hand side is given at the level of stratifying categories by the pullback in the diagram
     \begin{equation}
       \begin{tikzcd}
     \phi_2^* \cQ_{\bM,\phi_1} \ar[r] \ar[d] &   \cQ_{\bM,\phi_1} \ar[d] \\
       \cP_{\bX ; \bY}^{\Gamma}  \times \Face  \square^{n} \ar[r, "\id \times \phi_2"]  & \cP_{\bX ; \bY}^{\Gamma}  \times \Face  \square^{n}
       \end{tikzcd}
     \end{equation}
     and at the level of diagrams of presentations by the composite functor
     \begin{equation}
            \phi_2^* \cQ_{\bM,\phi_1} \to \cQ_{\bM,\phi_1}   \to \Kur.
     \end{equation}
   \item For each triple composition $\phi_1 \circ \phi_2 \circ \phi_3$, we require that the isomorphisms of pullbacks fit in a commutative diagram
     \begin{equation}
       \begin{tikzcd}
         \bM_{\phi_1  \circ \phi_2 \circ \phi_3} \ar[r] \ar[d] & \phi_3^{*}  \bM_{\phi_1 \circ \phi_2} \ar[d] \\
     \left(\phi_2 \circ    \phi_3 \right)^* \bM_{\phi_1}  \ar[r]   & \phi_3^{*} \left( \phi_2^* \bM_{\phi_1} \right).
       \end{tikzcd}
     \end{equation}
\end{enumerate}
    
Up to isomorphism, the data assigned to any permutation is determined by the flow bimodule associated to the identity map of the $n$-cube, which we denote $\bM$. We can thus formulate all properties which are invariant under isomorphisms in terms of this bimodule, as we shall presently do for the orientation data on the elements of $\flow_n(\bX, \bY) $ after we state the following basic result:
   \begin{lem} \label{lem:codim_1-elt-bimodules}
  The image in $ \Face\square^n \times \cP_{\bX ; \bY}^{\mu}(p,q)$ of each codimension $1$ element of $\cQ_{\bM}^{\mu}(p,q)$ satisfies one of the following mutually exclusive alternatives:
   \begin{enumerate}
   \item it is the product of a facet of $ \square^n$ with the minimal element of $\cP_{\bX ; \bY}^{\mu}(p,q)$. In this case, the normal direction is identified with the normal direction of this facet of the cube.
   \item it is the product of top stratum of $\square^n$ with a codimension one element of $\cP_{\bX ; \bY}^{\mu}(p,q)$ of the form $(p, \lambda, p', \mu - \lambda, q)$. We identity the normal orientation line associated to this inclusion with $\ro_{\bR^{p'}} $, with the inner pointing covector corresponding to the standard orientation on $\bR$.
   \item it is the product of top stratum of $[0,1]^n$ with a codimension one element of $\cP_{\bX ; \bY}^{\mu}(p,q)$ of the form $(p, \mu - \rho, q', \rho, q)$.  We identify the normal orientation line associated to this inclusion by $\ro_{\bR^{q'}}$, with the outward pointing covector corresponding to the standard orientation.
   \end{enumerate} \qed
   \end{lem}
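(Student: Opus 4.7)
The plan is to trace codimension through the refinement map supplied by the bimodule structure. By hypothesis, $\bM \equiv \bM_{\id}$ is a Kuranishi diagram whose coarser partition is $\cP_{\bM}^{\mu}(p,q) \equiv \cP_{\bX;\bY}^{\mu}(p,q) \times \Face \square^n$, so its stratifying category $\cQ_{\bM}^{\mu}(p,q)$ comes equipped with a refinement $\pi$ (in the sense of Definition \ref{def:refinement-poset}) to this product. The proof then breaks into (a) enumerating codimension $1$ elements of the target, (b) showing every codimension $1$ element of the source lands on one of these, and (c) reading off the normal orientation in each case.

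For (a), codimension is additive under products of models for partitioned manifolds. The minimum of $\cP_{\bX;\bY}^{\mu}(p,q)$ is the sequence $(p,\mu,q)$ with no intermediate objects, while the minimum of $\Face\square^n$ (ordered by reverse inclusion) is $\square^n$ itself. Hence codimension $1$ elements of $\cP_{\bM}^{\mu}(p,q)$ fall into two mutually exclusive classes: those of the form $((p,\mu,q), F)$ for $F$ a facet of $\square^n$, and those of the form $(R, \square^n)$ for $R$ a codimension $1$ element of $\cP_{\bX;\bY}^{\mu}(p,q)$. Unpacking the description of $\cP_{\bX;\bY}^{\mu}(p,q)$ from Section \ref{sec:morph-flow-categ} as sequences indexed by the integer $\ell + r$ of intermediate objects, codimension $1$ sequences are exactly $(p,\lambda,p',\mu-\lambda,q)$ with $p' \in \cP_{\bX}$ or $(p,\mu-\rho,q',\rho,q)$ with $q'\in \cP_{\bY}$, matching cases (2) and (3) of the statement.

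For (b), let $Q$ be codimension $1$ in $\cQ_{\bM}^{\mu}(p,q)$. By part (1) of Definition \ref{def:refinement-poset}, $\codim \pi Q \leq 1$. Suppose for contradiction that $\pi Q = P$ has codimension $0$; then $(Q, \id_P)$ is a codimension $1$ object in the right fibre $\partial^P \cQ$, which by part (2) is itself a model for partitioned manifolds whose minimum is $(P, \id_P)$. In such a model every codimension $1$ object lies on the boundary, and part (3) identifies this boundary with the colimit of $\partial^{\alpha} \cQ$ over morphisms $\alpha \co P \to P'$ with $P' \neq P$, hence with strictly larger codimension in $\cP_{\bM}$. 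Tracing $Q$ through this identification shows that $Q$ must in fact lie over an object of $\cP_{\bM}$ of codimension $\geq 1$, contradicting $\codim \pi Q = 0$. Therefore $\pi Q$ has codimension $1$ and falls into exactly one of the three families above.

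Finally, for (c), the identification of the normal orientation line is read off from the product structure together with the identification of $\cQ^Q$ with $\cP^{\pi Q}$ near the top stratum (part (4)). In case (1) the normal is the standard coordinate normal to the facet of $\square^n$. In cases (2) and (3), the codimension $1$ break in $\cP_{\bX;\bY}^{\mu}(p,q)$ at an intermediate $p' \in \cP_{\bX}$ or $q' \in \cP_{\bY}$ introduces an $\bR$ direction of gluing, yielding the lines $\ro_{\bR^{p'}}$ and $\ro_{\bR^{q'}}$ as in the composition conventions of Section \ref{sec:flow-categories}. The main obstacle in the whole argument is step (b): the refinement axioms require a careful inductive unwinding of the boundary structure of $\partial^P \cQ$ to rule out interior codimension $1$ objects, and this is where the non-trivial geometric content of Definition \ref{def:refinement-poset} is used.
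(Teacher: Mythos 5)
The paper offers no proof of this lemma (it is stated with a \qed), so the only question is whether your unwinding of the definitions is sound. Your steps (a) and (c) are fine: the enumeration of codimension $1$ elements of $\cP_{\bX;\bY}^{\mu}(p,q) \times \Face\square^n$ and the identification of the normal lines are exactly the intended content. The problem is step (b), which you correctly single out as the crux, but which proves a false statement by a false argument. You claim that in a model for partitioned manifolds "every codimension $1$ object lies on the boundary." This contradicts the very purpose of that notion: a partitioned manifold is allowed (indeed expected) to have several top-dimensional strata glued along \emph{interior} codimension $1$ elements, i.e.\ elements $P$ for which $|\cP^P \setminus \{P\}|$ is a sphere $S^0$ rather than a ball. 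Condition (3) of Definition \ref{def:refinement-poset} only identifies the \emph{boundary} of $\partial^P\cQ$ with the strata lying over deeper elements of $\cP$; the interior of $\partial^P\cQ$ lies over $P$ itself and may contain elements of arbitrary codimension. Consequently a codimension $1$ element of $\cQ_{\bM}^{\mu}(p,q)$ need not map to a codimension $1$ element of the coarse partition.

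This is not a hypothetical failure: the composition $\bM \circ \bN$ of two flow bimodules (Definition \ref{def:composition-flow-bimodules}), or the Hamiltonian bimodule obtained by gluing two continuation maps through an intermediate Hamiltonian $H'$, has codimension $1$ elements of $\cQ$ recording a breaking at an orbit of $H'$; under the collapse map these land on the product of the top stratum of $\square^n$ with the \emph{minimal} element $(p,\mu,q)$, which is none of the three alternatives. The lemma must therefore be read as classifying only the boundary codimension $1$ elements — equivalently, those whose image in $\Face\square^n \times \cP_{\bX;\bY}^{\mu}(p,q)$ has codimension $1$ — with the interior walls handled instead by the orientation compatibilities built into the definition of a Kuranishi diagram and by Lemma \ref{lem:virtual_counts_diagrams}. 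With that restriction your steps (a) and (c) constitute a complete proof; the attempted upgrade in step (b) should be deleted rather than repaired.
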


   We now specify the orientation data which we require each element $\bM$ of $\flow_n(\bX, \bY) $ to be equipped with as follows:
\begin{enumerate}
\item an orientation of the Kuranishi diagram $\bM^{\mu}(p, q)$ relative $\ro_{p} \otimes \ro^{-1}_{q} \otimes \ro_{[0,1]^n} $.
\item at the level of orientation lines, the map $\bX(p, p') \times \bM(p', q) \to \bM(p, q) $ is induced by the product of the identity on $\ro_{[0,1]^n} $ with the isomorphism
  \begin{equation}
    \ro_{p} \otimes \ro^{-1}_{p'} \otimes \ro^{-1}_{\bR^{p'}} \otimes  \ro_{p'} \otimes \ro^{-1}_{q} \cong    \ro_{p}  \otimes \ro^{-1}_{\bR^{p'}} \otimes \ro^{-1}_{q}.
  \end{equation}

\item at the level of orientation lines, the map $\bM(p, q') \times \bY(q', q) \to \bM(p, q) $ is induced by the product of the identity on $\ro_{[0,1]^n}$ with the isomorphism
  \begin{equation}
      \ro_{p} \otimes  \ro^{-1}_{q'} \otimes \ro_{q'} \otimes \ro^{-1}_{\bR^{q'}} \otimes  \ro^{-1}_{q} \cong   \ro_{p} \otimes \ro^{-1}_{\bR^{q'}} \otimes \ro^{-1}_{q}.
  \end{equation}
  \end{enumerate}

  \begin{lem} \label{lem:flow_bimodules_cubical_set}
    The collection $\{ \flow_n(\bX, \bY)\}_{n=0}^{\infty}$ naturally admits the structure of a symmetric semi-cubical set. 
  \end{lem}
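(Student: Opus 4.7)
The plan is to define face maps and an $S_n$-action on $\flow_n(\bX, \bY)$, then verify the axioms of a symmetric semi-cubical set (the cubical face identities, the $S_n$-action axioms, and the equivariance of face maps with the symmetric action).

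First, I would construct the face maps $\partial_i^\epsilon : \flow_n(\bX, \bY) \to \flow_{n-1}(\bX, \bY)$ for $i \in \{1, \ldots, n\}$, $\epsilon \in \{0,1\}$. Fix the embedding $\iota_i : S_{n-1} \hookrightarrow S_n$ identifying $S_{n-1}$ with the stabiliser of $i$, acting on $\{1,\ldots,n\} \setminus \{i\}$. Given $\bM = \{\bM_\phi\}_{\phi \in S_n}$, set $(\partial_i^\epsilon \bM)_\psi$ to be the restriction of $\bM_{\iota_i(\psi)}$ to the codimension-$1$ stratum obtained from Lemma \ref{lem:codim_1-elt-bimodules}(1) corresponding to the facet $F_i^\epsilon \subset \square^n$; under the natural identification $F_i^\epsilon \cong \square^{n-1}$ that omits the $i$-th coordinate, this is a bimodule on $\cP_{\bX;\bY}^\Gamma \times \Face\square^{n-1}$. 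The restrictions of the compatibility isomorphisms \eqref{eq:choice_iso_cube_Kur} supply the required cocycle data on $S_{n-1}$, and the relative orientation is obtained from the canonical splitting $\ro_{[0,1]^n} \cong \ro_{[0,1]^{n-1}} \otimes \ro_\bR$ by pairing with the outward conormal of $F_i^\epsilon$.

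Second, I would define a left action of $S_n$ by the reindexing rule $(\sigma \cdot \bM)_\phi := \bM_{\sigma\phi}$, with the isomorphisms \eqref{eq:choice_iso_cube_Kur} for $\sigma \cdot \bM$ inherited from those of $\bM$. The cocycle condition $(\sigma \cdot \bM)_{\phi_1\phi_2} \cong \phi_2^*(\sigma \cdot \bM)_{\phi_1}$ becomes $\bM_{\sigma\phi_1\phi_2} \cong \phi_2^* \bM_{\sigma\phi_1}$, which is exactly the original cocycle applied with $\psi_1 = \sigma\phi_1$ and $\psi_2 = \phi_2$; the group action axiom $\sigma' \cdot (\sigma \cdot \bM) = (\sigma'\sigma) \cdot \bM$ follows from associativity in $S_n$. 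Geometrically, $(\sigma \cdot \bM)_{\id} = \bM_\sigma \cong \sigma^* \bM_{\id}$, so this implements pullback by the coordinate permutation of $\square^n$.

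Finally, the symmetric semi-cubical axioms split into three checks. The cubical identity $\partial_i^\epsilon \partial_j^{\epsilon'} = \partial_{j-1}^{\epsilon'} \partial_i^\epsilon$ for $i < j$ follows because both sides restrict $\bM_{\id}$ to the same codimension-$2$ stratum $F_i^\epsilon \cap F_j^{\epsilon'}$ of $\square^n$, and the nested embeddings $S_{n-2} \hookrightarrow S_{n-1} \hookrightarrow S_n$ agree under reindexing. The equivariance identity $\partial_{\sigma(i)}^\epsilon \circ \sigma^* = \bar\sigma^* \circ \partial_i^\epsilon$, where $\bar\sigma \in S_{n-1}$ is the permutation induced by $\sigma$ on $\{1,\ldots,n\} \setminus \{i\}$, reduces to the compatibility of $\iota_{\sigma(i)}$ and $\iota_i$ with conjugation by $\sigma$. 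The main obstacle I anticipate is the bookkeeping of orientation signs: each face map contributes a factor of $\ro_\bR$ with a sign depending on the position of the omitted coordinate, while $\sigma^*$ acts on $\ro_{[0,1]^n}$ by $\mathrm{sgn}(\sigma)$, and these must combine to produce the correct Koszul-signed identities. This is essentially the standard sign analysis for the cubical chain complex of a manifold with corners, but it must be carried out uniformly across all permutation labels $\phi \in S_n$.
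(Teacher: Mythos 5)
Your proposal is correct and follows essentially the same route as the paper: the symmetric structure is given by reindexing $\psi \mapsto \bM_{\phi\circ\psi}$, and the face maps by restricting to the permutations stabilising the omitted coordinates (identified with the smaller symmetric group) and to the corresponding boundary stratum of $\cQ$, with orientations induced by the identification of normal directions in Lemma \ref{lem:codim_1-elt-bimodules}. The paper states this more tersely, phrasing the face maps for arbitrary injections of symmetric cubes rather than generating facets and omitting the axiom verification, but the content is the same.
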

  \begin{proof}
    We assign to each permutation $\phi$ the map
    \begin{equation}
            \flow_n(\bX, \bY) \to \flow_n(\bX, \bY) 
    \end{equation}
    which assigns to a permutation $\psi$ the data $\bX_{\phi \circ \psi}$.

 We assign to each injection of symmetric cubes $\square^k \stackrel{\beta}{\to} \square^n $ the map
    \begin{equation}
            \flow_n(\bX, \bY) \to \flow_k(\bX, \bY) 
    \end{equation}
    which is obtained by restricting to the permutations $\phi$ which are the identity on all factors of $\square^n$ in which this image is constant, and is given for these permutations (which are naturally identified with the permutations of $\{1, \ldots, k\}$) by restricting $\cQ$ to the corresponding boundary stratum.
       The choice of orientation on faces is induced by the identification of normal directions as in Lemma \ref{lem:codim_1-elt-bimodules}.
  \end{proof}
\subsection{Composition of morphisms}
\label{sec:comp-morph}

Our next goal is to associate to each triple of flow categories a composition map of flow bimodules. We begin at the level of partially ordered sets: given a triple of  partially ordered sets $\cP_{\bX}$, $\cP_{\bY}$, and  $\cP_{\bZ}$, we consider the composition bimodule 
\begin{equation}
    \cP_{\bX ; \bY}^{\Gamma} \times_{  \cP_{\bY}^{\Gamma_+}}  \cP_{\bY ; \bZ}^{\Gamma},
\end{equation}
over  $\cP_{\bX}^{\Gamma_+}$ and $\cP_{\bY}^{\Gamma_+} $, which can be explicitly described by specifying that it assigns to $p \in \cP_{\bX}$, $r \in \cP_{\bZ}$, and $\tau \in \Gamma$ the partially ordered set
\begin{equation}
\left(    \coprod_{\substack{q \in \cP_{\bY} \\ \lambda + \rho = \tau} }  \cP_{\bX ; \bY}^{\lambda}(p,q)  \times  \cP_{\bY ; \bZ}^{\rho}(q,r) \right)/ \sim,
\end{equation}
where $\sim$ is the equivalence relation that identifies the two images of each element of the sets
\begin{equation}
    \cP_{\bX ; \bY}^{\lambda}(p,q)  \times  \cP_{ \bY}^{\mu}(q,q')  \times  \cP_{\bY ; \bZ}^{\rho}(q',r)
\end{equation}
under the left and right actions of $\cP_{\bY}^{\Gamma_+} $ on these two modules.  Figure \ref{fig:arc-composition-flow-module} shows a representative element of this bimodule.
\begin{figure}[h]
  \centering
  \begin{tikzpicture}
    \draw[thick] (-1,0) -- (7,0);
    \node[label=left:{$p$}] at (-1,0) {};
    \node[label=right:{$r$}] at (7,0) {};
    \foreach \i in {0,...,6}{\pgfmathtruncatemacro{\j}{\i-2};\pgfmathtruncatemacro{\k}{\i-5};
     \ifthenelse{\i=2 \OR \i=5}{\filldraw (\i,0) circle (3pt)}{\filldraw (\i,0) circle (1pt)};
     \ifthenelse{0<\i}{ \ifthenelse{\i<3}{\node[label=above:{$p_{\j}$}] at (\i-.5,0) {}}{ \ifthenelse{5<\i}{\node[label=above:{$r_{\k}$}] at (\i-.5,0) {}}{\node[label=above:{$q_{\j}$}] at (\i-.5,0) {}}}}{};
     \ifthenelse{\i<3}{\node[label=below:{$\lambda_{\j}$}] at (\i,0) {}}{\ifthenelse{\i<5}{\node[label=below:{$\mu_{\j}$}] at (\i,0) {}}{\node[label=below:{$\rho_{\k}$}] at (\i,0) {}}};
    };
  \end{tikzpicture}
  \caption{Graphical representation of an object of $  \cP_{\bX ; \bY}^{\Gamma} \times_{  \cP_{\bY}^{\Gamma_+}}  \cP_{\bY ; \bZ}^{\Gamma} (p,r) $}
  \label{fig:arc-composition-flow-module}
\end{figure}

Note that there is a natural map of bimodules
  \begin{equation}
    \cP_{\bX ; \bY}^{\Gamma} \times_{  \cP_{\bY}^{\Gamma_+}}  \cP_{\bY ; \bZ}^{\Gamma} \to  \cP_{\bX ; \bZ}^{\Gamma},  \end{equation}
  given in the graphical description of Figure \ref{fig:arc-composition-flow-module} by collapsing the sub-arc consisting of edges labelled by element of $\cP_{\bY}$ (and adding the values of the elements of $\Gamma$ associated to the vertices). The following result will allows us to formulate associativity: 
\begin{lem}
For any quadruple of partially ordered sets $\cP_{\bW}$, $\cP_{\bX}$, $\cP_{\bY}$, and $\cP_{\bZ}$, the following diagram commutes:
  \begin{equation}
    \begin{tikzcd}
      \cP_{\bW ; \bX}^{\Gamma} \times_{  \cP_{\bX}^{\Gamma_+}}   \cP_{\bX ; \bY}^{\Gamma} \times_{  \cP_{\bY}^{\Gamma_+}}  \cP_{\bY ; \bZ}^{\Gamma} \ar[r] \ar[d] &   \cP_{\bW ; \bX}^{\Gamma} \times_{  \cP_{\bX}^{\Gamma_+}}   \cP_{\bX ; \bZ}^{\Gamma} \ar[d] \\
      \cP_{\bW ; \bY}^{\Gamma} \times_{  \cP_{\bY}^{\Gamma_+}}  \cP_{\bY ; \bZ}^{\Gamma} \ar[r] &   \cP_{\bW ; \bZ}^{\Gamma}.
    \end{tikzcd}
  \end{equation} \qed
\end{lem}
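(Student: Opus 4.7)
The plan is to verify commutativity directly, by tracing a generic element of the top-left corner through both paths and observing that they implement the same identifications. Using the graphical description from Figure \ref{fig:arc-composition-flow-module}, an element of $\cP_{\bW;\bX}^{\Gamma} \times_{\cP_{\bX}^{\Gamma_+}} \cP_{\bX;\bY}^{\Gamma} \times_{\cP_{\bY}^{\Gamma_+}} \cP_{\bY;\bZ}^{\Gamma}$ evaluated at $(w,z,\tau)$ can be drawn as a directed arc subdivided into four successive segments, labelled respectively by elements of $\cP_{\bW}$, $\cP_{\bX}$, $\cP_{\bY}$, $\cP_{\bZ}$, and joined at three distinguished vertices carrying action labels in $\Gamma_+$ (while all other vertices carry labels in $\Gamma_+$ as well, apart from possibly one allowed arbitrary-action label inside each original module, as in Equation \eqref{eq:poset-continuation}).

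First, I would write both composite maps explicitly on such a representative element. The top horizontal arrow collapses the $\cP_{\bY}$-segment (replacing it by a single vertex whose action label is the sum of the $\Gamma$-labels at the two endpoints of the segment together with all labels interior to it), producing an element of $\cP_{\bW;\bX}^{\Gamma} \times_{\cP_{\bX}^{\Gamma_+}} \cP_{\bX;\bZ}^{\Gamma}$; the subsequent right vertical arrow then does the same to the remaining $\cP_{\bX}$-segment. Conversely, the left vertical arrow collapses the $\cP_{\bX}$-segment first, landing in $\cP_{\bW;\bY}^{\Gamma} \times_{\cP_{\bY}^{\Gamma_+}} \cP_{\bY;\bZ}^{\Gamma}$, and the bottom horizontal arrow collapses the $\cP_{\bY}$-segment.

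Next, I would observe that the two collapse operations act on disjoint pieces of the arc, hence commute tautologically: in either order, the output is the directed arc obtained by removing both the $\cP_{\bX}$- and $\cP_{\bY}$-segments and replacing each by a single distinguished vertex whose action label is the sum of all the $\Gamma$-labels inside the corresponding collapsed segment together with the labels originally at its endpoints. The order-generating relation on $\cP_{\bW;\bZ}^{\tau}(w,z)$ is built from the same local collapsing moves, so it is also respected. Finally, since the equivalence relation defining each iterated fibre product over $\cP_{\bX}^{\Gamma_+}$ or $\cP_{\bY}^{\Gamma_+}$ is generated by moves that act only on its own segment, the assignments are well-defined on equivalence classes.

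I do not expect any serious obstacle; the whole argument is a bookkeeping check that commuting operations on disjoint pieces of a sequence agree, and the only point requiring a little care is tracking which $\Gamma$-labels are summed into which vertex when two collapses are performed on segments that share a distinguished endpoint, which is again dictated unambiguously by the definitions.
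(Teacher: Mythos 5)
Your argument is correct and is exactly the check the paper leaves implicit (the lemma is stated with no proof): both composites collapse the $\cP_{\bX}$- and $\cP_{\bY}$-labelled segments of the concatenated arc, these operations act on adjacent but order-independent pieces, and in either order all $\Gamma$-labels from the two segments and their distinguished endpoints are summed into the single resulting vertex. Your remark about well-definedness on the coequaliser equivalence classes and about the shared endpoint is the only point needing care, and you handle it correctly.
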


We now consider a triple $\bX$,  $\bY$, and $\bZ$ of flow categories, and a pair $\bM$ and $ \bN$ of elements of  $\flow_m(\bX,\bY)$  and $\flow_n(\bY, \bZ)$.      We note that a permutation of $\{1, \ldots, m+n\}$ induces permutations $\phi_m$ and $\phi_n$ of $\{1, \ldots, m\}$ and $\{1, \ldots, n\}$, obtained from the ordering of the images of $\{1, \ldots, m\}$ and $\{m+1, \ldots, m+n\}$ under $\phi$.

The composition $\Cat^{\Gamma}$ enriched bimodule $ \cQ_{\bM \circ \bN, \phi}$ over  $\cQ^{\Gamma_+}_{\bX}$ and  $\cQ^{\Gamma_+}_{\bZ}$  is given again as the coequaliser of the two functor 
    \begin{multline}
   \coprod_{\substack{q,q' \in \cP_{\bY} \\ \lambda + \mu + \rho = \tau}} \cQ^{\lambda}_{\bM, \phi_m}(p,q) \times \cQ^{\mu}_{\bY}(q,q') \times \cQ_{\bN, \phi_n}^{\rho}(q',r)  \rightrightarrows  \\     \coprod_{\substack{ q \in \cP_{\bY} \\ \lambda + \rho = \tau}} \cQ_{\bM, \phi_m}^{\lambda}(p,q) \times \cQ_{\bN, \phi_n}^{\rho}(q,r)  
      \end{multline}
      given by the action of  $\cQ^{\Gamma_+}_{\bY}$ on $\cQ_{\bM, \phi_m}$ and $\cQ_{\bN, \phi_n}$. In other words, the set of objects is the quotient of the disjoint union of the sets of objects of $ \cQ_{\bM, \phi_m}^{\lambda}(p,q) \times \cQ_{\bN, \phi_n}^{\rho}(q,r)  $ by the above equivalence relation, and the morphisms are the images of morphisms in this quotient, where we identify two morphisms which are the image of the same morphism in one of the categories $\cQ^{\lambda}_{\bM}(p,q) \times \cQ^{\mu}_{\bY}(q,q') \times \cQ_{\bN}^{\rho}(q',r)  $. Note that we have a natural map of bimodules
          \begin{equation}
  \cQ_{\bM \circ \bN, \phi} \to  \cP_{\bX ; \bZ}^{\Gamma} \times \Face(\square^{n+m}).
      \end{equation}

      \begin{defin} \label{def:composition-flow-bimodules}
      The composition 
  \begin{equation}
    \bM \circ  \bN \in \flow_{m+n}(\bX, \bZ)
  \end{equation}
  is the $m+n$ cube, stratified by $ \cQ_{\bM \circ \bN}  $ which assigns to a permutation $\phi$ of $\{1, \ldots, m+n\}$ and to each pair of objects
  \begin{equation}
   (P_{\bM},P_{\bN}) \in \cQ_{\bM, \phi_m}^{\lambda}(p,q) \times \cQ_{\bN, \phi_n}^{\rho}(q,r)
  \end{equation}
  the product Kuranishi presentation
  \begin{equation}
  \partial^{P_{\bM}}  \bM^{\lambda}_{\phi_m}(p,q) \times  \partial^{P_{\bN}} \bN_{\phi_n}^{\rho}(q,r).
  \end{equation}
  
\end{defin}

  The fact that this construction is functorial after passing to $  \cQ_{\bM \circ \bN} $ follows from the fact that the maps
  \begin{multline}
 \partial^{P_{\bM}} \bM^{\lambda}_{\phi_m}(p,q) \times \partial^{P_{\bY} \times P_{\bN}} \bN^{\mu + \rho}_{\phi_n}(q,r)   \leftarrow \partial^{P_{\bM}} \bM^{\lambda}_{\phi_m}(p,q) \times \partial^{P_{\bY}}\bY^{\mu}(q,q') \\ \times  \partial^{P_{\bN}}\bN^{\rho}_{\phi_n}(q',r) \to  \partial^{P_{\bM} \times P_{\bY}} \bM^{\lambda+ \mu}_{\phi_m}(p,q') \times \partial^{P_{\bN}} \bN^{\rho }_{\phi_n}(q',r).
  \end{multline}
are both isomorphisms of Kuranishi presentations.  Note as well that the fact that $ \bM^{\lambda}_{\phi_m}(p,q) $ and $ \bN^{\rho}_{\phi_n}(q,r)$ have only finitely many strata and are empty except for a set of values of $\lambda$ and $\rho$ on which the action map is proper and bounded below implies that the same finiteness properties holds for the fibre product. 

We omit the proof of the next result:
\begin{lem}
  The assignment $(\bM, \bN) \mapsto  \bM \circ  \bN $ extends to a map of cubical sets
  \begin{equation}
     \flow(\bX, \bY) \times \flow(\bY, \bZ) \to \flow(\bX, \bZ).
   \end{equation} \qed
\end{lem}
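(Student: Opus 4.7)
The plan is to verify the three conditions defining a morphism of symmetric semi-cubical sets with respect to the monoidal structure under which an $m$-cube and an $n$-cube produce an $(m+n)$-cube: well-definedness on each dimension, compatibility with face maps, and compatibility with the symmetric action. Given $\bM \in \flow_m(\bX, \bY)$ and $\bN \in \flow_n(\bY, \bZ)$, the composition $\bM \circ \bN$ of Definition \ref{def:composition-flow-bimodules} provides a Kuranishi bimodule stratified by $\cQ_{\bM \circ \bN}$ lying over $\cP^{\Gamma}_{\bX ; \bZ} \times \Face \square^{m+n}$. To upgrade this to a genuine element of $\flow_{m+n}(\bX, \bZ)$, I must furnish, for each permutation $\phi$ of $\{1,\ldots,m+n\}$, a bimodule $(\bM \circ \bN)_\phi$ together with compatibility isomorphisms for compositions of permutations. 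I take $(\bM \circ \bN)_\phi$ to be the composition bimodule built, exactly as in Definition \ref{def:composition-flow-bimodules}, from $\bM_{\phi_m}$ and $\bN_{\phi_n}$, where $(\phi_m,\phi_n)$ is the induced pair of permutations described in the text. The isomorphisms of Equation \eqref{eq:choice_iso_cube_Kur} are inherited from those of $\bM$ and $\bN$: they arise by combining the pullback squares for the induced decompositions of $\phi_1 \circ \phi_2$ with the naturality of the fibre product defining the composition, and the orientation data is inherited from the product orientations on $[0,1]^m \times [0,1]^n \cong [0,1]^{m+n}$ together with the orientations that $\bM$ and $\bN$ carry relative to $\ro_{[0,1]^m}$ and $\ro_{[0,1]^n}$.

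Next, I verify compatibility with face maps. A face injection $\square^{j+k} \hookrightarrow \square^{m+n}$ arising from face injections $\square^{j} \hookrightarrow \square^m$ and $\square^{k} \hookrightarrow \square^n$ corresponds, via the face maps constructed in Lemma \ref{lem:flow_bimodules_cubical_set}, to the restriction of $\cQ_{\bM \circ \bN}$ to the boundary stratum of $\Face \square^{m+n}$ cut out by these sub-cubes. The coequalizer description of $\cQ_{\bM \circ \bN}$ shows that this restriction is naturally isomorphic to the composition of the corresponding faces of $\bM$ and $\bN$; the identification of normal orientations in Lemma \ref{lem:codim_1-elt-bimodules} ensures that the orientation data matches. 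General face maps of $\flow_{m+n}$ are generated by such product face maps together with the symmetric action, which is addressed in the final step.

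Finally, compatibility with the symmetric action is essentially tautological once the data of the first step is set up: a permutation $\phi$ acts on $(\bM \circ \bN)$ by sending it to $(\bM \circ \bN)_\phi$, which by definition is the composition bimodule built from $\bM_{\phi_m}$ and $\bN_{\phi_n}$, matching the permutation action on $\flow_m(\bX,\bY) \times \flow_n(\bY,\bZ)$ via the induced pair $(\phi_m, \phi_n)$ composed with the shuffle of the $m+n$ cube coordinates. The main obstacle is precisely the coherence of permutations that mix the first $m$ coordinates with the last $n$: here one exploits the fact that the definition of $\flow_n$ was set up redundantly to include bimodule data for \emph{every} permutation, rather than only for those respecting a fixed ordering, so that the pullback by any such mixing permutation is already part of the input data, and the product structure of $\cQ_{\bM \circ \bN}$ over $\Face \square^{m+n}$ is manifestly invariant under arbitrary re-indexing of the underlying cube. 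Functoriality in the cubical set structure on each side then follows from the functoriality of the fibre product.
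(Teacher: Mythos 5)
The paper explicitly omits the proof of this lemma, so there is no argument of record to compare against; judged on its own terms, your reconstruction is consistent with the way the paper has set things up and identifies the right pressure points. In particular, you correctly read the source of the map as the Day convolution of symmetric cubical sets (an $m$-cube and an $n$-cube producing an $(m+n)$-cube), you take $(\bM\circ\bN)_\phi$ to be the coequaliser built from $\bM_{\phi_m}$ and $\bN_{\phi_n}$ exactly as in Definition \ref{def:composition-flow-bimodules}, and you correctly locate the reason the symmetric equivariance works at all: the redundant inclusion of bimodule data for \emph{every} permutation in the definition of $\flow_n$, which is precisely the ``roundabout solution'' the paper advertises. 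The reduction of general face maps to products of face maps composed with symmetries is also the right move for the Day convolution structure.

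The one substantive check you assert rather than perform is the orientation bookkeeping. For $\bM\circ\bN$ to be a well-defined element of $\flow_{m+n}(\bX,\bZ)$ one must verify (i) that the product relative orientation on $\partial^{P_{\bM}}\bM^{\lambda}(p,q)\times\partial^{P_{\bN}}\bN^{\rho}(q,r)$, taken relative to $\ro_p\otimes\ro_q^{-1}\otimes\ro_{[0,1]^m}\otimes\ro_q\otimes\ro_r^{-1}\otimes\ro_{[0,1]^n}$, descends to the coequaliser — that is, that the two maps being identified (the right $\bY$-action on $\bM$, which uses the \emph{opposite} of the standard orientation on $\bR^{q'}$, and the left $\bY$-action on $\bN$, which uses the standard one) induce the \emph{same} orientation on the identified strata — and (ii) that the resulting orientation satisfies the three normalisation conventions listed before Lemma \ref{lem:flow_bimodules_cubical_set}, with the Koszul sign from moving $\ro_{[0,1]^m}$ past $\ro_q\otimes\ro_q^{-1}$ accounted for. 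This is routine but is exactly the kind of sign verification that the opposite-orientation convention in the bimodule axioms was designed to make work, and a complete proof should spell it out rather than fold it into ``inherited from the product orientations.''
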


Unfortunately, the above map is not associative, in the sense that the diagram
   \begin{equation}
     \begin{tikzcd}
       \flow(\bX, \bY) \times \flow(\bY, \bZ) \times \flow(\bZ, \bW) \ar[r] \ar[d]  & \flow(\bX, \bZ) \times \flow(\bZ, \bW)  \ar[d] \\
    \flow(\bX, \bY) \times \flow(\bY, \bW) \ar[r] &  \flow(\bX, \bW)
     \end{tikzcd}
   \end{equation}
   does not commute. The problem amounts to the basic fact that the product of (partially ordered) sets is not strictly associative, i.e. $X \times (Y \times Z)$ is naturally isomorphic to $(X \times Y) \times Z $, but not equal to it.

      The standard way to encode this natural isomorphism would be to equip $\flow$ with a bicategorical structure: the $0$-cells are flow categories, and the $1$-cells are symmetric cubical categories. Explicitly, we consider $\flow_n(\bX,\bY)$ as a category with objects given by flow modules as above, where morphisms from $\bM$ to $\bN$ are given by maps of $\cD \Kur^{\Gamma}$ enriched bimodules
   \begin{equation} \label{eq:map_bimodule_2-category}
\bM_{\phi} \to \bN_{\phi},     
   \end{equation}
   so that the following properties hold:
   \begin{enumerate}
   \item The underlying map of $\Cat^{\Gamma}$ enriched bimodules fit in a commutative diagram
     \begin{equation}
       \begin{tikzcd}
         \cQ_{\bM_{\phi}} \ar[rr] \ar[dr] & & \cQ_{\bN_{\phi}} \ar[dl] \\ 
         & \cP_{\bX; \bY}^{\Gamma}  \times \Face  \square^{n}.& 
       \end{tikzcd}
     \end{equation}
   \item The isomorphisms in Equation \eqref{eq:choice_iso_cube_Kur} and the maps in Equation \eqref{eq:map_bimodule_2-category} are compatible in the sense that the following diagram commutes:
     \begin{equation}
              \begin{tikzcd}
          \bM_{\phi_1  \circ \phi_2} \ar[r] \ar[d] & \phi_2^* \bM_{\phi_1}     \ar[d] \\
\bN_{\phi_1  \circ \phi_2} \ar[r]  & \phi_2^* \bN_{\phi_1}.
       \end{tikzcd}
     \end{equation}
   \end{enumerate}
   The cubical structure maps are strictly compatible with this notion of morphism, so that each arrow $\alpha \co \square^k \to \square^n $ defines a functor
   \begin{equation}
\alpha^* \co     \flow_n(\bX, \bY) \to   \flow_k(\bX, \bY),
   \end{equation}
   with the property that we have a strict equality
   \begin{equation}
     \alpha_1^* \circ  \alpha_2^* = \left(  \alpha_1 \circ \alpha_2 \right)^*.
   \end{equation}
   \begin{lem}
     The composition map $\bM \circ \bN$, and the natural isomorphism
     \begin{equation}
       \left( \bM \circ \bN\right) \circ \bO \cong \bM \circ \left( \bN \circ \bO \right)       
     \end{equation}
     equips $\flow$ with the structure of a bicategory. \qed
   \end{lem}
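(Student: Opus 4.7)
The plan is to verify the axioms of a (possibly non-unital) bicategory in turn, namely that composition is functorial in 2-cells, that there is a natural associator 2-isomorphism, and that it satisfies the pentagon axiom. Most of the data has already been produced in Section \ref{sec:comp-morph}, so the work is to upgrade it to full bicategorical coherence.

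First, I would check that for each triple $(\bX,\bY,\bZ)$ and each pair of non-negative integers $(m,n)$, the composition defined in Definition \ref{def:composition-flow-bimodules} extends to a functor
\begin{equation}
\circ \co \flow_m(\bX,\bY) \times \flow_n(\bY,\bZ) \to \flow_{m+n}(\bX,\bZ).
\end{equation}
Given 2-cells $\bM \to \bM'$ and $\bN \to \bN'$, the induced map on each summand $\partial^{P_{\bM}} \bM^{\lambda}_{\phi_m}(p,q) \times \partial^{P_{\bN}} \bN^{\rho}_{\phi_n}(q,r)$ is natural in both variables, hence descends to the coequaliser defining $\bM \circ \bN$. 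Compatibility with the chosen isomorphisms $\bM_{\phi_1 \circ \phi_2} \cong \phi_2^* \bM_{\phi_1}$ is a matter of unwinding definitions, using the fact that the coequaliser commutes with the pullback along a permutation of the cube.

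Second, I would construct the associator. For $\bM \in \flow_m(\bX,\bY)$, $\bN \in \flow_n(\bY,\bZ)$, and $\bO \in \flow_\ell(\bZ,\bW)$, both $(\bM \circ \bN) \circ \bO$ and $\bM \circ (\bN \circ \bO)$ are realised as iterated coequalisers over $\cQ^{\Gamma_+}_{\bY}$ and $\cQ^{\Gamma_+}_{\bZ}$ of disjoint unions of triple products
\begin{equation}
\partial^{P_{\bM}} \bM^{\lambda}_{\phi_m}(p,q) \times \partial^{P_{\bN}} \bN^{\mu}_{\phi_n}(q,r) \times \partial^{P_{\bO}} \bO^{\rho}_{\phi_\ell}(r,s).
\end{equation}
The associator 2-cell is obtained by combining the associativity of fibre products of (partially ordered) sets with the associativity isomorphism of the symmetric monoidal product on $\cD \Kur^{\Gamma}$ coming from Lemma \ref{lem:dKur_has_tensor_and_coproduct}. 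This is a genuine natural isomorphism of $\cD \Kur^{\Gamma}$-enriched bimodules, not an equality, which explains why bicategorical rather than categorical structure is the right target.

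Third, I would verify the pentagon axiom. Once one has identified both sides of the pentagon as iterated coequalisers of quadruple products, the two composite associators reduce on each summand to applying the associator of the symmetric monoidal product in $\cD \Kur^{\Gamma}$ to a fourfold product, and Mac Lane's coherence theorem then gives equality. The main obstacle I expect is not the abstract coherence itself but rather checking that all of these constructions are compatible with the symmetric cubical data recorded by the permutations $\phi$ and the isomorphisms \eqref{eq:choice_iso_cube_Kur}: one must verify that the associator defined on the component $\phi = \id$ pulls back correctly under an arbitrary permutation of $\{1,\ldots,m+n+\ell\}$, and that the pentagon identity on each $\phi$-component is compatible with the coherence data of Equation \eqref{eq:choice_iso_cube_Kur} applied to arbitrary compositions. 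This is a bookkeeping exercise in commuting diagrams, but it is the place where the axiomatic approach is least transparent and where I would need to be most careful.
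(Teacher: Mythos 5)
The paper states this lemma with a bare \qed and no proof, and your outline supplies precisely the argument that omission is standing in for: composition descends functorially to the coequalisers, the associator is the canonical isomorphism of iterated coequalisers of products (which is a legitimate invertible 2-cell because 2-cells in $\flow_n$ are only required to commute with, not equal, the projection to $\cP^{\Gamma}_{\bX;\bY}\times\Face\square^n$), and the pentagon follows from coherence for the product. Your identification of the compatibility with the permutation data of Equation \eqref{eq:choice_iso_cube_Kur} as the only genuinely delicate bookkeeping is also the correct assessment.
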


\subsection{Strictifying the category of flow categories}
\label{sec:strict-categ-flow}

Our goal now is to produce an ordinary, cubically enriched, category of flow modules, which we shall achieve via a construction in the spirit of the Isbell construction. Let $\Flow_{n}(\bX, \bZ)$ denote the set whose elements consist of the following data:
   \begin{enumerate}
   \item For each face $\sigma$ of the $n$-cube of dimension $n_{\sigma}$, (i) a collection $(\bY^\sigma_0, \ldots, \bY^\sigma_{k_\sigma+1})$ of flow categories, with $\bY^\sigma_0 = \bX$ and $\bY^\sigma_{k_\sigma+1} = \bZ$, (ii) a collection $\{d^{\sigma}_i\}_{i=1}^{k_\sigma}$ of non-negative integers, and disjoint subsets $D_{i}^{\sigma}$ of $\{1, \ldots, n_{\sigma}\}$ of cardinality $d_i^{\sigma}$ (we allow the case $d^{\sigma}_i = 0$, in which case $  D^\sigma_i  = \emptyset$), and (iii) an element $\bM^{\sigma}_i $ of  $\flow_{d^{\sigma}_i}(\bY^\sigma_{i}, \bY^\sigma_{i+1}) $ for each $1 \leq i \leq k_\sigma$.
      \item For each pair $\sigma < \tau$ in $\Face\square^n$, a surjective monotone map
     \begin{equation}
\pi_{\sigma}^{\tau} \co     \{0, \ldots, k_{\tau} + 1\} \to   \{0, \ldots, k_{\sigma} + 1\},
\end{equation}
with the property that $D_{j}^{\sigma}$ maps, under the inclusion of $\{1, \ldots, n_{\tau}\} $ in $ \{1, \ldots, n_{\sigma}\}$ to $D_{\pi_{\sigma}^{\tau}(j)}^{\sigma} $. For each number $i$ between $0$ and $k_{\sigma} + 1 $, letting $k_i$ denote,  the smallest number in the source of $\pi_{\sigma}^{\tau}$ mapping to $i$, we have as well an equivalence of flow modules
     \begin{equation} \label{eq:map_of_decomposed_strata}
       \bM^\tau_{k_i+1} \times_{\bY^\tau_{k_{i+1}}} \bM^\tau_{k_i+2} \times_{\bY^\tau_{k_i+2}} \cdots \times_{\bY^\tau_{k_{i+1}-1}} \bM^\tau_{k_{i+1}} \times [0,1]^{e_{\sigma}^{\tau}(i)} \to \partial^{\tau} \bM^{\sigma}_i,
     \end{equation}
     where the right hand side is the boundary stratum associated to $\tau$, while $e_{\sigma}^{\tau}(i) $ is the difference between $\sum_{j=k_i+1}^{k_{i+1}} d^\tau_j$ and the dimension of the boundary stratum of $ \bM^{\sigma}_i$ associated to $\tau$ (which is the cardinality of the inverse image of $D_{i}^{\sigma}$ in $\{1, \ldots, n_{\tau}\}$).
\end{enumerate}
Finally we require these data to be compatible in the sense that, for each triple $\sigma < \tau < \rho$, the diagrams associated to the maps in Equation \eqref{eq:map_of_decomposed_strata} commute after applying the natural isomorphisms associated to reparenthesisation.
        \begin{rem}
          It may be useful to note that the construction at this stage could be simplified in two ways: we could assume that the subsets $D_i^{\sigma}$ form a partition, which would make the factor $[0,1]^{e_{\sigma}^{\tau}(i)} $ in Equation \eqref{eq:map_of_decomposed_strata} extraneous, and we could assume that this partition is standard (in the sense that it consists of successive subintervals). Since we need the construct $\Flow$ as a category enriched over cubical sets (rather than semi-cubical sets), the first condition would require us to have defined degeneracies in $\flow$, and then to address the fact that products of cubical sets entails taking the quotient by redundant degeneracies; this would give us less control in Appendix \ref{sec:lifting-flow-modules} when we prove a lifting theorem. On the other hand, the use of standard ordering would require restricting to non-symmetric cubical sets, which would not raise any difficulties for the construction of $\Flow$ as a category, but would preclude us from defining it as a multicategory in Section \ref{sec:mult-flow-categ} below.
                 \end{rem}
        It is straightforward to define face maps
        \begin{equation}
             \Flow_{n}(\bX, \bY) \to \Flow_{n-1}(\bX, \bY) ,     
           \end{equation}
           as they are simply given by considering the data associated to the given codimension $1$ stratum. We define the action of a symmetry $\phi$ by its action on the set of faces, the induced action on partitions, and the induced permutations 
           \begin{equation}
              \flow_{d^{\sigma}_i}(\bY^\sigma_{i}, \bY^\sigma_{i+1})  \to      \flow_{d^{\sigma}_i}(\bY^\sigma_{i}, \bY^\sigma_{i+1})        
           \end{equation}
           which intertwines the two orderings of $D_{i}^{\sigma} $ arising from the ordering of the coordinates of $\sigma$ and $\phi \sigma$.       Finally, the degeneracy map associated to an inclusion $s \co \{1,\ldots, n\} \to \{1, \ldots, m\}$ assigns to each face of the $m$-cube the collection of flow categories and flow modules corresponding to its image, and to each face $\sigma$ of the $m$-cube the subset of $ \{1, \ldots, m\} $ which is the image of $D_{i}^{s(\sigma)}$ under $s$.

           The compatibility of these maps is given by:
        \begin{lem}
          The collection $\{\Flow_{n}(\bX, \bY) \}_{n=0}^{\infty}$ forms a symmetric cubical set.
          \qed          
        \end{lem}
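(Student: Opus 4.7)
The plan is to verify the defining identities of a symmetric cubical set, namely that the face, degeneracy, and symmetry maps described immediately before the statement assemble into a functor from the opposite of the symmetric cube category to sets. Since $\flow(\bY_i^\sigma, \bY_{i+1}^\sigma)$ is already known to be a symmetric semi-cubical set by Lemma \ref{lem:flow_bimodules_cubical_set}, each of the three classes of operations on $\Flow_n(\bX,\bY)$ is built from a combination of (i) an operation on $\Face \square^n$, (ii) a corresponding operation on the ordered partitions $D^\sigma_i$ of the coordinates, and (iii) the induced operation on each flow module $\bM^\sigma_i$. Thus each required identity reduces to the corresponding identity in these three simpler building blocks.

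First I would check well-definedness of the three classes of maps, i.e. that the output data satisfies the compatibility conditions imposed on objects of $\Flow_n$. For a face inclusion $\tau < \sigma$, the restriction of a datum on $\sigma$ to $\tau$ inherits its compatibility from the original transition maps $\pi_{\rho}^{\sigma}$ composed with $\pi_{\sigma}^{\tau}$, which is where one uses the assumption of coherent reparenthesisation for triples $\sigma<\tau<\rho$. For a symmetry $\phi$, one uses the compatibility of the orderings of each $D^\sigma_i$ with the two induced orderings of $\phi\sigma$; here the key point is that the permutation $\phi$ restricts to an order-preserving bijection between the orderings inherited from $\sigma$ and from $\phi\sigma$ on each stratum, as guaranteed by Lemma \ref{lem:flow_bimodules_cubical_set}. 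For a degeneracy $s$, the defining property that $D^{s(\sigma)}_i$ is transported to its image under $s$ is manifestly compatible with composition.

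Next I would verify the symmetric cubical identities. These split into three groups: the face-face, face-degeneracy, and degeneracy-degeneracy identities of an ordinary cubical set; the identities governing the action of the symmetric group; and the compatibility identities relating symmetries to faces and degeneracies. In each case the identity holds at the level of $\Face \square^n$ and of subsets $D^\sigma_i \subset \{1,\ldots, n_\sigma\}$ by inspection, and it holds at the level of each $\bM^\sigma_i \in \flow_{d^\sigma_i}(\bY^\sigma_i, \bY^\sigma_{i+1})$ by Lemma \ref{lem:flow_bimodules_cubical_set}. Since each structure map on $\Flow_n(\bX,\bY)$ is determined coordinate-wise by these pieces, the desired identities follow immediately.

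The only point that requires any care is checking that applying a symmetry or degeneracy to a datum still satisfies Equation \eqref{eq:map_of_decomposed_strata} for every pair $\sigma < \tau$ of faces in the new cube; this is where the coherence of the reparenthesisation isomorphisms for triples is used, and I expect this bookkeeping to be the only mildly nontrivial step. All remaining verifications are direct unwinding of definitions.
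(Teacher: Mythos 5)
Your plan is correct and matches what the paper intends: the lemma is stated without proof precisely because it is the routine verification you outline, reducing each symmetric cubical identity to the corresponding identity on $\Face\square^n$, on the indexing subsets $D_i^{\sigma}$, and on the semi-cubical sets $\flow_{d_i^{\sigma}}(\bY_i^{\sigma},\bY_{i+1}^{\sigma})$ from Lemma \ref{lem:flow_bimodules_cubical_set}, with the coherence condition for triples $\sigma<\tau<\rho$ handling the compatibility of the maps in Equation \eqref{eq:map_of_decomposed_strata}. The only quibble is that the induced permutation of each $\bM_i^{\sigma}$ intertwines the two orderings of $D_i^{\sigma}$ rather than being order-preserving, but this does not affect the argument.
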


        There is a natural map
        \begin{equation} \label{eq:composition_lifted_flow_categories}
          \Flow_{n}(\bX, \bY) \times \Flow_{m}(\bY, \bZ) \to \Flow_{n+m}(\bY, \bZ)
        \end{equation}
        given, for each stratum of $\square^{n+m}$  by the formal product (concatenation) of sequences of bimodules. The key point is that this operation is strictly associative, is equivariant with respect to the action of the product of symmetric groups $\Sigma_n \times \Sigma_m $, and with both face and degeneracy maps, so we introduce the following definition:
        \begin{defin} \label{def:Flow_modules}
          The \emph{category of flow modules with factorised strata} is the cubically enriched category $\Flow$, with objects given by flow categories, morphisms by the cubical sets $\Flow(\bX, \bY) $, and compositions by Equation \eqref{eq:composition_lifted_flow_categories}.
        \end{defin}

        We distinguish the subcategory $ \Flow^{+}$ of \emph{monotone bimodules,} consisting of those bimodules which are supported at elements of $ \Gamma$ with non-negative real action.

\subsection{The functor from flow categories to chain complexes}
\label{sec:from-flow-categories}

We now return to the construction of Section \ref{sec:chain-compl-assoc}: let $\Lambda$ denote the Novikov field, with $\Bbbk$ coefficients and exponents in $\Gamma$, i.e. the completion along the  topology induced by the action homomorphism of the group algebra of $\Gamma$, over the ring $\Bbbk$. We consider the category $\Ch_{\Lambda}$ of chain complexes over $\Lambda$ as a category enriched over symmetric cubical sets:  explicitly, we define an $n$-cube of maps from $C^*_0$ to $C^*_1$ to be a map
\begin{equation}
  C_*( [0,1])^{\otimes n} \otimes C^*_0  \to C^*_1
\end{equation}
where $C_*([0,1])$ is the cochain complex with one generator in degree $-1$, corresponding to the top stratum of $[0,1]$, and two generators in degree $0$ corresponding to the endpoints $\{0,1\}$. The symmetric group acts via the Koszul sign conventions, the face maps are given by the natural inclusions, and the degeneracies act trivially. We also write $\Ch_{\Lambda_0}$ for the category of chain complexes over the Novikov ring $\Lambda_0$.

\begin{prop} \label{prop:floer-functor}
  A theory of virtual counts determines a cubically enriched functor
  \begin{equation}
  CF^* \co  \Flow \to \Ch_{\Lambda}
  \end{equation}
  and a lift to $\Ch_{\Lambda_0}$ of the restriction to $\Flow^+$.
\end{prop}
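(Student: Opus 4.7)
The plan is to assemble $CF^*$ on morphisms from matrix coefficients of virtual counts of the strata of Kuranishi diagrams $\bM^{\lambda}(p,q)$, thereby extending the construction of the differential $\bfm$ in Section \ref{sec:chain-compl-assoc}. On objects we retain the already-defined complexes $CF^*(\bX)$. For a single bimodule $\bM \in \flow_n(\bX, \bY)$, I would construct an $n$-cube $\mu_{\bM} \co C_*([0,1])^{\otimes n} \otimes CF^*(\bX) \to CF^*(\bY)$ by specifying, for each face $\sigma$ of $[0,1]^n$, each pair $(p,q) \in \cP_{\bX} \times \cP_{\bY}$, and each $\lambda \in \Gamma$, a matrix coefficient $\ro_p \otimes \ro_\sigma \to \ro_q$. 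This coefficient is obtained by applying the virtual count $\cV$ of Lemma \ref{lem:virtual_counts_diagrams} to the generator of $C^*_{st}(\bM^{\lambda}(p,q))$ associated to the stratum that projects to $\sigma \subset [0,1]^n$ and to the minimal element of $\cP_{\bX;\bY}^{\lambda}(p,q)$, using the relative orientation of $\bM^{\lambda}(p,q)$ to identify the corresponding orientation line with $\ro_p \otimes \ro_q^{-1} \otimes \ro_{[0,1]^n} \otimes \ro_\sigma^{-1}$. Summing with $T^{\lambda}$-weights, and invoking Condition \eqref{eq:compactness-property-morphisms} for $T$-adic convergence, yields a well-defined element of $\Hom_n(CF^*(\bX), CF^*(\bY))$.

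The chain-map property $d \mu_{\bM} - \mu_{\bM} d = 0$ would follow exactly as in Lemma \ref{lem:Floer-differential}: the image under $\cV$ of the boundary of the fundamental chain of each stratum of $\bM^{\lambda}(p,q)$ vanishes, and Lemma \ref{lem:codim_1-elt-bimodules} identifies the codimension-$1$ boundaries with the three contributions to $d \mu_{\bM} - \mu_{\bM} d$: facets of the cube (from the differential of $C_*([0,1])^{\otimes n}$), broken configurations along $\bX$ (which produce the precomposition with $\bfm_{\bX}$), and broken configurations along $\bY$ (which produce the postcomposition with $\bfm_{\bY}$). Compatibility with faces and symmetries in $\flow$ follows from the symmetric construction: cube faces correspond to restricting $\bM$ to the appropriate boundary stratum, while the isomorphisms \eqref{eq:choice_iso_cube_Kur} translate to equivariance of matrix coefficients under symmetric group actions.

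To extend from $\flow$ to the strictified $\Flow$, I would use the sequence structure built into Definition \ref{def:Flow_modules}: an element of $\Flow_n(\bX, \bZ)$ encodes, for each face $\sigma \subset [0,1]^n$, a sequence of bimodules $(\bM_1^\sigma, \ldots, \bM_{k_\sigma}^\sigma)$ together with a partition of the cube coordinates into subsets $\{D_i^\sigma\}$. The associated cube map is set to the tensor product composition
\begin{equation*}
  C_*([0,1])^{\otimes n} \otimes CF^*(\bX) \to \bigotimes_{i=1}^{k_\sigma} C_*([0,1])^{\otimes d_i^\sigma} \otimes CF^*(\bX) \to CF^*(\bY^\sigma_1) \to \cdots \to CF^*(\bZ),
\end{equation*}
where the first arrow distributes $C_*([0,1])^{\otimes n}$ according to the partition and the subsequent arrows apply the individual $\mu_{\bM_i^\sigma}$ built in the previous step, passing through the intermediate complexes $CF^*(\bY^\sigma_i)$. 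The strict functoriality of this assignment with respect to composition in $\Flow$ is a direct consequence of the strict associativity of tensor products and composition in $\Ch_\Lambda$, combined with the fact that composition in $\Flow$ is by concatenation of sequences; the multiplicativity of the virtual counts (via Lemma \ref{lem:lift_flow_cat-to-factorised}) ensures that matrix coefficients of product Kuranishi presentations factor through products of individual matrix coefficients. Degeneracies in $\Flow$ correspond to empty partition slots $D_i^\sigma = \emptyset$, producing cube maps that factor through the projection of $C_*([0,1])^{\otimes n}$ onto the non-degenerate coordinates, matching the trivial action of degeneracies on $\Ch_\Lambda$.

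The main obstacle is the bookkeeping: one must verify that signs, orientations, and the interaction between cube faces and Kuranishi stratum boundaries assemble coherently into a \emph{strictly} functorial cubical enrichment. The passage through the strictification $\Flow$ (rather than the bicategorical $\flow$) is precisely what makes this possible without appealing to homotopy coherent machinery, since the choice of partition $\{D_i^\sigma\}$ records exactly the parenthesization data needed to force strict associativity. For the final assertion, the subcategory $\Flow^+$ consists of bimodules supported on $\Gamma_+$, so every $T^\lambda$-weight appearing in a matrix coefficient satisfies $\cA(\lambda) \geq 0$, ensuring that $\mu_\bM$ takes values in $\Lambda_0$-linear maps and thereby producing the required lift of the cubically enriched functor to $\Ch_{\Lambda_0}$.
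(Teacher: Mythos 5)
Your overall strategy coincides with the paper's: matrix coefficients obtained by applying the virtual count to the stratum complexes of the diagrams $\bM^{\lambda}(p,q)$, the cochain-map identity via the trichotomy of Lemma \ref{lem:codim_1-elt-bimodules}, strict functoriality from the concatenation structure of $\Flow$, and the $\Lambda_0$-lift on $\Flow^+$ from non-negativity of the action. All of that is sound and matches the paper's proof.

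There is, however, one genuine gap. The theory of virtual counts is a natural transformation defined on $\widetilde{\Kur}^{\Bbbk}$ (and, via Lemma \ref{lem:virtual_counts_diagrams}, on $\cD\widetilde{\Kur}$), not on $\Kur$ itself; so before you can ``apply the virtual count $\cV$'' to a generator of $C^*_{st}(\bM^{\lambda}(p,q))$, you must lift the entire flow bimodule $\bM$ — not just the flow categories $\bX$, $\bY$ — to diagrams valued in $\widetilde{\Kur}$, and this lift must be chosen coherently with the face maps, the symmetric group actions on $n$-cubes, and the concatenation compositions, since otherwise neither your compatibility-with-faces claim nor your strict functoriality claim goes through (the monoidal property of $\cV$ only applies to the factorisations recorded in $\widetilde{\Kur}$). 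You cite Lemma \ref{lem:lift_flow_cat-to-factorised} for this, but that lemma only produces the canonical lift of a flow \emph{category}; for bimodules the lift is not canonical — the symmetries of cubes in $\flow_n$ are specified by isomorphisms of Kuranishi diagrams, whereas $\widetilde{\Kur}$ admits essentially only reorderings of chosen factors — and constructing it requires choosing orbit representatives and inducting on the dimension of cubes. This is precisely the content of Proposition \ref{prop:lift_flow_to_tflow} in Appendix \ref{sec:lifting-flow-modules}, which the paper's proof invokes as its ``key remaining ingredient'' and which occupies an entire appendix; without it (or an equivalent argument) your construction of the matrix coefficients is not yet defined.
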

\begin{proof}
At the level of objects, the functor assigns to each flow category the chain complex $CF^*(\bX)$ in $ \Ch_{\Lambda_0}$, and its tensor product with $\Lambda$ in $\Ch_{\Lambda} $. 

At the level of morphisms, we must assign to each element $\bM$ of $ \Flow_n(\bX, \bY) $ a map 
  \begin{equation}  \label{eq:structure_map_bimodule}
\bfm^{\bM} \co  C_*( [0,1])^{\otimes n} \otimes CF^*(\bX)  \to CF^*(\bY).
  \end{equation}
  This map is given by a collection of $\Bbbk$-linear maps
  \begin{equation} \label{eq:structure_map_coefficient_bimodule}
    \bfm^{\bM^{\mu}}_{p,q} \co  C_*( [0,1])^{\otimes n} \otimes \ro_{p} \to \ro_q
  \end{equation} indexed by $p \in \cP_{\bX}$, $y \in \cP_{\bY}$, and $\mu \in \Gamma$. This triple determines a diagram
\begin{equation}
  \cQ_{\bM}^{\mu}(p,q) \to \Kur,
\end{equation}
which is equipped with a product decomposition. The main result of Appendix \ref{sec:lifting-flow-modules} is that the construction of Lemma \ref{lem:lift_flow_cat-to-factorised} extends to a cubically enriched functor $\Flow \to \widetilde{\Flow}$, whose target consists of flow modules equipped with lifts to $\widetilde{\Kur}$. Applying this result, we define the value of Equation \eqref{eq:structure_map_coefficient_bimodule}  on the top degree generator of $ C_*( [0,1])^{\otimes n}$ to be the sum  over all the codimension $0$ objects of $ \cQ_{\bM}^{\mu}(p,q)$ of the maps induced by the chosen theory of virtual counts:  the associated Kuranishi presentations are oriented relative
\begin{equation}
  \ro_{p} \otimes \ro^{-1}_{q} \otimes \ro_{[0,1]^n},
\end{equation}
hence induce a map
\begin{equation}
\ro_{[0,1]^n} \otimes  \ro_{p} \to \ro_{q}.
\end{equation}
This defines the map on the top degree generator of the domain in Equation \eqref{eq:structure_map_coefficient_bimodule}. The maps on the lower degree generators are determined by the maps associated to lower dimensional cubes.

To see that Equation \eqref{eq:structure_map_bimodule} is a cochain map, we use the description of the codimension $1$ elements of $\cQ_{\bM}^{\mu}(p,q)$; the three alternatives respectively correspond to the terms associated to the differential on $ C_*( [0,1])^{\otimes n}$, $CF^*(\bX)$, and $CF^*(\bY)$, with the discussion of orientations preceding Lemma \ref{lem:flow_bimodules_cubical_set} implying that these terms corresponds to the equation for a co-chain map.

The statement that we obtain a map of cubical sets from $ \Flow(\bX, \bY)$ to the space of morphisms from $\bX$ to $\bY$ now follows from the fact that the map associated to a boundary stratum of the cube agrees with the restriction to the corresponding subcomplex of $ C_*( [0,1])^{\otimes n} $, and degeneracies act trivially because they correspond at the level of Kuranishi presentations to taking the product with cubes, and we are working with a multiplicative theory of virtual counts.

Finally, the compatibility of this functor with compositions follows from the axiom that a theory of virtual counts is a monoidal natural transformation of functors. Namely, we need to know that the diagram
\begin{equation}
  \begin{tikzcd}
    \begin{gathered}
      \Flow_{n}(\bX, \bY) \times \\
      \Flow_{m}(\bY, \bZ)   
    \end{gathered}
   \ar[r] \ar[d] &
   \begin{gathered}
     \Hom(C_*( [0,1])^{\otimes n} \otimes CF^*(\bX), CF^*(\bY)) \otimes \\
     \Hom(C_*( [0,1])^{\otimes m} \otimes CF^*(\bY), CF^*(\bZ))   
   \end{gathered}
  \ar[d] \\
 \Flow_{n+m}(\bX, \bZ)   \ar[r] & \Hom(C_*( [0,1])^{\otimes m+n} \otimes CF^*(\bX), CF^*(\bZ))
  \end{tikzcd}
\end{equation}
commutes. Since the composition maps respect the cubical structure, it suffices to check that, for each triple $(p,q,r)$ of objects of $\bX$, $\bY$ and $\bZ$, for each pair $\bM \in  \Flow_{n}(\bX, \bY)$ and $\bN \in \Flow_{m}(\bY, \bZ)$, and for each pair of labels $\mu$ and $\rho$, the diagram
\begin{equation}
   \begin{tikzcd}
     C_{m}( [0,1])^{\otimes m} \otimes  C_{n}( [0,1])^{\otimes n} \otimes \ro_{p} \ar[r] \ar[d] &    C_{m+n}( [0,1])^{\otimes m +n } \otimes  \ro_{p} \ar[d] \\
      C_{m}( [0,1])^{\otimes m} \otimes  \ro_q \ar[r] & \ro_r
  \end{tikzcd}
\end{equation}
commutes, where the left vertical map is associated to $\bM^{\mu}(p,q)$, the bottom horizontal map to $\bN^{\rho}(q,r) $, and the right vertical map to their product. This commutativity is immediate for the assumption that virtual counts are given by a monoidal natural transformation. \end{proof}

\section{The multicategory of flow categories and the Floer multifunctor}
\label{sec:mult-flow-categ}

The purpose of this section is to extend the constructions of the previous one to a multicategory whose objects are flow categories; this encodes the relevant multiplicative structures that appear in Floer theory. We shall abuse notation and use the same notation $\Flow$ for this multicategory.

\subsection{The multicategory of poset bimodules}
\label{sec:mult-poset-bimod}

We begin by extending the construction of Section \ref{sec:comp-morph}, using the multicategory $\Multimod^{\Gamma}$, whose objects are those of $\Cat^{\Gamma}$, and whose multi-morphisms are $\Cat^{\Gamma}$-enriched multimodules.

To introduce this multicategory explicitly, note that by specialising the discussion of Appendix \ref{sec:multimodules}, we find that a multimodule $\cM$ over a collection
\begin{equation}
\vec{\cQ} = (\cQ_1, \ldots, \cQ_k; \cQ)  
\end{equation}
of objects of $ \Cat^{\Gamma}$  consists of an object $\cM(\vec{Q})$ of $\Cat^{\Gamma}$ for each  collection
\begin{equation}
  \vec{q} = (q_1, \ldots, q_k; q) 
\end{equation}
of objects of $\cQ_i$, and commuting strict actions of the morphisms in $ \cQ_i$ on these categories, i.e. functors
\begin{align}
  \cM(\vec{q}) \times \cQ(q;q') & \to   \cM(\vec{q} \circ_1 (q;q')) \\
  \cQ_i(q'_i;q_i) \times   \cM(\vec{q}) & \to \cM((q'_i;q_i) \circ_i \vec{q}), \textrm{ for } 1 \leq i \leq k
\end{align}
where the sequences in the outputs of the two compositions are
\begin{align}
  \vec{q} \circ_1 (q;q') & \equiv  (q_1, \ldots,q_k; q') \\
  (q'_i;q_i) \circ_i \vec{q} & \equiv (q_1, \ldots, q_{i-1}, q'_i, q_{i+1}, \ldots, q_k; q).  
\end{align}
The multicategory $\Multimod^{\Gamma}$ has multimorphisms
\begin{equation}
 \Multimod^{\Gamma} \vec{\cQ}
\end{equation}
 described above. In order to describe compositions in this multicategory, consider a pair of sequences
\begin{align}
  \vec{\cQ}_{-} & = (\cQ_{-,1}, \ldots, \cQ_{-,k}; \cQ_{-}) \\
  \vec{\cQ}_{+} & = ( \cQ_{+,1}, \ldots, \cQ_{+,\ell}; \cQ_{+}).
\end{align}
Given multimodules $\cM_\pm$ over these sequences, and an integer $1 \leq i \leq k$  such that $\cQ_{-,i} = \cQ_{+}$, we have to describe a multimodule $ \cM_+ \circ_i \cM_-$ over the sequence 
    \begin{equation}
     \vec{\cQ}_{+} \circ_{i}  \vec{\cQ}_{-}  \equiv (\cQ_{-,1}, \ldots, \cQ_{-,i-1}, \cQ_{+,1}, \ldots, \cQ_{+,\ell}, \cQ_{-,i+1}, \ldots, \cQ_{-,k}; \cQ_{-,0}):
    \end{equation}

  \begin{defin} \label{def:composition_poset_multimodules}
    The \emph{composition $\cM_+ \circ_i \cM_{-}$}  assigns to a sequence
    \begin{equation}
      \vec{q} \in   \vec{\cQ}_{+} \circ_{i} \vec{\cQ}_{-}
    \end{equation}
and to an element $\tau$ of $\Gamma$ the coequaliser $ \left( \cM_+ \circ_i \cM_- \right)^{\tau}(\vec{q}) $ of the two maps of partially ordered sets
    \begin{equation}
      \coprod \cM^{\lambda}_{+}(\vec{q}_{+}) \times \cQ^{\mu}_{+}(q_+,q_+') \times \cM^{\rho}_{-}(\vec{q}_{-})  \rightrightarrows       \coprod \cM^{\lambda}_{+}(\vec{q}_{+}) \times  \cM^{\rho}_{-}(\vec{q}_{-})
      \end{equation}
      given by the action of  $\cQ$ on $\cM_{-}$ and $\cM_{+}$, where the disjoint union in the source is taken over all triples $\vec{q}_{+}$, $ (q_+,q_+')$, and $\vec{q}_{-}$ such that
      \begin{equation}
        \vec{q}_{+} \circ_1  (q_+,q_+') \circ_i \vec{q}_{-}  = \vec{q}
      \end{equation}
      and all triples $(\lambda, \mu, \rho)$, whose sum is $\tau$, and such that $\mu$ lies in $\Gamma_+$. The disjoint in the target is taken over all pairs  $\vec{q}_{+} $ and $\vec{q}_{-}$ such that
      \begin{equation}
        \vec{q}_{+} \circ_i  \vec{q}_{-} = \vec{q}
      \end{equation}
      as well as pairs $(\lambda, \rho)$ in $\Gamma$ whose sum is $\tau$.
       \end{defin}

   Noting that it is straightforward to construct an isomorphism
   \begin{equation}
  \Multimod^{\Gamma}\left( \cQ_1, \ldots, \cQ_k; \cQ \right) \cong  \Multimod^{\Gamma}  \left( \cQ_{\sigma(1)}, \ldots, \cQ_{\sigma(k)}; \cQ \right)
   \end{equation}
   by the relabelling associated to each permutation $\sigma$ of the set $\{1, \ldots, k\}$, which is compatible with compositions, we have:
   \begin{lem}
     The construction of Definition \ref{def:composition_poset_multimodules} is the multicomposition of a multicategory whose objects are those of $\Cat^{\Gamma}$, and whose multimorphisms are the sets $\Multimod^{\Gamma} \vec{\cQ}$.  \qed
   \end{lem}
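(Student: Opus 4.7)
The plan is to verify the three axioms of a (symmetric) multicategory: existence of identities, associativity of multicomposition, and equivariance under the symmetric group action. The definition already supplies the multicomposition, and the relabelling isomorphisms cited in the statement provide the symmetric group action, so the task reduces to checking that these are compatible and that identities exist.

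First I would specify the identity multimorphism of an object $\cQ$ in $\Cat^{\Gamma}$: this is the $(\cQ;\cQ)$-bimodule assigning to $(q_1;q_2)$ and $\tau \in \Gamma$ the partially ordered set $\cQ^{\tau}(q_1,q_2)$, with left and right actions given by the composition structure of $\cQ$. The fact that $\cQ \circ_1 \cM \cong \cM$ and $\cM \circ_i \cQ \cong \cM$ (for $\cQ$ matching the appropriate slot) then follows directly from the universal property of the coequaliser in Definition \ref{def:composition_poset_multimodules}, because one of the two maps being coequalised becomes an identity, so the coequaliser reduces to the quotient by the action of $\cQ$ on itself, which is tautologically $\cM$.

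Next I would verify associativity. There are two cases to consider, following the standard multicategory axioms. For nested composition at the same slot, one must show that $(\cM_1 \circ_i \cM_2) \circ_j \cM_3 \cong \cM_1 \circ_i (\cM_2 \circ_{j'} \cM_3)$ for the appropriate reindexed slot $j'$, and for composition at disjoint slots, one must check that $(\cM_1 \circ_i \cM_2) \circ_j \cM_3 \cong (\cM_1 \circ_j \cM_3) \circ_{i'} \cM_2$ for disjoint $i,j$. In both cases, unravelling the definition shows that the resulting partially ordered set is a double (respectively triple) coequaliser over products of the form
\begin{equation}
\cM_1^{\lambda_1}(\vec{q}_1) \times \cQ_\alpha^{\mu_\alpha}(-,-) \times \cM_2^{\lambda_2}(\vec{q}_2) \times \cQ_\beta^{\mu_\beta}(-,-) \times \cM_3^{\lambda_3}(\vec{q}_3),
\end{equation}
quotiented by the simultaneous left and right $\cQ$-actions. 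Since all the actions commute and the $\Gamma$-grading is additive, the iterated coequaliser is independent of the order in which the quotients are formed, giving the desired isomorphism of partially ordered sets together with the action maps.

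Finally I would verify equivariance: for a permutation $\sigma$ of $\{1,\dots,k\}$ and the relabelling isomorphism
\begin{equation}
\Multimod^{\Gamma}(\cQ_1,\dots,\cQ_k;\cQ) \cong \Multimod^{\Gamma}(\cQ_{\sigma(1)},\dots,\cQ_{\sigma(k)};\cQ),
\end{equation}
one checks that composing and then relabelling agrees with relabelling and then composing; this is a direct inspection of the disjoint unions appearing in Definition \ref{def:composition_poset_multimodules}, since the relabelling only permutes the factors $\cM^{\lambda}(\vec{q}_{\pm})$ without affecting the $\cQ$-actions being coequalised. The main potential obstacle is purely bookkeeping: tracking which $\cQ$ acts on which side of which factor across the iterated coequalisers, and verifying that the natural bijections preserve the partial orders and the $\Gamma$-grading. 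This is tedious but formal, and none of it involves genuine categorical subtleties because all actions are strict and all coequalisers are computed in the category of $\Gamma$-graded partially ordered sets.
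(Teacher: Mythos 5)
The paper states this lemma without proof, and the parts of your argument that actually matter --- associativity via the commutation of iterated coequalisers, and equivariance via the relabelling isomorphisms --- are exactly the intended (omitted) verification, so on those points you are fine. Note, however, that the paper's notion of multicategory (Definition \ref{def:multicategory_enriched_in_symmetric_monoidal}) explicitly does \emph{not} require unit morphisms, so your entire first step is verifying an axiom that is not being asserted.

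More importantly, that first step is also incorrect as written. The objects of $\Cat^{\Gamma}$ relevant here, such as the categories $\cP^{\Gamma_+}$ attached to flow categories, are genuinely non-unital: $\cP^{0}(q,q)$ is empty by the condition that $\cA(\lambda_i)=0$ forces a strict inequality, which is precisely why the paper introduces the ad hoc modification $\bar{\cP}$ when building bimodules in Section \ref{sec:morph-flow-categ}. For such a $\cQ$, the coequaliser computing $\cM \circ_i \cQ$ is $\cM \otimes_{\cQ} \cQ$, and this does not recover $\cM$; your claim that "one of the two maps being coequalised becomes an identity, so the coequaliser reduces... tautologically" does not describe either map (one is the action of $\cQ$ on $\cM$, the other is composition in $\cQ$), and even in the unital case the isomorphism $\cM \otimes_{\cQ} \cQ \cong \cM$ requires the splitting supplied by the identity morphisms rather than following formally from the universal property. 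Since units are not part of the statement, this does not sink the lemma, but you should delete that paragraph rather than rely on it.
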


\subsection{Multimodules of trees}
\label{sec:multimodules-trees}

There is a specific multimodule which will play a key r\^ole in our construction: let $\vec{\cP} = (\cP_1, \ldots, \cP_k; \cP)$ be a sequence of partially ordered sets as in Section \ref{sec:chain-compl-assoc-1}. 
Recall that we have constructed categories $ \cP_i^{\Gamma}$ with objects the elements of $\cP_i$. We shall construct a multimodule $ \cP_{\vec{\cP}} $ over these categories as follows: for each collection $\vec{p}  = (p_1, \ldots, p_k; p)$ of objects of these categories and each real number $\mu$, we introduce the category \begin{equation}
  \cP_{\vec{\cP}}^{\mu} (\vec{p}) \equiv   
    \coprod_{\substack{ p'_i \in \cP_i  \\ \mu^i \in \Gamma_+} } \bar{\cP}_1^{\mu_1}(p_1,p'_1) \times  \cdots \times \bar{\cP}_k^{\mu_k}(p_k,p'_k) \times \{   \mu - \sum_{i=0}^{k} \mu_i\} \times \bar{\cP}^{\mu_0}(p',p),
\end{equation}
where we recall that $ \bar{\cP}_i^{\mu_i}(p_i,p'_i)$ is the singleton $\{p_i\}$ whenever $\mu_i = 0$ and $p_i = p'_i$, and is otherwise given by $ \cP_i^{\mu_i}(p_i,p'_i)$. Since the element $  \mu - \sum_{i=0}^{k} \mu_i$ of $\Gamma$ is determined by the other data, we often omit it from our notation for elements of this set. We represent an element of $  \cP_{\vec{\cP}}^{\mu}(\vec{p}) $ graphically in Figure \ref{fig:tree-multiflow-category} as a tree, with edges labelled by elements of $\cP_i$ and vertices by elements of $\Gamma$, and in which all vertices are bivalent except for a single vertex of valence $k+1$.

\begin{figure}[h]
  \centering
  \begin{tikzpicture}
    \draw[thick] (-1,0) -- (7,0);
    \node[label=left:{$p_{2}$}] at (-1,0) {};
    \node[label=right:{$p$}] at (7,0) {};
     \foreach \i in {0,...,6}{
     \ifthenelse{\i=4}{\filldraw (\i,0) circle (3pt)}{\filldraw (\i,0) circle (1pt)};
   };
   \node[label=left:{$p_{1}$}] at (0,) {};
   \draw[thick] (0,1) -- (4,0);
   \foreach \i in {1,...,3}{\pgfmathparse{1-\i/4}
      \edef\j{\pgfmathresult}
     \filldraw (\i,\j) circle (1pt);
   };
      \node[label=left:{$p_{3}$}] at (1,-1) {};
      \draw[thick] (1,-1) -- (4,0);
       \foreach \i in {2,...,3}{\pgfmathparse{1-(\i-1)/3}
      \edef\j{\pgfmathresult}
     \filldraw (\i,-\j) circle (1pt);
   };
  \end{tikzpicture}
  \caption{Graphical representation of an object of $  \cP_{\vec{\cP}}^{\mu} (\vec{p})$. The edges along the arc from the leaf labelled $p_i$ to the $4$-valent vertex are labelled by elements of $\cP_i$, and the outgoing leaf by elements of $\cP$.}
  \label{fig:tree-multiflow-category}
\end{figure}

This set inherits a partial order which corresponds in the graphical representation to collapsing a subtree, and defining the label in $\Gamma$ of the quotient vertex to be the sum of the labels of the vertices in the inverse image. Our conventions is that such a collapse map corresponds to an arrow from the tree corresponding to its target to the source. This is compatible with the codimension function being given by the number of internal edges. Since the geometric realisation of the partially ordered set of elements of $\cP_{\vec{\cP}}^{\mu} (\vec p) $ between $\vec{P} < \vec{P}' $ is a product of cubes, we conclude:
\begin{lem}
  The partially ordered set $\cP_{\vec{\cP}}^{\mu} (\vec p) $ is a model for manifolds with generalised corners. \qed
\end{lem}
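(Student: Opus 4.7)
The plan is to verify the two axioms of a model for partitioned manifolds with generalised corners by reducing both to a combinatorial description of intervals in $\cP_{\vec{\cP}}^{\mu}(\vec{p})$ via the tree representation, in direct analogy with the proof that $\cP^{\lambda}(p,p')$ is such a model in the single-arc case.

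First, I would record the following combinatorial lemma. Given any element $\vec{P}$ of $\cP_{\vec{\cP}}^{\mu}(\vec{p})$ with $n$ internal edges, and any subset $S \subseteq E(\vec{P})$ of internal edges, contracting the edges in $S$ (in any order) produces a well-defined element $\vec{P}_S$ of $\cP_{\vec{\cP}}^{\mu}(\vec{p})$ with exactly $n - |S|$ internal edges; here the $\Gamma$-label at each remaining vertex is the sum of the labels at the vertices of $\vec{P}$ it absorbed, and the $\cP_i$-labels of the surviving edges are unchanged. The assignment $S \mapsto \vec{P}_S$ is injective, since the surviving edge-set and $\Gamma$-labels determine $S$. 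The one nontrivial check is that the constraint $\cA(\lambda_j)=0 \Rightarrow p_j < p_{j+1}$ is preserved under collapse: a merged vertex of action zero must come from two adjacent zero-action vertices of $\vec{P}$, each of which already forces a strict inequality between its two neighbouring $\cP_i$-labels, so transitivity gives the inequality for the new neighbours.

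Next, I would identify intervals. If $\vec{P}_0 \leq \vec{P}_1$ with $\vec{P}_0 = (\vec{P}_1)_{S_0}$ for some $S_0 \subseteq E(\vec{P}_1)$, then every $\vec{P}'$ with $\vec{P}_0 \leq \vec{P}' \leq \vec{P}_1$ is of the form $(\vec{P}_1)_{S'}$ for a unique $S' \subseteq S_0$, and this identifies the interval $[\vec{P}_0, \vec{P}_1]$ with the Boolean lattice $2^{S_0}$ ordered by reverse inclusion. Its order complex is a standard triangulation of a cube of dimension $|S_0| = \codim \vec{P}_1 - \codim \vec{P}_0$. Specialising to the downward set $\cP^{\vec{P}}$ (taking $\vec{P}_1 = \vec{P}$ and $S_0 = E(\vec{P})$) yields condition (i), as the dimension matches $\codim \vec{P} = n$. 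For condition (ii), the open interval $|\cP^{\alpha} \setminus \{\vec{P}_0,\vec{P}_1\}|$ is the order complex of $2^{S_0} \setminus \{\emptyset, S_0\}$, i.e.\ the poset of nontrivial subsets of $S_0$; this is the barycentric subdivision of $\partial \Delta^{|S_0|-1}$, hence homeomorphic to $S^{|S_0|-2}$, which is the required dimension $\codim \vec{P}_1 - \codim \vec{P}_0 - 2$.

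The main obstacle is not really geometric but bookkeeping: making the tree-collapse operation precise so that one is correctly working with representatives of elements of $\cP_{\vec{\cP}}^{\mu}(\vec{p})$ (rather than sequences on arcs), verifying that the collapse operation descends to the relevant equivalence, and checking the label-constraint preservation noted above. Once those are in place, both conditions reduce to the standard fact that the order complex of $(2^n,\subseteq)$ is a triangulation of the $n$-cube, and everything else is parallel to the corresponding single-arc argument already used in Section~\ref{sec:flow-categories} for $\cP^{\lambda}(p,p')$.
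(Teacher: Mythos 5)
Your proof is correct and takes essentially the same route as the paper, whose entire argument is the one-line observation preceding the lemma that intervals in $\cP_{\vec{\cP}}^{\mu}(\vec{p})$ realise (products of) cubes; you have simply supplied the combinatorial details, namely the identification of the interval $[\vec{P}_0,\vec{P}_1]$ with the Boolean lattice $2^{S_0}$ of contracted edge-sets and the check that the zero-action constraint is preserved under contraction. One small remark: condition (i) of Definition \ref{def:model_for_corners} asks for an initial element of the whole poset rather than the ball condition of the partitioned variant that you verify, but this initial element is the corolla with no internal edges, which your contraction map already exhibits as the common bottom of every downward set, so nothing is missing.
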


We denote by $  \cP_{\vec{\cP}}^{\Gamma}( \vec{p})$ the object of $\Cat^{\Gamma}$ associated to these collections of partially ordered sets.
\begin{lem}
  The assignment
  \begin{equation}
    \vec{p} \to \cP_{\vec{\cP}}^{\Gamma}( \vec{p}) 
  \end{equation}
  defines a multi-module over the categories $\vec{\cP}$. \qed
\end{lem}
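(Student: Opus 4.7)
The plan is to exhibit the $k+1$ required structure maps explicitly via concatenation in the graphical model of Figure \ref{fig:tree-multiflow-category}, and then to verify that these maps are compatible with the partial orderings (hence are functors), that they commute pairwise, and that they are strictly associative. The graphical picture makes each of these verifications essentially formal.

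Concretely, for a morphism $\alpha$ in $\cP^{\mu_0}(p,p')$, the right action map
\begin{equation}
  \cP_{\vec{\cP}}^{\mu}(\vec{p}) \times \cP^{\mu_0}(p,p') \to \cP_{\vec{\cP}}^{\mu+\mu_0}(\vec{p} \circ_1 (p;p'))
\end{equation}
is defined by concatenating $\alpha$ to the outgoing edge of the tree at the leaf labelled $p$, and adding the labels in $\Gamma$ associated to the endpoints of the concatenation. Similarly, for each $1 \leq i \leq k$ and each morphism $\alpha_i$ in $\cP_i^{\mu_i}(p'_i,p_i)$, the left action
\begin{equation}
  \cP_i^{\mu_i}(p'_i,p_i) \times \cP_{\vec{\cP}}^{\mu}(\vec{p}) \to \cP_{\vec{\cP}}^{\mu+\mu_i}((p'_i;p_i) \circ_i \vec{p})
\end{equation}
is defined by concatenating $\alpha_i$ to the $i$-th incoming edge of the tree. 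In both cases the relevant instance of $\bar{\cP}^{\bullet}$ is either already non-degenerate or collapses to a singleton at $p$ (resp.\ $p_i$), and in the latter case the concatenation is simply the insertion of $\alpha$ (resp.\ $\alpha_i$) with the label $\mu - \sum \mu_j$ (resp.\ $\mu_0$) unchanged; so the output is well-defined.

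The next step is to verify that these concatenations are monotone for the partial order generated by collapsing subtrees: if $\beta \leq \beta'$ in $\bar{\cP}^{\mu_j}(p_j,p'_j)$ for some $j$, then collapsing an internal edge within $\beta$ corresponds, after concatenation, to collapsing the same internal edge of the concatenated tree, which is precisely the partial order on the target; the case where $\alpha$ or $\alpha_i$ itself undergoes collapse is identical. The same reasoning also verifies strict associativity of each action (nested concatenations at the same leaf depend only on the total concatenation). The crucial commutativity of the $k+1$ actions follows from the fact that a morphism in $\cP_i$ acts on the sub-arc emanating from the leaf labelled by $p_i$, while a morphism in $\cP$ acts on the outgoing arc, and morphisms in $\cP_j$ for $j \neq i$ act on a disjoint incoming arc; since concatenations at disjoint edges of a tree commute on the nose, so do the corresponding structure maps, both as maps of partially ordered sets and as assignments of labels in $\Gamma$.

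The only delicate bookkeeping point, which I expect to be the main (if minor) obstacle, is checking that the additivity of labels in $\Gamma$ and the non-negativity conventions on $\Gamma_+$ are preserved under the actions. This amounts to noting that each of the labels $\mu_i$ (for $1 \leq i \leq k$) and the new root label lie in $\Gamma_+$ after concatenation whenever they did before, while the free label $\mu - \sum_j \mu_j \in \Gamma$ (which is not constrained to $\Gamma_+$) absorbs the remaining contribution to the total label $\mu$. Once this is checked, the commuting diagrams required by the definition of a multimodule in Section \ref{sec:mult-poset-bimod} are immediate from the disjointness of edges on which the actions are performed, completing the proof.
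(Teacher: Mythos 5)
Your proof is correct and is precisely the argument the paper leaves implicit behind its \qed: the structure maps are concatenation of labelled arcs at the corresponding leaf of the tree, and functoriality, strict associativity, and the commutation of the $k+1$ actions all reduce to the fact that concatenations performed at disjoint edges commute on the nose. One caution: the phrase ``adding the labels in $\Gamma$ associated to the endpoints of the concatenation'' must not be read as summing the two labels adjacent to the junction --- the action is pure concatenation, which introduces a new internal edge labelled by the junction object and keeps all $\Gamma$-labels separate (the image of a pair of minimal elements is a codimension-one element of the target); summing labels is the collapse operation generating the partial order, and building it into the action would break compatibility with composition in $\cP^{\Gamma_+}$ (itself defined by concatenation without collapse) and would contradict your own observation that nested concatenations depend only on the total concatenation.
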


\begin{figure}[h]
  \centering
  \begin{tikzpicture}
    \draw[thick] (0,0) -- (4,0);
    \filldraw (2,0) circle (3pt);
    \node[label=right:{$\cP_+$}] at (-1,0) {};
    \node[label=right:{$\cP_{-}$}] at (4,0) {};
    \node[label=left:{$\cP_{-,1}$}] at (0,1) {};
   \draw[thick] (0,1) -- (2,0);
     \node[label=left:{$\cP_{-,3}$}] at (1,-1) {};
     \draw[thick] (1,-1) -- (2,0);
     \begin{scope}[shift={(-4,0)}]
         \filldraw (2,0) circle (3pt);
       \draw[thick] (0,1) -- (2,0);
       \draw[thick] (1,-1) -- (2,0);
         \draw[thick] (2,0) -- (3,0);
       \node[label=left:{$\cP_{+,1}$}] at (0,1) {};
     \node[label=left:{$\cP_{+,2}$}] at (1,-1) {};
   \end{scope}
     \begin{scope}[shift={(-2,-4)}]
         \draw[thick] (2,0) -- (4,0);
    \filldraw (2,0) circle (3pt);
    \node[label=right:{$\cP_{-}$}] at (4,0) {};
    \node[label=left:{$\cP_{-,1}$}] at (0,1) {};
    \draw[thick] (0,1) -- (2,0);
     \node[label=left:{$\cP_{-,3}$}] at (1,-1) {};
     \draw[thick] (1,-1) -- (2,0);
      \node[label=left:{$\cP_{+,2}$}] at (0,-.33) {};
      \draw[thick] (0,-.33) -- (2,0);
         \node[label=left:{$\cP_{+,1}$}] at (-1,.33) {};
     \draw[thick] (-1,.33) -- (2,0);
   \end{scope}
    \end{tikzpicture}
  \caption{Graphical representation of objects of $\cP_{\vec{\cP}_{+}}^{\Gamma} \circ_{2}   \cP_{\vec{\cP}_{-}}^{\Gamma}$, and those of  $\cP_{\vec{\cP}_{+} \circ_{2}  \vec{\cP}_{-}}^{\Gamma}$.}
  \label{fig:composition_tree-multiflow-category}
\end{figure}

Next, we observe that, given a pair of sequences
\begin{align}
  \vec{\cP}_{-} & = (\cP_{-,1}, \ldots, \cP_{-,k}; \cP_{-}) \\
  \vec{\cP}_{+} & = ( \cP_{+,1}, \ldots, \cP_{+,\ell}; \cP_{+}).
\end{align}
and an integer $1 \leq i \leq k$ each $i$ such that $\cP_{-,i} = \cP_{+}$, we have a natural map of multimodules
\begin{equation} \label{eq:multi-composition-category-of-trees}
   \cP_{\vec{\cP}_{+}}^{\Gamma} \circ_i   \cP_{\vec{\cP}_{-}}^{\Gamma} \to \cP_{\vec{\cP}_{+} \circ_{i}  \vec{\cP}_{-}}^{\Gamma}
\end{equation}
which corresponds to collapsing the arc consisting of edges labelled by elements of $\cP_{+}$ (see Figure \ref{fig:composition_tree-multiflow-category}). This map is associative in the sense that it defines a multicategory (enriched in $\Cat^\Gamma$) with objects partially ordered sets, multimorphisms given by the multimodules $ \cP_{ \vec{\cP}}^{\Gamma}$, and multi-compositions given by Equation \eqref{eq:multi-composition-category-of-trees}. These data satisfy the axioms given in Definition \ref{def:multicategory_enriched_in_symmetric_monoidal}, the most important of which is that the functors associated to grafting trees are uniquely defined.

\begin{rem}
  The reader who is looking ahead to the construction of $A_\infty$ structures in Lagrangian Floer theory may be surprised at our construction of $ \cP_{ \vec{\cP}}^{\Gamma} $, and may have expected to find a (larger) category, whose objects are given by labelled trees with vertices of arbitrary valency greater than $1$. If our goal was specifically to construct $A_\infty$ structures, that would likely have been the procedure we followed, with the idea of constructing charts for the moduli spaces of pseudo-holomorphic discs of arbitrary modulus. Instead, the approach that we intend to take produces an $A_\infty$ structure by considering stable moduli spaces of maps parametrised by a cube mapping to the moduli space of abstract discs. This essentially exhibits Lagrangian Floer complexes as modules over the operad of chains on the Stasheff operad. The main advantage of this approach is that it is better adapted to producing more general operadic structures, as in the joint work \cite{AbouzaidGromanVarolgunes2021}.
\end{rem}

\subsection{Multimodules of flow categories}
\label{sec:mult-flow-categ-1}

Given a collection $\vec{\bX} = (\bX_1, \ldots, \bX_k; \bX) $ of flow categories, we shall explain the construction of a cubical set of multimorphisms with domain $(\bX_1, \ldots, \bX_k) $ and target $\bX$. For the next definition, we introduce the notation
\begin{equation}
  \ro_{\vec{p}} \equiv \ro_{p_1} \otimes \cdots \otimes \ro_{p_k} \otimes \ro^{-1}_{p}
\end{equation}
for the orientation line associated to a sequence consisting of objects of these categories, and we write
\begin{equation}
    \vec{\cP}^{\Gamma_+}_{\bX} \equiv \left(\cP_{\bX_1}^{\Gamma_+}, \cdots, \cP^{\Gamma_+}_{\bX_k}; \cP^{\Gamma_+}_{\bX}\right),
  \end{equation}
  for the objects of $\Cat^{\Gamma}$ associated to the sequence $\vec{\bX}$ of flow categories.
\begin{defin} \label{def:Kuranishi-n-multimodule}
  A \emph{Kuranishi $n$ multimodule} over $\vec{\bX}$ consists of the following data
   \begin{enumerate}
   \item For each permutation $\phi$ of $\{1, \ldots, n\}$, a multimodule $\bM_{\phi}$ enriched in $\cD \Kur^{\Gamma}$  over $ \vec{\bX} $,  with underlying morphism of $\Cat^{\Gamma}$-enriched multimodules with respect to the sequence  $\cP_{\bX_i}^{\Gamma_+}$
     \begin{equation}
\pi_\phi \co              \cQ_{\bM,\phi} \to \cP_{\vec{\bX} }^{\Gamma}  \times \Face  \square^{n}.
     \end{equation}
   \item For each composition $\phi_1 \circ \phi_2$ of permutations, an isomorphism
     \begin{equation}\label{eq:choice_isomorphism_cube-Kur}
        {\bM}_{\phi_1  \circ \phi_2} \cong \phi_2^* {\bM}_{\phi_1}      
     \end{equation}
     where the right hand side is given at the level of stratifying categories by the pullback in the diagram
     \begin{equation}
       \begin{tikzcd}
     \phi_2^* \cQ_{{\bM},\phi_1} \ar[r] \ar[d] &   \cQ_{{\bM},\phi_1} \ar[d] \\
       \cP_{\vec{\bX}}^{\Gamma}  \times \Face  \square^{n} \ar[r, "\id \times \phi_2"]  & \cP_{\vec{\bX}}^{\Gamma}  \times \Face  \square^{n}
       \end{tikzcd}
     \end{equation}
     and at the level of diagrams of presentations by the composite functor.
   \item For each triple composition $\phi_3 \circ \phi_2 \circ \phi_1$, we require that the isomorphisms of pullbacks fit in a commutative diagram
     \begin{equation}
       \begin{tikzcd}
         {\bM}_{\phi_1  \circ \phi_2 \circ \phi_3} \ar[r] \ar[d] & \phi_3^{*}  {\bM}_{\phi_1 \circ \phi_2} \ar[d] \\
     \left(\phi_2 \circ    \phi_3 \right)^* {\bM}_{\phi_1}  \ar[r]   & \phi_3^{*} \left( \phi_2^* {\bM}_{\phi_1} \right).
       \end{tikzcd}
     \end{equation}
\end{enumerate}
We assume that the Kuranishi diagram $\bM^{\mu}(\vec{p})$ is oriented relative $\ro_{\vec{p}} \otimes \ro_{[0,1]^n}$, and the maps of orientations lines associated to the three cases in Lemma \ref{lem:codim_1-elt-bimodules}:
\begin{enumerate}
\item are compatible with the identification of the normal directions of the cube in the case of boundary strata of $\square^n$,
\item are induced by the product of the identity on $\ro_{[0,1]^n} $ and on $\ro_{p_j}$ for $j\neq i$ with the isomorphism
  \begin{equation}
    \ro_{p_i} \otimes \ro^{-1}_{p'_i} \otimes \ro^{-1}_{\bR} \otimes  \ro_{p'_i}  \cong    \ro_{p_i}  \otimes \ro^{-1}_{\bR^{p'_i }}
  \end{equation}
  where we use the standard orientation on $\bR^{p'_i}$, in the case of the image of the map
  \begin{equation}
  \bX_{i}(p_i, p'_i) \times \bM( \vec{p}) \to \bM((p_i, p'_i) \circ_i \vec{p}).
  \end{equation}
\item  are  induced by the product of the identity on $\ro_{[0,1]^n} $ and on $\ro_{p_i}$ with the isomorphism
  \begin{equation}
    \ro^{-1}_p \otimes \ro_{p'} \otimes \ro^{-1}_{\bR} \otimes  \ro^{-1}_{p'} \cong  \ro^{-1}_{\bR^{p'}} \otimes \ro^{-1}_{p'}
  \end{equation}
  where we use the opposite of the standard orientation on $\bR^{p'}$, in the case of the image of the map
  \begin{equation}
  \bM(\vec{p}) \times \bX(p, p') \to \bM(\vec{p} \circ_{i} (p, p')) .
  \end{equation}
 
  \end{enumerate}
\end{defin}

We write $\flow_{n}(\vec{\bX})$ for the set of Kuranishi flow $n$-modules associated to $\vec{\bX}$. In the same way as in Section \ref{sec:comp-morph}, these form the objects of a category which we denote by the same symbol, with morphisms given by maps of $\cD \Kur^{\Gamma}$-enriched multimodules for each permutation of $\{1, \ldots, n\}$, whose underlying maps of $\Cat^{\Gamma}$ enriched bimodules are given by the identity on $  \cP_{\vec{\bX} }^{\Gamma}  \times \Face  \square^{n}$, and which commute with the isomorphisms associated to compositions of permutations.

These categories are themselves multimorphisms in a multicategory enriched in symmetric semi-cubical categories: first, we associate to each face map $\square^k \to \square^n$ a functor
\begin{equation}
  \flow_{n}(\vec{\bX}) \to  \flow_{k}(\vec{\bX})
\end{equation}
defined at the level of parametrising categories by taking the inverse image in $\cQ_{\bM}$ of the corresponding subcategory of $\Face \square^{n}$, and at the level of Kuranishi diagrams by restriction $\bM$ to this subcategory. The fact that we take the inverse image (rather than the fibre product), ensures that the diagrams associated to compositions are strictly commutative (rather than up to a natural isomorphism).

Next, we note that each permutation $f$ of the inputs induces a natural isomorphism
\begin{equation}
    \flow_{n}(\vec{\bX}) \cong \flow_{n}( f_* \vec{\bX})
\end{equation}
given by defining
\begin{equation}
 \cQ^{\lambda}_{f \bM_\phi}  \equiv  \cQ^{\lambda}_{\bM_\phi },
\end{equation}
and setting $f \bM_{\phi} $  to be the composite of $\bM_{\phi}$ with this isomorphism. These morphisms are readily seen to be compatible with the symmetric semi-cubical structure maps. 

Finally, given sequences $\vec{\bX}_+$ and $\vec{\bX}_-$, with corresponding $n_{\pm}$-multimodules $\bM_{\pm}$, we define a composition $n_+ + n_-$ multimodule $\bM_{+} \circ_i \bM_-$, whenever $\bX_{+} = \bX_{-,i} $. At the level of strata, we associate to a permutation $\phi$ of $\{1, \ldots, n_+ + n_-\}$ the $\Cat^{\Gamma}$-enriched multimodule
  \begin{equation}
      \cQ_{\left(\bM_{+} \circ_i \bM_{-}\right)_\phi} \equiv  \cQ_{\bM_{+},\phi_+} \circ_i  \cQ_{\bM_{-},\phi_-},
  \end{equation}
  where $\phi_{\pm}$ are the permutations of $\{1, \ldots, n_\pm\}$ associated to $\phi$, and the left hand side is the multimodule over $ \vec{\cP}^{\Gamma_+}_{\bX} $  which results from Definition \ref{def:composition_poset_multimodules}.  We then define \begin{equation}
     \left(\bM_{+} \circ_i \bM_-\right)_{\phi} \co    \cQ_{\left(\bM_{+} \circ_i \bM_{-}\right)_\phi} \to \Kur  
  \end{equation}
  to be the composite of this isomorphism with the functor the assigns to a pair $(\vec{p}_+, \vec{p}_-)$ of composable sequences, and a pair of objects
  \begin{equation}
   (P_{+},P_{-}) \in \cQ_{\bM_{+}, \phi_+}^{\lambda}(\vec{p}_+) \times \cQ_{\bM_-}^{\rho}(\vec{p}_-)
  \end{equation}
  the product Kuranishi presentation
  \begin{equation}
  \partial^{P_{+}}  \bM_{+}^{\lambda}(\vec{p}_+) \times  \partial^{P_{-}} \bM_{-}^{\rho}(\vec{p}_-).
\end{equation}
As in Section \ref{sec:comp-morph}, this multi-composition is associative up to a natural isomorphism, and the compositions of the natural isomorphisms associated to different factorisation strictly agree, in the sense that we have a symmetric semi-cubically enriched functor
\begin{equation}
 \underset{T}{\bigcirc}  \co \prod_{v \in V(T) }  \widetilde{\flow}(\vec{\bX}_v) \to     \widetilde{\flow}(\vec{\bX})
\end{equation}
associated to every directed tree $T$ with edges labelled by flow categories, with $\vec{\bX}_v $ the sequence of flow categories given by the edges adjacent to each vertex, and $\vec{\bX} $ the sequence associated to the outgoing leaves. The functor $ \underset{T}{\bigcirc}  $ is defined up to canonical isomorphism, and a representative of its isomorphism class  is determined by a total ordering of the vertices which refines the partial order associated to the tree and a compatible parenthesisation of the resulting sequence.
   \begin{lem} \label{lem:multi-category-flow}
     The multicomposition map $\bM \circ_i \bN$, and the natural isomorphisms
     \begin{align}
       \left( \bM \circ_i \bN\right) \circ_j \bO  & \cong \bM \circ_{i+j-1} \left( \bN \circ_j \bO \right)   \\
        \bM_{1} \circ_{i_1 + m_2-1} \left( \bM_2 \circ_{i_2} \bN \right)  & \cong \bM_{2} \circ_{i_2} \left( \bM_1 \circ_{i_1} \bN \right) 
     \end{align}
     equip $\flow$ with the structure of a multicategory enriched in symmetric semi-cubical categories. \qed
   \end{lem}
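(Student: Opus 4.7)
The plan is to verify the multicategory axioms by lifting the entire argument to the underlying level of $\Cat^\Gamma$-enriched multimodules, where the analogous statement has already been established via the multicategory of categories of trees in Section \ref{sec:multimodules-trees}, and then check that each of the structure maps refines to a functor between Kuranishi diagrams that is unique up to a canonical isomorphism of the kind described just before the lemma.

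First, I would check unitality and the compatibility of the multi-composition with the action of symmetries and with face maps. For each flow category $\bX$, the identity multimorphism is the $0$-dimensional multimodule whose underlying $\Cat^\Gamma$-enriched multimodule is $\cP^{\Gamma}_{\bX;\bX}$, equipped with the trivial Kuranishi diagram given by the singleton chart assigned to the minimal element $(p,0,p)$. That this is strictly a unit for $\circ_i$ follows because the composition multimodule constructed in Definition \ref{def:composition_poset_multimodules}, when one of the factors is the identity, degenerates to the other factor by the unitality property of the concatenation of arcs in $\cP^\Gamma_{\vec\cP}$. Compatibility with symmetries and with semi-cubical face maps reduces to checking that the formation of $\cQ_{(\bM_+\circ_i\bM_-)_\phi}$ commutes with pullback along inclusions of faces and along the isomorphisms $\phi_2^*\bM_{\phi_1}\cong \bM_{\phi_1\circ\phi_2}$, both of which are tautologies since these operations are performed stratum-by-stratum.

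Next, for the two associativity isomorphisms, I would separately construct them on the underlying $\Cat^\Gamma$-enriched multimodules using the fact, established in Section \ref{sec:multimodules-trees}, that the categories $\cP^\Gamma_{\vec{\cP}}$ of trees assemble into a strict multicategory after the grafting map of Equation \eqref{eq:multi-composition-category-of-trees}. This produces canonical isomorphisms of stratifying categories $\cQ_{(\bM\circ_i\bN)\circ_j\bO}\cong \cQ_{\bM\circ_{i+j-1}(\bN\circ_j\bO)}$ and similarly for parallel composition; the value of the Kuranishi diagrams on either side at a sequence of objects is a triple product of Kuranishi presentations (respectively, pairs of separated products), and the associativity (resp.\ commutativity up to reordering of factors) of the symmetric monoidal structure on $\Kur$ of Lemma \ref{lem:dKur_has_tensor_and_coproduct} supplies the required natural isomorphism at the level of diagrams. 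To get an actual map of Kuranishi multimodules, rather than merely an isomorphism class, one must fix a total ordering refining the partial order of the underlying grafted tree and a parenthesisation; this is precisely the datum of the isomorphism labelled $\bM_{\phi_1\circ\phi_2}\cong \phi_2^*\bM_{\phi_1}$ in Equation \eqref{eq:choice_isomorphism_cube-Kur} that has already been included in the definition of a flow multimodule.

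For the coherence axioms (Mac Lane's pentagon for serial composition and the hexagon relating parallel and serial composition), I would appeal to the informal remark preceding the lemma: the functors $\underset{T}{\bigcirc}$ are determined up to canonical isomorphism by the labelled tree $T$ alone, and the choice of ordering and parenthesisation is auxiliary. Concretely, each side of the coherence diagrams for $\flow$ computes the composition along the same labelled tree but via a different sequence of binary graftings; the diagram therefore commutes because the two assignments agree with the canonical functor $\underset{T}{\bigcirc}$, and the symmetric group action permuting the orderings and parenthesisations is internal to the category of Kuranishi diagrams thanks to the third clause of Definition \ref{def:Kuranishi-n-multimodule}.

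The main obstacle is the bookkeeping involved in matching the ordering and parenthesisation data on both sides of the coherence diagrams; abstractly this is forced by the uniqueness of $\underset{T}{\bigcirc}$ up to canonical isomorphism, but writing it out concretely requires tracking the permutations $\phi_\pm$ of $\{1,\dots,n_\pm\}$ obtained from a permutation $\phi$ of $\{1,\dots,n_+ + n_-\}$ through each step and verifying that the resulting isomorphisms of pullbacks are compatible with each other. This is the Kuranishi-diagram analogue of the verification carried out for bimodules at the end of Section \ref{sec:comp-morph}, and no essentially new ingredient is needed beyond careful notation.
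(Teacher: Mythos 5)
Your argument is, in substance, the one the paper intends: the lemma is stated without proof because it is the accumulation of the constructions immediately preceding it, namely that associativity and the parallel-composition isomorphism are inherited from the strict multicategory of tree posets of Section \ref{sec:multimodules-trees} together with the associator and symmetry of the monoidal structure on $\Kur$, and that coherence is forced by the fact that $\underset{T}{\bigcirc}$ is determined up to canonical isomorphism by the labelled tree alone, with the ordering and parenthesisation being auxiliary choices. Your treatment of the coherence diagrams and of compatibility with face maps and symmetries is the right one.

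Two corrections. First, the unitality paragraph should be deleted: Definition \ref{def:multicategory_enriched_in_symmetric_monoidal} explicitly does not require unit morphisms (and the flow categories themselves are non-unital), and the claim of \emph{strict} unitality is in any case false. Composing with your proposed identity multimodule replaces each Kuranishi presentation $\bX$ by a product $\bX \times \pt$, and these are not equal in $\Kur$ --- the automorphism groups, stratifying categories, and indexing categories are all genuine products --- which is precisely the rigidity problem the paper emphasises throughout (e.g.\ in its discussion of why $\Flow$ must record factorisations). Second, a minor conflation: the total ordering and parenthesisation needed to pin down a representative of $\underset{T}{\bigcirc}$ is not the same datum as the isomorphism $\bM_{\phi_1\circ\phi_2}\cong\phi_2^*\bM_{\phi_1}$ of Equation \eqref{eq:choice_isomorphism_cube-Kur}; the latter concerns permutations of the coordinates of the parametrising cube, while the former concerns the order in which the coequalisers of Definition \ref{def:composition_poset_multimodules} are formed. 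The ordering of vertices is an auxiliary choice external to the data of a flow multimodule, and different choices are related by the canonical isomorphisms discussed just before the lemma.
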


\subsection{Flow multimodules with factorisations, and the Floer multifunctor}
\label{sec:flow-mult-with}

 Continuing in analogy with the constructions of Section \ref{sec:flow-categ-chain}, we now extract from the multicategory $\flow$, which is enriched in categories, a multicategory enriched in sets:
  \begin{defin}\label{def:Flow_multicategory}
    The set $\Flow_{n}(\vec{\bX})$ associated to a sequence $\vec{\bX}$ of flow categories consists of the following data:
     \begin{enumerate}
     \item For each face $\sigma$ of the $n$-cube, a directed tree $T_{\sigma}$ with edges labelled by flow categories and vertices by flow multimodules as above, an identification of the labels of the leaves of $T$ with the sequence $\vec{\bX}$, and a disjoint collections of subsets $D_v$ of $\{1, \ldots, n\}$, indexed by the vertices of $T_{\sigma}$, so that the number of elements of $D_v$ agrees with the dimension of $\bM_v$.
     \item For each pair $\sigma < \tau$ in $\Face\square^n$, a map $\pi_{\sigma}^{\tau}$ of trees $T_{\tau} \to T_{\sigma}$, which preserves the labels of all edges which are not collapsed,  and so that $D_u$ maps to $D_{\pi_{\sigma}^{\tau}(u)}$ for each vertex $u$ of $T_{\tau}$.
       \item For each vertex $v$ of $T_{\sigma}$ a map
       \begin{equation} \label{eq:map_decomposed_strata_multimodule}
[0,1]^{e_{\sigma}^{\tau}(v)} \times \underset{u \to T_{\tau}^v}{\bigcirc} \bM_{u} \to \bM_v,            
\end{equation}
where $T_{\tau}^{v} $ denotes the inverse image of this tree in $T_{\tau}$, which is an isomorphism onto the boundary strata of $\bM_v$ associated to $\tau$ (the integer $e_{\sigma}^{\tau}(v)$ is determined by the difference in dimensions).
   \end{enumerate}
        We require these data to be compatible in the sense that, for each triple $\sigma < \tau < \rho$, the diagrams associated to the maps in Equation~\eqref{eq:map_decomposed_strata_multimodule} commute.
      \end{defin}

      The cubical structure maps are defined in the same way as for the case in which there is only one input (c.f. Section \ref{sec:strict-categ-flow}): face maps are given by restriction to the associated face of the $n$-cube, permutations act on the choices of subsets $D_v$ of $\{1, \ldots, n\}$ and on the multimodules $\bM_v$, and degeneracies are defined formally by associating to a cube the data labelled by its image. We also have a natural isomorphism
      \begin{equation}
            \Flow_{n}(\vec{\bX}) \cong  \Flow_{n}(f^*\vec{\bX}),  
      \end{equation}
      whenever $f$ is a permutation of the input flow categories, given by composing $f$ with the labelling of the leaves.

      Whenever $\vec{\bX}_+$ and $\vec{\bX}_-$ are sequences of flow categories with the property that $\bX_{+} = \bX_{-,i} $, we now define a composition map
      \begin{equation} \label{eq:multi-composition-Flow}
             \Flow_{n}(\vec{\bX}_+) \times     \Flow_{m}(\vec{\bX}_-) \to \Flow_{n+m}( \vec{\bX}_+ \circ_i \vec{\bX}_-)
      \end{equation}
      by concatenating labelled trees: every stratum of $\square^{n+m}$ may be uniquely written as the product $\sigma_+ \times \sigma_-$ of strata of $\square^n$ and $\square^m$, and the tree $T_{\sigma}$ is defined to be the union of the trees $T_{\sigma_+}$ and $T_{\sigma_-}$ along the root of $T_{\sigma_+}$, and the leaf of $T_{\sigma_-}$ labelled by $ \bX_{-,i}$. The edges and vertices of $T_{\sigma}$ then carry the labels and operations induced from those of the edges and vertices of $T_{\sigma_{\pm}}$.

      \begin{lem}
        \label{lem:Flow_multicat-well-defined}
        The multicomposition in Equation \eqref{eq:multi-composition-Flow} equips $\Flow$ with the structure of a multicategory enriched in cubical sets. \qed
      \end{lem}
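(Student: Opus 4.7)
The plan is to verify the multicategory axioms for $\Flow$ directly from the tree-grafting description of composition in Definition \ref{def:Flow_multicategory}, treating the result as a rectification of Lemma \ref{lem:multi-category-flow} analogous to the rectification from $\flow$ to $\Flow$ carried out in the unary case in Section \ref{sec:strict-categ-flow}. The key observation is that by making the combinatorial data of the trees $T_\sigma$ and disjoint subsets $D_v \subset \{1,\dots,n\}$ part of the definition of a morphism, the natural isomorphisms of Lemma \ref{lem:multi-category-flow} — which witnessed associativity and the exchange law only up to coherent isomorphism — become strict equalities. Concretely, each labelled tree together with its partition of cube coordinates canonically specifies a representative of the composition functor $\underset{T}{\bigcirc}$ of Section \ref{sec:mult-flow-categ-1}, so composing in $\Flow$ replaces an isomorphism class of multimodules by an actual multimodule.

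First, I would verify strict associativity of $\circ_i$. Given three composable flow multimodules with tree data $(T^1_\sigma), (T^2_\sigma), (T^3_\sigma)$, both iterated compositions $(\bM_1 \circ_i \bM_2) \circ_j \bM_3$ and $\bM_1 \circ_{i+j-1} (\bM_2 \circ_j \bM_3)$ are defined facewise by grafting the trees at the specified leaves. Since grafting of edge-labelled trees is a strictly associative operation, and since a face $\sigma$ of $\square^{n_1+n_2+n_3}$ decomposes uniquely as $\sigma_1\times\sigma_2\times\sigma_3$, the two sides produce identical labelled trees with identical partitions $\{D_v\}$ on every face; the induced maps \eqref{eq:map_decomposed_strata_multimodule} agree because each vertex of the resulting tree lives in a unique factor. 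The exchange law $\bM_1 \circ_{i_1+m_2-1} (\bM_2 \circ_{i_2} \bN) = \bM_2 \circ_{i_2} (\bM_1 \circ_{i_1} \bN)$ is verified identically, using that disjoint graftings on a common base tree commute strictly.

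Next, I would check that the symmetric group actions on inputs are respected. The action of a permutation $f$ of the inputs on $\Flow_n(\vec{\bX})$ is given by relabelling the leaves of each $T_\sigma$; this clearly intertwines with grafting at position $i$ up to the usual reindexing, yielding the axioms of a symmetric multicategory.

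Finally, I would verify that composition is a map of symmetric cubical sets, which is what it means for $\Flow$ to be enriched in cubical sets. Face maps on $\Flow_n(\vec{\bX})$ restrict $\sigma$ to a subface; since faces of $\square^{n+m}$ are products of faces of $\square^n$ and $\square^m$, and since the tree data is defined facewise, composition commutes strictly with face maps. Permutations of cube coordinates act on the subsets $D_v$, and the composition in \eqref{eq:multi-composition-Flow} sends the disjoint union of subsets of $\{1,\ldots,n\}$ and $\{1,\ldots,m\}$ to subsets of $\{1,\ldots,n+m\}$ in an equivariant way. Degeneracies, defined by pulling back along projections $\square^m \to \square^n$, likewise commute with grafting. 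The main point of care will be the bookkeeping of how partitions $\{D_v\}$ interact with permutations that reorder coordinates across the divide between the two cube factors, but the disjointness of the $D_v$ together with the uniqueness of the product decomposition of faces of $\square^{n+m}$ makes this an essentially formal check.
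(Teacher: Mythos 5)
Your proposal is correct and fills in exactly the verification the paper leaves implicit (the lemma is stated with an omitted proof): strict associativity and the exchange law follow from strict associativity of tree grafting together with the unique product decomposition of faces of $\square^{n+m}$, and compatibility with the cubical structure maps is the routine bookkeeping you describe. This matches the paper's intended argument, which is the same rectification-by-recording-factorisations strategy already used for the unary category $\Flow$ in Section \ref{sec:strict-categ-flow}.
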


We complete this section by showing that Proposition \ref{prop:floer-functor} is compatible with multiplicative structures:
\begin{prop}
   A theory of virtual counts determines a cubically enriched multifunctor
  \begin{equation}
  CF^* \co  \Flow \to \Ch_{\Lambda}
  \end{equation}
  and a lift to $\Ch_{\Lambda_0}$ of the restriction to $\Flow^+$.
\end{prop}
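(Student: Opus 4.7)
The plan is to mimic the construction of Proposition~\ref{prop:floer-functor}, replacing the binary composition structure by the multicategorical one from Lemma~\ref{lem:Flow_multicat-well-defined}, and to check that the naturality axioms for a theory of virtual counts imply the multifunctoriality axioms of Appendix~\ref{sec:algebr-constr-symm}. On objects, the multifunctor will again assign the chain complex $CF^*(\bX)$ constructed in Section~\ref{sec:chain-compl-assoc}. On multimorphisms, we need to produce, for each sequence $\vec{\bX} = (\bX_1, \ldots, \bX_k; \bX)$ and each element $\bM \in \Flow_n(\vec{\bX})$, a map
\begin{equation}
  \bfm^{\bM} \co C_*([0,1])^{\otimes n} \otimes CF^*(\bX_1) \otimes \cdots \otimes CF^*(\bX_k) \to CF^*(\bX),
\end{equation}
given as a sum of matrix coefficients labelled by sequences $\vec{p} = (p_1, \ldots, p_k; p)$ and elements $\mu \in \Gamma$, each weighted by $T^{\mu}$.

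First I would handle Kuranishi $n$-multimodules (i.e.\ elements of $\flow_n(\vec{\bX})$) directly. Given $\bM$ and a sequence $\vec{p}$, the diagram $\bM^{\mu}(\vec{p}) \co \cQ^{\mu}_{\bM}(\vec{p}) \to \Kur$ is oriented relative $\ro_{\vec{p}} \otimes \ro_{[0,1]^n}$. Invoking the appropriate multimodule version of Lemma~\ref{lem:lift_flow_cat-to-factorised} (which I would state and prove as an extension of the main result of Appendix~\ref{sec:lifting-flow-modules}), each codimension-$0$ object of $\cQ^{\mu}_{\bM}(\vec{p})$ carries a canonical lift to a factorised Kuranishi presentation in $\widetilde{\Kur}$; applying the theory of virtual counts $\cV$ and using Lemma~\ref{lem:virtual_counts_diagrams} then yields a map $\ro_{[0,1]^n} \otimes \ro_{\vec{p}} \to \Bbbk$, which rearranges to the coefficient of $\bfm^{\bM}$ in top cube-degree. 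The values on lower-degree generators of $C_*([0,1])^{\otimes n}$ are forced by the restrictions to faces. That $\bfm^{\bM}$ is a chain map is then read off from the codimension-$1$ analysis of $\cQ^{\mu}_{\bM}(\vec{p})$ generalising Lemma~\ref{lem:codim_1-elt-bimodules}: the three alternatives (cube face, $\bX_i$-action on input $i$, $\bX$-action on output) correspond respectively to the differential on $C_*([0,1])^{\otimes n}$ and to the $CF^*(\bX_i)$- and $CF^*(\bX)$-module structures, with signs matching the orientation conventions in Definition~\ref{def:Kuranishi-n-multimodule}.

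Next I would promote this to the factorised multicategory $\Flow$. For each element of $\Flow_n(\vec{\bX})$ consisting of a tree $T_\sigma$ together with multimodules $\bM_v$ at its vertices, the associated map is obtained by composing the maps $\bfm^{\bM_v}$ in the order dictated by $T_\sigma$; this is well-defined because $\Ch_\Lambda$ is a genuine (symmetric monoidal) category so the composition is strictly associative and equivariant. The symmetric cubical structure compatibility follows as in the proof of Proposition~\ref{prop:floer-functor}, using that degeneracies of cubes correspond to taking products of Kuranishi presentations with an interval and that $\cV$ is monoidal. The multicomposition compatibility reduces to checking that for $\bM_+ \in \Flow_n(\vec{\bX}_+)$ and $\bM_- \in \Flow_m(\vec{\bX}_-)$ with $\bX_+ = \bX_{-,i}$, the diagram
\begin{equation}
  \begin{tikzcd}[column sep=small]
    C_*([0,1])^{\otimes n+m} \otimes \bigotimes CF^*(\vec{\bX}_+ \circ_i \vec{\bX}_-) \ar[r] \ar[d] & CF^*(\bX_{-,0}) \\
    C_*([0,1])^{\otimes m} \otimes CF^*(\bX_{-,1}) \otimes \cdots \otimes CF^*(\bX_{-,k})  \ar[ur] &
  \end{tikzcd}
\end{equation}
commutes, where the vertical arrow applies $\bfm^{\bM_+}$ on the $i$-th tensor factor; this is the monoidality of $\cV$, applied to the product decomposition of the Kuranishi diagrams parametrising the codimension-$0$ strata of $\bM_+ \circ_i \bM_-$, exactly as in the final paragraph of the proof of Proposition~\ref{prop:floer-functor}. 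Equivariance under permutations of inputs is likewise immediate from the symmetry axiom on $\cV$.

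The main obstacle will be establishing the multimodule version of the lifting result of Appendix~\ref{sec:lifting-flow-modules}: the argument for a single input should go through essentially verbatim once one replaces the bimodule stratification by the tree-based multimodule stratification of Section~\ref{sec:multimodules-trees}, but care must be taken that the lifts chosen at different vertices of a tree agree along the internal edges, so that the factorisations assigned by $\widetilde{\Kur}$ are compatible with the multicomposition functors $\underset{T}{\bigcirc}$. Granting this, and the obvious restriction of the construction to the subcategory $\Flow^+$ of monotone multimodules (on which all action values lie in $\Gamma_+$ so that the map factors through $\Ch_{\Lambda_0}$), the proposition follows.
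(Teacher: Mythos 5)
Your proposal matches the paper's proof: the paper's own argument is a short sketch that defers precisely to Proposition \ref{prop:lift_flow_to_tflow} of Appendix \ref{sec:lifting-flow-modules} for the factorised lift, defines the structure maps from the orientation lines of the top strata exactly as you do, and carries over the remaining verifications (chain map property, cubical compatibility, multicomposition compatibility via monoidality of $\cV$) from Proposition \ref{prop:floer-functor}. The only point worth noting is that the ``multimodule version of the lifting result'' you flag as the main obstacle is already the content of Appendix \ref{sec:lifting-flow-modules} as written — it is formulated for sequences $\vec{\bX}$ and the full multicategory $\Flow$, including the compatibility with the multicomposition functors $\underset{T}{\bigcirc}$ along internal edges that you correctly identify as the delicate step.
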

\begin{proof}[Sketch of proof]
  The key remaining ingredient are the results of Appendix \ref{sec:lifting-flow-modules}, in which we construct a multicategory $\widetilde{\Flow}$, in analogy with $\Flow$, but consisting of diagrams valued in $\widetilde{\Kur}$, and where we establish the existence of a cubically enriched multifunctor $\Flow \to \widetilde{\Flow}$ (c.f. Proposition \ref{prop:lift_flow_to_tflow}). Using this lift, we assign a map of degree $-n$
    \begin{equation}  \label{eq:structure_map_multimodule}
 CF^*(\bX_1) \otimes  CF^*(\bX_2) \otimes  \cdots \otimes  CF^*(\bX_k)  \to CF^*(\bX).
  \end{equation}
  for each $\bM \in \Flow_n(\vec{\bX})$, as follows: for a sequence $\vec{p} = (p_1, \ldots, p_k; p)$ of objects of these categories, and an element $\lambda$ of $\Gamma$, the sum of the maps from the orientation lines of the top strata of $\bM^{\lambda}(\vec{p})$ defines a map
  \begin{equation}
\ro_{[0,1]^n} \otimes \ro_{p_1} \otimes \cdots \otimes \ro_{p_k} \otimes \ro_p,
  \end{equation}
because we have assumed $\bM$ to be oriented relative the corresponding graded line. Equation \eqref{eq:structure_map_multimodule} is then obtained by trivialising the $\ro_{[0,1]^n} $ factor, and taking the sum of these contributions, weighted by $T^{\lambda}$. 
\end{proof}

\part{Application to Hamiltonian Floer theory}
\label{part:appl-hamilt-floer}

Our goal in this part is to apply the formalism developed in the previous parts to Hamiltonian Floer theory. The treatment we provide follows the approach taken in \cite{AbouzaidGromanVarolgunes2021}, and is designed to be directly applicable for the study of local Floer homology pursued therein.
\begin{rem}
We expect that the recent work of Bai-Xu \cite{BaiXu2022} and Rezchikov \cite{Rezchikov2022} will soon be adapted to reprove the results of this part, using geometrically constructed global charts.
\end{rem}
\section{The multicategory of Hamiltonian Floer data}
\label{sec:mult-hamilt}

We briefly recall, from \cite{AbouzaidGromanVarolgunes2021}, the construction of the Hamiltonian Floer multicategory $\cF$ associated to a closed symplectic manifold $M$. Its objects are pairs $(J,H)$, consisting of a tame almost complex structure and a non-degenerate Hamiltonian
\begin{equation}
  H \co S^1 \times M \to \bR.  
\end{equation}
The multimorphisms are symmetric cubical sets of Hamiltonian Floer data, i.e. for each sequence of such pairs $\overrightarrow{(J,H)}$, and each natural number $n$, we have a set $\cF_n(\overrightarrow{(J,H)})$, whose elements are given by the following data:
\begin{itemize}
\item (Labelled trees) A functor $T$ from $\Face \square^n$ to the category of directed trees whose edges are labelled by objects of $\cF$, with an identification of the labels of the leaves with $\overrightarrow{(J,H)}$ (the morphism are given by collapsing subtrees).
\item (Stable Riemann surfaces) A smooth family $\{\Sigma_v\}_{v \in T_\sigma}$ of pre-stable Riemann surfaces, parametrised by the interior of each stratum $\sigma$ of the cube, of topological type $T_{\sigma}$ (i.e. for each vertex $v$, a smooth family $\Sigma_v$ of genus $0$ Riemann surfaces with punctured labelled by the edges adjacent to $v$).
\item (Floer data)  An equivalence class of smooth choices, in the interior of each stratum $\sigma$ and for each $v \in T$, of a triple $(J_v,H_v, \alpha_v)$ consist of a family $J_v$ of almost complex structures parametrised by $\Sigma_v$, of a closed $1$-form $\alpha_v$ on $\Sigma_v$, and of a function
  \begin{equation}
        H_v \co \Sigma_V \times M \to \bR.
      \end{equation}
      The equivalence class is the quotient by the $\bR^*$ action
\begin{equation}
\lambda \cdot (J_v,H_v, \alpha_v)  =  (J_v,\lambda H_v, \lambda^{-1} \alpha_v).    
\end{equation}
\end{itemize}
We require to the choice of surfaces $\Sigma_v$ and Floer data $(H_v,\alpha_v)$ to be obtained by gluing near each stratum of the cube, and include the datum of an atlas, presenting this data in a neighbourhood of each stratum of the cube as the outcome of gluing. More precisely, we associate to each stratum $\sigma$ of the cube $[0,1]^n$ the neighbourhood of its interior $\nu \Int{\sigma}$ obtained by taking the product with intervals of length $1/3$ in all normal directions (any constant smaller than $1/2$ would be adequate).
\begin{itemize}
\item (Gluing atlas) For each stratum $\sigma$, a smooth extension of the families $\{\Sigma_v\}_{v \in T_\sigma}$ of Riemann surfaces and Floer data $(H_v,\alpha_v)$ to $\nu \Int{\sigma}$, together with a choice of gluing parameters yielding the families associated to the adjacent strata.
\end{itemize}
We further require that the gluing data associated to adjacent edges are compatible in the sense that the composition of gluing maps associated to a pair of nested strata agree.

\begin{rem}
  There are two minor differences between our notation and that of \cite{AbouzaidGromanVarolgunes2021}: (i) we shall not exclude bivalent vertices which are labelled by continuation maps for which the Hamiltonian does not change, and in particular we allow the constant continuation map, and (ii) we omit the additional perturbation datum used in \cite{AbouzaidGromanVarolgunes2021} to achieve transversality in the symplectically monotone setting.
\end{rem}
\begin{rem}
One can generalise the above construction to symplectic manifolds which are geometrically bounded in the sense of \cite{Groman2015}. In that case, we restrict attention to those Hamiltonian data satisfying the notion of dissipativity described in \cite[Appendix C]{AbouzaidGromanVarolgunes2021}. These conditions are required to ensure compactness of moduli spaces of holomorphic curves of bounded energy, whose proof is completely independent of the considerations of our construction.
\end{rem}
It is straightforward to equip the collections of sets $\{\cF_n(\overrightarrow{(J,H)})\}_{n=0}^{\infty} $ with the structure of a symmetric cubical set, to construct reordering isomorphisms of symmetric cubical sets
\begin{equation}
  \cF(\overrightarrow{(J,H)}) \cong \cF(f \overrightarrow{(J,H)})  
\end{equation}
for each permutation $f$ of the inputs, and to define (multi)-composition maps (given by concatenating trees), which are appropriately associative.

The main goal of the following section is to prove the following result:
\begin{thm}
  \label{thm:Kuranishi-Hamiltonians-Floer}
 There is a multifunctor
  \begin{equation}
    \bX \co \cF \to \Flow
  \end{equation}
  which assigns to each pair $(J,H)$ a Kuranishi flow category with objects the set of time-$1$ Hamiltonian orbits of $H$, whose restriction to monotone homotopies factors through $\Flow^+$, and so that the following normalisation condition holds:
  \begin{equation}
    \label{eq:normalisation_Ham-Floer}
    \parbox{31em}{the flow bimodule associated to the constant continuation map is given, for trivial action, by the trivial Kuranishi presentation on the point for each orbit.}
  \end{equation}
\end{thm}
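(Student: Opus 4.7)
The plan is to construct the multifunctor $\bX$ by assembling Kuranishi presentations of the various moduli spaces that appear in Hamiltonian Floer theory, then verifying that these assemble into the categorical structure required by Definition \ref{def:flow_category} (for objects) and Definition \ref{def:Kuranishi-n-multimodule} (for multimorphisms). First, for each non-degenerate Hamiltonian $H$, I would define $\bX(H)$ as the flow category whose object set $\cP_{\bX(H)}$ is the (finite) set of time-$1$ Hamiltonian orbits. Taking $\Gamma = \bR$ with the identity action map, for each pair $(p,p')$ of orbits and each $\lambda \in \bR_{\geq 0}$, the Kuranishi presentation $\bX(H)^\lambda(p,p')$ is produced from the moduli space of (unparametrised) solutions to Floer's equation with asymptotics $p$ and $p'$, with energy $\lambda$ (after choosing an almost complex structure via $J$ and an appropriate local perturbation scheme). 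The stratifying category is $\cP_{\bX(H)}^\lambda(p,p')$ from Definition~\ref{def:poset-flow-category}, with the stratum associated to a broken configuration receiving the product Kuranishi presentation. The orientation lines $\ro_p$ are the usual determinant lines of linearised operators at each orbit, and the required relative orientations follow from standard spectral flow/index theory.

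Second, for a sequence $\vec{H}$ and an element of $\cH_n(\vec{H})$ --- i.e.\ a functor $T \co \Face\square^n \to \Tree$ together with families $\{(\Sigma_v, H_v, \alpha_v)\}$ and gluing atlas --- I would construct the multimodule $\bM \in \flow_n(\vec{\bX})$ as follows. For each sequence of orbits $\vec{p} = (p_1, \ldots, p_k; p)$ and each $\mu \in \bR$, the Kuranishi diagram $\bM^\mu(\vec{p})$ is assembled from moduli spaces of solutions of the continuation-type equation on the parametrised family of surfaces $\{\Sigma_v\}_{v \in T_\sigma}$ with Floer data $(H_v, \alpha_v)$, asymptotic to $\vec{p}$, of total energy $\mu$. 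The cube $\square^n$ enters as the parameter space; the stratifying category $\cQ_{\bM}^\mu(\vec{p})$ refines $\cP_{\vec{\bX}}^\mu(\vec{p}) \times \Face\square^n$ by recording which stratum of the cube one is in and which breaking configuration has occurred. The gluing atlas provides the local models in Equation \eqref{eq:local_model}, and the symmetry data of Equation \eqref{eq:choice_isomorphism_cube-Kur} is given by relabelling the coordinates of the cube via a permutation. Condition \eqref{eq:compactness-property-morphisms} follows from Gromov-Floer compactness given the bounded-energy hypothesis, and the monotone case of $\Flow^+$ corresponds to restricting to $(H_v, \alpha_v)$ for which the geometric energy estimate gives a sign on the topological energy.

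Third, I would verify multifunctoriality: the multi-composition in $\cH$ given by grafting trees corresponds, at the level of moduli spaces, to the inclusion of boundary strata associated to breaking along the Hamiltonian labelling the grafted edge. This matches the composition in $\Flow$ from Definition \ref{def:composition-flow-bimodules} and Section \ref{sec:flow-mult-with} by construction, since the stratifying category $ \cQ$ was built precisely to include such breaking strata, and the factorisations $\bM^\tau_{k_i+1} \times_{\bY^\tau} \cdots $ demanded in Definition \ref{def:Flow_multicategory} are produced by the gluing atlas data. Cubical functoriality (compatibility with faces, degeneracies, and symmetries) reduces to the corresponding compatibilities of the atlas. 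The normalisation condition \eqref{eq:normalisation_Ham-Floer} holds because for a constant continuation of trivial action, the only solutions are the constant Floer cylinders at each orbit, whose moduli space is a point (with trivial Kuranishi chart).

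The principal obstacle is the actual construction of the Kuranishi presentations themselves: one must produce, for every relevant moduli space, charts $(G, X, V, \fs)$ satisfying Definition \ref{def:Kuranishi-charts} together with the compatibility of Equation \eqref{eq:iso_strata_Kuranishi_diagram} on boundary strata, the transversality of Diagram \eqref{eq:diagram_charts_Kuranishi_map} for the morphisms arising from broken configurations, and the contractible nerve condition \eqref{eq:contractible_nerve_presentation}. In particular, the charts near a codimension-$k$ stratum must match, up to the local model in Equation \eqref{eq:local_model_stratum_open}, the $k$-fold products of charts of the factor moduli spaces --- and they must do so coherently as one varies the point in the parameter cube and the sequence $\vec{p}$. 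This is essentially the content of the joint work with Groman and Varolgunes cited in \cite{AbouzaidGromanVarolgunes2021}, which provides finite-dimensional reductions and a coherent gluing theorem that are strong enough to produce charts satisfying these axioms; the proposal here is to package that geometric input through the categorical machinery of Part~\ref{part:from-virtual-counts} to obtain the stated multifunctor.
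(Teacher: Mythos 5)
Your outline matches the paper's strategy at the level of architecture (objects from Floer trajectory moduli spaces stratified by $\cP^{\lambda}(p,p')$, multimorphisms from continuation-type moduli spaces over the cube, composition matching grafting of trees, normalisation from constant cylinders), but it leaves as black boxes exactly the steps that the paper actually has to carry out, and it mislocates where the geometric input comes from. The paper does \emph{not} import the Kuranishi presentations from \cite{AbouzaidGromanVarolgunes2021} (that reference supplies only the underlying topological flow category); it constructs them in Sections \ref{sec:flow-categ-assoc}--\ref{sec:flow-mult-hamilt} via a category of \emph{thickening data}: stabilising divisors, auxiliary marked points permuted by products of symmetric groups $G_\alpha$, and finite-dimensional spaces of inhomogeneous terms $V_\alpha \subset \bR^{\infty}\otimes\bR[G_\alpha]$, with the chart being the regular locus of the thickened moduli space and the obstruction bundle the trivial bundle with fibre $V_\alpha$. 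Two consequences of this specific construction are invisible in your proposal but are load-bearing. First, Definition \ref{def:flow_category} demands a category \emph{strictly} enriched in $\cD\Kur^{\Gamma_+}$, so composition must be strictly associative on the nose; the paper achieves this precisely because $V_{\alpha_1\times\alpha_2}$ is the internal direct sum inside $\bR^\infty\otimes\bR[G_{\alpha_1\times\alpha_2}]$ rather than an abstract direct sum. Your proposal never says how strict associativity is obtained. Second, the contractible-nerve condition \eqref{eq:contractible_nerve_presentation}, which you name as an obstacle, is verified in the paper by an explicit zig-zag of diagrams of charts $F \Leftarrow F' \Rightarrow F'' \Leftarrow F''' \Rightarrow F_+$ obtained by adjoining a common stabilising divisor and perturbation space; this is not something the external reference supplies.

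The other genuine gap is the passage from $\flow$ to $\Flow$. An element of $\Flow_n(\vec{\bX})$ is not just a multimodule over the cube: it includes, for every face $\sigma$, a chosen factorisation of the associated multimodule as a multicomposition, compatibly over all faces. You attribute this to ``gluing atlas data'', but the paper needs Lemma \ref{lem:factorisation-cubes}, a combinatorial uniqueness statement: the canonical factorisation is extracted from the minimal pairs $(T,I)$ of subtrees and coordinate subsets on which the Floer data actually depend, and its universal property is what makes the assignment compatible with face maps, symmetries, and multicomposition. Without some such canonical choice the construction would not be a multifunctor to $\Flow$ (only to the weaker, non-strictly-associative $\flow$). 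So the proposal is the right skeleton, but the theorem's content lives in the three points above, none of which is supplied.
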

\begin{rem}
It will be clear from the construction that the multifunctor $\bX$ lifts the multifunctor $\cM$, constructed in \cite{AbouzaidGromanVarolgunes2021}, which assigns to each Hamiltonian a topological flow category, and to each cube of Floer data a multimodule over the cube.
\end{rem}
Composing $\bX$ with the multifunctor $\Flow \to \Ch$ associated in Section \ref{sec:flow-mult-with} to a theory of virtual counts over a ring $\Bbbk$, we obtain a functor
\begin{equation}
  CF^* \co \cF \to \Ch_{\Bbbk},
\end{equation}
which we call the \emph{Floer functor}. We now state a useful consequence of Theorem \ref{thm:Kuranishi-Hamiltonians-Floer}:
\begin{cor} \label{cor:Hamiltonian-CF-well-defined}
  If $M$ is compact, then the chain complex $CF^*(J,H)$ associated by the Floer functor to each non-degenerate Hamiltonian $H$ and almost complex structure $J$ is independent, up to contractible choice, of this choice in the sense that the functor $CF^*$ is naturally homotopy equivalent  to a constant functor.
\end{cor}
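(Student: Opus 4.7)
The plan is to deduce the corollary from the multifunctoriality of $CF^*$ together with (i) a contractibility statement for the cubical sets of multimorphisms in $\cH$ and (ii) the normalisation condition \eqref{eq:normalisation_Ham-Floer}.

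First I would verify that for each pair $(H_0, H_1)$ of non-degenerate Hamiltonians, the symmetric cubical set $\cH(H_0;H_1)$ is weakly contractible (i.e.\ satisfies an appropriate Kan extension property). The argument is parametric and essentially convexity-based: given any filling problem of a cubical horn by Floer data—a choice of pre-stable Riemann surfaces $\{\Sigma_v\}$, of $\bR^*$-classes $(H_v,\alpha_v)$, and of a gluing atlas, all prescribed on a subcube—one can interpolate linearly in the space of one-forms and Hamiltonians on the universal family of surfaces away from a neighbourhood of the prescribed strata, while transporting the gluing parameters using a partition of unity. The compatibility of gluing near nested strata reduces to a parametric version of the non-parametric statement that the space of Floer data joining two Hamiltonians is contractible, which is standard. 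I would phrase this as the assertion that $\cH$ is equivalent, as a cubically enriched multicategory, to the terminal such multicategory on its set of objects.

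Second, I would translate contractibility of mapping spaces into the homotopical statement about $CF^*$. Because $CF^*$ is a cubically enriched multifunctor by Theorem \ref{thm:Kuranishi-Hamiltonians-Floer} (and the construction of Section \ref{sec:flow-mult-with}), any $1$-cube in $\cH(H_0;H_1)$ joining two continuation data produces a chain homotopy between the corresponding maps $CF^*(H_0)\to CF^*(H_1)$, and higher cubes produce higher coherence homotopies. The contractibility from the previous step then implies that any continuation datum induces a well-defined element of $\Hom_{\Ch_\Bbbk}(CF^*(H_0), CF^*(H_1))$ in the homotopy category, and that the space of such elements is contractible.

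Third, I would upgrade these chain maps to homotopy equivalences using the normalisation. For any $H$, the constant continuation $\mathrm{id}_H \in \cH(H;H)$ induces, by \eqref{eq:normalisation_Ham-Floer}, a chain map whose $T^0$-component is the identity on the basis of orbits; the remaining terms lie in the maximal ideal of $\Lambda_0$ (or $\Lambda$), so the map is an isomorphism on $CF^*(H)$. Given continuation data $f\in\cH(H_0;H_1)$ and $g\in\cH(H_1;H_0)$, the composition $g\circ f$ is a $0$-cube in $\cH(H_0;H_0)$; connectedness of that cubical set yields a $1$-cube to $\mathrm{id}_{H_0}$, and multifunctoriality promotes this to a chain homotopy $CF^*(g)\circ CF^*(f) \simeq CF^*(\mathrm{id}_{H_0})$. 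The latter is invertible, so $CF^*(f)$ is a homotopy equivalence, and symmetrically for $CF^*(g)$.

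Finally, to package this as the statement that $CF^*$ is naturally homotopy equivalent to a constant functor, I would pick a basepoint Hamiltonian $H_*$, a continuation $f_H \in \cH(H; H_*)$ for each $H$, and use the contractibility of the spaces $\cH(\vec{H}; H_*)$ to coherently fill in the higher simplices needed to produce a natural transformation (in the cubically enriched sense) from $CF^*$ to the constant functor $H \mapsto CF^*(H_*)$; the previous step shows each component is a homotopy equivalence. The main obstacle in this plan is the first step: checking contractibility of $\cH(H_0;H_1)$ is conceptually elementary but requires carefully coordinating the gluing atlases with the parametric construction of Floer data so that the resulting families are smooth and have the gluing compatibility built into Definition of $\cH$.
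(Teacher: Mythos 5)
Your proposal is correct and follows essentially the same route as the paper's proof: contractibility of the morphism spaces of $\cH$, invertibility of the map induced by the constant continuation via the normalisation condition \eqref{eq:normalisation_Ham-Floer}, and the two-sided inverse argument comparing the compositions $g\circ f$ and $f\circ g$ with the constant continuation data. The only differences are minor: the paper simply asserts the contractibility of $\cH(H_0;H_1)$ that you propose to prove, and it additionally records that the constant continuation is homotopic to the identity (via the idempotent relation $f^2\simeq f$ obtained by gluing constant data to itself), a fact which in any case follows from the invertibility you establish together with contractibility.
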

\begin{proof}
  Since the space of morphisms between any two objects in $\cF$ is contractible, it suffices to show that every morphism induces a homotopy equivalence, and that the morphism induced by endomorphisms is homotopic to the identity (this second check is required because of the lack of unitality). Starting with endomorphisms, the statement  of Theorem \ref{thm:Kuranishi-Hamiltonians-Floer} implies that the map
  \begin{equation}
  f \co    CF^*(J,H) \to CF^*(J,H)
  \end{equation}
  associated to the constant continuation induces the identity modulo the maximal ideal of $\Lambda_0$.  Proceeding by induction on energy, we conclude the existence of a homotopy inverse. On the other hand, since the result of gluing the constant continuation datum to itself is again constant, we obtain from the Floer functor a homotopy from $f^2$ to $f$; composing with the homotopy inverse of $f$ yields a homotopy from $f$ to the identity.
  
  We now claim that each continuation map from $(J_0,H_0)$ to $(J_1,H_1)$ induces a chain homotopy equivalence
  \begin{equation}
    CF^*(J_0,H_0) \to CF^*(J_1,H_1).
  \end{equation}
  To see this, we consider the continuation map in the opposite direction, and the canonical homotopy from the composite to the constant homotopy on $(J_0,H_0)$ provides a left inverse up to homotopy. Considering the homotopy from the composite in the other direction to the constant homotopy on $(J_1,H_1)$ provides a right inverse. The fact that the space of morphisms in $\cF$ is contractible completes the proof.
\end{proof}

\section{The flow category associated to a Hamiltonian}
\label{sec:flow-categ-assoc}

In order to prove Theorem \ref{thm:Kuranishi-Hamiltonians-Floer}, we begin with the construction at the level of objects, i.e. we assign to each pair $(J,H)$ consisting of an almost complex structure and a non-degenerate Hamiltonian $H$ a flow category $\bX(J,H)$, with object set $\cP$ given by the set of time-$1$ Hamiltonian chords of $H$. The discussion that we present is a version of the one given in \cite{AbouzaidBlumberg2021}, with simplifications due to the fact that, when studying ordinary rather than generalised Floer homology, we need not record information about the tangential structure of the moduli spaces.

\subsection{Categories of thickening data}
\label{sec:categ-thick-data}

Our proximate goal is to define the indexing category $A^{\lambda}(p,p')$ for the Kuranishi presentation of the moduli space of Floer trajectories of action $\lambda$ connecting the time-$1$ orbits $p$ and $p'$ of $H$. To this end, we recall that we introduced a partially ordered set  $ \cP^{\lambda}(p,p')$ in Definition \ref{def:poset-flow-category}, consisting of sequences of elements of $\cP$ starting at $p$ and ending at $p'$ together with a choice of elements of $\bR_+$ for each pair of successive elements. We write $T_{P}$ for the labelled arc associated to each element $P$ of $ \cP^{\lambda}(p,p')$.

We shall need to refer to an appropriate moduli space of abstract curves in the construction of each object of $A^{\lambda}(p,p')$: such a moduli space will be determined by (i) an arrow $P \to P'$ in $ \cP^{\lambda}(p,p')$, (ii) a natural number $S_T$ for each sub-arc $T$ of  $T_{P'}$, and (iii) a sequence $\vec{r}^{T}$ of strictly positive integers of length $S_T$ for each such sub-arc. Given this data, we let $\cM_{\{S_T\},\{\vec{r}_T\}, P \to P'}$ denote the moduli space of stable genus $0$ Riemann surfaces with marked points labelled by the union of the set $\{p,p'\}$ with the union over all $T$ of the sets $\{\vec{r}_T\}$, equipped with the following data:
\begin{enumerate} 
\item a labelling by an element of $\cP^{\lambda}(p,p') $ lying between $P$ and $P'$ in the sense that the corresponding arc is identified with the arc of components connecting the marked points labelled by $p$ and $p'$.
    \item a framing of the marked points $p$ and $p'$ and of the nodes along the path between them.
\end{enumerate}
We require that the framings be compatible in the following sense: each component along the path from $p$ to $p'$ acquires two identifications with $(\bP^{1},0,\infty)$ from the framings of the nodes, which is given up to positive dilation, and we require that these identifications agree. We require as well that the following condition hold:
\begin{equation} \label{eq:all-bubble-stabilised}
  \parbox{31em}{the marked points indexed by the sequence $\vec{r}_{T}$ lie in the components associated to $T$, and the map obtained by forgetting all but one of the elements of this sequence does not collapse any component.}
\end{equation}
More precisely, if the given Riemann surface is labelled by an element $Q$ of $\cP^{\lambda}(p,p') $, then the arrow $Q \to P'$ corresponds to a collapse map from  $T_{P'}$ to $T_{Q}$, and the arc $T$ then determines a collection of components, in which we require the given marked points to lie. This moduli space admits a natural action by the product of the symmetric groups on  $r_{T,i}$, given by relabelling marked points.

\begin{defin} \label{def:thickening-data}
  A  choice $\alpha$  of \emph{thickening data}  for the moduli space $\cM^{\lambda}(p,p')$ consists of
  \begin{enumerate}
  \item (Choice of stratum) A morphism $P_\alpha \to P'_\alpha $ in $ \cP^{\lambda}(p,p')$. We write $\partial^{\alpha} \cP^{\lambda}(p,p') $ for the partially ordered set of objects between $P_\alpha$ and $P'_\alpha$, and $T_{\alpha}$ for the labelled arc $T_{P'_\alpha}$.
  \item (Additional marked points) For each sub-arc $T$ of $T_{\alpha}$, a natural number $S^{\alpha}_T$, and a sequence $\vec{r}_{T}^{\alpha}$ of length $S_T^\alpha$, consisting of strictly positive integers. We write $\vec{r}^{\alpha}$ for the union of the sequence $\vec{r}_T^{\alpha}$ over all arcs $T$,  $G_\alpha$ for the product of the symmetric groups on $r^\alpha_{i}$ elements, $\cM_{\alpha}$ for the moduli space associated to this data, and $\cC_{\alpha}$ for the universal curve over $\cM_{\alpha}$.
    \item (Stabilising divisors) A sequence $\vec{D}^{\alpha}_{T}=(D^\alpha_{T,1}, \ldots, D^{\alpha}_{T,S^\alpha_T})$ of smooth codimension $2$ submanifolds of $M$ for each arc $T \subset T_{\alpha}$. We write $\vec{D}^{\alpha}$ for the union of these sequences.
    \item (Inhomogenous term) A $G_\alpha$-invariant finite dimensional subspace
      \begin{equation}
      V_\alpha \subset \bR^{\infty} \otimes \bR[G_{\alpha}]           
      \end{equation}
       and an equivariant map
    \begin{equation}
Y_\alpha \co      V_\alpha \to \Omega^{0,1}_{vert}(\cC_{\alpha}, C^{\infty}(M,TM))
    \end{equation}
    into the space of $(0,1)$-forms on the fibres of the universal curve, with value in the space of vector fields on $M$, with support away from all nodes, and which is obtained by gluing near broken curves.
  \end{enumerate}
\end{defin}
We define a map $\alpha \to \beta$ of thickening data to consist first of all of a factorisation
\begin{equation} \label{eq:inclusion-strata}
P_{\beta} \to P_\alpha \to P'_\alpha \to P'_{\beta}.
\end{equation}
The map $P'_{\alpha} \to P'_\beta$ corresponds to a map $T_{\beta} \to T_{\alpha}$, so that each sub-arc of $T_{\beta}$ projects to a sub-arc of $T_{\alpha}$.  We then require the data of an embedding
  \begin{equation}
    \coprod_{\pi(T') = T }  \{1, \ldots, S^\beta_{T'}\} \subset  \{1, \ldots, S^\alpha_T\},
  \end{equation}
which induces embeddings of sequences of natural numbers and divisors
\begin{align} \label{eq:inclusion_marked_points}
\vec{r}^{\beta} & \subset \vec{r}^{\alpha} \\ \label{eq:inclusion-divisors}
  \vec{D}^{\beta} & \subset \vec{D}^{\alpha}.
\end{align}
To formulate the remaining data, note that these maps induce a surjection $G_\alpha \to G_\beta$ of groups, and a $G_\alpha$-equivariant map
\begin{equation}
    \cM_\alpha \to \cM_\beta
\end{equation}
of moduli spaces, defined by forgetting the marked points which are not in the image of the chosen map in Equation \eqref{eq:inclusion_marked_points}, and that Equation \eqref{eq:inclusion-strata} implies that this forgetful map is compatible with our constraints on the labelling of the moduli spaces. Condition \eqref{eq:all-bubble-stabilised} moreover implies that the induced map of universal curves does not collapse any component, and is a submersion.

The final piece of data in the definition of a map of  thickening data is a  $G_\alpha$-equivariant inclusion
\begin{equation}
    V_\alpha \hookrightarrow V_\beta,
\end{equation}
fitting in a commutative diagram
    \begin{equation} \label{eq:commutative_diagram_map_deformations}
      \begin{tikzcd}
V_\beta \ar[r]  &         \Omega^{0,1}_{vert}(\cC_{\beta}, C^{\infty}(M,TM)) \ar[d]  \\
  V_\alpha \ar[r] \ar[u] &    \Omega^{0,1}_{vert}(\cC_{\alpha}, C^{\infty}(M,TM))
      \end{tikzcd}
    \end{equation}
    in which the right vertical map is given by pullback.

    It is straightforward to define composition of maps of thickening data, yielding a category $A^{\lambda}(p,p')$.

\subsection{Morphisms the flow category associated to Hamiltonians}
\label{sec:morph-hamilt-flow}

We now assign to each object of $ A^{\lambda}(p,p')$ a Kuranishi chart as follows: we begin by defining $X_{\alpha}(p,p')$ to be the set of pairs $(u,v)$, with $v$ an element of $V_\alpha$, and  $u$ a map to $M$ with domain $\Sigma$ a fibre of the universal curve $\cC_\alpha$ satisfying the deformed Floer equation
\begin{equation} \label{eq:deformed_Floer}
\left( du - X_H \otimes dt \right)^{0,1}  = Y_\alpha(v),
\end{equation}
with asymptotic conditions along each node of $\Sigma$ along the path from $p$ to $p'$ given by the corresponding element of $\cP$, and so that the topological energy of each component along this path (including all attached sphere bubbles) is given by the corresponding positive real number. In addition, we require that the marked points labelled by $r_{T,i}^\alpha$ map to $D^\alpha_{T,i}$.
\begin{rem}
  To clarify the meaning of Equation \eqref{eq:deformed_Floer}, we note that there is distinguished identification of each component of $\Sigma$ along the path between the marked points $(p,p')$ with the cylinder $\bR \times S^1$, which is canonical up to translation in the $\bR$ direction. This implies that the form $dt$ is well-defined along these components, and we extend it trivially to components not lying along this path.
\end{rem}
\begin{defin}
  The Kuranishi chart $\bX_\alpha$ consists of
  \begin{enumerate}
  \item  the finite group $G_\alpha$,
  \item the partially ordered set $\partial^{\alpha} \cP^{\lambda}(p,p') $ equipped with its natural map to $ \cP^{\lambda}(p,p') $,
  \item  the manifold 
      \begin{equation}
X^{reg}_{\alpha}(p,p') \subset X_{\alpha}(p,p'),
\end{equation}
consisting of curves whose evaluation maps at the points labelled by $r_i^\alpha$ are transverse to $D^\alpha_i$, and which are regular, 
\item the trivial $G_\alpha$ vector bundle over $ X^{reg}_{\alpha}(p,p') $  with fibre $V_\alpha $, and
  \item the section of this vector bundle given by the projection map 
    \begin{equation}
\fs \co      X^{reg}_{\alpha}(p,p') \to V_\alpha .
    \end{equation}
  \end{enumerate}
\end{defin}
We recall that the regular locus consists of all points so that $V_\alpha$ surjects onto the cokernel of the linearisation of the Floer equation in Equation \eqref{eq:deformed_Floer}. The statement that this is a topological manifold with boundary is a consequence of the gluing theory of pseudoholomorphic curves, which in this context was extensively worked out by Pardon in \cite[Appendix C]{Pardon2016}. Moreover, the usual analysis of orientations in Floer theory implies that there are graded lines $\ro_p$ associated to each orbit so that this presentation is oriented relative $\ro_{p,p'}$ as in Equation \eqref{eq:orientation-line-flow-category}.

Each arrow $f\co \alpha \to \beta$ in $A^{\lambda}(p,p')$ induces a map
\begin{equation}
  X_{\alpha}(p,p') \to X_{\beta}(p,p'),
\end{equation}
given by the inclusion $V_\alpha \to V_\beta$ of spaces of inhomogenous perturbations, and the forgetful map on universal curves. This map is equivariant with respect to the $G_\alpha$ action given on the right hand side by the projection map of products of symmetric groups $G_\alpha \to G_\beta $, and respects stratifications with respect to the inclusion
\begin{equation}
  \partial^{\alpha} \cP^{\lambda}(p,p') \subset \partial^{\beta} \cP^{\lambda}(p,p').
\end{equation}
Condition \eqref{eq:all-bubble-stabilised} implies that the kernel acts freely on $ X_{\alpha}(p,p')$, and the commutativity of Diagram \eqref{eq:commutative_diagram_map_deformations} implies that this map preserves regular loci, and moreover that that the diagram 
\begin{equation}
  \begin{tikzcd}
  V_\alpha \ar[r] & V_\beta \\
  X_{\alpha}(p,p') \ar[u] \ar[r] & \partial^f X_{\beta}(p,p') \ar[u]   
  \end{tikzcd}
 \end{equation}
is transverse.

The above discussion implies that each arrow in $A^{\lambda}(p,p')$ induces a map of Kuranishi charts
\begin{equation} \label{eq:map_Kuranishi-charts-morphisms}
  \bX_\alpha (p,p')\to \bX_\beta(p,p'),
\end{equation}
which is functorial in $A^{\lambda}(p,p') $.

For the next statement, we recall the footprint functor from Equation \eqref{eq:footprint_functor}, which assigns to each Kuranishi chart the quotient of the zero-locus of $\fs$ by the automorphism group. Since the zero locus of $\fs$ consists of solutions to Floer's equation, this quotient is naturally identified with an open subject of the corresponding stratum of the compactified moduli space $\cM(p,p')$ of Floer trajectories. 
\begin{prop}
The natural map
  \begin{equation}
    \colim_{\alpha \in  A^{\lambda}(p,p')}  \cM(\bX_\alpha(p,p')) \to \cM(p,p')
  \end{equation}
  is a homeomorphism, and the nerve of the category of charts covering each point in $ \cM(p,p') $ is contractible.
\end{prop}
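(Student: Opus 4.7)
The proposition contains two independent assertions, which I treat in turn, ending with the main technical difficulty.

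\emph{The colimit map is a homeomorphism.} The crux is the existence of a local Kuranishi chart around each $u \in \cM(p,p')$. Writing $Q$ for the stratum of $u$, my plan is to follow the Cieliebak--Mohnke prescription: on each unstable component of the (stabilised) domain of $u$, choose enough marked points so that Condition \eqref{eq:all-bubble-stabilised} holds; select codimension-$2$ submanifolds $\vec{D}^\alpha \subset M$ transverse to $u$ at these points; and enlarge the $G_\alpha$-equivariant subspace $V_\alpha \subset \bR^\infty \otimes \bR[G_\alpha]$ together with its map $Y_\alpha$ until $Y_\alpha(V_\alpha)$ surjects onto the cokernel of the linearised Floer operator $D_u \bar\partial$. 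Setting $P_\alpha = \min \cP^\lambda(p,p')$ and $P'_\alpha = Q$ produces an object $\alpha$ of $A^\lambda(p,p')$ whose footprint is, by the inverse function theorem together with the topological gluing results of \cite[Appendix C]{Pardon2016}, an open neighbourhood of $u$ in $\cM(p,p')$ compatible with the stratification. Functoriality of this construction under morphisms in $A^\lambda(p,p')$ together with the contractibility of $A_{[u]}$ (established in the next paragraph) implies that the colimit map is a bijection; continuity of its inverse is immediate from the local charts, and compactness of $\cM(p,p')$ upgrades this to a homeomorphism of stratified spaces.

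\emph{Contractibility of $A_{[u]}$.} Fix $u \in \cM(p,p')$ with stratum $Q$. Given any finite diagram of objects $\alpha_0, \ldots, \alpha_n$ in $A_{[u]}$, I would build a common ``apex'' $\gamma \in A_{[u]}$ together with auxiliary intermediates $\alpha_i^+$. Set $\vec{r}^\gamma = \bigcup_i \vec{r}^{\alpha_i}$ (disjoint union of the added-marked-point sequences), $\vec{D}^\gamma = \bigcup_i \vec{D}^{\alpha_i}$, and $V_\gamma = \sum_i V_{\alpha_i}$; take $P_\gamma = \min \cP^\lambda(p,p')$ and $P'_\gamma$ the join of $P'_{\alpha_i}$ in the lattice of refinements of $Q$ (which exists because each $P'_{\alpha_i} \geq Q$ and refinements of $Q$ form a product of refinement lattices of its subintervals). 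Define $\alpha_i^+$ to have the divisorial and marked-point data of $\alpha_i$ but with $V_{\alpha_i^+} = V_\gamma$ and stratification $(P_\gamma, P'_\gamma)$. All these objects lie in $A_{[u]}$: regularity of $u$ survives because $V$ only grows, and divisorial transversality survives because we only add divisors already transverse to $u$ at the relevant marked points. The arrows $\alpha_i \to \alpha_i^+ \leftarrow \gamma$ then give a canonical contraction of the given diagram in the nerve. Applied to simplices of every dimension, this shows that every finite subcomplex of the nerve is null-homotopic, whence the nerve is weakly contractible; this is the standard strategy of Pardon \cite{Pardon2016} and \cite{AbouzaidBlumberg2021}.

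\emph{Main obstacle.} The principal technical difficulty lies in the topological gluing theory required in the first part to identify the footprint of a chart as an open neighbourhood of $u$ inside $\cM(p,p')$ near broken limits (where the domain degenerates). This is non-trivial in the continuous category, where one does not have smooth local models, and I would invoke Pardon's analysis in \cite[Appendix C]{Pardon2016} as a black box.
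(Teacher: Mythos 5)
Your treatment of the first claim is essentially the paper's: the paper phrases it by extending boundary charts to the top stratum and reducing to the subcolimit over charts with $P_\alpha$ minimal (whose footprints are open inclusions), but the content --- local existence of charts via stabilising divisors and surjective perturbation spaces, Pardon's topological gluing as a black box, and connectivity of the charts covering a point --- is the same. The gap is in your contractibility argument. The intermediate objects $\alpha_i^+$ do not exist as objects of $A^\lambda(p,p')$: you give them the marked-point data of $\alpha_i$ but the perturbation space $V_\gamma = \sum_j V_{\alpha_j}$, and there is no way to define the required map $Y_{\alpha_i^+} \co V_\gamma \to \Omega^{0,1}_{vert}(\cC_{\alpha_i}, C^{\infty}(M,TM))$ on the summands $V_{\alpha_j}$ with $j \neq i$, since $Y_{\alpha_j}$ is defined on the universal curve over $\cM_{\alpha_j}$ and there is no forgetful map $\cC_{\alpha_i} \to \cC_{\alpha_j}$. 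Even extending by zero, the compatibility square \eqref{eq:commutative_diagram_map_deformations} for the arrow $\gamma \to \alpha_i^+$ would force each $Y_\gamma(v)$, $v \in V_{\alpha_j}$, to be pulled back from $\cC_{\alpha_i}$, which fails. The underlying point you are running into is that marked points shrink while perturbation spaces grow along arrows of $A^\lambda(p,p')$, so a naive ``join'' of charts is not available.

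The paper's proof navigates exactly this asymmetry with the longer zig-zag $F \Leftarrow F' \Rightarrow F'' \Leftarrow F''' \Rightarrow F_+$: first add a fresh set of marked points $r_+$ and a fresh divisor $D_+$ disjoint from all divisors of $F$ while keeping the old perturbations ($F'$); only then enlarge the perturbation space by a representation $V_+$ defined over the enlarged moduli space ($F''$); then discard the old perturbations ($F'''$); and only at the last step forget the old marked points, landing on a constant functor $F_+$. Your argument can be repaired in this spirit --- for instance, replace $\alpha_i^+$ by the chart $\alpha_i'$ carrying the marked points and divisors of $\gamma$ but only the perturbation space $V_{\alpha_i}$, giving a zig-zag $\alpha_i \Leftarrow \alpha_i' \Rightarrow \gamma$ --- but the two-step contraction as written does not go through. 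Separately, your stratification data for the apex is inconsistent with the direction of morphisms: an arrow $\gamma \to \beta$ requires the interval $[P_\gamma, P'_\gamma]$ to be contained in $[P_\beta, P'_\beta]$, so the apex must carry the smallest interval through the stratum $Q$ of $u$ (the paper first reduces to charts with $P_\alpha = P'_\alpha = Q$), rather than $P_\gamma = \min$ and $P'_\gamma$ a join of the $P'_{\alpha_i}$.
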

\begin{proof}
  The first claim is proved by first noting that we can extend each chart defined on a stratum of non-zero codimension to include the top stratum by using gluing to extend the choice of inhomogenous terms from the boundary to the interior of the moduli space of curves with marked points. This implies that the colimit of footprints is homeomorphic to the subcolimit on the category of charts with $P_\alpha$ the minimal element, for which the maps to $\cM(p,p') $ are open inclusions. The homeomorphism then follows from the connectivity of the space of such charts covering each point, which can be proved by adapting the argument below to a simpler setting.

  We reduce the proof of the second claim to the subcategory of charts for which $P_\alpha = P'_\alpha$ labels the stratum containing a chosen point $[u]$. Given any finite diagram $F$ of such charts, we choose a divisor $D_+ \subset M$ which is transverse to $u$, stabilises all its component, and is disjoint from all the divisors appearing in the diagram $F$. We let $r_+$ denote the cardinality of $u^{-1}(D_+)$, and choose a representation $V_+$ of the symmetric group on $r_+$ letters, equipped with an equivariant surjection to the cokernel of the linearised Floer operator at $u$. We extend this surjection in an arbitrary way to a choice of inhomogenous term for the moduli space with $r_+$ marked points. Setting the sequences $\vec{r}^T_+$ for subtrees of $T_\alpha$ to be empty except for $T = T_\alpha$, in which case it is the singleton $r_+$, we obtain in this way a chart, and we write $F_+$ for the constant functor on the domain of $F$ with this value.

  We construct three additional diagram the same domain as $F$, equipped with natural transformations
  \begin{equation}
    F \Leftarrow F' \Rightarrow F'' \Leftarrow F''' \Rightarrow F_+
  \end{equation}
  as follows: in $F'$, we add $r_+$ to the set of marked points in $F$ and $D_+$ to the set of divisors, keeping the same inhomogeneous terms, in $F''$, we enlarge the inhomogeneous terms to include $V_+$, while in $F'''$, we only take $V_+$ as our inhomegenous term. The map from $F'''$ to $F_+$ is then obtained by forgetting the marked points and divisors associated to $F$. This zig-zag provides a homotopy between the map that $F$ induces on nerves and the constant map, which shows that the nerver of the category of charts covering $[u]$ has trivial homotopy groups, hence is contractible.
\end{proof}
\begin{cor}
  The collection of functors
  \begin{equation}
  \bX^{\lambda}(p,p') \co A^{\lambda}(p,p') \to \Chart,
\end{equation}
indexed by $\lambda \in \bR_+$ defines an object $\bX(p,p')$ of $\cD \Kur^{\bR_+}$, lying over $\cP^{\bR_+}(p,p')$. \qed
\end{cor}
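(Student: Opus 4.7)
The plan is to verify the three components that go into an object of $\cD\Kur^{\bR_+}$: the diagram structure for each fixed $\lambda$, the orientation data, and the properness/boundedness conditions across $\lambda \in \bR_+$.

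First I would argue that for each fixed $\lambda$, the functor $\bX^{\lambda}(p,p') \co A^{\lambda}(p,p') \to \Chart$ assembles into a Kuranishi diagram lying over $\cP^{\lambda}(p,p')$. The natural projection $A^{\lambda}(p,p') \to \cP^{\lambda}(p,p')$, sending $\alpha$ to $P_{\alpha}$ (say, the maximal element $P'_\alpha$ if one orients the convention that way), is a refinement of models for partitioned manifolds with generalised corners in the sense of Definition \ref{def:refinement-poset}: the right fibre over any $P$ is the subcategory of thickenings with $P_{\alpha}=P$, whose geometric realisation captures the expected cell structure, and conditions \ref{item:preserve_boundary} and \ref{item:degree_1} reduce to the compatibility of the stratification $\partial^\alpha \cP^{\lambda}(p,p')$ of each chart with the maps in Equation \eqref{eq:map_Kuranishi-charts-morphisms}, already built into the definition of $A^{\lambda}(p,p')$. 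The requirement that the natural map from the colimit of footprints be a homeomorphism and that the covering categories have contractible nerves is exactly the content of the preceding proposition, so we may upgrade each $\bX^{\lambda}(p,p')$ to a Kuranishi diagram.

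Next I would record the orientation data. Standard arguments in Floer theory (orientations of determinant lines of Cauchy-Riemann operators, trivialised relative to a choice of graded $\Bbbk$-line $\ro_p$ at each orbit) provide canonical relative orientations of each chart $\bX_{\alpha}(p,p')$ with respect to $\ro_{p,p'}\otimes \ro_{P_\alpha}^{-1}$, compatible with the arrows of $A^{\lambda}(p,p')$, so that the functor lifts to a functor into the category of oriented Kuranishi charts indexed over $\cP^{\lambda}(p,p')$. Assembled across $\lambda$, this gives a functor from $A^{\Gamma_+}(p,p')$ to relatively oriented charts with the required graded line $\ro_{p,p'}$.

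Finally I need to check the finiteness conditions built into the definition of $\cD\Kur^{\bR_+}$: for each energy cap $E$, only finitely many $\lambda \in [0,E]$ have non-empty $\bX^\lambda(p,p')$, and the image of the set of non-empty $\lambda$ is bounded below (it is contained in $[0,\infty)$). The boundedness below is immediate from the definition of $\Gamma_+$. The local finiteness follows from Gromov compactness for Floer trajectories between the fixed asymptotes $p,p'$ of the non-degenerate Hamiltonian $H$, which guarantees that only finitely many total action values occur below any energy bound. The only part that requires some vigilance is keeping track of the fact that compactness here must be established at the level of the underlying moduli space $\cM^{\lambda}(p,p')$ (not just the individual charts); this is where the homeomorphism provided by the preceding proposition is used again. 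Once these pieces are in place, the collection $\{\bX^{\lambda}(p,p')\}_{\lambda \in \bR_+}$ manifestly satisfies the axioms of an object of $\cD\Kur^{\bR_+}$ lying over $\cP^{\bR_+}(p,p')$.
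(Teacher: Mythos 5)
Your overall skeleton --- a Kuranishi diagram for each fixed $\lambda$, orientation data from the standard Floer-theoretic analysis, and properness over $\bR$ from Gromov--Floer compactness --- is the right one, and matches what the paper leaves implicit (the corollary carries no proof beyond the preceding proposition and the construction of the charts). However, your first step misidentifies the structure and, as written, would fail. You propose to exhibit the projection $A^{\lambda}(p,p') \to \cP^{\lambda}(p,p')$ as the refinement $\pi \co \cQ \to \cP$ in the definition of a Kuranishi diagram, and to check Definition \ref{def:refinement-poset} for it. But $A^{\lambda}(p,p')$ is the \emph{indexing category of the Kuranishi presentation}, not the stratifying category of the diagram: its right fibres over an element $P$ are categories of thickening data (choices of marked points, divisors, obstruction spaces), whose geometric realisations are merely contractible nerves, not balls of dimension $\codim P$, so condition (i) of the definition of a model for partitioned manifolds fails, and conditions \ref{item:preserve_boundary} and \ref{item:degree_1} do not even parse for this functor. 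The correct identification, which the paper makes explicit at the start of Section \ref{sec:mult-struct-maps}, is that the stratifying category of the diagram is $\cQ = \cP = \cP^{\lambda}(p,p')$ with the identity refinement, and the diagram is obtained by applying the tautological functor $\Kur \to \cD\Kur$ that assigns to a presentation the diagram of its boundary strata: to each $P$ one assigns the presentation with domain $A_P \subset A^{\lambda}(p,p')$ consisting of thickening data whose strata lie under $P$.

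With that correction, what actually needs to be verified for each $\lambda$ is Definition \ref{def:Kuranishi-presentation}: that the colimit of the footprints $\cM(\bX_\alpha)$ over $A^{\lambda}(p,p')$ is a compact Hausdorff space with finitely many non-empty strata (this is the homeomorphism of the preceding proposition combined with Gromov--Floer compactness of $\cM^{\lambda}(p,p')$), and that the nerve of the category of charts covering each point is contractible (the second assertion of that proposition). Your remaining two steps --- the relative orientations with respect to $\ro_{p,p'}$, and the finiteness of the set of $\lambda \in [0,E]$ with non-empty moduli space together with boundedness below by $0$ --- are correct and are exactly what the definition of $\cD\Kur^{\bR_+}$ requires.
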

\subsection{Composition in the flow category associated to a Hamiltonian}
\label{sec:comp-flow-categ}

Given a triple $(p,p',p'')$ of time-$1$ orbits of $H$, we first consider the map
\begin{equation} \label{eq:map-domain-Kuranishi-presentation}
  A^{\lambda_1}(p,p') \times A^{\lambda_2}(p',p'') \to A^{\lambda_1 + \lambda_2}(p,p'')
\end{equation}
which at the level of stratifying sets is induced by the map
\begin{equation}
  \cP^{\lambda_1}(p,p') \times \cP^{\lambda_2}(p',p'') \to \cP^{\lambda_1 + \lambda_2}(p,p'')
\end{equation}
from Section \ref{sec:flow-categories}, and at the level of marked points and stabilising divisors by concatenating sequences, with the one associated to $(p,p')$ appearing first. These choices specify a moduli space $ \cM_{\alpha_1 \times \alpha_2} $, and determine a homeomorphism
\begin{equation} \label{eq:inclusion_abstract_moduli_spaces_charts}
  \cM_{\alpha_1} \times \cM_{\alpha_2} \to \cM_{\alpha_1 \times \alpha_2}.
\end{equation}
At the level of inhomogenous terms, we complete the definition of the map in Equation \eqref{eq:map-domain-Kuranishi-presentation} by taking direct sums of vector spaces, and extending each map $Y_{\alpha_i}$ by zero to the components of the curve coming from the other factor. 

At the level of moduli space of curves, Equation \eqref{eq:inclusion_abstract_moduli_spaces_charts} induces a $G_{\alpha_1} \times G_{\alpha_2}$-equivariant homeomorphism
\begin{equation}
     X_{\alpha_1}(p,p') \times X_{\alpha_2}(p',p'') \to X_{\alpha_1 \times \alpha_2}(p,p''),
\end{equation}
which is compatible with the projection to $V_{\alpha_1} \times V_{\alpha_2}$, and preserves regular loci as well as orientations. This map is natural with respect to morphisms in $ A^{\lambda_1}(p,p') \times A^{\lambda_2}(p',p'')$, so we conclude that we have constructed a natural isomorphism
  \begin{equation}
    \begin{tikzcd}
      A^{\lambda_1}(p,p') \times A^{\lambda_2}(p',p'') \ar[d] \ar[r] & \Chart_{ \cP^{\lambda_1}(p,p') } \times \Chart_{ \cP^{\lambda_2}(p,p') } \ar[d,""{name=fromhere}] \\
      A^{\lambda_1 + \lambda_2}(p,p'') \ar[r,""{name=tohere}] & \Chart_{ \cP^{\lambda_1 + \lambda_2}(p,p'')}.
         \arrow[Rightarrow,from=fromhere, to=tohere, start anchor={east},
    end anchor={south}, shorten >=3pt, shorten <=6pt,bend right=30]
    \end{tikzcd}
  \end{equation}
  and hence a map
  \begin{equation} \label{eq:composition_Kuranishi-flow-category}
    \bX(p,p') \times \bX(p',p'') \to \bX(p,p'')
  \end{equation}
  in $\cD \Kur^{\bR^+}$. These maps lift to the category of oriented Kuranishi diagrams, with isomorphisms of the underlying orientation lines satisfying the properties listed in Definition \ref{def:flow_category}.

Having arranged in Definition \ref{def:thickening-data} for the vector spaces of choices of inhomogeneous terms to be subspaces of $\bR^{\infty} \otimes  \bR[G_{\alpha}]  $, this operation is strictly associative.  We conclude:
  \begin{prop}
    The Kuranishi presentations $ \bX(p,p') $, indexed by $\bR_+$, are the morphisms of a Kuranishi flow category $\bX(H)$ with objects the time-$1$ Hamiltonian orbits of $H$, with compositions given by Equation \eqref{eq:composition_Kuranishi-flow-category}.  \qed
  \end{prop}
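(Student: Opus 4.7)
The proof amounts to verifying the three conditions of Definition \ref{def:flow_category}. First, each $\bX^{\lambda}(p,p')$ constructed in Section \ref{sec:morph-hamilt-flow} is already a Kuranishi presentation stratified by the partially ordered set $\cP^{\lambda}(p,p')$ (which has a unique minimum $(p,\lambda,p')$); hence it defines an object of $\cD \Kur$ via the trivial refinement $\cQ = \cP^\lambda(p,p')$. The collection $\{\bX^\lambda(p,p')\}_{\lambda \in \bR_+}$ defines an object of $\cD \Kur^{\bR_+}$ because, by Gromov--Floer compactness for closed (or geometrically bounded) $M$, the set of $\lambda \in \bR_+$ for which $\bX^\lambda(p,p')$ is non-empty is discrete and bounded below by the topological action of any Floer trajectory from $p$ to $p'$, hence proper over $\bR$.

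Next I would check the orientation condition. Choose graded $\Bbbk$-lines $\ro_p$ for each orbit via a coherent system of orientations on index bundles of Cauchy-Riemann type operators with Hamiltonian asymptotics, as in \cite{FukayaOhOhtaOno2009}. The standard determinant-line analysis then produces a canonical orientation of each $\bX^\lambda_\alpha(p,p')$ relative to $\ro_{p} \otimes \ro_{p'}^{-1} \otimes \ro_{\bR^{p'}}^{-1} = \ro_{p,p'}$, where the last factor reflects the quotient by the $\bR$-translation along cylindrical components. The gluing theorem for broken Floer trajectories identifies the normal direction to the codimension-one stratum $(p,\lambda_1,p',\lambda_2,p'')$ of $\cP^{\lambda_1+\lambda_2}(p,p'')$ with the gluing parameter $\bR^{p'}$, and under this identification the product orientation on $\bX^{\lambda_1}(p,p') \times \bX^{\lambda_2}(p',p'')$ agrees with the boundary orientation of $\bX^{\lambda_1+\lambda_2}(p,p'')$. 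Unwinding the definitions, this is exactly the isomorphism of graded lines in Definition \ref{def:flow_category}.

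Strict associativity of the composition in Equation \eqref{eq:composition_Kuranishi-flow-category} is where the setup pays off: the direct sum $V_{\alpha_1} \oplus V_{\alpha_2}$ as subspaces of $\bR^\infty \otimes \bR[G_{\alpha_1} \times G_{\alpha_2}] \subset \bR^\infty \otimes \bR[G_{\alpha_1 \times \alpha_2}]$ is strictly (not just up to isomorphism) associative, as are concatenation of sequences of stabilising divisors and marked points, and the fibred product of universal curves along nodes. Finally, the finiteness condition \eqref{eq:compactness-property-morphisms} follows from a further application of Gromov-Floer compactness: for each $p$ and each $E \in \bR$, the set of $p'$ admitting a broken Floer trajectory from $p$ to $p'$ of total energy at most $E$ is finite, because the energy bound controls the set of asymptotic orbits. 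The main obstacle in this proof is the orientation compatibility of the third paragraph, since matching the gluing sign in the determinant-line analysis with the conormal convention $\ro_{\bR^{p'}}^{-1}$ of Section \ref{sec:flow-categories} requires tracing sign conventions carefully through the definition of the enriched category $\cP^{\bR_+}$; everything else is a direct bookkeeping exercise.
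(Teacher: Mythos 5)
Your proposal is correct and follows essentially the same route as the paper, which presents this proposition as an immediate consequence of the construction in Sections \ref{sec:morph-hamilt-flow} and \ref{sec:comp-flow-categ}: the composition is built by concatenating marked points and divisors and taking internal sums of inhomogeneous terms inside the fixed ambient space $\bR^{\infty}\otimes\bR[G_\alpha]$, which is exactly where strict associativity comes from, with orientations and the finiteness condition handled by the standard Floer-theoretic arguments you cite. Your extra detail on the sign conventions and on Condition \eqref{eq:compactness-property-morphisms} fills in points the paper leaves implicit but does not change the argument.
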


  \section{Flow multimorphisms in Hamiltonian Floer theory}
\label{sec:flow-mult-hamilt}
Let $\overrightarrow{(J,H)}$ be a sequence of Hamiltonians and $\kappa$ an element of $\cF_n(\overrightarrow{(J,H)})$. We shall assign to this cube an element of $\Flow_n(\bX(\overrightarrow{(J,H)}))$, but this requires two preliminary steps: first, we perform all constructions at the level of multimodules over flow categories, then we lift to $\flow_n(\bX(\overrightarrow{(J,H)}))$.

\subsection{Kuranishi diagrams associated to cubes}
\label{sec:kuran-diagr-assoc}

Proceeding as in Section \ref{sec:categ-thick-data}, we begin by associating to the cube $\kappa$, to a real number $\mu$, and to a sequence $\vec{p}$ of elements of $\cP_{\overrightarrow{(J,H)}}$ a partially ordered set $\cQ^{\mu}_{\kappa,\id}(\vec{p})$ whose elements $Q$ consist of:
  \begin{enumerate}
  \item A choice $\sigma_Q$ of a stratum of the $n$-cube,
  \item A tree $T_Q$ obtained  from the tree associated to $T_{\sigma}$ by adding bivalent vertices to some of its edges, whose vertices are labelled by real numbers, with the property that the vertices not coming from $T_\sigma$ are labelled by positive real numbers. In addition, we have the datum of a label of all edges by Hamiltonian orbits of $H_{e}$, so that the leaves are labelled by $\vec{p}$. We shall say that these added vertices are of \emph{Floer type}.
  \end{enumerate}
  To clarify the statement about labelling of edges, we recall that the datum of the cube $\kappa$ includes a labelling of the edges of the tree $T_\sigma$ by Hamiltonian functions; the edges of $T_Q$ inherit this labeling, which we refine with the data of Hamiltonian orbits. The ordering is given by inclusion of strata of the $n$-cube, and by collapse of trees. Note that we have a forgetful map
  \begin{equation} \label{eq:projection-category-multi-module-cube}
    \cQ^{\mu}_{\kappa,\id}(\vec{p}) \to \cP_{\bX(\overrightarrow{(J,H)})}^{\mu}(\vec{p}) \times \Face \square^n,
  \end{equation}
whose first factor is given by collapsing all interior vertices, and whose second factor is given by the chosen face of the cube.

To each $Q \in \cQ^{\mu}_{\kappa,\id}(\vec{p})$, we shall assign a category $A_{Q}$ indexing a Kuranishi presentation. To specify the data determining an object $\alpha$ in $A_Q$ we proceed in two steps.  First, we choose  a pair
\begin{equation}
  Q_\alpha < Q'_\alpha
\end{equation}
of elements of $\cQ^{\mu}_{\kappa,\id}(\vec{p})$ lying under $Q$. We write $\partial^{\alpha} \cQ^{\mu}_{\kappa,\id}(\vec{p})  $ for this set, and  $T_\alpha$ for the tree $T_{Q'_\alpha}$. 
In addition, for each subtree $T$ of $T_\alpha$, we pick an integer $S^\alpha_T$ and a sequence $ \vec{r}^{\alpha}_{T}$ of natural numbers consisting of $S^\alpha_T$ elements. 

To describe the remaining data that are required to specify the object $\alpha$, we introduce the moduli space
  \begin{equation}
    \cM_{\alpha} \to \square^{\sigma_{Q_\alpha}}
  \end{equation}
  consisting of a point in $\square^{\sigma_{Q_\alpha}}$, and a stable curve with points marked by the disjoint union of $\vec{p}$ with the elements of a collection of finite sets  $\{r^{\alpha}_{T,i}\}$, equipped with an identification of the tree of components connecting the marked points $\vec{p}$, with the tree corresponding to an element $Q'$ of  $\partial^{\alpha} \cQ^{\mu}_{\kappa,\id}(\vec{p})$, a framing along each node along the paths from some input to the output, and an identification of the components between the inputs and the output with the prestable curve associated by $\kappa$ to this point which is compatible with framings. More precisely, if $v$ is a vertex of $T_{Q'}$ obtained from a vertex $v$ of the tree $T_{\sigma_{Q'}}$, then we include in our data a choice of biholomorphism between $\Sigma_v$ and the Riemann surface obtained by forgetting all marked points, and collapsing all unstable components not lying on a path from input to output. As before, we impose the additional condition that the marked points labelled by $r^{\alpha}_{T,i}$ stabilise the unions of components corresponding to the image of $T$ in $T_{Q'}$. In particular, forgetting all but of the one of the sets $r^{\alpha}_{T,i}$ does not collapse any component lying over the tree $T$. Finally, we require that the framings of the components associated to vertices of $T_{Q'}$ which are not vertices of $T_{\sigma_{Q'}} $ are compatible in the sense that they induce the same identification with $\bP^1$, up to positive dilation.
  \begin{lem}  The moduli space $ \cM_{\alpha} $ is a manifold with boundary, stratified by the elements of $\cQ^{\mu}_{\kappa,\id}(\vec{p}) $ lying between $Q_\alpha$ and $Q'_\alpha $.                                   
  \end{lem}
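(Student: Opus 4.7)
The plan is to describe, for each element $Q'$ of $\partial^{\alpha} \cQ^{\mu}_{\kappa,\id}(\vec{p})$, a product local model of $\cM_{\alpha}$ along the corresponding stratum, and to assemble these local models into a global manifold-with-boundary structure whose stratification matches $\partial^{\alpha} \cQ^{\mu}_{\kappa,\id}(\vec{p})$.

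First, I would establish that the open stratum associated to $Q'$ is a manifold. Fixing $Q'$, the underlying topological type of the curves in that stratum is determined by $T_{Q'}$, and the space of such curves with the appropriate marked points is, by Condition \eqref{eq:all-bubble-stabilised}, the open stratum of the corresponding Deligne--Mumford moduli space, hence a smooth orbifold which the choice of framings along nodes on the paths between the inputs and output rigidifies to a manifold. Over the cube factor, the identification with the prestable curve associated by $\kappa$ to a point of $\square^{\sigma_{Q'}}$ is a closed condition that precisely cuts out the preimage of this identification under the evaluation/forgetful map to the universal curve of $\kappa$; because the components along the paths from inputs to output carry no additional marked points of their own beyond the $\{r^\alpha_{T,i}\}$, this condition defines a submanifold with the expected dimension. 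The dimension count then identifies this open stratum as a manifold of the expected codimension, given by the number of Floer-type interior vertices of $T_{Q'}$ beyond those of $T_{\sigma_{Q'}}$, plus the codimension of $\sigma_{Q'}$ in $\square^{\sigma_{Q_\alpha}}$.

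Next, I would upgrade these strata-wise descriptions to collar neighborhoods. Near a point of the stratum associated to $Q'$, I expect a local model of the form
\begin{equation}
 U \times [0,\epsilon)^{k(Q')},
\end{equation}
where $U$ is an open subset of the corresponding open stratum and $k(Q')$ is the number of codimension-$1$ refinements of $Q'$ bounding the point (equivalently, the number of Floer-type vertices of $T_{Q'}$ not in $T_{\sigma_{Q'}}$, plus the number of codimension-$1$ cube faces containing $\sigma_{Q'}$). The collar in cube directions is supplied by the smooth structure on $\square^n$; the collars in Floer-type gluing directions are supplied by the usual holomorphic gluing construction for framed nodes, which here is orientation-preserving and compatible with the stabilizing marked points because of Condition \eqref{eq:all-bubble-stabilised}. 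The compatibility among framings on the components arising from vertices of $T_{\sigma_{Q'}}$ (they induce identifications with $\bP^1$ that agree up to positive dilation) ensures that the Floer-type gluing parameters and the ``cube'' parameters do not interfere: the former glue nodes in directions transverse to the fibers of the universal curve of $\kappa$, while the latter move inside those fibers.

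Finally, I would check that the locally defined collar coordinates patch together. This is a matter of verifying that the gluing parameters associated to Floer-type vertices depend only on the nearby data (not on the choice of ambient $Q'$), and that the identifications with the $\kappa$-family of Riemann surfaces intertwine gluing on the ambient universal curve and gluing in $\cM_\alpha$ up to a smooth change of variables; these are standard in the framed gluing theory exposited in \cite[Appendix C]{Pardon2016} and underlie the existence of the gluing atlases in the definition of $\cH_n(\vec{H})$. Together these show that $\cM_\alpha$ is a topological manifold with generalized corners, stratified exactly by the partially ordered set of $Q' \in \cQ^{\mu}_{\kappa,\id}(\vec{p})$ with $Q_\alpha \leq Q' \leq Q'_\alpha$, as required. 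The main obstacle is not the stratumwise smoothness but the compatibility of the two kinds of collars---those coming from the cube $\square^{\sigma_{Q_\alpha}}$ and those coming from Floer-type node smoothing---which I would handle by exploiting the gluing atlas built into the definition of a Floer cube and the positive-dilation compatibility of framings.
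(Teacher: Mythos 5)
Your proposal is correct and follows essentially the same route as the paper: the paper also reduces the statement to exhibiting a product (corner) local model near each boundary point via the framed gluing construction, performed parametrically over the choices of marked points and moduli, with the only additional device being the observation that gluing parameters can be chosen so that the translated cylindrical ends avoid the marked points. Your extra discussion of the stratumwise manifold structure and the patching of collars is a more detailed elaboration of the same argument rather than a different one.
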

  \begin{proof}
  Since the topological type of the curve is constant along each stratum, the key point is to prove that a neighbourhood of each boundary point in $\cM_{\alpha}$ is modelled after the product of a neighbourhood in its stratum with a cube corresponding to the normal directions. This is a consequence of gluing: we may choose gluing parameters so that the corresponding translated cylindrical ends are disjoint from any choice of marked points on a Riemann surface representing a point in $\cM_{\alpha}$, and the glued surfaces yield the desired slice which is modelled after a corner. Performing this construction parametrically with respect to the choices of marked points and modulus on the underlying Riemann surfaces yields an open neighbourhood of the chosen point.
  \end{proof}
  \begin{rem}
    We point out at this stage that it is crucial that the unstable components associated to the vertices of Floer type are considered modulo real translation only (i.e. that we keep track of the framing data). Otherwise, the corresponding corner strata will be collapsed along the induced $S^1$ action.
  \end{rem}

  We can now proceed to complete the definition the domain of the Kuranishi presentation associated to $Q \in \cQ^{\mu}_{\kappa,\id}(\vec{p})$. As before, we write $G_\alpha$ for the product of permutations groups associated to the sequence of natural numbers $ \vec{r}^{\alpha}_{T}$ for all subtrees $T$ of $T_{\alpha}$:
  \begin{defin}
    An object $\alpha$ of $A_Q$ consists of  a pair $ Q_\alpha < Q'_\alpha $ of elements of $\cQ^{\mu}_{\kappa,\id}(\vec{p})$ under $Q$, together with the following data for each subtree $T$ of $T_\alpha$:
    \begin{enumerate}
    \item  a natural number $S^\alpha_T$,
    \item a sequence $ \vec{r}^{\alpha}_T$ of natural numbers of length $S^\alpha_T$, and
      \item a sequence $\vec{D}^{\alpha}_T $ of codimension $2$ submanifolds of $X$, consisting of $S^\alpha_T$ elements.
    \end{enumerate}
  We require as well a finite dimensional $G_\alpha$ representation $V_\alpha$ embedded in $\bR^{\infty} \otimes \bR[G_{\alpha}]   $, and a $G_\alpha$ equivariant map
    \begin{equation}
Y_\alpha \co      V_\alpha \to \Omega^{0,1}_{vert}(\cC_{\alpha}, C^{\infty}(M,TM))
    \end{equation}
    into the space of $(0,1)$-forms on the fibres of the universal curve over the moduli space $\cM_\alpha$, with support away from all the nodes.  
  \end{defin}

  As before, we then introduce the moduli space of maps
  \begin{equation}
         X_{\alpha}(\vec{p}) \to V_{\alpha},
  \end{equation}
  consisting of pairs $(u,v)$, with $v$ an element of $V_\alpha$ and $u$ a map from the fibre of the universal curve over $\cM_{\alpha}$ to $M$ satisfying the equation
  \begin{equation} \label{eq:deformed_Floer_many_inputs}
    \left( du - X_{H} \otimes \alpha \right)^{0,1} = Y(v_\alpha),    
  \end{equation}
mapping the points marked by $r_{T,i}^{\alpha}$ to $D_{T,i}^\alpha$, and  with asymptotic conditions along all nodes lying on the path from inputs to output given by the corresponding label of the tree underlying the chosen element of $\cM_{\alpha} $. In Equation \eqref{eq:deformed_Floer_many_inputs}, the function $H$ and the $1$-form $\alpha$ are the Floer data included in the choice of cube $\kappa$.

  The space $ X_{\alpha}(\vec{p})$ inherits a natural action of $G_\alpha$ from the action of this group on $\cM_\alpha$ (by relabelling marked points) and on $V_\alpha$. We write
  \begin{equation}
      X^{reg}_{\alpha}(\vec{p}) \subset  X_{\alpha}(\vec{p}) 
  \end{equation}
  for the open submanifold of curves which are transverse to the divisors at the marked points, and which are regular. Writing $
  \fs_\alpha$ for the projection map to $V_\alpha$, we have constructed a Kuranishi chart
  \begin{equation}
\bX_{\alpha} \equiv   (G_\alpha, \partial^{\alpha} \cQ^{\mu}_{\kappa,\id}(\vec{p}) ,    X^{reg}_{\alpha}(\vec{p}), V_\alpha, \fs_\alpha),
  \end{equation}
  where we abuse notation and write $ V_\alpha$ for the trivial vector bundle on $X^{reg}_{\alpha}(\vec{p})$ with this fibre.

  We define a morphism $\alpha \to \beta$ in $A_Q$ to consist of a factorisation in  $\cQ^{\mu}_{\kappa,\id}(\vec{p})$ as in Equation \eqref{eq:inclusion-strata}, inclusions of sequences of natural numbers and divisors as in Equations \eqref{eq:inclusion_marked_points} and \eqref{eq:inclusion-divisors}, and a map of representations inducing a commutative diagram as in Equation \eqref{eq:commutative_diagram_map_deformations}. This induces a map of the associated Kuranishi charts as in Equation \eqref{eq:map_Kuranishi-charts-morphisms}, which defines the desired functor
  \begin{equation} \label{eq:Kuranishi-presentation-stratum-multimodule}
    \bX^Q \co A_Q \to \Chart.    
  \end{equation}

  The Kuranishi presentations from Equation \eqref{eq:Kuranishi-presentation-stratum-multimodule} assemble into a Kuranishi diagram
  \begin{equation}
       \bX^{\mu}_{\kappa,\id}(\vec{p}) \co \cQ^{\mu}_{\kappa,\id}(\vec{p}) \to \Kur.
  \end{equation}
Indeed, each pair $Q \to Q'$ determines an inclusion of categories
\begin{equation} \label{eq:inclusion_categories_strata}
  A_{Q'} \to A_Q,    
\end{equation}
with the property that the $\bX^{Q'}$ agrees with the restriction of $\bX^Q$. We write
\begin{equation}
    \bX_{\kappa,\id}(\vec{p}) \in \cD \Kur^{\Gamma}.
\end{equation}
for the resulting $\bR$-graded Kuranishi diagram. At this stage, we note that a permutation $f$ of the inputs induces an isomorphism of $\bR$-graded Kuranishi diagrams
\begin{equation}
   \bX_{\kappa,\id}(\vec{p})  \cong \bX_{\kappa,\id}(f^*\vec{p}),
\end{equation}
induced by relabelling the incoming leaves of the trees that enter in the definition of the strata and of the Kuranishi presentations.

\subsection{Multimodule structure maps associated to a cube}
\label{sec:mult-struct-maps}

We now explain the multi-module structure maps on $\bX_{\kappa,\id}$. Recall that we have constructed module maps
\begin{align}
  \cP_{\bX_i}^{\lambda}(p'_i, p_i) \times   \cP_{\vec{\bX}}^{\mu}(\vec{p}) & \to \cP_{\vec{\bX}}^{\lambda+\mu}((p'_i, p_i) \circ_i \vec{p}) \\
  \cP_{\vec{\bX}}^{\mu}(\vec{p}) \times   \cP_{\bX}^{\lambda}(p, p') & \to \cP_{\vec{\bX}}^{\mu+\lambda}( \vec{p} \circ_1 (p,p')) 
\end{align}
in Section \ref{sec:multimodules-trees}. Since these are defined by concatenating trees, they naturally lift to maps
  \begin{align}
    \cP_{\bX_i}^{\lambda}(p'_i, p_i) \times    \cQ^{\mu}_{\kappa,\id}(\vec{p}) & \to \cQ^{\lambda+\mu}_{\kappa,\id}((p'_i, p_i) \circ_i \vec{p}) \\
    \cQ^{\mu}_{\kappa,\id}(\vec{p})  \times   \cP_{\bX}^{\lambda}(p, p') & \to  \cQ^{\mu+\lambda}_{\kappa,\id}( \vec{p} \circ_1 (p,p')),
  \end{align}
which preserve the projection to the cube in Diagram \eqref{eq:projection-category-multi-module-cube}.

Since we constructed $\cF$ as a category enriched in Kuranishi presentations (rather than in Kuranishi diagrams), we now apply the functor from $\Kur$ to $\cD \Kur$ that assigns to each presentation the diagram indexed by the strata: whenever $P$ is an element of $\cP^{\lambda}(p, p')$, we denote by
\begin{equation}
  A_{P} \subset A^{\lambda}(p,p')
\end{equation}
the subcategory consisting of thickening data as in Definition \ref{def:thickening-data} whose underlying strata lie under $P$. Writing $Q \circ_1 P$ for the image of the pair $(Q,P)$ in  $  \cQ_{\kappa,\id}^{\mu+\lambda}( \vec{p} \circ_1 (p,p'))$, there is a natural functor
\begin{align}
  A_Q \times A_P & \to A_{Q \circ_1 P} \\
  (\beta, \alpha) & \mapsto \beta \circ_1 \alpha
\end{align}
defined as follows: we set
\begin{equation}
  Q_{\beta \circ_1 \alpha}  < Q'_{\beta \circ_1 \alpha}
\end{equation}
so be the images of $(Q_{\beta}, Q_{\alpha})$ and $(Q'_\beta, Q'_{\alpha})$ under the product map. In this way, we find that $T_{\beta \circ_1 \alpha } $ is the concatenation of $T_\beta$ with $T_\alpha$.  We then define $S_{\beta \circ_1 \alpha}^T$ to agree with $S_{\beta}^T $ if $T \subset T_\beta$, with $S^T_\alpha$ if $T \subset T_\alpha$, and to otherwise vanish. We then take the sequences of marked points and of divisors for the subtrees of the composition to be given by those specified in the data for $\alpha$ and $\beta$.  This construction induces a homeomorphism of moduli spaces
\begin{equation} \label{eq:embedding_abstract_moduli-spaces-multimodule}
  \cM_{\beta} \times \cM_\alpha \to \cM_{\beta \circ_1 \alpha}.
\end{equation}
Taking the direct sum of vector spaces
\begin{equation}
  V_{\beta \circ_1 \alpha }  \equiv V_\beta \oplus V_{\alpha},
\end{equation}
we have a natural map
\begin{equation}
  Y_{\beta \circ_1 \alpha  }  \co V_{\beta \circ_1 \alpha } \to \Omega^{0,1}_{vert}(\cC_{ \beta \circ_1 \alpha}, C^{\infty}(M,TM)),
\end{equation}
which is the direct sum of $ Y_{\beta}$ and $Y_{\alpha}$ along the image of Equation \eqref{eq:embedding_abstract_moduli-spaces-multimodule}, extended by $0$ to the other components.

As in Section \ref{sec:comp-flow-categ}, Equation \eqref{eq:embedding_abstract_moduli-spaces-multimodule} induces a homeomorphism of moduli spaces
\begin{equation}
   X_{\beta}(\vec{p}) \times X_{\alpha}(p,p') \to X_{ \beta \circ_1 \alpha}( \vec{p} \circ_1 (p,p')),
\end{equation}
which is $\Gamma_{ \beta \circ_1 \alpha }$-equivariant, preserves regular loci, and intertwines the projections to $  V_{\beta \circ_1 \alpha } $. We thus obtain a map of Kuranishi charts
\begin{equation}
 \bX_{\beta} \times \bX_{\alpha} \to \bX_{ \beta \circ_1 \alpha},
\end{equation}
which is natural with respect to the choice of objects $ (\beta, \alpha) \in  A_Q \times A_P $, hence induces a map of Kuranishi presentations
\begin{equation}
  \bX^Q  \times     \bX^P \to     \bX^{Q \circ_1 P}.
\end{equation}

This entire construction is functorial with respect to the choice of pair
\begin{equation}
(Q,P) \in   \cQ^{\mu}_{\kappa,\id}(\vec{p}) \times \cP^{\lambda}(p, p'),  
\end{equation}
hence defines a map of Kuranishi diagrams
\begin{equation}
   \bX^{\mu}_{\kappa,\id}(\vec{p}) \times  \bX^{\lambda}(p, p' ) \to   \bX^{\mu+\lambda}_{\kappa,\id}(\vec{p} \circ_1 (p,p')).
\end{equation}
Assembling these maps for all choices of a real number $\mu$ and a non-negative real number $\lambda$, we obtain a map
\begin{equation}
  \bX_{\kappa,\id}(\vec{p}) \times  \bX(p, p' ) \to   \bX_{\kappa,\id}(\vec{p} \circ_1 (p,p'))
\end{equation}
of $\bR$-graded Kuranishi diagrams.

Applying the same construction to the operations $\circ_i$ for the inputs yields maps
\begin{align}
  \bX(p'_i, p_i) \times \bX_{\kappa,\id}(\vec{p})  \to   \bX_{\kappa,\id}((p'_i, p_i) \circ_i \vec{p}).
\end{align}

The associativity properties satisfied by these maps, and their compatibility with permutations of the inputs, are summarised by the following statement:
\begin{lem}
  The collection of $\bR$-graded Kuranishi diagrams $  \bX_{\kappa,\id}(\vec{p}) $, indexed by sequences $\vec{p}$ of time-$1$ orbits of the Hamiltonians $\overrightarrow{(J,H)}$, define a multimodule over the flow categories $\bX(\overrightarrow{(J,H)})$. \qed
\end{lem}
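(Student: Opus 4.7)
The plan is to verify the defining axioms of a $\cD\Kur^{\Gamma}$-enriched multimodule over the categories $\bX(\vec{H})$. These amount to (i) strict associativity of the output composition $\bX_{\kappa,\id}(\vec{p}) \times \bX(p,p') \to \bX_{\kappa,\id}(\vec{p} \circ_1 (p,p'))$ with the composition in $\bX(H_0)$, (ii) strict associativity of each input composition $\bX(p'_i,p_i) \times \bX_{\kappa,\id}(\vec{p}) \to \bX_{\kappa,\id}((p'_i,p_i) \circ_i \vec{p})$ with composition in $\bX(H_i)$, (iii) commutativity of the two actions on different input slots (and of the action on the output with that on any input), and (iv) equivariance with respect to the isomorphisms $\bX_{\kappa,\id}(\vec{p}) \cong \bX_{\kappa,\id}(f^*\vec{p})$ induced by permutations of inputs.

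First I would unravel each of these axioms at the level of the underlying stratifying categories: the required equalities on the partially ordered sets $\cQ^{\mu}_{\kappa,\id}$ and their partners from $\cP^{\lambda}$ reduce to the fact that concatenation of labelled trees is strictly associative, and that concatenating a tree to the output slot of another tree commutes with concatenation at any input slot. These are manifest from the graphical description. On the level of strata of the cube $\sigma_Q$, the composition maps are the identity, so no parametric subtlety arises.

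Next I would verify the axioms at the level of Kuranishi charts indexed by the objects $\alpha \in A_Q$. The key observation, inherited from the corresponding argument in Section~\ref{sec:comp-flow-categ}, is that the construction $\beta \circ_i \alpha$ is built out of: (a) concatenation of trees and sequences of marked points/divisors, (b) direct sums $V_{\beta \circ_i \alpha} = V_\beta \oplus V_\alpha$ of deformation subspaces of $\bR^{\infty} \otimes \bR[G_{\beta \circ_i \alpha}]$, and (c) the fibred product of moduli spaces $\cM_\beta \times \cM_\alpha \to \cM_{\beta \circ_i \alpha}$ together with the fibred product of the corresponding manifolds $X^{reg}$. Each of these ingredients is strictly associative—crucially the direct sum of subspaces of a fixed ambient space $\bR^{\infty} \otimes \bR[G]$ is strictly (not merely up to isomorphism) associative—so the composed maps on Kuranishi charts coincide on the nose rather than only up to natural transformation.

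Finally I would check equivariance with respect to permutations and the compatibility of output and input compositions. Permutation equivariance is automatic from the definition: a permutation $f$ acts only by relabelling the leaves of $T_Q$, the components of $\vec{p}$, and the factors of $V$, and the composition maps transport this relabelling in the obvious way. The main obstacle—to the extent there is one—is organizing the bookkeeping of the associativity for the mixed diagrams involving compositions at two different input slots or at an input and the output simultaneously, since one must track framings of the added Floer vertices along the arcs from inputs to output. Here the essential point is that the framing data is associated to the paths from a given leaf to the root and the concatenation construction preserves these paths, so the two resulting multimodule maps agree strictly. With all these compatibilities in hand, one concludes that $\{\bX_{\kappa,\id}(\vec{p})\}$ is a multimodule over $\bX(\vec{H})$ in $\cD\Kur^{\Gamma}$, as required.
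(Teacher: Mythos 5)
Your proposal is correct and follows essentially the same route as the paper: the lemma is stated with \qed because the preceding construction already supplies the structure maps, and the verification reduces—exactly as you say—to the strict associativity of tree concatenation, the strictness of direct sums of inhomogeneous-term subspaces of the ambient $\bR^{\infty}\otimes\bR[G]$, and the canonical identifications of product moduli spaces. (Your extra check of permutation equivariance is not part of the multimodule axioms of Appendix \ref{sec:multimodules} but is harmless, and "fibred product" should read "product" for the moduli spaces.)
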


We now assign to each cube $\kappa$ in $\cF_n(\overrightarrow{(J,H)})$ an element $\bX_{\kappa} $ of $\flow_n(\bX(\overrightarrow{(J,H)}))$: recall from Definition \ref{def:Kuranishi-n-multimodule} that this consists of the assignment of a multimodule over the sequence $\bX(\overrightarrow{(J,H)})$, enriched in the category of $\bR$-graded Kuranishi diagrams, for each permutation of the coordinates of $\square^n$, together with coherent isomorphisms for compositions of permutations.

The module assigned to a permutation $\phi$ is
\begin{equation}
  \bX_{\kappa,\phi} \equiv   \bX_{\phi^* \kappa, \id}.
\end{equation}
For each composition $\phi_1 \circ \phi_2$ of permutations, we consider the isomorphism
\begin{equation} \label{eq:isomorphism_Kuranishi-multimodule_pullback}
  \bX_{(\phi_1 \circ \phi_2)^* \kappa, \id} \to  \phi_2^* \bX_{\phi_1^* \kappa, \id }
\end{equation}
defined as follows: the action of $\phi_2$ on the faces of the cube determines an isomorphism of partially ordered sets
\begin{equation}
\phi_2 \co  \cQ^{\mu}_{\phi_1^* \kappa, \id}(\vec{p}) \cong  \cQ^{\mu}_{(\phi_1 \circ \phi_2)^* \kappa, \id}(\vec{p}) 
\end{equation}
lying over the action of $\phi_2$ of faces of the cube. The categories $A_Q$ and $A_{\phi_2 Q}$ associated to an element of the left hand side and its image are naturally isomorphic, since they are given by the same data, and the functors to $\Kur$ are intertwined by these isomorphisms. The isomorphism of Kuranishi multimodules in Equation \eqref{eq:isomorphism_Kuranishi-multimodule_pullback} then follows from the compatibility of the isomorphism with maps of the stratifying sets of the resulting Kuranishi diagrams, relabeling of inputs, and multimodule actions.
\begin{lem}
  The collection multimodules $ \{ \bX_{\phi^* \kappa,\id} \}$, together with the isomorphism of Equation \eqref{eq:isomorphism_Kuranishi-multimodule_pullback}, define an object of $\flow_n(\bX(\overrightarrow{(J,H)})) $ associated to each cube $\kappa$ in $\cF_n(\overrightarrow{(J,H)}) $. This assignment is equivariant with respect to the action of the symmetric group on these two sets. \qed
\end{lem}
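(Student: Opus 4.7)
The plan is to verify the three structural axioms in Definition \ref{def:Kuranishi-n-multimodule}, together with the orientation compatibility and the symmetric group equivariance, all of which will follow essentially tautologically from the fact that the construction of $\bX_{\kappa,\id}$ in Sections \ref{sec:kuran-diagr-assoc} and \ref{sec:mult-struct-maps} is defined by data (labelled trees, thickening data, choices of inhomogeneous terms) that is pulled back along the face poset $\Face\square^n$ of the cube.

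First I would record the obvious point that, for each permutation $\phi$ of $\{1,\ldots,n\}$, the underlying $\Cat^\Gamma$-module of $\bX_{\phi^*\kappa,\id}$ maps to $\cP_{\vec{\bX}}^{\Gamma}\times \Face\square^n$ via the composition of the canonical projection from Equation \eqref{eq:projection-category-multi-module-cube} with $\id\times\phi$; this identifies the target of the projection correctly for each $\phi$. The multimodule axioms (left and right actions of $\cP^{\Gamma_+}_{\bX_i}$ and $\cP^{\Gamma_+}_{\bX}$, respectively) were already established in Section \ref{sec:mult-struct-maps}, and these are manifestly natural in the data defining $\kappa$, so they are preserved under the relabeling $\phi^*$.

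The key step is the verification of the cocycle condition for triple compositions $\phi_1\circ\phi_2\circ\phi_3$. Unwinding the construction of the pullback isomorphism in Equation \eqref{eq:isomorphism_Kuranishi-multimodule_pullback}, both of its iterated applications $\phi_3^*\bX_{\phi_1\circ\phi_2,\id}$ and $(\phi_2\circ\phi_3)^*\bX_{\phi_1^*\kappa,\id}$ arise from relabelings of the face structure by the composite $\phi_2\circ\phi_3$, and the identifications at the level of stratifying partially ordered sets $\cQ^\mu_{\ast,\id}(\vec{p})$ coincide because the categories $A_Q$ associated to a stratum $Q$ and its image under a permutation depend only on the tree structure (which is invariant under relabeling of the coordinates of $\square^n$) and on the choices of marked points, divisors, and inhomogeneous terms (which are carried along). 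Consequently the underlying functors to $\Chart$ are strictly equal, not merely naturally isomorphic, and the required diagram commutes on the nose.

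Next I would check the orientation constraints. Since $\kappa$ prescribes an orientation of each parametrising cube, the pullback $\phi^*\kappa$ acquires the orientation twisted by the sign of the permutation of coordinates. Combined with the orientation of the underlying Kuranishi diagram relative $\ro_{\vec{p}}\otimes\ro_{[0,1]^n}$ established in Sections \ref{sec:flow-categ-assoc} and \ref{sec:kuran-diagr-assoc} (via the standard analysis of orientations in Floer theory), this gives the oriented relative dimension. The three cases of codimension one strata listed after Definition \ref{def:Kuranishi-n-multimodule}---the boundary facets of $\square^n$ and the two types of breakings along inputs/output---correspond respectively to the three types of codimension one strata in the moduli spaces of Floer solutions: strata over the boundary of the cube parameter, breakings of a Floer cylinder attached to an input orbit, and breakings attached to the output orbit. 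In each case the identification of the conormal orientation line with $\ro_{\bR^{p'}}$ (respectively with a coordinate of $\square^n$) is the one inherited from the standard gluing picture, matching the prescription in the definition.

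Finally, the equivariance with respect to relabeling inputs is straightforward: a permutation $f$ of input Hamiltonians induces a bijection of leaves of every tree $T_{Q}$, and the resulting rebijection on thickening data $(V_\alpha, Y_\alpha, \vec{D}^\alpha)$ commutes with the construction of moduli spaces and with the pullback isomorphism in Equation \eqref{eq:isomorphism_Kuranishi-multimodule_pullback}. I expect the only mildly delicate point to be ensuring that the choice of ordering used in the concatenation of direct sums (e.g.\ the definition of $V_{\beta\circ_1\alpha}$) behaves compatibly under $\phi^*$; this is handled by the convention, already implicit in Definition \ref{def:thickening-data}, that all spaces of inhomogeneous terms are embedded in $\bR^\infty\otimes\bR[G_\alpha]$, so the relevant identifications are inherited from the standard product structure of this ambient space.
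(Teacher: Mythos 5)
Your proposal is correct and follows essentially the same route as the paper, which offers no separate proof beyond the construction immediately preceding the lemma: the pullback isomorphism of Equation \eqref{eq:isomorphism_Kuranishi-multimodule_pullback} is defined by relabeling faces of the cube, the categories $A_Q$ and $A_{\phi_2 Q}$ are given by identical data (trees, marked points, divisors, inhomogeneous terms), and the cocycle condition for triple compositions holds because iterated relabelings compose strictly. One small clarification: the equivariance asserted in the last sentence of the lemma refers to the $\Sigma_n$-action on the two sets $\cH_n(\vec{H})$ and $\flow_n(\bX(\vec{H}))$ (i.e.\ permuting cube coordinates), which is immediate from the definition $\bX_{\kappa,\phi}\equiv\bX_{\phi^*\kappa,\id}$ together with your cocycle verification, whereas the input-relabeling equivariance you check in your final paragraph is a separate (also true and later needed) compatibility.
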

\subsection{Composition of multimodules of Kuranishi diagrams}
\label{sec:comp-mult-kuran}

We now consider a pair $\overrightarrow{(J,H)}_+$ and $\overrightarrow{(J,H)}_-$ of sequences of Hamiltonians, with $H_+ = H_{-,i}$, and cubes
\begin{equation}
  \kappa_{\pm} \in \cF_{n_\pm}(\overrightarrow{(J,H)}_\pm).
\end{equation}
For each pair $\vec{p}_\pm$ of sequences of time-$1$ orbits of $ \overrightarrow{(J,H)}_+$ with the property that $p_+ = p_{-,i}$, and for each pair $\mu_\pm$ of real numbers,  we have a map of partially ordered sets
\begin{equation} \label{eq:map_circ_i-posets}
\circ_i \co   \cQ^{\mu_+}_{\kappa_+,\id}(\vec{p}_+) \times  \cQ^{\mu_-}_{\kappa_-,\id}(\vec{p}_-) \to  \cQ^{\mu_+ + \mu_-}_{\kappa_+ \circ_i \kappa_-,\id}( \vec{p}_+ \circ_i \vec{p}_-)
\end{equation}
defined by taking the products of strata of cubes, and by concatenating trees. Given a pair $(Q_+, Q_-)$ lying in the source of this map, we have a functor
\begin{align}
  A^{Q_+} \times A^{Q_-} & \to A^{Q_+ \circ_i Q_-} \\
  (\alpha_+, \alpha_-) & \mapsto \alpha_+ \circ_i \alpha_-
\end{align}
obtained as follows: we assign to pairs $Q_{\alpha_+} \to Q'_{\alpha_+}$ and  $Q_{\alpha_-} \to Q'_{\alpha_-}$ the image pair
\begin{equation}
    Q_{\alpha_+ \circ_i \alpha_-} \to Q'_{\alpha_+ \circ_i \alpha_-}
  \end{equation}
  under Equation \eqref{eq:map_circ_i-posets}. The tree $T_{ \alpha_+ \circ_i \alpha_-}$ is then naturally isomorphic to the concatenation of $T_{\alpha_+}$ with $T_{\alpha_-}$. We set the collections of marked points and divisors associated to any subtrees of $ T_{ \alpha_+ \circ_i \alpha_-}$ to agree with the marked points and divisors associated to the corresponding subtree of $T_{\alpha_+}$ or $T_{\alpha_-}$ if it is contained in it,  and to be empty otherwise. This induces a homeomorphism of moduli spaces
\begin{equation}
  \cM_{\alpha_+} \times \cM_{\alpha_-} \to \cM_{\alpha_+ \circ_i \alpha_- }
\end{equation}
lying over the products of the cubes associated to $Q_+$ and $Q_-$. Defining $V_{\alpha_+ \circ_i \alpha_- } $ to be the direct sum of $V_{\alpha_+}$ and $V_{\alpha_-}$, we have a natural map
\begin{equation}
  Y_{\alpha_+ \circ_i \alpha_-  }  \co V_{\alpha_+ \circ_i \alpha_- } \to \Omega^{0,1}_{vert}(\cC_{\alpha_+ \circ_i \alpha_- }, C^{\infty}(M,TM)),
\end{equation}
given by the direct sum of $ Y_{\alpha_\pm}$ along the image of the embedding of moduli spaces, extended by $0$ to the other components of $  \cM_{\alpha_+ \circ_i \alpha_- }$.

Passing to regular loci of the associated moduli spaces of maps, we obtain from this construction a map of  Kuranishi charts
\begin{equation}
 \bX_{\alpha_+} \times \bX_{\alpha_-} \to \bX_{\alpha_+ \circ_i \alpha_- } ,
\end{equation}
which is natural with respect to the choice of objects $ \alpha_\pm \in  A_{Q_\pm}$, hence induces a map of Kuranishi presentations
\begin{equation}
  \bX^{Q_+}  \times     \bX^{Q_-} \to     \bX^{Q_+ \circ_i Q_-}.
\end{equation}
Since the maps in Equation \eqref{eq:inclusion_categories_strata} are inclusions of categories which are compatible with the Kuranishi presentations we have defined, this construction is again functorial with respect to the choice of strata $Q_\pm$, hence defines a map
\begin{equation} \label{eq:product_multimodules-map}
  \bX_{\kappa_+,\id}(\vec{p}_+) \times   \bX_{\kappa_-,\id}(\vec{p}_-) \to   \bX_{\kappa_+ \circ_i \kappa_-,\id}( \vec{p}_+ \circ_i \vec{p}_-)
\end{equation}
of $\bR$-graded Kuranishi diagrams.

At this stage, we consider the square of $\bR$-graded Kuranishi diagrams
\begin{equation} \label{eq:diagram_composition_multimodules-middle}
  \begin{tikzcd}
    \bX_{\kappa_+,\id}(\vec{p}_+) \times \bX(p,p_{-,i}) \times  \bX_{\kappa_-,\id}(\vec{p}_-) \ar[d] \ar[r] & \bX_{\kappa_+,\id}(\vec{p}_+ \circ_1 (p,p_{-,i})) \times  \bX_{\kappa_-,\id}(\vec{p}_-) \ar[d] \\
  \bX_{\kappa_+,\id}(\vec{p}_+) \times \bX(p,p_{-,i}) \times  \bX_{\kappa_-,\id}(\vec{p}_-) \ar[r] &  \bX_{\kappa_+ \circ_i \kappa_-,\id}( \vec{p}_+ \circ_1 (p,p_{-,i}) \circ_i \vec{p}_-).
  \end{tikzcd}
\end{equation}
We claim that this diagram (strictly) commutes: first, it strictly commutes at the level of the partially order set of strata because the corresponding maps are defined by concatenation of trees. Next, we see that the indexing diagrams for the categories of Kuranishi presentation strictly commute because we have defined our choices of sequences of marked points and divisors to be associated to the subtrees of the trees underlying the strata. Finally, the diagram at the level of Kuranishi presentations commutes because of the uniqueness of the associator isomorphisms of sets.

An easier analysis shows that all the maps in Diagram \eqref{eq:diagram_composition_multimodules-middle} commute with the multimodule structure maps associated to the sequence $ \overrightarrow{(J,H)}_+  \circ_i \overrightarrow{(J,H)}_- $. Its compatibility with the symmetric group actions implies:
\begin{lem}
  The collection of maps given by applying Equation \eqref{eq:diagram_composition_multimodules-middle} to composable sequence of time-$1$ orbits of $\overrightarrow{(J,H)}_+$ and $\overrightarrow{(J,H)}_-$ defines a morphism
  \begin{equation}
     \bX_{\kappa_+} \circ_i   \bX_{\kappa_-} \to   \bX_{\kappa_+ \circ_i \kappa_-}
   \end{equation}
in $\flow_{n_+ + n_-}(\bX(\overrightarrow{(J,H)}_+ \circ_i \overrightarrow{(J,H)}_-)) $. \qed
\end{lem}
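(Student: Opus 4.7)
The plan is to check that the collection of maps \eqref{eq:product_multimodules-map} constructed in the preceding paragraph assembles into a single morphism of the multi-cubical flow category, by verifying compatibility with (i) the symmetric-group action built into the definition of $\flow_n$, (ii) the pullback isomorphisms of Equation \eqref{eq:choice_isomorphism_cube-Kur}, and (iii) the multimodule action maps over $\bX(\vec{H}_+ \circ_i \vec{H}_-)$. Recall that, by Definition \ref{def:Kuranishi-n-multimodule}, a morphism in $\flow_{n_+ + n_-}(\bX(\vec{H}_+ \circ_i \vec{H}_-))$ between two such multimodules is a collection of maps of $\cD \Kur^{\Gamma}$-enriched multimodules indexed by permutations of $\{1, \ldots, n_+ + n_-\}$, intertwining the pullback isomorphisms associated to compositions of permutations. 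On the other hand, a permutation $\phi$ of $\{1, \ldots, n_+ + n_-\}$ induces, through the ordering of the images of $\{1, \ldots, n_+\}$ and $\{n_+ + 1, \ldots, n_+ + n_-\}$, a pair $(\phi_+, \phi_-)$ of permutations, and by Definition \ref{def:composition-flow-bimodules} this is exactly the data used to construct the $\phi$-component of the composition $\bX_{\kappa_+} \circ_i \bX_{\kappa_-}$.

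First, I would specify the required map for each permutation $\phi$: its source is, strata by strata, the fibre product of $\bX_{\phi_+^* \kappa_+, \id}$ with $\bX_{\phi_-^* \kappa_-, \id}$, and its target is $\bX_{\phi^*(\kappa_+ \circ_i \kappa_-), \id}$. The key geometric identity I would use is that $\phi^*(\kappa_+ \circ_i \kappa_-) = \phi_+^* \kappa_+ \circ_i \phi_-^* \kappa_-$ as cubes of Floer data in $\cH$: this is immediate from the definition of the composition in $\cH$ as concatenation of labelled trees, since the permutation $\phi$ acts only by relabelling coordinates, and the splitting into $(\phi_+, \phi_-)$ is induced precisely by the fact that the coordinates of the source cube are separated into those contributed by $\kappa_+$ and by $\kappa_-$. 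Given this identity, Equation \eqref{eq:product_multimodules-map} applied to the cubes $\phi_+^* \kappa_+$ and $\phi_-^* \kappa_-$ yields the required map, and by construction its projection to $\cP_{\vec{\bX}}^{\Gamma} \times \Face \square^{n_+ + n_-}$ coincides with the canonical map of parametrising categories already used in Definition \ref{def:composition-flow-bimodules}.

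Second, I would verify compatibility with the pullback isomorphisms \eqref{eq:choice_isomorphism_cube-Kur}. Writing a composition $\phi = \phi^1 \circ \phi^2$ in $\Sigma_{n_+ + n_-}$ and the corresponding decompositions $\phi^j = (\phi^j_+, \phi^j_-)$, this amounts to the commutativity of a square in which the horizontal arrows are instances of \eqref{eq:product_multimodules-map} and the vertical arrows are the pullback isomorphisms. Both sides trace back to the same assignments of marked points, divisors, representations, and inhomogeneous terms, because in Equation \eqref{eq:product_multimodules-map} all these data are packaged from the factors independently, and the pullback isomorphisms act only through relabelling of the cube coordinates. The required commutativity then follows from the analogous (tautological) compatibility of \eqref{eq:isomorphism_Kuranishi-multimodule_pullback} for $\kappa_+$ and $\kappa_-$ individually. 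The compatibility with triple compositions of permutations is likewise reduced to the corresponding coherence for $\bX_{\kappa_\pm}$, which was already recorded in the previous subsection.

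Finally, I would check compatibility with the multimodule action of $\bX(\vec{H}_+ \circ_i \vec{H}_-)$. There are three families of action maps to consider, depending on whether one acts on an input of $\vec{H}_-$ different from the $i$\textsuperscript{th} one, on an input coming from $\vec{H}_+$, or on the output. In each case the compatibility diagram factors through a square of the shape of \eqref{eq:diagram_composition_multimodules-middle}, whose commutativity was already observed there at the level of stratifying categories (since both routes collapse the same trees), at the level of indexing categories of Kuranishi presentations (since marked points and divisors are distributed by subtree), and at the level of Kuranishi charts (by uniqueness of associators of sets and by the direct-sum definition of $V_{\alpha_+ \circ_i \alpha_-}$ and of $Y_{\alpha_+ \circ_i \alpha_-}$). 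The main obstacle I anticipate is purely bookkeeping: ensuring that the signs of the induced maps on orientation lines agree on both sides of every compatibility square, in particular that the conventions for normal orientations to boundary strata and for Koszul signs coming from the permutation $\phi \mapsto (\phi_+, \phi_-)$ match. This is handled by the same orientation conventions that were fixed after Lemma \ref{lem:codim_1-elt-bimodules}, and I would verify it directly on each of the three kinds of action maps.
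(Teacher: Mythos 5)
Your proposal is correct and follows essentially the same route as the paper, which disposes of this lemma by appeal to the strict commutativity of Diagram \eqref{eq:diagram_composition_multimodules-middle} (for descent to the coequaliser defining $\bX_{\kappa_+}\circ_i\bX_{\kappa_-}$), an ``easier analysis'' for the outer multimodule actions, and compatibility with the symmetric group actions via the splitting $\phi \mapsto (\phi_+,\phi_-)$. You in fact supply more detail than the paper does — in particular the identity $\phi^*(\kappa_+\circ_i\kappa_-) = \phi_+^*\kappa_+\circ_i\phi_-^*\kappa_-$ and the coherence with the pullback isomorphisms \eqref{eq:isomorphism_Kuranishi-multimodule_pullback} — all of which is consistent with the paper's construction.
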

Having arranged for the marked points and divisors to be indexed by the vertices of trees, the associativity of concatenation of trees implies:
\begin{lem}
  Consider a triple $\overrightarrow{(J,H)}_+$, $\overrightarrow{(J,H)}$, and  $\overrightarrow{(J,H)}_-$ of Hamiltonians, and either (i) a pair $i < j$ of labels for the inputs of  $\overrightarrow{(J,H)}_-$ with $H_{-,i}=H$ and $H_{-,j}=H_+$, or (ii) a pair $i$ and $j$ of labels for the inputs of  $\overrightarrow{(J,H)}_-$ and $\overrightarrow{(J,H)}$ with $H_{-,i}=H$ and $H_{j}=H_+$. In the two respective situations, the diagram associated to taking composition in the two possible orders commutes up to the natural isomorphism from Lemma \ref{lem:multi-category-flow}
 \qed
\end{lem}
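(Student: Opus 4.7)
The plan is to trace through the explicit combinatorial descriptions of the three-fold compositions and reduce the associativity claim to the strict associativity of grafting labelled trees at disjoint leaves, combined with the associator on Cartesian products of sets already recorded in Lemma \ref{lem:multi-category-flow}. The two cases are structurally parallel: case (i) corresponds to the first natural isomorphism in that lemma (iterated nesting), while case (ii) corresponds to the second (grafting into disjoint inputs), with the commutation of disjoint graftings replacing the associativity of nested ones. I will focus on case (i).

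The first step is to identify the partially ordered sets of strata. Tracing through Equation \eqref{eq:map_circ_i-posets}, both $\cQ^{\mu_{-}+\mu+\mu_{+}}_{\kappa_{-} \circ_i (\kappa \circ_j \kappa_{+}), \id}(\vec{p})$ and $\cQ^{\mu_{-}+\mu+\mu_{+}}_{(\kappa_{-} \circ_i \kappa) \circ_{i+j-1} \kappa_{+}, \id}(\vec{p})$ consist of a face of $\square^{n_{-}+n+n_{+}}$, a labelled tree obtained by grafting the three trees $T_{\sigma_\pm}$ and $T_\sigma$ at the designated leaves, and an assignment of bivalent Floer-type vertices with positive labels. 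Since grafting at disjoint leaves is strictly associative, the two sets are equal on the nose, and the resulting identification commutes with projection to the product of cubes.

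The second step is to compare, for corresponding strata $Q$, the categories $A_Q$ of thickening data together with their functors to $\Chart$. This identification is essentially built into our definitions: the marked points, stabilising divisors, and subspaces $V_\alpha$ are indexed by the subtrees of the tree underlying $Q$, and either order of composition partitions these subtrees among the three factors in the same way, producing the same category of thickening data and the same moduli spaces $\cM_\alpha$ and $X_\alpha$. At the level of Kuranishi charts, the two orderings produce products $(\bX_{\alpha_{-}} \times \bX_\alpha) \times \bX_{\alpha_{+}}$ and $\bX_{\alpha_{-}} \times (\bX_\alpha \times \bX_{\alpha_{+}})$ identified by the canonical associator of Cartesian products; the splittings $(V_{\alpha_{-}} \oplus V_\alpha) \oplus V_{\alpha_{+}}$ versus $V_{\alpha_{-}} \oplus (V_\alpha \oplus V_{\alpha_{+}})$ of the obstruction spaces are intertwined by this associator, and so are the corresponding assignments $Y_{\alpha_{-}} + Y_\alpha + Y_{\alpha_{+}}$ of inhomogeneous terms. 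This is precisely the data that gets promoted to the natural isomorphism of Lemma \ref{lem:multi-category-flow} when we pass from individual Kuranishi charts to flow multimodules.

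The main obstacle I anticipate is the bookkeeping required to verify that the triple associator just described is compatible with all of the ambient structure on $\flow_{n_{-}+n+n_{+}}(\bX(\vec{H}_{-} \circ_i \vec{H} \circ_{i+j-1} \vec{H}_{+}))$: namely with the $\Sigma_{n_{-}+n+n_{+}}$-equivariance, with the two-sided actions of the underlying flow categories, and with the isomorphisms of Equation \eqref{eq:isomorphism_Kuranishi-multimodule_pullback} that encode pullback along permutations of cube coordinates. Each of these checks, however, reduces to a corresponding statement for the stratifying posets of labelled trees and for products of individual Kuranishi charts, where the required compatibilities are manifest from the combinatorial description in Section \ref{sec:comp-mult-kuran}. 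Assembling these verifications, the two composite morphisms of multimodules agree after application of the natural isomorphism from Lemma \ref{lem:multi-category-flow}, completing the proof.
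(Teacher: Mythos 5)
Your proposal is correct and follows essentially the same route as the paper: the paper's (one-line) justification is precisely that the marked points and divisors are indexed by subtrees of the stratifying trees, so the stratifying posets and thickening-data categories agree on the nose under either order of composition, and the only discrepancy at the level of Kuranishi presentations is the canonical associator of Cartesian products, which is exactly the natural isomorphism of Lemma \ref{lem:multi-category-flow}. One cosmetic slip: you have the two cases matched to the wrong isomorphisms of that lemma --- case (i) (both graftings into distinct inputs of $\vec{H}_-$) is the disjoint-grafting/commutation isomorphism, while case (ii) is the nested one, and the computation you carry out under the heading ``case (i)'' is in fact the nested case; since the argument is identical for both, this does not affect correctness.
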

\subsection{Construction of the multifunctor}
\label{sec:constr-mult}

It remains at this stage to assemble the constructions of the previous section into a multifunctor from the multicategory of Hamiltonians to the multicategory $\Flow$ of flow categories. The key missing ingredient is that an element of $\Flow_n(\bX(\overrightarrow{(J,H)}))$ contains the datum of compatible factorisations of the flow multimodules associated to each stratum of the $n$-cube as a multicomposition. The next result asserts the existence and uniqueness of such lifts:
\begin{lem} \label{lem:factorisation-cubes}
  For each cube $\kappa \in \cF_n(\overrightarrow{(J,H)})$, there is a tree $T_{\kappa}$, with edges labelled by Hamiltonians so that the incoming ones are labelled by $\overrightarrow{(J,H)}$, a collection $n_v$ of natural numbers indexed by the vertices of $T_\kappa$ whose sum is strictly smaller than $n$, a collection of non-degenerate cubes $\kappa_{v} \in \cF_{n_v}(\overrightarrow{(J,H)})$, a permutation $\phi$ of $\{1, \ldots, n\}$, and an isomorphism
  \begin{equation}
\Psi \co [0,1]^{n - \sum n_v} \times \underset{T_\kappa}{\bigcirc} \kappa_v \cong \phi^* \kappa.
\end{equation}
This choice is unique if we assume that the restriction of $\phi$ to the collection of coordinates associated to each cube $\kappa_v$, and to the degenerate coordinates, preserves order, and satisfies the following universal property: for each choice of data $(T'_\kappa, \{n_{v'}\}, \kappa_{v'}, \phi', \Psi')$ satisfying these properties, then there is a map of trees $\pi \co T_\kappa \to T'_\kappa$, permutations $\phi_{v'}$ of $\{1, \ldots, n_{v'}\} $, and isomorphisms
   \begin{equation} \label{eq:factorisation-less-refined}
   \Psi_{v'} \co  [0,1]^{n_{v'} - \sum n_v} \underset{\pi^{-1} T_{v'}}{\bigcirc} \kappa_v \cong \phi^*_{v'} \kappa,
   \end{equation}
   so that, assuming that the permutations $\phi_{v'} $ are chosen to be order preserving on the subintervals associated to each factor, the map $\Psi$ is obtained by taking the composition of $\Psi'$ with the product of the maps $\Psi'_v$.
\end{lem}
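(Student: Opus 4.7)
The plan is to extract the tree $T_\kappa$ and the factor cubes $\kappa_v$ from the local combinatorics of $\kappa$ near its codimension one strata, then check that the resulting decomposition is forced on us. I would begin by analysing the functor $T_{\bullet}$ from $\Face \square^n$ to directed labelled trees encoded by $\kappa$, concentrating on what happens at each facet $\sigma_i^\varepsilon$ (the face where the $i$\th coordinate equals $\varepsilon \in \{0,1\}$). By the gluing atlas built into the definition of $\cH_n(\vec{H})$, the tree $T_{\sigma_i^\varepsilon}$ is obtained from the top stratum tree $T_{\top}$ either by adding a single new edge $e_i^\varepsilon$ (the $i$\th coordinate then provides the corresponding gluing parameter) or by doing nothing. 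Call the $i$\th direction \emph{active} if at least one of $\sigma_i^0, \sigma_i^1$ produces a new edge, and \emph{degenerate} otherwise. A crucial point is that two different coordinate directions cannot produce the same edge, since the gluing parameter for a given node is pinned down by the smooth chart in the atlas; this ensures that the assignment $i \mapsto e_i^\varepsilon$ is injective on active coordinates.

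From this data I would build $T_\kappa$ by inserting into $T_{\top}$ all the edges $e_i^\varepsilon$ created at the facets at the positions specified by the atlas; the hypothesis that each $\cP^\alpha \setminus \{P_0, P_1\}$ realises to a sphere guarantees that the resulting combinatorial object is again a tree. For each vertex $v \in V(T_\kappa)$, I let $I_v \subseteq \{1,\ldots,n\}$ be the set of active coordinates whose associated edge lies \emph{strictly inside} the subtree at $v$ (versus being an edge adjacent to $v$ in $T_\kappa$), and set $n_v \equiv |I_v|$; the remaining coordinates correspond to either edge-producing directions already recorded in $T_\kappa$ or to fully degenerate directions, and the latter furnish the factor $[0,1]^{n - \sum n_v}$. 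The cube $\kappa_v$ is then defined as the restriction (after applying $\phi^{-1}$) of $\kappa$ to the subcube of $\square^n$ spanned by $I_v$, which by construction is a cube of Floer data for the sequence $\vec{H}_v$ read off from the edges adjacent to $v$. The permutation $\phi$ is determined by grouping the coordinates vertex-by-vertex under any fixed traversal of $T_\kappa$, and then keeping the induced order on each group together with the trailing degenerate block; the order-preservation condition in the statement makes $\phi$ unique.

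The isomorphism $\Psi$ is supplied directly by the gluing atlas: near each stratum of $\square^n$, the atlas exhibits $\kappa$ as the image of a prescribed gluing of the $\kappa_v$ along the edges of $T_\kappa$, using the active coordinates as gluing parameters. Strict associativity of $\underset{T_\kappa}{\bigcirc}$ in $\cH$, which holds because multicomposition is defined by bare concatenation of trees, then promotes these local identifications to a global equality $\Psi \co [0,1]^{n - \sum n_v} \times \underset{T_\kappa}{\bigcirc} \kappa_v \cong \phi^* \kappa$. For the universal property, I observe that any other tuple $(T'_\kappa, \{\kappa_{v'}\}, \phi', \Psi')$ satisfying the analogous order-preservation constraint must in particular represent every edge of $T'_\kappa$ by an active coordinate of $\kappa$ and every coordinate inside $\kappa_{v'}$ as internal to $I_{v'}$ in the above sense. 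Hence the edge set of $T'_\kappa$ embeds in that of $T_\kappa$, yielding a collapse map $\pi \co T_\kappa \to T'_\kappa$, and the permutations $\phi_{v'}$ together with the identifications $\Psi_{v'}$ are then forced by reapplying the gluing-atlas argument inside each $\kappa_{v'}$. The main obstacle will be verifying that the global $\Psi$ is well defined where the neighbourhoods of different strata overlap, which is exactly the place where the compatibility clause on the atlas (that gluing maps for nested strata commute) has to be used in earnest, with careful bookkeeping of how the interleaving permutation $\phi$ interacts with the ordering of gluing parameters.
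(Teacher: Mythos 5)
There is a genuine gap, and it comes from using the wrong criterion to detect the factorisation. You propose to read off the decomposition from the breaking pattern at the facets of $\square^n$: a direction is ``active'' if some facet in that direction creates a new edge, and ``degenerate'' otherwise, with the degenerate directions supplying the factor $[0,1]^{n-\sum n_v}$. But degeneracy of a cube in $\cH_n(\vec{H})$ means that the Floer data $(H_v,\alpha_v)$ and the families of Riemann surfaces are pulled back under a projection of cubes; it is perfectly possible for the data to depend nontrivially on a coordinate along which no breaking occurs at either facet (e.g.\ a $1$-parameter family of continuation data interpolating between two regular choices). Your construction would misclassify such a direction as degenerate, and the asserted isomorphism $\Psi$ would fail. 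The paper instead partitions the coordinates by \emph{dependence}: it considers pairs $(T,I)$ of a subtree $T$ of the top-stratum tree $T_{[\square^n]}$ and a subset $I$ of coordinates such that the Floer datum at a vertex depends on a coordinate of $I$ exactly when that vertex lies in $T$. This poset is closed under intersection, so it has a canonical set of minimal elements, which is what produces both the partition $\{I_v\}$ and, crucially, the universal property (any other factorisation gives a coarser equivalence relation on vertices, hence is refined by the minimal one). Your proposal has no analogue of this minimality argument, and your verification of the universal property is correspondingly circular.

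A second, structural problem: you build $T_\kappa$ by \emph{inserting} the new edges $e_i^\varepsilon$ into $T_{[\square^n]}$, i.e.\ as a refinement of the top-stratum tree. But in the multicomposition $\underset{T_\kappa}{\bigcirc}\kappa_v$ the top-stratum tree of the composite is obtained by grafting the top-stratum trees of the factors $\kappa_v$ along $T_\kappa$, so $T_\kappa$ must be a \emph{quotient} (collapse) of $T_{[\square^n]}$, with each vertex $v$ corresponding to the subtree carried by the factor $\kappa_v$; a maximally refined tree would force every $\kappa_v$ to have a single-vertex top stratum, which is not the general situation. Relatedly, your bookkeeping of coordinates does not close up: coordinates that are ``edge-producing but already recorded in $T_\kappa$'' are excluded from every $n_v$ and from the degenerate block, so the dimensions on the two sides of $\Psi$ need not match. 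The correct recipe is the paper's: $n_v=|I_v|$ where $I_v$ is the set of coordinates on which the data of the $v$-th subtree depends, the factor cubes are the restrictions of $\kappa$ to the subcubes spanned by the $I_v$, and the leftover coordinates are exactly those on which nothing depends.
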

\begin{proof}
  We first consider the case of a non-degenerate element. Recall that there is a cube $T_{[\square^n]}$ associated by $\kappa$ to the top stratum of the cube. The key point is to construct a quotient $T_{\kappa}$ of this tree, and a corresponding partition of the coordinates by vertices of $T_{\kappa}$ (some elements of this partition may be empty). This quotient is constructed as follows: consider the set of pairs $(T,I)$, consisting of a subtree $T$ of $T_{[\square^n]}$, and a (possibly empty) subset $I$ of the coordinates $\{1, \ldots, n\}$ with the property that the Floer data associated to a vertex $v$ of $T_{[\square^n]}$ depend on a coordinate lying in $I$ if and only if $v$ is a vertex of $T$ (note that this condition is vacuous if $I$ is empty). This collection of pairs is partially ordered by inclusion of trees and of subsets of the coordinates, and this partial order has a unique set of minimal elements because it is closed under intersections. Since the pair $(T_{[\square^n]}, \{1, \ldots, n\})$ is an element of this set, the collection of minimal elements provides a partition of the tree, and we define $T_\kappa$ to be the quotient $T_{[\square^n]}$ by the resulting equivalence relation. Choosing an ordering of the vertices of $T_\kappa$, and hence a decomposition of $\{1, \ldots, n\}$ as the ordered union of the intervals $\{1, \ldots, n_v\}$, the permutation $\phi$ can be chosen to be the unique map whose restriction to $\{1, \ldots, n_v\}$ preserves order with image $I_v$. The cubes $\kappa_{v'}$ are then defined to be the non-degenerate cubes associated to the restriction of $\kappa$ to the inverse image of $v$ in $T_{[\square^n]}$, and the factorisation follows by construction. We extend this construction to arbitrary (degenerate) cubes by taking the product with the intervals labelled by the coordinates corresponding to the degeneracies.

  The universal property is proved by noting that every other factorisation provides an equivalence relation on the set of vertices of $T_{[\square^n] }$ which is refined by the equivalence relation associated to the minimal trees.
\end{proof}

We are now ready to construct an object $\tilde{\bX}_{\kappa}$ of $\Flow_n(\bX(\overrightarrow{(J,H)}))$ for each cube $\kappa$ in $ \cF_n(\overrightarrow{(J,H)}) $ (c.f. Definition \ref{def:Flow_multicategory}): if $\kappa$ is non-degenerate, and $\sigma \in \Face(\square^n)$ is a stratum of codimension $k$, we write $\kappa|\sigma$ for the cube of dimension $n-k$ obtained from the associated face map, and we define the tree
\begin{equation}
  T_\sigma \equiv T_{\kappa|\sigma},
\end{equation}
and the collection of flow modules $\{ \bX_{\kappa,v} \}$ indexed by the vertices of this tree to be those obtained by applying the above result to  $\kappa|\sigma$. To construct the maps associated to pairs $\sigma < \tau $ of faces, note that the factorisation of $\kappa|\sigma $ into a composition induces a factorisation of $\kappa|\tau$, so that Equation \eqref{eq:factorisation-less-refined} provides the desired maps in Equation \eqref{eq:map_decomposed_strata_multimodule}. The compatibility diagram for triples of faces is then a consequence of the compatibility of the maps to $\kappa$.

At this stage, one may readily check that this construction is compatible with (i) the structure maps of cubical sets, i.e. permutations, injections, and surjections of cubes, (ii) the permutation maps of inputs, and (iii) multicomposition operations, which implies:
\begin{prop}
  The assignment $\kappa \to \tilde{\bX}_{\kappa}$ defines a multifunctor
  \begin{equation}
    \cF \to \Flow.
  \end{equation} \qed
\end{prop}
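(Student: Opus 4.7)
The plan is to verify the three compatibilities enumerated just before the statement: compatibility with (i) the cubical structure maps, (ii) permutations of inputs, and (iii) multicomposition. The definition of $\tilde{\bX}_\kappa$ is built from the canonical factorisation produced by Lemma \ref{lem:factorisation-cubes}, so the strategy throughout is to invoke the universal property recorded there to reduce each check to the corresponding identity for the ambient $\flow_n(\bX(\vec{H}))$, which has already been verified.

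First I would handle the cubical structure. For nested faces $\tau \subset \sigma$ of $\square^n$, the restriction $\kappa|\tau$ is obtained from $\kappa|\sigma$ by a further face map, so the dependency-of-Floer-data partition of the topological tree of $\kappa|\tau$ refines (after pullback) that of $\kappa|\sigma$. The universal property in Lemma \ref{lem:factorisation-cubes} then produces the required morphism
\begin{equation}
[0,1]^{e_\sigma^\tau(v)} \times \underset{u \to T_\tau^v}{\bigcirc} \bX_{\kappa|\tau, u} \to \bX_{\kappa|\sigma, v}
\end{equation}
for each vertex $v$ of $T_\sigma$, matching Equation \eqref{eq:map_decomposed_strata_multimodule}, and the compatibility for triples of faces follows from the uniqueness clause. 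For degenerate coordinates one observes that they are, by construction, absorbed into the $[0,1]^{n-\sum n_v}$ factor of the factorisation, so the symmetric cubical structure maps on $\cH_n(\vec{H})$ and $\Flow_n(\bX(\vec{H}))$ intertwine the assignment $\kappa \mapsto \tilde{\bX}_\kappa$. Equivariance under permutations of inputs is immediate, since permutations act only by relabelling the sequence $\vec{H}$ attached to the leaves of each labelling tree and commute with the minimal-partition construction in the proof of Lemma \ref{lem:factorisation-cubes}.

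The main obstacle is (iii), the compatibility with multicomposition. Given composable cubes $\kappa_\pm \in \cH_{n_\pm}(\vec{H}_\pm)$, the composite $\kappa := \kappa_+ \circ_i \kappa_-$ has underlying topological tree obtained by grafting the trees of $\kappa_+$ and $\kappa_-$ at the $i$-th input of $\kappa_-$, and its Floer data on a vertex $v$ depends only on those of the $n_++n_-$ coordinates that come from the cube to which $v$ originally belonged. Consequently the minimal coordinate-dependency equivalence relation on the grafted tree, which defines $\tilde{\bX}_\kappa$, splits along the grafting seam: its restrictions to the two subtrees reproduce the minimal equivalence relations defining $\tilde{\bX}_{\kappa_\pm}$. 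Since the composition in $\Flow$ was defined in Section \ref{sec:flow-mult-with} by concatenating precisely such trees of labelled multimodules, one obtains a strict identification $\tilde{\bX}_{\kappa_+ \circ_i \kappa_-} = \tilde{\bX}_{\kappa_+} \circ_i \tilde{\bX}_{\kappa_-}$, up to the chosen ordering conventions handled by the permutations $\phi_{v'}$ of Lemma \ref{lem:factorisation-cubes}. The hardest point in this step is checking that no equivalence class in the dependency partition of $\kappa$ crosses the grafting seam; this ultimately rests on the geometric fact that multicomposition in $\cH$ is defined by grafting independent families of Riemann surfaces with independent Floer data, a point which has to be traced carefully through the definitions but involves no additional analysis.

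Associativity of the resulting multifunctor, and the compatibility with the isomorphisms of Lemma \ref{lem:multi-category-flow} relating the two possible orders of composition for a triple of composable cubes, then follow at the level of trees of trees, since both sides are computed from the same canonical minimal partition of the topological tree of the triply-composed cube. Once these checks are assembled, the assignment $\kappa \mapsto \tilde{\bX}_\kappa$ satisfies all the axioms of a multifunctor enriched in symmetric cubical sets, proving the proposition.
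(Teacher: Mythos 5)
Your proposal is correct and follows essentially the same route as the paper, which constructs $\tilde{\bX}_\kappa$ from the canonical factorisation of Lemma \ref{lem:factorisation-cubes} and then leaves the three compatibilities as a check for the reader ("one may readily check"). Your elaboration of the multicomposition step — that the minimal coordinate-dependency partition of the grafted tree is block-diagonal across the seam because the Floer data of each vertex depends only on the coordinates of the cube it came from — is precisely the observation the paper's construction implicitly relies on.
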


\part{Appendices}
\label{part:appendices}

\appendix

\section{Symmetric monoidal categories, functors, and natural transformations}
\label{sec:symm-mono-categ-1}

As a preliminary notion, recall that a weak equivalence of functors
  \begin{equation} 
\begin{tikzcd}[column sep=huge]
\cC
  \arrow[bend left=50]{r}[name=U,label=above:$F$]{}
  \arrow[bend right=50]{r}[name=D,label=below:$G$]{} &
\Ch.
\arrow[from=U.south-|D,to=D,Rightarrow,shorten=5pt]
\end{tikzcd}   
\end{equation}
is a natural transformation which is given, for every object $\bX$ of $\cC$, by a quasi-isomorphism of chain complexes
\begin{equation}
  F(\bX) \to G(\bX).  
\end{equation}
Recall as well that, if $F$ and $G$ are (lax) monoidal functors (i.e. if we have an associative natural transformation $F(\bX) \otimes F(\bY)   \to F(\bX \times \bY)$, and similarly for $G$), then a monoidal natural transformation
\begin{equation}
  F \Rightarrow G
\end{equation}
is a natural transformation with the property that the diagram
\begin{equation}
  \begin{tikzcd}
    F(\bX_1) \otimes F(\bX_2) \ar[r] \ar[d] & F( \bX_1 \times \bX_2) \ar[d] \\
     G(\bX_1) \otimes G(\bX_2) \ar[r]  & G( \bX_1 \times \bX_2)
  \end{tikzcd}
  \end{equation}
  commutes. If $F$ and $G$ are symmetric monoidal (i.e. if the morphism $F(\bX) \otimes F(\bY)   \to F(\bX \times \bY)$ commutes with transposition), a symmetric monoidal natural transformation is a monoidal transformation in the above sense (no additional property is required). A key example is given by:
  
  \begin{prop} \label{prop:Eilenberg-Zilber}
  The Eilenberg-Zilber shuffle map defines a quasi-isomoprhism
  \begin{equation}
    C_* X \otimes C_* Y \to C_* \left(X \times Y\right)
  \end{equation}
  whenever $X$ and $Y$ are topological spaces, and $C_*$ denotes the singular chain functor. This quasi-isomorphism is symmetric in the sense that it lifts the singular chain functor from the category of topological spaces (or simplicial sets) to the category of cochain complexes to a lax symmetric monoidal functor. \qed
\end{prop}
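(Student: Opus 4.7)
The plan is to establish this classical Eilenberg-Zilber result by writing down the shuffle map explicitly and then verifying the required coherences.

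First, I would define, for singular simplices $\sigma \co \Delta^p \to X$ and $\tau \co \Delta^q \to Y$, the shuffle map
\begin{equation}
\nabla(\sigma \otimes \tau) = \sum_{(\mu,\nu)} (-1)^{\epsilon(\mu,\nu)} (\sigma \times \tau) \circ (s_\nu \times s_\mu),
\end{equation}
where the sum is over $(p,q)$-shuffles $(\mu,\nu)$ of $\{1,\ldots,p+q\}$, $\epsilon(\mu,\nu)$ is the shuffle sign, and $s_\mu$, $s_\nu$ denote the iterated degeneracies indexed by the shuffle. The first routine step is to check directly from the simplicial identities that $\nabla$ commutes with the boundary, using that the sign of a shuffle together with a face operation matches the Koszul sign on the tensor product. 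I would not grind through this calculation but rather cite it as a standard verification.

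Next, to show $\nabla$ is a quasi-isomorphism, I would appeal to the method of acyclic models: the functors $X,Y \mapsto C_*(X) \otimes C_*(Y)$ and $X,Y \mapsto C_*(X \times Y)$ from pairs of spaces to chain complexes are both free and acyclic with models the pairs $(\Delta^p, \Delta^q)$, since $C_*(\Delta^p \times \Delta^q)$ is contractible and $C_*(\Delta^p) \otimes C_*(\Delta^q)$ is contractible as well. Since the two functors agree in degree $0$ up to the natural isomorphism $H_0(X) \otimes H_0(Y) \cong H_0(X \times Y)$, the acyclic models theorem yields a natural chain homotopy equivalence, and one verifies that $\nabla$ represents this equivalence (again by comparing on models).

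To equip $C_*$ with the structure of a lax symmetric monoidal functor, I would then verify three coherences. Associativity asserts that the two composites
\begin{equation}
C_*(X) \otimes C_*(Y) \otimes C_*(Z) \rightrightarrows C_*(X \times Y \times Z)
\end{equation}
coincide; this follows since the two $(p,q,r)$-shuffle descriptions of a $(p+q+r)$-shuffle agree on the nose, with matching signs. Unitality reduces to the statement that the shuffle with a $0$-simplex is the identity. The main subtle point is symmetry: one must check that the diagram
\begin{equation}
\begin{tikzcd}
C_*(X) \otimes C_*(Y) \ar[r,"\nabla"] \ar[d,"\tau_{C_*X,C_*Y}"'] & C_*(X \times Y) \ar[d,"C_*(\tau_{X,Y})"] \\
C_*(Y) \otimes C_*(X) \ar[r,"\nabla"'] & C_*(Y \times X)
\end{tikzcd}
\end{equation}
commutes, where the left vertical map involves the Koszul sign $(-1)^{pq}$ on degree $p$ and $q$ factors. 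This is the main obstacle, and it is where the precise definition of the shuffle sign $\epsilon(\mu,\nu)$ does its work: swapping the roles of $(\mu,\nu)$ corresponds to swapping the $p$ and $q$ blocks in a $(p,q)$-shuffle, which produces exactly the sign $(-1)^{pq}$ relative to the corresponding $(q,p)$-shuffle. Once this sign bookkeeping is done, compatibility with the hexagon axiom for the symmetric monoidal structure on chain complexes is automatic. Combining these verifications with the already-established naturality and quasi-isomorphism property produces the desired lax symmetric monoidal lift.
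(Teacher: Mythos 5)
Your proposal is correct, and it is the standard proof of this classical fact; the paper offers no argument at all (the statement is closed with a \qed as a citation to the classical literature), so there is nothing to diverge from. The outline --- explicit shuffle formula, acyclic models with models $(\Delta^p,\Delta^q)$ for the quasi-isomorphism, and the sign bookkeeping showing that transposing a $(p,q)$-shuffle to a $(q,p)$-shuffle produces exactly the Koszul sign $(-1)^{pq}$ --- is exactly the argument the paper is implicitly invoking, and you correctly identify the symmetry check as the only point requiring care (it is precisely what fails for the Alexander--Whitney map in the other direction).
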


The notions discussed above  lead to the \emph{homotopy category} of symmetric monoidal functor, whose objects are symmetric monoidal functors, and whose morphisms are representable by zig-zags
\begin{equation}
  F \Leftarrow \cdots \Rightarrow G  
\end{equation}
of monoidal natural transformations between symmetric monoidal functors, in which all the left pointing arrows are weak equivalences. We shall not need to quote from the literature any result about this category, as we prove by hand, in Proposition \ref{prop:cofibrant_property_of_lift_Kur}, the result that we need.

\section{Symmetric cubical sets}
\label{sec:symm-cubic-sets}

In Part \ref{part:from-virtual-counts}, we use cubical sets as a model for (the homotopy theory of) spaces. The main advantage of cubes over simplices is that the product of cubes is again a cube. Unfortunately, in the usual notion of cubical sets, the monoidal structure that corresponds to the product of cubes (called the Day convolution) is not symmetric, because permutations of cubes are not considered as part of the data. One is then led to consider the notion of a symmetric cubical set:
\begin{defin}[Section 2.2 \cite{Grandis2007}]
  A \emph{symmetric cubical set} $A_\bullet$ consists of the following data:
  \begin{itemize}
  \item ($n$-cubes) A collections of sets $A_n$ indexed by the natural numbers (starting with $n=0$).
  \item (Face maps) For each $k$-dimensional face of $[0,1]^n$, a face map $A_{n} \to A_{k} $.
  \item (Degeneracies) For each projection map $[0,1]^n \to [0,1]^k$ a degeneracy map $A_{k} \to A_n$.
    \item (Symmetries) For each permutation of the set $\{1, \cdots, n\}$, an isomorphism $A_n \cong A_n$.
    \end{itemize}
    These structures are required to satisfy all the relations dual to the maps of cubes which define them, in the following sense: if $\mathrm{Cubes}$ is the subcategory of the category of spaces whose objects are cubes $[0,1]^n$ (for $n$ a natural number), and whose maps are compositions of (i) inclusions of boundary faces, (ii) projection maps, and (iii) reordering of factors, then the above data defines a functor
    \begin{equation}
      A_{\bullet} \co \mathrm{Cubes} \to \Set.      
    \end{equation}
\end{defin}

\begin{rem}
  In some part of the literature, e.g. \cite{Isaacson2011}, one assumes as well the existence of structures called \emph{connections}, which arise from the map
  \begin{equation}
    \max \co [0,1]^2 \to [0,1].
  \end{equation}
  Cubical sets with connections have the advantage that they straightforwardly model the homotopy theory of spaces in the sense that all objects are fibrant (i.e. satisfy the analogue of the Kan condition for simplicial sets). However, they are inconvenient to use in our context because the function $\max$ is not smooth.
\end{rem}

Symmetric cubical sets form a category with morphisms $A_\bullet \to B_\bullet$ given by a collection of maps
\begin{equation}
  A_n \to B_n  
\end{equation}
which commute with all structure maps. The category of cubical sets admits products, which are given by taking the product of the underlying sets and the structure maps, but this does not have the desired properties for us. Instead, we are interested in the symmetric monoidal structure which is associated to the fact that the category $\cC$ of cubes is symmetric monoidal, via the general process of Day convolution. This may be explicitly described by defining  the tensor product $A \otimes B$ of symmetric cubical sets to be given by
  \begin{equation} \label{eq:monoidal_structure_cubical_set}
    \left(A \otimes B\right)_n \equiv \coprod_{i+j=n} \Iso(n, (i, j)) \times_{\Sigma_i \times \Sigma_j} A_i \times A_j.    
  \end{equation}
  where $\Iso(n,(i,j))$ is the set of bijections from $(\{1, \ldots, n\}$ to the disjoint union of $\{1, \ldots, i\}$ with $\{1, \ldots, j\}$, and $\Sigma_i \times \Sigma_j$ is the product of the symmetric group on $i$ and $j$ letters, which acts on $A_i \times A_j$ via the structure maps and on $\Iso(n, (i,j)) $ via reparametrising the target. All the structure maps of $ A \otimes B$ are induced in a straightforward way from the corresponding structure maps of $A$ and $B$, keeping in mind that one uses the choice of element of $\Iso(n,(i,j))$ to transfer any map of cubes to a pair of maps of cubes. There is a natural isomorphism
  \begin{equation}
     A \otimes B \cong B \otimes A   
  \end{equation}
  of cubical sets, arising from the isomorphism
  \begin{equation}
        \Iso(n, (i,j)) \cong \Iso(n,(j,i))
  \end{equation}
  induced by the isomorphism
  \begin{equation}
   \{1, \ldots, i\} \amalg \{1, \ldots, j\}   \cong  \{1, \ldots, j\}  \amalg \{1, \ldots, i\}.
  \end{equation}
\begin{prop}[Section 6.3 of \cite{Grandis2009}]
 The category of symmetric cubical sets is symmetric monoidal with respect to the monoidal structure in Equation \eqref{eq:monoidal_structure_cubical_set}. \qed
\end{prop}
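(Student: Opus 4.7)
The plan is to recognize Equation~\eqref{eq:monoidal_structure_cubical_set} as (a presentation of) the Day convolution of presheaves on the small symmetric monoidal category $\mathrm{Cubes}$ of cubes under Cartesian product. Since Day convolution is precisely the universal construction that lifts a symmetric monoidal structure from a small category $\cC$ to $\mathrm{Fun}(\cC^{op}, \Set)$, once one verifies that $\mathrm{Cubes}$ is symmetric monoidal and identifies Equation~\eqref{eq:monoidal_structure_cubical_set} with the Day convolution formula, the result follows from Day's theorem. The writeup will either invoke this theorem as a black box, or unpack it into explicit structural isomorphisms; for concreteness the plan below follows the latter route.

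First I would verify that $\mathrm{Cubes}$ is itself a strict symmetric monoidal category under the tensor product $[0,1]^i \otimes [0,1]^j = [0,1]^{i+j}$. The class of morphisms (generated by face inclusions, projections to factors, and coordinate permutations) is manifestly closed under products, the associator is the identity, and the symmetry $[0,1]^i \otimes [0,1]^j \cong [0,1]^j \otimes [0,1]^i$ is the coordinate permutation induced by the canonical bijection $\{1,\ldots,i\} \sqcup \{1,\ldots,j\} \cong \{1,\ldots,j\} \sqcup \{1,\ldots,i\}$ of finite sets. The category of symmetric cubical sets is then identified with the presheaf category $\mathrm{Fun}(\mathrm{Cubes}^{op},\Set)$.

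Next I would construct the structural isomorphisms for $\otimes$ explicitly. The unit $I$ is the symmetric cubical set representable by $[0,1]^0$, which has a single $0$-cube and no higher cubes; the isomorphism $A \otimes I \cong A$ collapses the term $\Biject(n, n+0) \times_{\Sigma_n \times \Sigma_0} A_n \times \{*\}$ onto $A_n$. The associator $(A \otimes B) \otimes C \cong A \otimes (B \otimes C)$ is induced by the canonical identification
\begin{equation}
(\{1,\ldots,i\} \sqcup \{1,\ldots,j\}) \sqcup \{1,\ldots,k\} \cong \{1,\ldots,i\} \sqcup (\{1,\ldots,j\} \sqcup \{1,\ldots,k\})
\end{equation}
of finite sets, while the symmetry $A \otimes B \cong B \otimes A$ was already constructed in the paragraph preceding the statement. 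A key preliminary verification is that the quotient by $\Sigma_i \times \Sigma_j$ in Equation~\eqref{eq:monoidal_structure_cubical_set} interacts correctly with all face, degeneracy, and symmetry maps in $A$ and $B$, so that the structure maps of $A \otimes B$ are well-defined and functorial.

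The main obstacle will be checking the pentagon and hexagon coherence axioms. These reduce to verifying that certain diagrams of bijections of finite sets commute after being twisted by the $\Sigma_i \times \Sigma_j \times \Sigma_k$ (and $\Sigma_i \times \Sigma_j \times \Sigma_k \times \Sigma_\ell$ in the pentagon case) actions on $A_i \times B_j \times C_k$; the underlying bijections are the standard associator and symmetry of the symmetric monoidal category $(\mathrm{FinSet}, \sqcup)$, which is known to satisfy these axioms, so the coherence reduces to checking that the quotients are compatible with these actions. The tedious but routine point is to confirm that the symmetry, acting by the permutation swapping the two blocks of $\{1,\ldots,i+j\}$, is compatible with the action of $\Sigma_i \times \Sigma_j \subset \Sigma_{i+j}$ on $A_i \times B_j$ via the symmetry structure of $A$ and $B$; this is what forces us to work with \emph{symmetric} cubical sets rather than ordinary ones, for which the analogous construction produces only a (non-symmetric) monoidal structure.
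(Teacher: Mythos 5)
Your proposal is correct and matches the paper's own treatment: the paper explicitly frames Equation \eqref{eq:monoidal_structure_cubical_set} as the Day convolution associated to the symmetric monoidal structure on the category of cubes and then defers the proposition to Grandis, which is exactly the identification-plus-coherence argument you outline. Your explicit fallback (reducing the pentagon and hexagon to the coherence of finite sets under disjoint union, after checking compatibility of the $\Sigma_i \times \Sigma_j$ quotient with the structure maps) is the standard way to unpack that citation and contains no gaps.
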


Next, we assign to each symmetric cubical set $A$ the chain complex $ C_*(A; \Bbbk)$ which in degree $n$ is the quotient of the free $\Bbbk$-module generated by the $n$-cubes modulo (i) the submodule generated by degenerate cubes, and (ii) the submodule generated by the sum $\alpha + \tau \left(\alpha\right) $, for $\alpha$ an arbitrary $n$-cube and  $\tau$ a transposition of $\{1, \ldots, n\}$. The differential on this complex is inherited from the alternating sum of the face maps. 
\begin{lem}
  The assignment
  \begin{equation}
    A \to    C_*(A; \Bbbk)  
  \end{equation}
  defines a symmetric monoidal functor from the category symmetric cubical sets to the category of chain complexes over $\Bbbk$.
\end{lem}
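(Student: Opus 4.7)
The functoriality on morphisms is immediate: a map $f \co A \to B$ of symmetric cubical sets commutes with the face maps, and sends degenerate cubes to degenerate cubes and symmetric transforms to symmetric transforms, so the induced map on free $\Bbbk$-modules descends to the quotient $C_*(A;\Bbbk) \to C_*(B;\Bbbk)$ and commutes with the alternating-sum differential. The plan is to exhibit a ``shuffle'' natural transformation
\begin{equation}
  \mu_{A,B} \co C_*(A;\Bbbk) \otimes C_*(B;\Bbbk) \to C_*(A \otimes B;\Bbbk)
\end{equation}
by sending a pair $([\alpha],[\beta])$ with $\alpha \in A_p$, $\beta \in B_q$ to the class of the triple $(\iota_{p,q},\alpha,\beta) \in \Biject(p+q,p+q) \times A_p \times B_q$, where $\iota_{p,q} \co \{1,\dots,p+q\} \to \{1,\dots,p\} \amalg \{1,\dots,q\}$ is the order-preserving bijection. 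All other representatives of elements of $(A \otimes B)_{p+q}$ coming from $A_p \times B_q$ differ from this by post-composing $\iota_{p,q}$ with a permutation of $\{1,\dots,p+q\}$, which gives the same class modulo the antisymmetry relations up to the sign of that permutation; this is exactly the standard shuffle sign and is the key computation that will appear repeatedly below.

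To check that $\mu_{A,B}$ descends to the quotients: if $\alpha$ is degenerate in $A_p$ then $(\iota_{p,q},\alpha,\beta)$ is degenerate in $(A \otimes B)_{p+q}$ via the obvious projection on the first $p$ coordinates, and similarly on the second factor. If $\sigma$ is a transposition of two elements of $\{1,\dots,p\}$ then the relation $[\sigma^*\alpha]+[\alpha]=0$ on the left matches the relation on the right obtained by transposing the two corresponding coordinates of $\{1,\dots,p+q\}$, since by the equivalence relation defining $(A \otimes B)_{p+q}$ this transposition acts on $(\iota_{p,q},\alpha,\beta)$ by moving the permutation into the $\Sigma_p$-factor. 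I would then verify that $\mu_{A,B}$ is a chain map by splitting the faces of $[0,1]^{p+q}$ into those involving the first $p$ coordinates and those involving the last $q$, which yields the cubical Leibniz rule $d(\alpha \otimes \beta) = d\alpha \otimes \beta + (-1)^p \alpha \otimes d\beta$ with the sign arising from the alternating-sum convention.

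For the monoidal axioms, associativity of $\mu$ is immediate from the fact that $\iota_{p,q} \amalg \iota_{r,s}$-type diagrams commute strictly at the level of bijections: the two compositions $C_*(A) \otimes C_*(B) \otimes C_*(C) \to C_*(A \otimes B \otimes C)$ both send $([\alpha],[\beta],[\gamma])$ to the class of the order-preserving bijection into $\{1,\dots,p\} \amalg \{1,\dots,q\} \amalg \{1,\dots,r\}$. Unitality is trivial since the unit cubical set $*$ has $C_0(*;\Bbbk) = \Bbbk$ and $\iota_{p,0}=\iota_{0,p}=\mathrm{id}$. For the symmetric structure, we must check that the diagram
\begin{equation}
  \begin{tikzcd}
    C_*(A) \otimes C_*(B) \ar[r,"\mu_{A,B}"] \ar[d,"\tau_{\mathrm{Ch}}"'] & C_*(A \otimes B) \ar[d,"C_*(\tau_{A,B})"] \\
    C_*(B) \otimes C_*(A) \ar[r,"\mu_{B,A}"'] & C_*(B \otimes A)
  \end{tikzcd}
\end{equation}
commutes, where $\tau_{\mathrm{Ch}}$ is the Koszul swap $\alpha \otimes \beta \mapsto (-1)^{pq} \beta \otimes \alpha$ and $\tau_{A,B}$ is the braiding of symmetric cubical sets. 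Tracing both paths, one obtains $(\iota_{p,q} \circ \sigma_{p,q}, \beta, \alpha)$ versus $(\iota_{q,p}, \beta, \alpha)$, where $\sigma_{p,q}$ is the permutation of $\{1,\dots,p+q\}$ that interchanges the first $p$ and last $q$ elements while preserving the internal orders; this has sign $(-1)^{pq}$, which precisely cancels the sign introduced by $\tau_{\mathrm{Ch}}$.

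The main obstacle, and the one requiring the most care, is the bookkeeping of signs in this last step: one must ensure that the orientation convention used in defining the antisymmetry quotient in $C_*(\_;\Bbbk)$, the Koszul convention in $\Ch_{\Bbbk}$, and the chosen normal form $\iota_{p,q}$ all fit together consistently, not only for the braiding but also so that the hexagon axiom is satisfied. This reduces to verifying that for three classes $[\alpha],[\beta],[\gamma]$ the two ways of shuffling the factors $\{1,\dots,p\}$, $\{1,\dots,q\}$, $\{1,\dots,r\}$ into the opposite order produce the same total sign, which follows from the standard fact that the sign of a permutation is a homomorphism. Once this is verified the remaining checks (naturality of $\mu$, and compatibility with identities) are routine.
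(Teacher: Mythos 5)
Your proposal is correct and follows the same route as the paper, which only sketches the argument: the monoidal map sends a pair of generators to (the class of) their product, and symmetry follows from the antisymmetry quotient, with the sign $(-1)^{pq}$ of the block shuffle cancelling the Koszul sign. Your version simply fills in the details (choice of normal form $\iota_{p,q}$, descent to the quotient, the Leibniz rule, and the hexagon check) that the paper leaves implicit.
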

\begin{proof}[Sketch of proof:]
  The functoriality is straightforward. The monoidal structure is induced by mapping the generators associated to $\alpha$ and $\beta$ to their product. Having taken the quotient by the relation which identifies cubes which differ by a permutation  (up to sign), the monoidal structure is symmetric. 
\end{proof}

The following consequence is implicit in \cite[Appendix C]{Abouzaid2011}:
\begin{cor}
  The symmetric cubical chains define a symmetric monoidal functor from the category of topological spaces to the category of chain complexes over $\Bbbk$, which computes ordinary homology. \qed
\end{cor}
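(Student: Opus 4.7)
The plan is to construct a singular symmetric cubical set functor $\Sing^{\square} \co \Top \to \mathrm{sCub}$ by setting $(\Sing^{\square} X)_n \equiv C^0([0,1]^n,X)$, with face maps induced by restricting to boundary faces of the cube, degeneracies by precomposition with projection maps $[0,1]^n \to [0,1]^k$, and the symmetric group $\Sigma_n$ acting by permuting coordinates of $[0,1]^n$. This is evidently functorial in $X$.

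The next step is to verify that $\Sing^{\square}$ is lax symmetric monoidal. Given spaces $X$ and $Y$, I would define the natural transformation
\begin{equation}
    \Sing^{\square}(X) \otimes \Sing^{\square}(Y) \to \Sing^{\square}(X \times Y)
\end{equation}
by sending a class represented by an element $\varphi \in \Biject(n, i+j)$ together with cubes $\alpha \co [0,1]^i \to X$ and $\beta \co [0,1]^j \to Y$ to the composite $[0,1]^n \stackrel{\varphi}{\to} [0,1]^i \times [0,1]^j \stackrel{\alpha \times \beta}{\to} X \times Y$. Well-definedness on the coequaliser in Equation~\eqref{eq:monoidal_structure_cubical_set} follows from $\Sigma_i \times \Sigma_j$-equivariance of the reordering isomorphism, and the symmetry of the Biject factor forces the resulting monoidal structure to be symmetric. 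Composing with the lax symmetric monoidal functor $A \mapsto C_*(A;\Bbbk)$ of the preceding lemma then produces a lax symmetric monoidal functor $\Top \to \Ch_{\Bbbk}$.

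It then remains to identify the value of this composite with ordinary homology. I would proceed by an acyclic models argument, taking the models to be convex subsets of Euclidean space. First, on a point: $(\Sing^{\square} \pt)_n$ is a singleton for each $n$, all structure maps are identities, so the subcomplex of degenerate cubes kills everything above degree $0$, and the symmetrisation relation is trivial in degree $0$; thus the complex is $\Bbbk$ concentrated in degree $0$. Next, for a convex subset $K \subset \bR^N$, I would write down an explicit chain null-homotopy: choosing a basepoint $x_0 \in K$, each cube $\alpha \co [0,1]^n \to K$ admits a canonical cone $[0,1]^{n+1} \to K$ linearly interpolating between $\alpha$ and the constant cube at $x_0$, which after passage to the quotient by degenerate and antisymmetrised cubes assembles into a contracting homotopy to the complex of the point. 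Combined with the Mayer–Vietoris property (which follows, as for singular chains, from subdivision plus an inclusion-of-subcomplexes acyclicity argument), the standard acyclic models comparison then identifies the composite functor with singular chains, up to natural quasi-isomorphism, matching Proposition~\ref{prop:Eilenberg-Zilber} for the monoidal structure.

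The main obstacle is verifying that the additional antisymmetrisation relation $\alpha + \tau(\alpha) = 0$, imposed over and above the normalised cubical chains, does not damage acyclicity on the models. The cleanest way to settle this is to show that the subcomplex generated by the antisymmetrisation relations is acyclic on convex sets: a transposition $\tau$ of two coordinates of $[0,1]^n$ is connected to the identity by a path of self-homeomorphisms of $[0,1]^n$ (rotating the relevant $[0,1]^2$ factor through $[0,1]^2 \times [0,1]$), and this path, composed with the cone construction on a convex target, produces explicit chains witnessing $\alpha + \tau(\alpha)$ as a boundary modulo degenerate cubes. This allows the preceding null-homotopy to descend to the symmetric cubical chain complex, completing the verification; the explicit construction is carried out in \cite[Appendix C]{Abouzaid2011}, whose argument we import.
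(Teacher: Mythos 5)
The paper offers no proof of this corollary at all: it simply composes the preceding lemma (symmetric cubical chains as a symmetric monoidal functor out of symmetric cubical sets) with the singular symmetric cubical set functor, and defers the identification with ordinary homology to \cite[Appendix C]{Abouzaid2011}. Your reconstruction of the first two steps (the functor $\Sing^{\square}$ and its lax symmetric monoidal structure via $\Biject(n,i+j)$) is correct and is exactly what the statement presupposes.

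The one place where your argument needs more care is the invocation of ``the standard acyclic models comparison.'' The free-functor version of acyclic models requires the chain functor in each degree to be free on a basis corepresented by the models, and the symmetric cubical chain functor is \emph{not} free: a nondegenerate cube $\alpha \co [0,1]^2 \to X$ satisfying $\alpha(s,t)=\alpha(t,s)$ (e.g.\ $\alpha(s,t)=\gamma((s+t)/2)$ for a path $\gamma$) gives $2\alpha=0$ in the quotient, so the chain groups acquire torsion over $\bZ$. Your cone argument does correctly show that the contracting homotopy descends to the symmetric quotient (the cone of $\alpha+\tau\alpha$ is again a relation, and cones of degenerate cubes are degenerate), so acyclicity on convex models is fine; the gap is in the \emph{freeness} hypothesis of the comparison theorem, which is a separate issue from the acyclicity you discuss in your final paragraph. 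The standard repair is either to use the representable-functor (Eilenberg--Mac Lane) form of acyclic models, or to run acyclic models only for the unsymmetrised normalised cubical chains (which \emph{are} free on nondegenerate cubes) and then show separately that the quotient by the symmetrisation ideal is a quasi-isomorphism --- which is the content of the appendix you (and the paper) cite. Since you do ultimately import that reference, the argument closes, but the sentence claiming the subcomplex of relations is handled by exhibiting $\alpha+\tau(\alpha)$ as a boundary should be replaced by one of these two precise mechanisms: being a boundary in the ambient complex of a convex set is automatic and proves nothing about the quotient.
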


\section{Multiplicativity of colimits via the prismatic subdivision}
\label{sec:mult-colim-via}

Given a category $\cC$, let $  \Delta  \cC$ denote the category of simplices,  i.e. objects are functors from a poset $\mathbf{n} = \{0 < \ldots < n\}$ to $\cC$, and morphisms are given by an order preserving map $\mathbf{n} \to \mathbf{m} $ so that the following  diagram commutes 
\begin{equation}
  \begin{tikzcd}
    \mathbf{n} \ar[r] \ar[d] & \cC . \\
    \mathbf{m} \ar[ur] & 
  \end{tikzcd}
\end{equation}
We note that there is a natural projection map
\begin{equation}
\pi \co \Delta \cC \to \cC  
\end{equation}
which assigns to each simplex its last element.

We shall be interested in functors
\begin{equation}
F \co  \Delta \cC \to \Ch.
\end{equation}
We denote the (ordinary) left Kan extension of this functor to $\cC$ by $LF$. Given a functor $A \to \cC$, we shall associate to $F$ the homotopy colimit of $L F$ over $A$, which we shall model via the bar construction:
\begin{equation}
  \hocolim_{A} L F \equiv B( \Bbbk, C_*(\Delta A), L F),  
\end{equation}
where we write $C_*(\Delta A)$ for the chains of the nerve, and $\Bbbk$ for the module over $C_*(\Delta A)$ given by the forgetful map from $\Delta A$ to a point.

Assuming that $\cC$ is a symmetric monoidal category, our main goal in this Appendix is to formulate the structures (at the level of functors) which ensure that such a homotopy colimit yield a symmetric monoidal functor on the category of diagrams in $\cC$.

The starting point is to consider the category $ \Delta^{\RS{lr}}\cC^2 $ whose objects are compositions $  \mathbf{k} \to \mathbf{n}_1 \times \mathbf{n}_2 \to \cC \times \cC$, and whose morphisms are commutative diagrams 
\begin{equation}
  \begin{tikzcd}
    \mathbf{k} \ar[r] \ar[d] & \mathbf{n}_1 \times \mathbf{n}_2 \ar[r] \ar[d] & \cC \times \cC \ar[d,"="] \\
    \mathbf{\ell} \ar[r]  & \mathbf{m}_1 \times \mathbf{m}_2 \ar[r]  & \cC \times \cC.
  \end{tikzcd}
\end{equation}
This category is equipped with natural functors to $\Delta \cC \times \Delta \cC$ as well as to $\Delta \left( \cC \times \cC\right)$. Given a functor $F \co \Delta \cC \to \Ch$, we thus obtain a diagram
  \begin{equation}
     \begin{tikzcd}
       \Delta^{\RS{lr} }\cC^2 \ar[r] \ar[d] &\Delta \left( \cC \times \cC\right) \ar[r] &  \Delta \cC  \ar[r]   & \Ch. \\
      \Delta  \cC \times \Delta \cC \ar[urrr,dashed] & & &
    \end{tikzcd}
  \end{equation}
  The dashed arrow refers to the left Kan extension of the horizontal composition, over the projection map. We shall denote this Kan extension by $F^{\RS{lr}} $. 
  \begin{defin} \label{def:mult_structure_functor}
    A \emph{multiplicative structure} on $F \co \Delta \cC \to \Ch$ is a natural transformation
    \begin{equation}
      F \otimes  F \Rightarrow   F^{\RS{lr}}
    \end{equation}
     of functors from $ \Delta \cC \times  \Delta \cC$ to $\Ch$.
  \end{defin}
  In order to use this natural transformation to define products, it is useful to consider the diagram
  \begin{equation} \label{eq:diagram_out_of_multiplicative}
     \begin{tikzcd}
       \Delta^{\RS{lr} }\cC^2 \ar[r] \ar[d] &\Delta \left( \cC \times \cC\right) \ar[r] \ar[d] &  \Delta \cC  \ar[r]   & \Ch. \\
       \Delta  \cC \times \Delta \cC \ar[r] & \cC \times \cC  \ar[urr,dashed] & & 
     \end{tikzcd}
  \end{equation}
  We write $ L F^{\RS{lr}} $ for the Kan extension of composite $ \Delta^{\RS{lr} }\cC^2 \to \Delta  \cC \times \Delta \cC \to \cC \times \cC  $, and $L F$ for the Kan extension of the projection $\Delta \left( \cC \times \cC\right) \to  \cC \times \cC $. We note that we have natural transformations
  \begin{equation} \label{eq:multiply_in_and_out}
       L F \otimes L F \Rightarrow   L F^{\RS{lr}} \Rightarrow L F,  
  \end{equation}
  where the first is induced by the multiplicative structure, and the second by Diagram \eqref{eq:diagram_out_of_multiplicative}.

For the next statement, we consider a triple $A_1$, $A_2$, and $A_{12}$ of categories over $\cC$ equipped with a commutative diagram
    \begin{equation} \label{eq:product_categories_over_C}
      \begin{tikzcd}
        A_1 \times A_2 \ar[r] \ar[d]  & \cC \times \cC \ar[d] \\
        A_{12} \ar[r] & \cC,
      \end{tikzcd}
    \end{equation}
where the right vertical map is the monoidal structure on $\cC$.
  \begin{lem}
    A multiplicative structure on $F$ induces a natural map
    \begin{equation}
      \hocolim_{A_1} L F \otimes \hocolim_{A_2} L F \to \hocolim_{A_{12}}L F.
    \end{equation}
  \end{lem}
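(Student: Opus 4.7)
The strategy is to assemble the desired map from three ingredients: the bar-construction presentation of the homotopy colimit, the Eilenberg--Zilber shuffle map on nerve chains, and the multiplicative structure on $F$ combined with the functoriality of homotopy colimits along the map $A_1 \times A_2 \to A_{12}$ supplied by diagram \eqref{eq:product_categories_over_C}.

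First, I would rewrite each homotopy colimit as a bar construction $\hocolim_{A_i} LF = B(\Bbbk, C_*(\Delta A_i), LF)$ and identify the tensor product of two such bar complexes with
\[
B\bigl(\Bbbk,\; C_*(\Delta A_1) \otimes C_*(\Delta A_2),\; LF \boxtimes LF\bigr),
\]
where $LF \boxtimes LF$ denotes the functor on $A_1 \times A_2$ sending $(a_1, a_2)$ to $LF(a_1) \otimes LF(a_2)$, i.e.\ the pullback along $A_1 \times A_2 \to \cC \times \cC$ of the external tensor product of $LF$ with itself.

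The next step splits into two parallel constructions. On the side of nerve chains, I would apply the Eilenberg--Zilber shuffle of Proposition \ref{prop:Eilenberg-Zilber}, followed by the natural functor $\Delta A_1 \times \Delta A_2 \to \Delta(A_1 \times A_2)$ that concatenates a pair of simplices into a product simplex, and finally push forward along $A_1 \times A_2 \to A_{12}$, producing a map $C_*(\Delta A_1) \otimes C_*(\Delta A_2) \to C_*(\Delta A_{12})$. On the side of coefficients, I would use the multiplicative structure $F \otimes F \Rightarrow F^{\RS{lr}}$ from Definition \ref{def:mult_structure_functor}; left Kan-extending along $\Delta \cC \times \Delta \cC \to \cC \times \cC$ yields $LF \otimes LF \Rightarrow LF^{\RS{lr}}$, and composition with the second arrow of Equation~\eqref{eq:multiply_in_and_out} gives a natural transformation $LF \otimes LF \Rightarrow LF$ on $\cC \times \cC$. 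The universal property of Kan extensions, applied to the factorisation $\Delta(\cC \times \cC) \to \Delta \cC \to \cC$, produces a canonical comparison from this $LF$ on $\cC \times \cC$ to $LF \circ \otimes$; restricting along $A_1 \times A_2 \to \cC \times \cC$ and invoking the commutativity of \eqref{eq:product_categories_over_C}, this identifies the target with the pullback of $LF$ from $A_{12}$ to $A_1\times A_2$.

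Combining the two parallel constructions yields a chain map of bar complexes
\[
B\bigl(\Bbbk, C_*(\Delta A_1) \otimes C_*(\Delta A_2), LF \boxtimes LF\bigr) \longrightarrow B\bigl(\Bbbk, C_*(\Delta A_{12}), LF\bigr) = \hocolim_{A_{12}} LF,
\]
which is the desired map and is manifestly natural in the triple $(A_1, A_2, A_{12})$. The main obstacle is really just bookkeeping: one must verify that the shuffle-and-project map on nerves and the multiplicative structure on coefficients are simultaneously compatible, so that their joint action assembles into a well-defined chain map of bar complexes. This coherence is a formal consequence of naturality of the multiplicative structure along functors between simplex categories, so once the definitions are unpacked no additional input is required beyond Definition \ref{def:mult_structure_functor}.
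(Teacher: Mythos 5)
Your proof is correct and follows essentially the same route as the paper: the paper's argument is exactly the three-step composition $\hocolim_{A_1} LF \otimes \hocolim_{A_2} LF \to \hocolim_{A_1 \times A_2} (LF \otimes LF) \to \hocolim_{A_1 \times A_2} LF \to \hocolim_{A_{12}} LF$, using the Eilenberg--Zilber map for the first arrow, the composite of Equation~\eqref{eq:multiply_in_and_out} for the second, and the functor $A_1 \times A_2 \to A_{12}$ for the third. Your write-up merely unpacks these same steps at the level of bar constructions.
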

  \begin{proof}
    The desired map is given by the following composition:
    \begin{align} \label{eq:product_induced_by_mult_structure-1}
      \hocolim_{A_1} L F \otimes \hocolim_{ A_2} L F & \to  \hocolim_{A_1 \times A_2} L F \otimes L F \\
      & \to \hocolim_{A_1 \times A_2} L F  \\ \label{eq:product_induced_by_mult_structure-last}
  & \to \hocolim_{A_{12}} L F .
\end{align}
We use the composition in Equation \eqref{eq:multiply_in_and_out} for the second step.
  \end{proof}

  Next, consider the problem of establishing properties of this product. We begin by considering commutativity, and noting that the transposition on $ \cC \times \cC$ induces a transposition $\tau$ on $ \Delta^{\RS{lr}}\cC^2$ which lifts the transpositions on $\Delta \cC \times \Delta \cC $. Using the assumption that the monoidal structure on $\cC$ is symmetric, we obtain a natural isomorphism
  \begin{equation}
F^{\RS{lr} } \Rightarrow \tau^* F^{ \RS{lr}}   
  \end{equation}
of functors on $\Delta \cC \times \Delta \cC $.

  \begin{defin}
    A multiplicative structure on a functor $F \co \Delta \cC \to \Ch$ is commutative if the following diagram of natural transformations commutes:
    \begin{equation}
      \begin{tikzcd}
   F \otimes F \ar[r,Rightarrow] \ar[d,Rightarrow]  &     \ar[d,Rightarrow] F^{\RS{lr} }    \\
   \tau^{*} \left( F \otimes F \right) \ar[r,Rightarrow] &     \tau^* F^{\RS{lr} }  .
      \end{tikzcd}
    \end{equation}
  \end{defin}

  We now consider a pair of categories $A_1$ and  $A_2$, 
  over $\cC$.
    The following result is readily proved by checking that each step in Equations \eqref{eq:product_induced_by_mult_structure-1}--\eqref{eq:product_induced_by_mult_structure-last} is intertwined by the transposition maps:
    \begin{lem} \label{lem:commutative_map_hocolim}
      If the multiplicative structure on $F$ is commutative, the following diagram, in which the right vertical map is induced by the isomorphism $A_{1} \times A_2 \cong A_{2} \times A_{1}$ 
      and    the natural transformation of the symmetry for the monoidal structure of $\cC$,
            commutes
        \begin{equation}
      \begin{tikzcd}
     \displaystyle{ \hocolim_{A_1} L F  \otimes \hocolim_{ A_2} L F } \ar[r]  \ar[d]  & \displaystyle{\hocolim_{A_{1} \times A_{2}} L F} \ar[d]   \\
      \displaystyle{ \hocolim_{A_2}L F \otimes  \hocolim_{ A_1} L F}   \ar[r] &     \displaystyle{   \hocolim_{A_{2} \times A_1} L F} .
      \end{tikzcd}
    \end{equation} \qed
  \end{lem}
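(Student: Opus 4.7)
The plan is to break the product map from Equations \eqref{eq:product_induced_by_mult_structure-1}--\eqref{eq:product_induced_by_mult_structure-last} into its constituent steps, and verify that each step commutes with the symmetry isomorphism transposing the two factors. Since the target of the map in the statement of the lemma is $\hocolim_{A_{1} \times A_{2}} L F$ rather than $\hocolim_{A_{12}} L F$, only the first two of the three steps are relevant, and the verification reduces to checking two compatibility squares.

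First, I would verify that the Eilenberg-Zilber type map
\begin{equation*}
\hocolim_{A_1} L F \otimes \hocolim_{A_2} L F \to \hocolim_{A_1 \times A_2} L F \otimes L F
\end{equation*}
is intertwined by the transposition. Using the bar construction model, this map is a shuffle product, whose commutativity is a standard fact (compare Proposition \ref{prop:Eilenberg-Zilber}): the Koszul signs produced by the symmetry of $\otimes$ in $\Ch$ on the source exactly match those produced by the transposition of factors in the target, so the relevant square commutes strictly.

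Second, I would verify that the step induced by the composite $L F \otimes L F \Rightarrow L F^{\RS{lr}} \Rightarrow L F$ of Equation \eqref{eq:multiply_in_and_out} is intertwined by the transposition. This is where the commutativity hypothesis on the multiplicative structure enters: by definition the square
\begin{equation*}
\begin{tikzcd}
F \otimes F \ar[r,Rightarrow] \ar[d,Rightarrow] & F^{\RS{lr}} \ar[d,Rightarrow] \\
\tau^{*}(F \otimes F) \ar[r,Rightarrow] & \tau^{*} F^{\RS{lr}}
\end{tikzcd}
\end{equation*}
commutes, and this commutativity is preserved under the left Kan extension to $\cC \times \cC$ defined in Diagram \eqref{eq:diagram_out_of_multiplicative}, yielding the analogous square for $L F \otimes L F \Rightarrow L F^{\RS{lr}}$. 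The further natural transformation $L F^{\RS{lr}} \Rightarrow L F$ is intertwined with the transposition using the symmetry isomorphism of the monoidal structure on $\cC$, which provides the identification $L F^{\RS{lr}} \cong \tau^{*} L F^{\RS{lr}}$ after passing to the Kan extension.

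The main obstacle is purely one of bookkeeping: keeping track of three separate sources of symmetry---the Koszul signs from $\otimes$ in $\Ch$, the transposition on $\Delta^{\RS{lr}} \cC^2$ used in defining the multiplicative structure, and the symmetry of $\cC$ itself---and verifying that the induced transposition on $\hocolim_{A_1 \times A_2} L F$ coming from the reparametrisation $A_1 \times A_2 \cong A_2 \times A_1$ matches at each stage. Since all three ultimately derive from a single underlying swap of the two factors, these compatibilities hold automatically, and pasting the two squares gives the desired commutativity.
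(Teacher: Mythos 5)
Your proposal is correct and follows exactly the route the paper takes: the paper's entire proof is the one-line remark that the lemma "is readily proved by checking that each step in Equations \eqref{eq:product_induced_by_mult_structure-1}--\eqref{eq:product_induced_by_mult_structure-last} is intertwined by the transposition maps," which is precisely the step-by-step verification you carry out (Eilenberg--Zilber symmetry for the first step, the commutativity hypothesis together with the symmetry of $\cC$ for the second). Your observation that only the first two of the three steps are relevant here, since the target is $\hocolim_{A_1 \times A_2} L F$, is also consistent with the statement.
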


  Finally, we consider associativity. We begin by considering the categories $\Delta^{\RS{lir}} \cC^{3}$, $\Delta^{\RS{l{Llr}}} \cC^{3} $, and $ \Delta^{\RS{{Llr}r}} \cC^{3} $ with the following objects
  \begin{align}
  \Ob  \Delta^{\RS{lir}} \cC^{3}     & \equiv \{ \bfk \to \bfm_1 \times \bfm_2 \times \bfm_3 \to \cC \times \cC \times \cC \} \\
  \Ob  \Delta^{\RS{l{Rlr}}} \cC^{3}       & \equiv \{ \bfk \to \bfn_1 \times \bfn_2 \to \bfn_{1} \times \bfm_1 \times \bfm_2 \to \cC \times \cC \times \cC\} \\
  \Ob   \Delta^{\RS{{Llr}r}} \cC^{3}    & \equiv \{ \bfk \to \bfn_1 \times \bfn_2 \to  \bfm_1 \times \bfm_2 \times \bfn_2 \to \cC \times \cC \times \cC\},
  \end{align}
  and morphisms given by commutative diagrams as before. We have a commutative diagram of functors 
  \begin{equation} \label{eq:diagram_comparison_two_products}
    \begin{tikzcd}
   & &  \Delta \cC & &  \\
      \Delta^{\RS{rl}} \cC^2 \ar[d]    & \Delta^{\RS{{Llr}r}} \cC^{3}  \ar[d] \ar[l] \ar[r] \ar[ur]  &  \Delta^{\RS{lir}} \cC^{3} \ar[d]  \ar[u, bend right]    \ar[u, bend left]  &  \Delta^{\RS{l{Rlr}}} \cC^{3} \ar[d] \ar[l] \ar[ul]  \ar[r]  & \Delta^{\RS{rl}} \cC^2 \ar[d] \\
      \left(\Delta \cC\right)^2   &   \ar[l]  \Delta^{\RS{rl}} \cC^2  \times \Delta \cC \ar[r]   &  \left( \Delta \cC \right)^3   &  \ar[l]  \Delta \cC  \times \Delta^{\RS{rl}} \cC^2 \ar[r] & \left( \Delta \cC \right)^2 
         \end{tikzcd}
  \end{equation}
  where the two parallel vertical arrows in the top row are naturally isomorphic, and we abusively write $ \left( \Delta \cC \right)^3   $ for the category obtained from either of the two parenthesisations of product $\Delta \cC \times \Delta \cC \times \Delta \cC$. Beside its commutativity, the main property of this diagram which we shall use is:
  \begin{lem}
    The leftmost and rightmost squares in Diagram \eqref{eq:diagram_comparison_two_products} are pullback squares. \qed
  \end{lem}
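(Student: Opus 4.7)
The plan is to verify the pullback property of each square by directly unwinding the definitions of the four corner categories and the functors between them, and matching object and morphism data on both sides. Since all four categories are defined in terms of diagrams of ordered sets mapping into $\cC^{n}$, with the relevant functors given by forgetting part of the data or composing with monoidal products, the verification reduces to a careful bookkeeping argument.

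Consider the leftmost square. An object of $\Delta^{\RS{{Llr}r}} \cC^{3}$ is a chain $\bfk \to \bfn_{1} \times \bfn_{2} \to \bfm_{1} \times \bfm_{2} \times \bfn_{2} \to \cC^{3}$, where the middle map acts as the identity on the $\bfn_{2}$ factor. The top horizontal functor to $\Delta^{\RS{rl}} \cC^{2}$ collapses the inner refinement, producing the outer $\RS{rl}$-simplex $\bfk \to \bfn_{1} \times \bfn_{2} \to \cC^{2}$ whose first $\cC$-coordinate is obtained by post-composing $\bfn_{1} \to \bfm_{1} \times \bfm_{2} \to \cC^{2}$ with the monoidal product $\cC^{2} \to \cC$. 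The right vertical functor to $\Delta^{\RS{rl}} \cC^{2} \times \Delta\cC$ records the inner refinement as a separate $\RS{rl}$-simplex $\bfn_{1} \to \bfm_{1} \times \bfm_{2} \to \cC^{2}$, paired with the simplex $\bfn_{2} \to \cC$ coming from the third factor. The two composites into $(\Delta\cC)^{2}$ both produce the pair $(\bfn_{1} \to \cC,\ \bfn_{2} \to \cC)$, where the first map is the monoidal product of $\bfn_{1} \to \bfm_{1} \times \bfm_{2} \to \cC^{2}$.

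From this description, an object of the (strict) fibre product consists of a pair $(b, c)$ whose images in $(\Delta\cC)^{2}$ agree: the shared data forces the second components of both to be the simplex $\bfn_{2} \to \cC$, and the first components to agree as the single map $\bfn_{1} \to \cC$ that arises both from the outer refinement in $b$ and, via the monoidal product, from the inner refinement in $c$. Equipping this compatible pair with the map $\bfn_{1} \to \bfm_{1} \times \bfm_{2}$ supplied by the $\Delta^{\RS{rl}} \cC^{2}$-component of $c$, and with the map $\bfk \to \bfn_{1} \times \bfn_{2}$ inherited from $b$, precisely reconstructs the chain defining an object of $\Delta^{\RS{{Llr}r}} \cC^{3}$, and this procedure is manifestly inverse to the pair of forgetful functors. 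An identical check applies at the level of morphisms, which in all four categories are commutative diagrams of order-preserving maps of the underlying ordered sets. The rightmost square is handled by the same argument after interchanging the roles of the first and third factors.

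The main obstacle is not conceptual but bookkeeping: one must be scrupulous about which functors project out the ``outer'' monoidal-product data and which ones remember the ``inner'' refinement data, and about the fact that the monoidal product $\cC^{2} \to \cC$ enters only when comparing images in $(\Delta\cC)^{2}$. Once this is set up carefully, the only nontrivial datum carried by an object of $\Delta^{\RS{{Llr}r}} \cC^{3}$ beyond that of a compatible pair, namely the refinement $\bfn_{1} \to \bfm_{1} \times \bfm_{2}$, is already part of the $\Delta^{\RS{rl}} \cC^{2}$ factor of $c$, so the pullback identification becomes tautological.
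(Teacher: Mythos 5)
Your verification is correct: the paper states this lemma with no proof (it is asserted with an immediate \qed), and the only available argument is exactly the definition-unwinding you carry out — identifying the four functors, checking that the shared image in $(\Delta \cC)^2$ forces the inner simplices and their maps to $\cC$ to coincide, and observing that the remaining data of a compatible pair (the refinement $\bfn_1 \to \bfm_1 \times \bfm_2$ together with $\bfk \to \bfn_1 \times \bfn_2$) reassembles bijectively into an object of $\Delta^{\RS{{Llr}r}}\cC^3$, with the map $\bfn_1 \to \cC$ redundant because it is constrained to be the monoidal composite. The morphism-level check and the symmetric treatment of the rightmost square are handled appropriately, so the proposal fills the omitted verification correctly.
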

  \begin{cor} \label{cor:pullback_of_Kan_is_Kan}
    Given a functor $F \co \Delta^{\RS{rl}} \cC^2 \to \Ch  $, the pullback to $\Delta^{\RS{rl}} \cC^2  \times \Delta \cC $ of the left Kan extension of $F$ to $\left( \Delta \cC \right)^2  $ is naturally isomorphic to the left Kan extension of the composition of $F$ with the map  $\Delta^{\RS{{Llr}r}} \cC^{3} \to \Delta^{\RS{rl}} \cC^2 $. Similarly, the pullback to $ \Delta \cC  \times \Delta^{\RS{rl}}  \cC^2$ of this left Kan extension is naturally isomoprhic to the left Kan extension of the composite functor on $  \Delta^{\RS{l{Rlr}}} \cC^{3}$. \qed
  \end{cor}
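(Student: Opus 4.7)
The plan is to deduce both claims directly from the pointwise formula for left Kan extensions, combined with the pullback squares established in the preceding lemma. The two statements are entirely symmetric, so I treat only the first. Write $p \colon \Delta^{\RS{rl}} \cC^2 \to (\Delta\cC)^2$ for the vertical functor, $i \colon \Delta^{\RS{rl}}\cC^2 \times \Delta\cC \to (\Delta\cC)^2$ for the bottom horizontal, $q \colon \Delta^{\RS{{Llr}r}}\cC^3 \to \Delta^{\RS{rl}}\cC^2$ for the top horizontal, and $j \colon \Delta^{\RS{{Llr}r}}\cC^3 \to \Delta^{\RS{rl}}\cC^2 \times \Delta\cC$ for the left vertical, so that the preceding lemma identifies the resulting square as a pullback in $\Cat$ satisfying $p\circ q = i \circ j$.

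Unfolding both sides via the pointwise formula, the value of $i^{*}LF$ at an object $(\sigma,\tau)$ of $\Delta^{\RS{rl}}\cC^2 \times \Delta\cC$ is the colimit of $F$ over the comma category $p/i(\sigma,\tau)$, whose objects are pairs $(\rho,\,p(\rho)\to i(\sigma,\tau))$ with $\rho \in \Delta^{\RS{rl}}\cC^2$; while the value of the left Kan extension of $F\circ q$ along $j$ at $(\sigma,\tau)$ is the colimit of $F \circ q$ over the comma category $j/(\sigma,\tau)$. The canonical comparison functor
\[
 j/(\sigma,\tau) \longrightarrow p/i(\sigma,\tau), \qquad \bigl(z,\, j(z) \to (\sigma,\tau)\bigr) \mapsto \bigl(q(z),\, p(q(z)) \to i(\sigma,\tau)\bigr),
\]
is well-defined because $p\circ q = i\circ j$. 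The universal property of the strict pullback in $\Cat$ provides, for each object of the target, a unique preimage, and applied again to morphisms (which in a pullback category are precisely pairs of compatible morphisms in the two factors) produces a unique preimage of every morphism. Hence the comparison is an isomorphism of categories. Since $F\circ q$ is by definition the composition of $q$ with $F$, the two colimit diagrams are identified under this isomorphism, yielding the desired natural isomorphism.

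Naturality in $(\sigma,\tau)$ follows from the evident compatibility of the comparison functors with morphisms in $\Delta^{\RS{rl}}\cC^2 \times \Delta\cC$, and naturality in $F$ is automatic. The main subtlety, and the step where the preceding lemma is essential, is the lifting of morphisms (not merely objects) from $p/i(\sigma,\tau)$ to $j/(\sigma,\tau)$; this is precisely what the pullback property at the level of morphism sets provides. The second claim, concerning $\Delta^{\RS{l{Rlr}}}\cC^3$ and the pullback to $\Delta\cC \times \Delta^{\RS{rl}}\cC^2$, is proved identically after interchanging the roles of the two factors.
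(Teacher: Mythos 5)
The central step of your argument --- that the strict pullback property makes the comparison functor $j/(\sigma,\tau)\to p/i(\sigma,\tau)$ an isomorphism of categories --- is false. An object of $p/i(\sigma,\tau)$ is a pair consisting of an object $\rho'$ of $\Delta^{\RS{rl}}\cC^2$ together with an \emph{arbitrary} morphism $f\co p(\rho')\to i(\sigma,\tau)$ in $\left(\Delta\cC\right)^2$; the universal property of the pullback only produces unique preimages of pairs of objects (resp.\ morphisms) of the two corners whose images in $\left(\Delta\cC\right)^2$ are \emph{equal}, i.e.\ it identifies the strict fibres $j^{-1}(\sigma,\tau)\cong p^{-1}(i(\sigma,\tau))$, not the comma categories. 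In the case at hand the functor $i\co \Delta\cC\times\Delta^{\RS{rl}}\cC^2\to\left(\Delta\cC\right)^2$, which on the second factor composes with the monoidal structure of $\cC$, is far from injective on objects, and your comparison functor is not even injective on objects: a morphism of simplices from $\bfn'_2\to\cC$ to the total simplex $\bfk\to\cC$ underlying an object $\rho$ of $\Delta^{\RS{rl}}\cC^2$ lifts to a morphism $\rho''\to\rho$ for many different $\rho''$ with the prescribed total simplex. More fundamentally, the Beck--Chevalley isomorphism you are asserting fails for general strict pullbacks of categories: take $D$ to be the arrow category with objects $x\to y$, $B=\{x\}$, $C=\{y\}$, so that the pullback is empty; then $i^*\mathrm{Lan}_p F$ has value $F(x)$ at $y$, while $\mathrm{Lan}_j(F\circ q)$ is the empty colimit.

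What rescues the statement is an additional property of the vertical functor $p\co\Delta^{\RS{rl}}\cC^2\to\left(\Delta\cC\right)^2$: it is a Grothendieck opfibration (a cocartesian lift of a morphism $(\bfn'_1,\bfn'_2)\to(\bfm_1,\bfm_2)$ of pairs of simplices over $\cC$ is obtained by keeping $\bfk'$ fixed and composing the structure map $\bfk'\to\bfn'_1\times\bfn'_2$ with the given maps). Consequently the inclusion of the strict fibre $p^{-1}(d)$ into the comma category $p/d$ admits an adjoint and is final, so the pointwise left Kan extension is computed as a colimit over the strict fibre; pullbacks of opfibrations are opfibrations with the same strict fibres, and at that point the pullback property established in the preceding lemma does exactly what you want. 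Equivalently, you can show directly that your comparison functor of comma categories is final rather than an isomorphism. Either way, this finality is an input you must supply; it is not a formal consequence of the square being a pullback.
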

  Returning to the choice of functor $F$, we 
  consider the left Kan extensions 
  \begin{equation}
    \begin{tikzcd}
      &  \Delta^{\RS{{Llr}r}} \cC^{3}  \ar[dr] \ar[d] & &  \Delta^{\RS{l{Rlr}}} \cC^{3} \ar[d] \ar[dl] & \\
      \Ch & \ar[l,"F",swap]   \Delta \cC  & \ar[l]        \Delta^{\RS{lir}} \cC^{3} \ar[r] \ar[d] &  \ar[r,"F"]  \Delta \cC  & \Ch \\
      & &   \left(\Delta \cC\right)^3 \ar[urr, dashed] \ar[ull, dashed] & & 
    \end{tikzcd}
  \end{equation}
  which we denote $F^{\RS{l{Rlr}}}$ and $F^{\RS{{Llr}r}} $, and which correspond to the two different ways of associating an ordered triple of objects of $\cC$. These two functors are naturally isomorphic.
  \begin{lem} \label{lem:multiply_twice}
    A multiplicative structure on $F$ induces natural transformations of functors on $\left(\Delta \cC\right)^3 $
    \begin{align}
      F \otimes ( F \otimes F) \Rightarrow &  F^{\RS{l{Rlr}}} \\
    (F \otimes F)  \otimes F \Rightarrow & F^{\RS{{Llr}r}}.
    \end{align}
  \end{lem}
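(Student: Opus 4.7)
The plan is to apply the multiplicative structure $\mu \co F \otimes F \Rightarrow F^{\RS{lr}}$ twice, using Corollary \ref{cor:pullback_of_Kan_is_Kan} as a Beck--Chevalley--type tool to make sense of the intermediate step as a Kan extension. I treat the first transformation in detail; the second is symmetric.

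First, I would tensor $\mu$ on the left with the identity natural transformation on $F$ to obtain a natural transformation of functors on $(\Delta\cC)^3 = \Delta\cC \times (\Delta\cC)^2$:
\begin{equation}
  F \otimes (F \otimes F) \Rightarrow F \otimes F^{\RS{lr}}_{23},
\end{equation}
where the subscript records that the multiplicative structure is applied to the second and third arguments. Next, I need a natural transformation $F \otimes F^{\RS{lr}}_{23} \Rightarrow F^{\RS{l{Rlr}}}$. Using the pointwise formula for $F^{\RS{lr}}$ as a left Kan extension, its value at $(\sigma_2,\sigma_3)$ is a colimit over the slice category of $\Delta^{\RS{rl}}\cC^{2} \to (\Delta\cC)^2$ at $(\sigma_2,\sigma_3)$, and since tensoring with the vector space $F(\sigma_1)$ commutes with colimits, the value of $F\otimes F^{\RS{lr}}_{23}$ at $(\sigma_1,\sigma_2,\sigma_3)$ is naturally the colimit, over objects $\bfn_2 \to \bfm_1 \times \bfm_2$ (with target compatible with $(\sigma_2,\sigma_3)$), of $F(\sigma_1) \otimes F(\tau_{\bfn_2})$, where $\tau_{\bfn_2}\co \bfn_2 \to \cC$ is the simplex obtained by monoidal product. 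Applying $\mu$ at the pair $(\sigma_1,\tau_{\bfn_2})$ yields a map to $F^{\RS{lr}}(\sigma_1,\tau_{\bfn_2})$, and the latter expands further into a colimit of terms $F(\bfk)$ indexed by $\bfk \to \bfn_1 \times \bfn_2$ with $\bfn_1 = \bfp_1$.

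The crux is then to identify this double colimit with $F^{\RS{l{Rlr}}}(\sigma_1,\sigma_2,\sigma_3)$. For this I would invoke Corollary \ref{cor:pullback_of_Kan_is_Kan}, applied to the rightmost pullback square in Diagram \eqref{eq:diagram_comparison_two_products}: it identifies the pullback of $F^{\RS{lr}}$ along the map $\Delta\cC \times \Delta^{\RS{rl}}\cC^{2} \to (\Delta\cC)^2$ with a left Kan extension of the relevant composite functor on $\Delta^{\RS{l{Rlr}}}\cC^{3}$, and by functoriality of left Kan extensions the iterated colimit constructed above maps naturally (via $\mu$ applied slice-wise, together with the canonical natural transformation $F^{\RS{lr}} \Rightarrow LF \circ \otimes$ of Equation~\eqref{eq:multiply_in_and_out} interpreted on simplices) into $F^{\RS{l{Rlr}}}$. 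Composing the two natural transformations gives the desired
$F \otimes (F\otimes F) \Rightarrow F^{\RS{l{Rlr}}}$.

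For the second natural transformation, the same strategy applies with the roles swapped: tensor $\mu$ with the identity on $F$ on the \emph{right} to get $(F\otimes F)\otimes F \Rightarrow F^{\RS{lr}}_{12} \otimes F$, and then use Corollary \ref{cor:pullback_of_Kan_is_Kan} applied to the \emph{leftmost} pullback square in Diagram \eqref{eq:diagram_comparison_two_products} to assemble the result into $F^{\RS{{Llr}r}}$. The main obstacle I anticipate is bookkeeping: one must verify carefully that the Beck--Chevalley-type identification coming from the pullback square interchanges the ``internal'' colimit defining $F^{\RS{lr}}$ in one pair of slots with the ``external'' colimit of the pointwise application of $\mu$, and that this identification is natural in the triple $(\sigma_1,\sigma_2,\sigma_3)$. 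This amounts to checking that the construction respects morphisms in the slice categories defining both Kan extensions, which is a formal but somewhat lengthy diagram chase using that $F^{\RS{lr}}$ is itself defined as a Kan extension, so that the universal property of colimits provides the required factorisations uniquely.
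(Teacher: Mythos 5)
Your proposal is correct and follows essentially the same route as the paper's proof: apply the multiplicative structure first to the inner pair (Step 1 of the paper), then apply it again slice-wise over the Kan extension and invoke Corollary \ref{cor:pullback_of_Kan_is_Kan} for the appropriate pullback square of Diagram \eqref{eq:diagram_comparison_two_products} (the paper's Steps 2 and 3). Your pointwise-colimit phrasing is just a more explicit rendering of the paper's Kan-extension functoriality argument, and the symmetric treatment of the second transformation matches as well.
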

  \begin{proof}
    Since the two cases are similar, we shall only discuss the first case, which amounts to considering the right half of Diagram \eqref{eq:diagram_comparison_two_products}. The map will be constructed in three steps:

{\bf Step 1:}  We start with the natural transformation
    \begin{equation} \label{eq:first_multiplication_in_assoc}
           F \otimes ( F \otimes F) \Rightarrow F \otimes F^{\RS{lr}}
           \end{equation}
           induced by the multiplicative structure.

  {\bf Step 2:} In this step, we shall construct a natural transformation from the right hand side of Equation \eqref{eq:first_multiplication_in_assoc} to the Kan extension of the map $  \Delta^{\RS{l{Rlr}}} \cC^{3} \to  \Ch$ along projection map to $  \left( \Delta \cC \right)^3$. The target of Equation \eqref{eq:first_multiplication_in_assoc} is the left Kan extension of the diagram
  \begin{equation}
    \begin{tikzcd}
    \Delta \cC  \times \Delta^{\RS{rl}} \cC^2 \ar[r] \ar[d]& \left( \Delta \cC \right)^2 \ar[r] & \Ch \times \Ch \ar[r] & \Ch.  \\
    \left( \Delta \cC \right)^3  \ar[urrr,dashed] & &  &
    \end{tikzcd}
  \end{equation}
  By abuse of notation, we shall denote the horizontal composition $F \otimes F$. This fits in a commutative diagram
  \begin{equation}
    \begin{tikzcd}
      \Delta^{\RS{l{Rlr}}} \cC^{3} \ar[d]  \ar[r]  & \Delta^{\RS{rl}} \cC^2 \ar[d]\ar[r] & \Ch \\
      \Delta \cC  \times \Delta^{\RS{rl}} \cC^2 \ar[r] & \left( \Delta \cC \right)^2 \ar[r] \arrow[ur, dashed, ""{name=D}]  
      & \Ch \times \Ch. \ar[u] \arrow[Rightarrow, to=D,shorten=1ex] 
     \end{tikzcd}
  \end{equation}
  The dashed arrow represents $F^{\RS{lr}}$, which we recall is the left Kan extension of the top right horizontal map along the middle vertical arrow. The multiplicative structure is the data of a natural transformation from $F \otimes F$ to $F^{\RS{lr}}$ (thought of as functors on $ \left( \Delta \cC \right)^2$), and hence induces the displayed natural transformation. Corollary \ref{cor:pullback_of_Kan_is_Kan} identifies the pullback of $F^{\RS{lr}}$ to a functor on $\Delta \cC  \times \Delta^{\RS{rl}} \cC^2 $ with the Kan extension of the map $  \Delta^{\RS{l{Rlr}}} \cC^{3} \to  \Ch$ along the right vertical map, so we obtain from the previous diagram a natural transformation
   \begin{equation}
    \begin{tikzcd}
      \Delta^{\RS{l{Rlr}}} \cC^{3} \ar[d]  \ar[r]   & \Ch \\
    \Delta \cC  \times \Delta^{\RS{rl}} \cC^2 \ar[r] \arrow[ur, dashed, ""{name=D}]  & \Ch \times \Ch \ar[u]\arrow[Rightarrow, to=D,shorten=1ex]  ,
     \end{tikzcd}
  \end{equation}
  where the diagonal arrow is again the left Kan extension of the top vertical map.  Composing further with the projection from $\Delta \cC  \times \Delta^{\RS{rl}} \cC^2  $ to $ \left( \Delta \cC \right)^3$, and using the functoriality of Kan extensions, we obtain a natural transformation from $  F \otimes F^{\RS{lr}} $ to the Kan extension of the map $  \Delta^{\RS{l{Rlr}}} \cC^{3} \to  \Ch$ along projection map to $  \left( \Delta \cC \right)^3$.

{\bf Step 3:}  The commutativity of the diagram
  \begin{equation}
       \begin{tikzcd}
         \Delta^{\RS{l{Rlr}}} \cC^{3} \ar[r]  \ar[dr] & \Delta^{\RS{lir}} \cC^{3} \ar[r] \ar[d]  & \Ch \\
         & \left( \Delta \cC \right)^3 \arrow[ur, dashed]  &
     \end{tikzcd}  
   \end{equation}
   gives a natural transformation from the Kan extension associated to the diagonal map to the one associated to the vertical map. The desired map is the composition of this natural transformation with Equation \eqref{eq:first_multiplication_in_assoc}.

\end{proof}

  The above result leads to the following definition:
  \begin{defin}
    A multiplicative structure on $F$ is \emph{associative} if the following triangle of natural transformations of functors on $\left( \Delta \cC \right)^3 $ commutes:
    \begin{equation} \label{eq:associativity_of_multiplication}
      \begin{tikzcd}
          F \otimes ( F \otimes F) \ar[Rightarrow, r] \ar[Rightarrow, d,"\cong"] & F^{\RS{l{Rlr}}} \ar[Rightarrow, d,"\cong"] \\
    (F \otimes F)  \otimes F\ar[Rightarrow, r]   &  F^{\RS{{Llr}r}}.
      \end{tikzcd}
\end{equation}
\end{defin}

We now state the consequence of associativity at the level of homotopy colimits, for a triple of  categories $A_{1}$, $A_{2}$, and $A_{3}$, over $\cC$. 
\begin{lem} \label{lem:associative_map_hocolim}
  If the multiplicative structure on $F$ is associative, the following diagram commutes:
  \begin{equation}
    \begin{tikzcd}[column sep = 10]
    \displaystyle{  \hocolim_{A_1} L F  \otimes  \hocolim_{A_1}LF \otimes \hocolim_{A_3} LF} \ar[r] \ar[d] & \displaystyle{\hocolim_{A_{1} \times A_2 } LF  \otimes \hocolim_{A_3} LF}  \ar[d] \\
   \displaystyle{  \hocolim_{A_1} LF  \otimes  \hocolim_{A_{2} \times A_3} LF }  \ar[r] & \displaystyle{ \hocolim_{ A_{1} \times A_2 \times A_3} LF}.
    \end{tikzcd}
  \end{equation}
\end{lem}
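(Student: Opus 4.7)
The plan is to unpack both composites around the square into the three-step product construction from Equations~\eqref{eq:product_induced_by_mult_structure-1}--\eqref{eq:product_induced_by_mult_structure-last}, reduce both to a single natural transformation $LF\otimes LF\otimes LF \Rightarrow LF$ of functors on $\cC\times\cC\times\cC$ (pulled back along the product functor $A_1\times A_2\times A_3 \to \cC\times\cC\times\cC$), and then invoke the assumed associativity~\eqref{eq:associativity_of_multiplication} to identify the two resulting natural transformations. Applying the homotopy colimit over $A_1\times A_2\times A_3$ will then yield the commutativity of the square.

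First I would handle the piece of the construction which does not involve the multiplicative structure at all: the Eilenberg--Zilber type shuffle map
\[
\hocolim_{A_1} LF \otimes \hocolim_{A_2} LF \otimes \hocolim_{A_3} LF \to \hocolim_{A_1\times A_2\times A_3} \bigl( LF\otimes LF\otimes LF\bigr)
\]
implicit in step~\eqref{eq:product_induced_by_mult_structure-1}. This shuffle map is strictly associative at the level of bar constructions, so the contribution of this step to the two composites of the square agrees. Both composites around the diagram are therefore reduced to $\hocolim_{A_1\times A_2\times A_3}$ applied to the pullback to $A_1\times A_2\times A_3$ of a natural transformation of functors on $\cC\times\cC\times\cC$, followed by the natural transformation $LF \otimes LF \otimes LF \Rightarrow LF$ induced from Equation~\eqref{eq:multiply_in_and_out} through the triple monoidal structure on $\cC$.

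Next I would interpret the two natural transformations on $\cC\times\cC\times\cC$ using Lemma~\ref{lem:multiply_twice}: the composite going through $\hocolim_{A_1\times A_2}LF \otimes \hocolim_{A_3}LF$ factors as
\[
LF \otimes LF \otimes LF \Rightarrow (LF\otimes LF)\otimes LF \Rightarrow LF^{\RS{{Llr}r}} \Rightarrow LF,
\]
while the composite through $\hocolim_{A_1}LF \otimes \hocolim_{A_2\times A_3}LF$ factors as
\[
LF \otimes LF \otimes LF \Rightarrow LF\otimes (LF\otimes LF) \Rightarrow LF^{\RS{l{Rlr}}} \Rightarrow LF.
\]
The final arrows in both, from $LF^{\RS{{Llr}r}}$ and $LF^{\RS{l{Rlr}}}$ to $LF$, come from the inclusions of $\Delta^{\RS{{Llr}r}}\cC^3$ and $\Delta^{\RS{l{Rlr}}}\cC^3$ into $\Delta^{\RS{lir}}\cC^3$ in Diagram~\eqref{eq:diagram_comparison_two_products}, and both factor through the Kan extension of the composite $\Delta^{\RS{lir}}\cC^3\to\Delta\cC\to\Ch$ along the projection to $(\Delta\cC)^3$. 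Once this is unraveled, the equality of the two resulting natural transformations on $(\Delta\cC)^3$ is precisely the commutativity of Diagram~\eqref{eq:associativity_of_multiplication}.

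The main obstacle is the careful bookkeeping of the various Kan extensions that arise from the pullback squares in Corollary~\ref{cor:pullback_of_Kan_is_Kan}; no new ideas beyond those in the proof of Lemma~\ref{lem:multiply_twice} are required, but one must verify that the intermediate natural isomorphisms identifying $F\otimes F^{\RS{lr}}$ and $F^{\RS{lr}}\otimes F$ with Kan extensions from $\Delta^{\RS{l{Rlr}}}\cC^3$ and $\Delta^{\RS{{Llr}r}}\cC^3$ interact correctly with the final projection to $\Delta^{\RS{lir}}\cC^3$. Once this compatibility is in place, the associativity assumption feeds through, and an application of $\hocolim_{A_1\times A_2\times A_3}$ yields the square of the lemma.
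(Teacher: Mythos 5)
Your proposal is correct and follows essentially the same route as the paper: both composites are factored through $\hocolim_{A_1\times A_2\times A_3} LF^{\RS{l{Rlr}}} \cong \hocolim_{A_1\times A_2\times A_3} LF^{\RS{{Llr}r}}$ via the argument of Lemma~\ref{lem:multiply_twice}, and the commutativity of Diagram~\eqref{eq:associativity_of_multiplication} then closes the square. The extra detail you supply about the strict associativity of the shuffle map on bar constructions and the Kan-extension bookkeeping is exactly what the paper's terse proof leaves implicit.
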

\begin{proof}
The argument of Lemma \ref{lem:multiply_twice} shows that the two composition factor through
  \begin{equation}
         \hocolim_{A_1 \times A_2 \times A_3} L F^{\RS{l{Rlr}}} \cong \hocolim_{A_1 \times A_2 \times A_3} L F^{\RS{{Llr}r}}.
       \end{equation}
       Equation \eqref{eq:associativity_of_multiplication} then implies the desired commutativity.
     \end{proof}

     We shall also need to understand the naturality of these constructions with respect to the choice of functor $F$. For our main definition we note that a natural transformation of functor $F \Rightarrow G$ on $\Delta \cC$ induces a natural transformation
     \begin{equation}
           F^{\RS{lr}} \to  G^{\RS{lr}}  
         \end{equation}
         of the induced functors on $\left(\Delta \cC \right)^2$: 
     \begin{defin}
       A \emph{multiplicative natural transformation} $F \Rightarrow G$ of multiplicative functors on $\Delta \cC$ is a natural transformation with the property that the following diagram commutes:
       \begin{equation}
         \begin{tikzcd}
   F \otimes F  \ar[r, Rightarrow] \ar[d, Rightarrow]       & F^{\RS{lr}} \ar[d, Rightarrow] \\
      G \otimes G  \ar[r, Rightarrow]        & G^{\RS{lr}}.
         \end{tikzcd}
       \end{equation}
     \end{defin}
     
     The main reason for stating the above definition is to be able to formulate the following result, which refers to Diagram \eqref{eq:product_categories_over_C}:
     \begin{lem} \label{lem:multiplicative-map-hocolim-functorial}
       If a natural transformation $F \Rightarrow G$ is  multiplicative, the induced map of homotopy colimits fits in a commutative diagram
       \begin{equation}
         \begin{tikzcd}
        \displaystyle{   \hocolim_{A_1} L F \otimes \hocolim_{A_2} L F } \ar[r] \ar[d] & \displaystyle{ \hocolim_{A_{1} \times A_2}L F }\ar[d] \\
       \displaystyle{   \hocolim_{A_1} L G \otimes \hocolim_{A_2} L G} \ar[r] & \displaystyle{ \hocolim_{A_{1} \times A_2}L G} .
         \end{tikzcd}
       \end{equation} \qed
     \end{lem}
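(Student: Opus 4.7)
The plan is to unfold the product map on homotopy colimits into its constituent steps (as in Equations \eqref{eq:product_induced_by_mult_structure-1}--\eqref{eq:product_induced_by_mult_structure-last}) and check commutativity of the resulting prism one square at a time; the multiplicativity hypothesis on $F\Rightarrow G$ is designed precisely to handle the one non-formal step.

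First I would recall that the product map of the lemma is the three-fold composite
\begin{equation}
  \hocolim_{A_1}LF \otimes \hocolim_{A_2}LF \to \hocolim_{A_1\times A_2}(LF\otimes LF) \to \hocolim_{A_1\times A_2} LF^{\RS{lr}} \to \hocolim_{A_1\times A_2} LF,
\end{equation}
where the first arrow is the Eilenberg--Zilber style shuffle map on the bar construction, the second arrow is induced by the multiplicative structure $F\otimes F \Rightarrow F^{\RS{lr}}$ after left Kan extension to $(\cC\times\cC)$ and then pullback to $A_1\times A_2$, and the last arrow is induced by the canonical transformation $F^{\RS{lr}}\Rightarrow F$ of Diagram \eqref{eq:diagram_out_of_multiplicative}. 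The analogous composite defines the bottom row for $G$. The outer rectangle of the lemma thus decomposes as three smaller squares stacked horizontally, each of which I would verify separately.

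The first square commutes because the Eilenberg--Zilber shuffle map is, by naturality of the bar construction, a natural transformation of functors of the pair $(F,F)$; applying it to the natural transformation $F\Rightarrow G$ on each tensor factor yields the desired compatibility. The last square commutes because both the operation $H \mapsto LH^{\RS{lr}}$ and the canonical map $LH^{\RS{lr}}\Rightarrow LH$ are constructed as (compositions of) left Kan extensions along fixed functors of diagram categories, hence are functorial in $H\co \Delta\cC \to \Ch$; applying this functoriality to $F\Rightarrow G$ produces a commutative square before passing to the homotopy colimit over $A_1\times A_2$, and this commutativity survives the homotopy colimit. The middle square is the only one that uses the definition of multiplicativity: after passing to Kan extensions over $(\Delta\cC)^2$ and then pulling back to $A_1\times A_2$, its commutativity is exactly the defining property of a multiplicative natural transformation, namely that
\begin{equation}
  \begin{tikzcd}
    F\otimes F \ar[r,Rightarrow] \ar[d,Rightarrow] & F^{\RS{lr}} \ar[d,Rightarrow] \\
    G\otimes G \ar[r,Rightarrow] & G^{\RS{lr}}
  \end{tikzcd}
\end{equation}
commutes as natural transformations of functors on $(\Delta\cC)^2$.

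There is no significant obstacle; the only subtle point is bookkeeping: one must verify that the naturality of Kan extensions used in the first and third squares is compatible with pullback along $A_1\times A_2\to (\Delta\cC)^2\to (\cC\times\cC)$, but this is immediate since left Kan extension is a $2$-functor and the relevant pullbacks are strict. Stacking the three commutative squares yields the commutativity of the diagram in the statement, completing the proof.
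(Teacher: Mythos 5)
Your proof is correct and follows the intended route: the paper omits the argument as immediate, and unwinding the product map into the three steps of Equations \eqref{eq:product_induced_by_mult_structure-1}--\eqref{eq:product_induced_by_mult_structure-last} and checking each square (naturality of the shuffle map and of Kan extension for the outer two, the definition of a multiplicative natural transformation for the middle one) is exactly the verification being suppressed.
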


\section{Algebraic constructions in symmetric monoidal categories}
\label{sec:algebr-constr-symm}

In this section, we review various standard categorical constructions, in the enriched context. We thus fix 
 a symmetric monoidal category $\cC$; the reader may want to have in mind the category of topological spaces, or the category of cochain complexes. The most important example for us will be the category of Kuranishi presentations, though in that case we shall impose some additional conditions on the formal data that we shall presently consider.

We take the notion of a $\cC$-enriched category as given: this is a category with a set of objects, whose morphisms are objects of $\cC$, with composition data are specified as morphisms in $\cC$.

\subsection{Multimodules}
\label{sec:multimodules}
Let $\vec{\cA} = (\cA_1, \ldots, \cA_k; \cA)$ be a collection of categories enriched in $\cC$. We shall define the notion of a multimodule with inputs $(\cA_1, \ldots, \cA_k)$ and output $\cA$. This requires some additional notation: given an integer $i$ betwen $1$ and $k$, a sequence $\vec{p} = (p_1, \ldots, p_k; p)$ of elements of $\vec{A}$, and an element $p'_i$ of $\cA_i$, we write
\begin{equation}
  (p'_i ,p_i) \circ_{i} \vec{p} = (p_1, \ldots , p_{i-1}, p'_i, p_{i+1}, \ldots, p_k; p).
\end{equation}

\begin{defin}
  A $\cC$-enriched multimodule $ \cM$ over $\vec{\cA}$  consists of the data of an object
\begin{equation}
  \cM(\vec{p}; p) \in \Ob \cC
\end{equation}
for each sequence of objects $\vec{p} = (p_1, \ldots, p_k, p)$, and commuting maps
\begin{align}
  \cA_i(p'_i ,p_i) \otimes \cM( \vec{p}; p )   & \to \cM\left( (p'_i ,p_i) \circ_{i} \vec{p} ; p\right) \\
   \cM( \vec{p} ; p)  \otimes \cA(p, p') & \to \cM\left(  \vec{p} ; p'  \right)
 \end{align}
 that are associative.
\end{defin}

\subsection{Multicategories in symmetric monoidal categories}
\label{sec:mult-symm-mono}

\begin{defin} \label{def:multicategory_enriched_in_symmetric_monoidal}
  A \emph{multicategory} $\cX$ in $\cC$ consists of the following data:
  \begin{enumerate}
  \item (Object) a set of object $\Ob(\cX)$
  \item (Multimorphisms) For each sequence $( p_1, \ldots, p_k; p)$ of objects of $\cX$, an object $\cX(p_1, \ldots, p_k; p)$ of $\cC$. 
  \item (Multicompositions) For each pair of sequences $(p^{-}_1, \ldots, p^{-}_k;p^-)$ and $(p^{+}_1, \ldots, p^{+}_\ell; p^+)$, and for each $i$ such that $p^{-}_i = p^{+} $, a map 
    \begin{equation}
      \cX(p^-_1, \ldots, p^-_k; p^-) \times \cX(p^{+}_1, \ldots, p^{+}_\ell, p^{+}) \stackrel{\circ_i}{\to} \cX(p_1, \ldots, p_{k+\ell-1}; p)
    \end{equation}
    where the output sequence is
    \begin{equation}
      (p_1, \ldots, p_{k+\ell-1}; p) = (p^-_1, \ldots, p^-_{i-1}, p^{+}_1, \ldots, p^{+}_\ell, p^-_{i+1}, \ldots, p_{k}; p^-).
    \end{equation}
  \item (Symmetry) For each permutation $\sigma$ of the sequence $(1, \ldots, k)$, an isomorphism
    \begin{equation}
      \cX(p_1, \ldots, p_k; p) \cong \cX(p_{\sigma(1)}, \ldots, p_{\sigma(k)}; p).
    \end{equation}
  \end{enumerate}
  These data are assumed to satisfy the following axioms:
  \begin{enumerate}
  \item (Associativity) Representing multicomposition by grafting of labelled trees with a unique internal vertex, we require that the operations associated to different choices of orderings on the decomposition of a labelled tree agree.
  \item (Symmetric group representation) The isomorphism associated to a composition $\sigma_1 \circ \sigma_2$ of permutation agrees with the composition of the isomorphisms associated to $\sigma_1$ and $\sigma_2$.
  \item (Symmetry of multicompositions) The multicomposition commute with the action of permutations of the inputs.
  \end{enumerate}
\end{defin}
The reader may find explicit formulae for the associativity property of multicomposition in \cite{LeinsterBook}. The main thing to keep in mind is that we do not allow the inputs of a multimorphisms to be empty, and we do not require the existence of units morphisms.

\section{Lifting flow multimodules}
\label{sec:lifting-flow-modules}

The purpose of this appendix is to prove a technical result concerning the multicategory $\Flow$ constructed in Sections \ref{sec:flow-categ-chain} and \ref{sec:mult-flow-categ}. Recall that this cubically enriched multicategory is built from diagrams valued in the category $\Kur$ of Kuranishi presentations. We shall introduce an analogous multicategory $\widetilde{\Flow}$ built from diagrams valued in $ \widetilde{\Kur}$, and then prove the following result, which allows us to then apply theories of virtual counts to pass from geometry to algebra:

\begin{prop}
  \label{prop:lift_flow_to_tflow}
  There is a cubically enriched multi-functor $\Flow \to \widetilde{\Flow}$, which at the level of objects is given by the canonical lift of Lemma \ref{lem:lift_flow_cat-to-factorised}, and whose composition with the forgetful map is naturally isomorphic to the identity of $\Flow$.
\end{prop}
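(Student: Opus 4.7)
The plan is to extend the canonical lifting procedure of Lemma \ref{lem:lift_flow_cat-to-factorised} from flow categories to the setting of flow multimodules, and then to propagate this lifting through the cubical and multicategorical structure of $\Flow$. First I would unpack the target: $\widetilde{\Flow}$ is the multicategory defined by repeating Definitions \ref{def:Flow_multicategory} and \ref{def:Flow_modules} verbatim, but requiring that each flow multimodule $\bM_v$ appearing at a vertex of a tree $T_\sigma$ be lifted to a multimodule enriched in $\cD\widetilde{\Kur}$ rather than $\cD\Kur$, with all structure maps and the isomorphisms of Equation \eqref{eq:choice_isomorphism_cube-Kur} respecting these lifts. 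The cubical structure, symmetric group actions, and multicomposition carry over unchanged, and the forgetful functor $\widetilde{\Flow}\to \Flow$ is tautological.

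The core construction is the assignment, to each flow multimodule $\bM$ over a sequence $\vec{\bX}$, of a canonical $\cD\widetilde{\Kur}$-valued lift $\widetilde{\bM}$. For each triple $(\vec{p},\mu,P)$ with $P\in \cQ^{\mu}_{\bM}(\vec{p})$, the definition of a multimodule structure exhibits $\partial^P \bM^{\mu}(\vec{p})$ as a product of codimension-$0$ strata of Kuranishi presentations coming from the factors $\bX_i(\cdot,\cdot)$, from $\bX(\cdot,\cdot)$, and from $\bM(\cdot)$, corresponding to the graphical decomposition of $P$ as a tree (see Figure \ref{fig:tree-multiflow-category} and the discussion following it). I would show that the set of such product decompositions of $\partial^P\bM^{\mu}(\vec{p})$ admits a \emph{unique maximal} factorisation, by essentially the argument of Lemma \ref{lem:lift_flow_cat-to-factorised}: two decompositions admit a common refinement, so the partially ordered set of decompositions is filtered, and the top stratum constraints force a unique maximum. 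Assigning to each $P$ this maximal factorisation yields the required lift to $\widetilde{\Kur}$, and the compatibility of these factorisations under the arrows of $\cQ^\mu_\bM(\vec p)$ is automatic since the maximal factorisation of a boundary stratum of $P$ refines the pullback of the maximal factorisation of $P$.

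Once the lift is constructed for $\flow_0(\vec{\bX})$, I would extend to general elements of $\Flow_n(\vec{\bX})$ by applying the construction vertex-by-vertex to the trees $T_\sigma$ appearing in Definition \ref{def:Flow_multicategory}: each $\bM_v$ receives a canonical lift $\widetilde{\bM}_v$, and the maps in Equation \eqref{eq:map_decomposed_strata_multimodule} lift uniquely because they come from inclusions of boundary strata. The required compatibility with face maps, permutations, and degeneracies of cubes is immediate from the canonicity of the maximal factorisation. For multicomposition $\bM_+\circ_i \bM_-$, one checks that the maximal factorisation of a stratum in the composition is the concatenation of the maximal factorisations of the factors; this ensures strict (not merely up to isomorphism) compatibility with the composition maps of Lemma \ref{lem:Flow_multicat-well-defined}, so that the assignment $\bM\mapsto \widetilde{\bM}$ defines a cubically enriched multifunctor. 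Naturality of the composition with the forgetful map holds by construction because forgetting the factorisation data recovers the original $\bM$.

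The main obstacle is the existence and uniqueness of the maximal factorisation in the multimodule setting: in the flow category case of Lemma \ref{lem:lift_flow_cat-to-factorised}, the maximal decomposition of an object of $\cQ_{\bX}^{\lambda}(p,p')$ is combinatorially evident from the arc picture, but for a flow multimodule, strata are indexed by trees with one distinguished (multivalent) vertex, and one must verify that the decompositions of the form ``factor out a bivalent sub-arc adjacent to a leaf'' or ``factor out the multivalent vertex'' can always be pushed to a unique common refinement. This is where care is required, particularly around the interplay between the action of the flow categories on the left and right inputs of the multimodule. Once this combinatorial lemma is established, the rest of the argument is formal bookkeeping parallel to the proof of Lemma \ref{lem:lift_flow_cat-to-factorised}.
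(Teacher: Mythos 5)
Your construction of the lift on a single multimodule---via the unique maximal factorisation of each stratum read off from its labelled tree---is essentially the paper's base case, and your treatment of multicomposition by concatenation of trees also matches. But the proposal has a genuine gap: you assert that the lift is canonical and that ``compatibility with face maps, permutations, and degeneracies of cubes is immediate from the canonicity of the maximal factorisation.'' This is precisely where the construction fails to be automatic, and the paper stresses at the outset of Appendix \ref{sec:lifting-flow-modules} that the multifunctor $\Flow\to\widetilde{\Flow}$ is \emph{not} canonical, in contrast with Lemma \ref{lem:lift_flow_cat-to-factorised}. The problem is that an $n$-cube in $\flow_n(\vec{\bX})$ carries, for each pair of permutations, an isomorphism $\bM_{\phi_1\circ\phi_2}\cong\phi_2^*\bM_{\phi_1}$ which is an arbitrary isomorphism of Kuranishi diagrams, whereas morphisms in $\widetilde{\Kur}$ are essentially prescribed by reorderings of factors (the definition demands equality $\bX_i(P)=\bY_{f(i)}(f(P))$, not isomorphism). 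Applying your canonical maximal factorisation to each $\bM_\phi$ separately gives no guarantee that the given isomorphisms are realised by permutations of the chosen factors. The paper resolves this by choosing a representative of each orbit of the symmetric group action on non-degenerate $m$-cubes, constructing the lift on that representative, and transporting it to the rest of the orbit by composing with permutations.

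A second, related omission is that your argument is not inductive, whereas the paper's proof proceeds by induction on the dimension of non-degenerate cubes with four explicit inductive hypotheses. The induction is forced on you by the first issue: once the lift on lower-dimensional cubes has been \emph{chosen} (not canonically determined), the lift of a stratum of $\cQ_{\bM_v,\phi}$ lying over a proper face of $\square^m$ is constrained by those earlier choices via the cubical face maps, so you cannot simply impose the maximal factorisation on every stratum. Only the strata lying over the top face of the cube receive the canonical tree-indexed factorisation; the rest must be matched to the inductively specified data, and one must then verify (the paper's fourth hypothesis) that the forgetful map returns an equivalence with the original $\bM$. Your combinatorial worry about common refinements of decompositions in the multimodule setting is legitimate but secondary; the real obstruction you need to confront is the rigidity of morphisms in $\widetilde{\Kur}$ against the flexibility of the symmetry data in $\flow_n$.
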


Before proceeding with the construction, we stress the fact that, while the construction of Lemma \ref{lem:lift_flow_cat-to-factorised} is canonical, the multi-functor $\Flow \to \widetilde{\Flow}$ is not. The essential problem concerns the action of symmetries on the sets of $n$-cubes. In $\Flow$, the data of the action includes a choice of isomorphism between two Kuranishi diagrams. However, since there are very few isomorphisms in $\widetilde{\Kur}$ we will have to choose a representative of each equivalence class, and replace these isomorphisms by reorderings of factors. 

Proceeding with the construction, we consider a sequence of Kuranishi flow categories $\vec{\bX} = (\bX_1, \ldots, \bX_k; \bX)$. Using Lemma \ref{lem:lift_flow_cat-to-factorised}, we obtain a sequence
\begin{equation}
  \vec{\widetilde{\bX}} \equiv \left( \widetilde{\bX}_1, \ldots, \widetilde{\bX}_k, \widetilde{\bX} \right)  
\end{equation}
of categories enriched over $\cD \widetilde{\Kur}^{\Gamma}$. This lift allows us to introduce a set $ \widetilde{\flow}_{n}(\vec{\bX})$ whose elements consist of the following data:
   \begin{enumerate}
   \item For each permutation $\phi$ of $\{1, \ldots, n\}$, a multimodule $\widetilde{\bM}_{\phi}$ enriched in $\cD \widetilde{\Kur}^{\Gamma}$  over $ \vec{\widetilde{\bX}} $,  whose morphism of $\Cat^{\Gamma}$-enriched multimodules with respect to the sequence  $\cP_{\bX_i}^{\Gamma_+}$ is
     \begin{equation}
\pi_\phi \co              \cQ_{\bM,\phi} \to \cP_{\vec{\bX} }^{\Gamma}  \times \Face  \square^{n}.
     \end{equation}
   \item For each composition $\phi_1 \circ \phi_2$ of permutations, an isomorphism
     \begin{equation}\label{eq:choice_isomorphism_cube-tKur}
        \widetilde{\bM}_{\phi_1  \circ \phi_2} \cong \phi_2^* \widetilde{\bM}_{\phi_1}      
     \end{equation}
     where the right hand side is given at the level of stratifying categories by the pullback in the diagram
     \begin{equation}
       \begin{tikzcd}
     \phi_2^* \cQ_{\widetilde{\bM},\phi_1} \ar[r] \ar[d] &   \cQ_{\widetilde{\bM},\phi_1} \ar[d] \\
       \cP_{\vec{\bX}}^{\Gamma}  \times \Face  \square^{n} \ar[r, "\id \times \phi_2"]  & \cP_{\vec{\bX}}^{\Gamma}  \times \Face  \square^{n}
       \end{tikzcd}
     \end{equation}
     and at the level of diagrams of presentations by the composite functor.
   \item For each triple composition $\phi_3 \circ \phi_2 \circ \phi_1$, we require that the isomorphisms of pullbacks fit in a commutative diagram
     \begin{equation}
       \begin{tikzcd}
         \widetilde{\bM}_{\phi_1  \circ \phi_2 \circ \phi_3} \ar[r] \ar[d] & \phi_3^{*}  \widetilde{\bM}_{\phi_1 \circ \phi_2} \ar[d] \\
     \left(\phi_2 \circ    \phi_3 \right)^* \widetilde{\bM}_{\phi_1}  \ar[r]   & \phi_3^{*} \left( \phi_2^* \widetilde{\bM}_{\phi_1} \right).
       \end{tikzcd}
     \end{equation}
\end{enumerate}
\begin{rem}
  The above definition is entirely analogous to the one given in Section \ref{sec:cubic-set-morph}. The essential difference is that the isomorphism in Equation~\eqref{eq:choice_iso_cube_Kur} is much less constrained than the one in Equation~\eqref{eq:choice_isomorphism_cube-tKur}, because morphisms in $\widetilde{\Kur}$ are essentially prescribed by permutations of the factors associated to each stratum.
\end{rem}

We now state a preliminary result which we will not directly use, but which the reader can treat as a testing ground for proving Proposition \ref{prop:lift_flow_to_tflow}:
\begin{lem}
  \label{lem:life_flow_to_tflow-no-composition}
  Given a sequence $\vec{\bX}$ of flow categories, there is a map of symmetric semi-cubical sets
  \begin{equation}
   \flow(\vec{\bX}) \to \widetilde{\flow}(\vec{\bX})    
  \end{equation}
  whose composite with the forgetful map takes every $n$-cube to an equivalent one, and which is equivariant with respect to the symmetries of the sequence $\vec{\bX} $. \qed
\end{lem}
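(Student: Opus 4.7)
The plan is to construct the lift by upgrading the canonical factorisation of Lemma \ref{lem:lift_flow_cat-to-factorised} from flow categories to flow multimodules, and then verifying that the resulting assignment is compatible with all the structure maps of the symmetric semi-cubical set. Concretely, given $\bM \in \flow_n(\vec\bX)$ and a permutation $\phi$ of $\{1,\ldots,n\}$, I first produce a lift $\widetilde{\bM}_\phi$ of $\bM_\phi$ to $\cD \widetilde{\Kur}^{\Gamma}$ by applying the construction of Lemma \ref{lem:lift_flow_cat-to-factorised} stratum-by-stratum: each object $Q$ of $\cQ_{\bM,\phi}^\mu(\vec p)$ maps under $\pi_\phi$ to a pair consisting of a face $\sigma$ of $\square^n$ and an element of $\cP_{\vec\bX}^{\mu}(\vec p)$, which admits a unique maximal factorisation through the $\cP_{\bX_i}^{\Gamma_+}$ and $\cP_{\bX}^{\Gamma_+}$ actions together with the (top stratum of the) cube $\sigma$. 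Pulling back the diagram via the corresponding composition of bimodule actions exhibits $\partial^{Q}\bM_\phi$ as a product of codimension $0$ strata, which is the datum of a factorisation in $\widetilde{\Kur}$. This is independent of any auxiliary choice because the maximal factorisation is unique, and it is compatible with the $\bX_i$- and $\bX$-actions because those actions are exactly what is used to define it.

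Next, I need to produce the isomorphism $\widetilde{\bM}_{\phi_1\circ\phi_2}\cong\phi_2^*\widetilde{\bM}_{\phi_1}$ in $\cD\widetilde\Kur^{\Gamma}$, lifting the given isomorphism in $\cD\Kur^{\Gamma}$. The key point is that on underlying Kuranishi presentations both sides agree (since the pullback along $\phi_2$ of a stratum over $(\sigma, \vec p)$ is the same Kuranishi presentation, only relabelled by a different face of the cube), and the canonical factorisation on each side is likewise intrinsic. Thus the isomorphism in $\widetilde\Kur$ is given by the permutation of factors induced by $\phi_2$ acting on the ordering of the coordinates of the cube together with the already-prescribed isomorphism of the underlying multimodule. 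The triple-composition coherence then reduces to the corresponding coherence of $\bM$ together with the fact that compositions of permutations act as compositions of factor reorderings.

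It remains to check that the assignment $\bM\mapsto\widetilde{\bM}$ commutes with face maps and with relabelling under permutations of $\vec\bX$. Face maps amount to restriction to a face $\sigma$ of the cube, which corresponds on the stratifying category side to passing to a subcategory; the canonical factorisation of a stratum is preserved by this passage, so the lift commutes with face maps strictly. For a permutation $f$ of $\vec\bX$, the induced identification $\flow_n(\vec\bX)\cong\flow_n(f^*\vec\bX)$ is given by relabelling the inputs, which commutes with the prescription of the maximal factorisation since factors associated to the input flow categories are simply relabelled, not mixed with the cube data; the same relabelling then gives the equivariance at the level of $\widetilde{\flow}$.

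The main obstacle will be the second paragraph: one needs to verify that the abstract isomorphism $\bM_{\phi_1\circ\phi_2}\cong\phi_2^*\bM_{\phi_1}$ supplied as part of the datum of an element of $\flow_n$ is indeed realised by a permutation of factors after passing to the canonical factorisation. This is where the rigidity of $\widetilde{\Kur}$ (in which the only morphisms between products of codimension $0$ strata are reorderings of factors) is both an aid and a constraint: it forces the lift to be essentially unique once it exists, but it requires us to observe that the original isomorphism in $\Kur$ is compatible with the intrinsic factorisation, which ultimately follows from the fact that the factorisation is determined by the functorial data $\pi_\phi$ and the bimodule action maps, both of which are preserved by the given isomorphism.
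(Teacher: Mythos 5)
Your first paragraph is essentially the right starting point, and your third paragraph (face maps and equivariance under reorderings of $\vec{\bX}$) is fine modulo the second. The genuine gap is exactly where you flag "the main obstacle" and then wave it away. The datum of an element of $\flow_n(\vec{\bX})$ supplies, for each composition $\phi_1\circ\phi_2$, an \emph{isomorphism} $\bM_{\phi_1\circ\phi_2}\cong\phi_2^*\bM_{\phi_1}$ of $\cD\Kur^{\Gamma}$-enriched multimodules; it does not supply an equality. If you lift each $\bM_{\phi}$ separately by its own intrinsic maximal factorisation, then the required isomorphism $\widetilde{\bM}_{\phi_1\circ\phi_2}\cong\phi_2^*\widetilde{\bM}_{\phi_1}$ must be a morphism in $\cD\widetilde{\Kur}^{\Gamma}$, and morphisms in $\widetilde{\Kur}$ demand that corresponding factors be \emph{equal} (the condition $\bX_i(P)=\bY_{f(i)}(f(P))$ in the definition), not merely isomorphic. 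The factors coming from the flow categories $\bX_i$, $\bX$ are indeed literally shared by both sides, but the remaining "core" factor of a stratum of $\bM_{\phi_1\circ\phi_2}$ is a codimension-$0$ stratum of $\bM_{\phi_1\circ\phi_2}$ itself, while the corresponding factor on the other side is a codimension-$0$ stratum of $\bM_{\phi_1}$; these are two different Kuranishi presentations related only by the given isomorphism. So your claimed lift of the coherence isomorphism does not exist in general, and "the factorisation is determined by functorial data preserved by the isomorphism" only yields isomorphic factors, which is not enough.

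This is precisely the obstruction the paper highlights before Proposition \ref{prop:lift_flow_to_tflow}: there are too few isomorphisms in $\widetilde{\Kur}$, so the given isomorphisms must be \emph{replaced} by reorderings of factors rather than lifted. The repair, and the route the paper takes, is to exploit the slack in the statement of the lemma ("takes every $n$-cube to an \emph{equivalent} one"): define the lift canonically only for $\phi=\id$ (and, when one also wants equivariance for the cubical symmetries and the reorderings of $\vec{\bX}$, only for a chosen representative of each orbit), and then \emph{define} $\widetilde{\bM}_{\phi}$ to be $\widetilde{\bM}_{\id}$ precomposed with the relabelling of stratifying categories associated to $\phi$. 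The coherence isomorphisms in $\widetilde{\flow}$ then become tautological reorderings, at the cost that the forgetful image of the lift is only equivalent, not equal, to the original cube. Your proposal as written insists on a choice-free lift of every $\bM_{\phi}$ and therefore cannot close the coherence step; once you replace the second paragraph by the orbit-representative/transport argument, the rest of your outline goes through.
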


At this stage, we have multi-compositions $\circ_i$ on $\widetilde{\flow}$ defined as in Definition \ref{def:composition_poset_multimodules}, which equip it with the structure of a multicategory enriched in semi-cubical categories; in particular, we have an operation
\begin{equation} \label{eq:multi-composition-tflow}
 \underset{T}{\bigcirc}  \co \prod_{v \in V(T) }  \widetilde{\flow}(\vec{\bX}_v) \to     \widetilde{\flow}(\vec{\bX})
\end{equation}
associated to every directed tree $T$ with edges labelled by flow categories, with $\vec{\bX}_v $ the sequence of flow categories given by the edges adjacent to each vertex, and $\vec{\bX} $ the sequence associated to the outgoing leaves.

\begin{rem}
  As in Section \ref{sec:mult-flow-categ-1}, the domain of Equation \eqref{eq:multi-composition-tflow} is not canonically defined, because it requires a choice of ordering and of parentheses, but again, there are coherent isomorphisms for such choices, so that specifying them for one of them determines all others.
\end{rem}

We use these operations introduce the multicategory $\widetilde{\Flow}$ enriched in symmetric cubical sets, by analogy with the construction from Definition \ref{def:Flow_multicategory}: given a sequence $\vec{\bX}$, we define $\widetilde{\Flow}_{n}( \vec{\bX})$ to consist of the following data:
\begin{enumerate}
\item For each face $\sigma$ of the $n$-cube, a directed 
  tree $T_{\sigma}$ with each edge $e$ labelled by a flow category $\bX_e$ and each vertex labelled by an object $\widetilde{\bM}_{v}$ of $\widetilde{\flow}\left(\vec{\bX}_{T_v}\right)$. In addition, we have an identification of the labels of the leaves of $T$ with the sequence $\vec{\bX}$, and a disjoint collection of subsets $D_v$ of $\{1, \ldots, n\}$, indexed by the vertices of $T_{\sigma}$, so that the number of elements of $D_v$ agrees with the dimension of the cube underlying $\widetilde{\bM}_v$.
     \item For each pair $\sigma < \tau$ in $\Face\square^n$, a map  $\pi_{\sigma}^{\tau}$ of trees $T_{\tau} \to T_{\sigma}$, which preserves the labels of all edges which are not collapsed, and so that $D_u$ maps to $D_{\pi_{\sigma}^{\tau}(u)}$ for each vertex $u$ of $T_{\tau}$. We are given as well the following data for each vertex $v$ of $T_{\sigma}$: letting $T_{\tau}^{v} $ denote the inverse image of this tree in $T_{\tau}$, and $ \widetilde{[0,1]}$ denote the interval considered as an object of $\widetilde{\Kur}$, a map
       \begin{equation} \label{eq:map_decomposed_strata_multimodule-tFlow}
\widetilde{[0,1]}^{e_{\sigma}^{\tau}(v)} \times \underset{u \to T_{\tau}^v }{\bigcirc} \widetilde{\bM}_{u} \to \widetilde{\bM}_v            
\end{equation}
which is an isomorphism onto the boundary strata of $\widetilde{\bM}_v$ associated to $\tau$.
\item   For each triple $\sigma < \tau < \rho$, the diagrams associated to the maps in Equation~\eqref{eq:map_decomposed_strata_multimodule} commute.
\end{enumerate}

The cubical structure maps of $\widetilde{\Flow}$ are defined in the same way as in Section \ref{sec:mult-flow-categ-1}, as are the multicompositions and the maps associated to reorderings of the input flow categories.

\begin{proof}[Proof of Proposition \ref{prop:lift_flow_to_tflow}]
  The goal is to assign to each sequence $\vec{\bX}$ of Kuranishi flow categories and each element $\bM$ of $\Flow_{n}(\vec{\bX})$, an element
  \begin{equation}
    \widetilde{\bM} \in     \widetilde{\Flow}_{n}(\vec{\bX})
  \end{equation}
  which is compatible with the structure maps of symmetric cubical sets in the sense that
  \begin{equation}
    \alpha^*(\widetilde{\bM}) = \widetilde{\alpha^* \bM}    
  \end{equation}
  for each map $\alpha \co \square^m \to \square^n$, and which is compatible with multi-compositions in the sense that
  \begin{equation}
 \widetilde{  \bM_{+} \circ_{i} \bM_{-}} = \widetilde{\bM}_{+} \circ_{i}     \widetilde{\bM}_{-}
  \end{equation}
whenever $\bM_{\pm} $ are composable.  

The data of trees associated to each stratum of the $n$-cube will be the same for a flow module with factorised strata and its lift, as is the data of the flow categories associated to each edge of the tree. So it remains to lift the flow modules $\bM_v$ associated to each vertex, and the maps of flow modules in Equation \eqref{eq:map_decomposed_strata_multimodule}.  First, we set the $\Cat^\Gamma$-enriched multimodule $\cQ_{\tilde{\bM}_v, \phi}$ to agree with $\cQ_{\bM_v, \phi}$ for each multimodule $\bM$ with factorised strata, each face $\sigma$ of the $n$ cube, each vertex $v \in T_{\sigma}$, and each permutation $\phi$. We use this to define the projection map to  $  \underset{v \in T}{\bigcirc} \cP_{\vec{\bX}_v}^{\Gamma}   \times \Face  \square^{n}  $. We must then functorially assign to each $P \in \cQ_{\bM_v, \phi}$, $\lambda \in \Gamma$, and $\vec{p} \in \vec{\bX}$ an object of $\widetilde{\Kur}$, compatibly with morphisms in $ \cQ_{\bM_v, \phi}$, the multimodule action, the structure maps of cubical sets, the symmetries of $\vec{\bX} $, as well as the multicompositions associated to inclusions $\sigma \subset \tau$ of faces of the cube.

The proof proceeds by induction on the dimension of non-degenerate cubes. In the base case of a $0$-cube,  we start by considering flow modules with factorised strata whose corresponding tree has a unique vertex $v$. For such an indecomposable $\bM$, there is a canonical lift $\widetilde{\bM}_{v,\id}$ of $\bM_{v,\id}$ specified by assigning to each stratum of the Kuranishi diagram $\bM^{\lambda}_{v,\id}(\vec{p})$, labelled by $\lambda \in \Gamma$ and a sequence $\vec{p}$ of objects of $\vec{\bX}$, its decomposition labelled by vertices of the underlying tree (with a unique vertex of valence the number of elements of the sequence $\bX$, and all other vertices of valence $2$). Since there is no choice to be done in this case, this construction is clearly compatible with any symmetries of $\bX$ in the sense that if $f$ is a permutation of $\{1, \ldots, d\}$, with the property that $\bX_{i} = \bX_{f(i)}$ for all $i$, then the lift of $\bM$ agrees with that of $f^*(\bM)$.

Using compositions, this immediately extends to $0$-cubes labelled by arbitrary trees. We then extend to cubes whose underlying non-degenerate cube has dimension $0$ by lifting the data from the $0$-cubes, since degeneracies are defined in the same way on both sides. There are no choices in this first step, so that it is compatible with the isomorphisms arising from reordering the inputs.

We now assume that a lift has been specified for all flow multimodules whose underlying non-degenerate cube has dimension $i<m$, compatibly with compositions and with the cubical structure maps, and subject to the following constraints:
\begin{enumerate}
  \item If $f$ is a permutation of the elements $(1, \ldots, d)$, then the lift of an element of $ \Flow_{i}(\vec{\bX}) $ agrees with the lift of its image in $  \Flow_{i}(f^* \vec{\bX}) $.
  \item Each orbit of the action of the symmetric group on non-degenerate $i$-cubes has a representative $\bN$ so that, for each stratum $\sigma$, and each vertex $v$ of $T_\sigma$, the lift $\widetilde{\phi_* \bN}_v$ is given by composing $\widetilde{\bN}_v$ with the isomorphism of stratifying categories associated to $\phi$.
  \item The sequence of Kuranishi presentations associated by $ \widetilde{\bN}_{v,\id}$ to each object of $\cQ^{\lambda}_{\bN_v, \id}(\vec{p})$ lying over the top stratum of the $i$-cube is given by its canonical decomposition in terms of the labelled tree associated to its image in $\underset{v \in T}{\bigcirc} \cP_{\vec{\bX}_v}^{\Gamma} $.
\end{enumerate}
 Combining these three inductive assumptions, and using $\underline{    \widetilde{\bM}} $ to denote the image of an object $\widetilde{\bM}$ of $\widetilde{\Kur}$ under the forgetful map to $\Kur$, we see that we have a canonical map
  \begin{equation} \label{eq:isomorphism_lift_and_forgets}
     \underline{    \widetilde{\bM}}^{P}_{v,\phi}(\vec{p}) \to \bM^P_{v,\phi}(\vec{p})
  \end{equation}
  obtained whenever $P$ lies over the top stratum of a cube by composing a permutation with the canonical isomorphism associated to the factorisation of the representative of the orbit of this cube.
  
  We impose an additional induction hypothesis:
  \begin{enumerate}
    \setcounter{enumi}{3}
  \item The isomorphisms in Equation \eqref{eq:isomorphism_lift_and_forgets} define an equivalence of flow modules with factorised strata $\underline{    \widetilde{\bM}} \cong \bM$, which is compatible with face maps.
  \end{enumerate}

  We now proceed with the inductive step, and consider all non-degenerate cubes of dimension $m$ in the sets $ \{\Flow_m(f^* \vec{\bX})\}$ for any reordering $f$ of the inputs.  We choose a representative $\bN$ of each orbit of the action of the symmetric group on $m$-cubes, and of the reorderings of $\vec{\bX}$. 

  If $\sigma$ is a proper stratum of the $m$-cube, then a lift $\widetilde{\bN}_{v,\phi}$ is specified in the inductive step for each $v \in T_{\sigma}$. If $\sigma =  [\square^m]$ is the top stratum, then whenever $v \in T_{[\square^m]}$ is a vertex of the associated tree, the induction hypothesis further specifies the lift of the restriction of $\widetilde{\bN}_{v,\phi}$ to each element of $\cQ_{\bN_v,\phi} $ which projects to a boundary stratum of $\square^m$. For elements which project to the top stratum, we proceed starting with the case $\phi = \id$ as follows: consider a sequence $\vec{p}$ of objects of $\vec{\bX}$, and an element  $P \in  \cQ^{\lambda}_{\bN_v,\id}(\vec{p}) $ which lies over the top face of $\square^m$. Our goal is to specify a lift of $\bN^{P}_{v,\id}(\vec{p})$ to an object of $\widetilde{\Kur}^{\Gamma}$: this itself consists of a choice, for each object $P'$ of $\cQ^{\lambda}_{\bN_v,\id}(\vec{p}) $ lying under $P$, of a sequence of Kuranishi presentations whose product is equipped with an isomorphism to $\partial^{P'} \bN^{P}_{v,\id}(\vec{p})$. If $P'$ lies over a boundary face of the cube, then this sequence is determined by the previous step using the analogue of Diagram \eqref{eq:iso_strata_Kuranishi_diagram} for objects of $\cD \widetilde{\Kur}$. Otherwise, we use the labelled tree corresponding to the image of $P'$ in  $ \underset{v \in T}{\bigcirc} \cP_{\vec{\bX}_v}^{\Gamma}(\vec{p})$  to write down a sequence of Kuranishi presentations whose product is equipped with a canonical isomorphism to $\partial^{P'} \bN^{P}_{v,\id}(\vec{p})$. The inductive assumption and the structure maps of $\bN_{v}$ then give rise to the maps in Equation \eqref{eq:compatibility_factorisations_tkur}: the key point is that each arrow $P' \to P''$ under $P$ gives rise to a canonical map
  \begin{equation}
   \partial^{P''} \bN^{P}_{v,\id}(\vec{p}) \to  \partial^{P'} \bN^{P}_{v,\id}(\vec{p}),
  \end{equation}
  and since the tree corresponding to $P''$ refines the tree corresponding to $P'$, this uniquely lifts to a map between the factorisations.  Next, we specify the lift of $\bN_{v,\phi}$ by composing $ \bN_{v,\id}$ with the isomorphism of $\Cat^{\Gamma}$-enriched multimodules associated to $\phi$.

  This completes the construction of the lift of the chosen representative $\bN$ of the orbit of the permutation group on the inputs, and we specify the lift of any $m$-cube by composing with permutations of the inputs, and pulling back the lift under a permutation of the inputs of $\vec{\bX}$ mapping an $m$-cube to its chosen representative.

The first three induction hypotheses are immediate from the construction. The fourth is a tedious check which we omit.
  
\end{proof}

\bibliographystyle{alpha}
\def\cprime{$'$}

\end{document}